\numberwithin{equation}{section}
\numberwithin{figure}{section}
\newenvironment{ack}%
{\par \vspace{\baselineskip}%
 \noindent \textbf{Acknowledgements.}}%
{\par \vspace{\baselineskip}}%
\theoremstyle{definition}
\newtheorem{thm}{Theorem}[subsection]
\newtheorem{prp}[thm]{Proposition}
\newtheorem{lem}[thm]{Lemma}
\newtheorem{cor}[thm]{Corollary}
\newtheorem{dfn}[thm]{Definition}
\newtheorem{fct}[thm]{Fact}
\newtheorem{ntn}[thm]{Notation}
\newtheorem{rmk}[thm]{Remark}
\newtheorem{eg}[thm]{Example}
\newtheorem*{thm*}{Theorem}
\newtheorem*{prp*}{Proposition}
\newtheorem*{lem*}{Lemma}
\newtheorem*{cor*}{Corollary}
\newtheorem*{dfn*}{Definition}
\newtheorem*{ntn*}{Notation}
\newtheorem*{fct*}{Fact}
\newtheorem*{rmk*}{Remark}
\newtheorem*{eg*}{Example}
\newcommand{\ol}{\overline}
\newcommand{\ul}{\underline}
\newcommand{\wh}{\widehat}
\newcommand{\wt}{\widetilde}
\newcommand{\xr}{\xrightarrow}
\newcommand{\xrr}[1]{\xrightarrow{\, #1 \,}}
\newcommand{\inj}{\hookrightarrow}
\newcommand{\surj}{\twoheadrightarrow}
\newcommand{\longto}{\longrightarrow}
\newcommand{\simto}{\xr{\sim}}
\newcommand{\longsimto}{\xrr{\sim}}
\newcommand{\longinj}{\lhook\joinrel\longrightarrow}
\newcommand{\longsurj}{\relbar\joinrel\twoheadrightarrow}
\newcommand{\qiseq}{\underset{\mathrm{qis}}{\simeq}}
\newcommand{\dqiseq}{\, \qiseq \, }
\newcommand{\wtc}{\mathbin{\wt{\circ}}}
\newcommand{\tsum}{\textstyle \sum}
\newcommand{\tprd}{\textstyle \prod}
\newcommand{\tboplus}{\textstyle \bigoplus}
\newcommand{\tbotimes}{\textstyle \bigotimes}
\newcommand{\Wedge}{\textstyle \bigwedge}
\newcommand{\bl}{\bullet}
\newcommand{\ve}{\varepsilon}
\newcommand{\hy}{\mathchar`-}
\newcommand{\gq}{/ \kern-0.3em /}
\newcommand{\sr}{/ \kern-0.3em / \kern-0.3em /}
\newcommand{\vac}{\left|0\right>}
\newcommand{\one}{\mathbf{1}}
\newcommand{\itp}{\mathbin{\lrcorner}}
\newcommand{\sinf}{\tfrac{\infty}{2}}
\DeclareMathOperator{\colim}{colim}
\newcommand{\Higgs}{\mathcal{M}_{\mathrm{Higgs}}}
\newcommand{\bbk}{\Bbbk}
\newcommand{\bbA}{\mathbb{A}}
\newcommand{\bbL}{\mathbb{L}}
\newcommand{\bbN}{\mathbb{N}}
\newcommand{\bbP}{\mathbb{P}}
\newcommand{\bbQ}{\mathbb{Q}}
\newcommand{\bbR}{\mathbb{R}}
\newcommand{\bbC}{\mathbb{C}}
\newcommand{\bbT}{\mathbb{T}}
\newcommand{\bbZ}{\mathbb{Z}}
\newcommand{\bfV}{\mathbf{V}}
\newcommand{\calD}{\mathcal{D}}
\newcommand{\calS}{\mathcal{S}}
\newcommand{\calT}{\mathcal{T}}
\newcommand{\frkg}{\mathfrak{g}}
\newcommand{\frkl}{\mathfrak{l}}
\newcommand{\frkS}{\mathfrak{S}}
\newcommand{\shM}{\mathscr{M}}
\newcommand{\shO}{\mathscr{O}}
\DeclareMathOperator{\shHom}{\mathscr{H} \kern-0.1em \mathit{om}}
\newcommand{\ch}{\mathrm{ch}}
\newcommand{\cl}{\mathrm{cl}}
\newcommand{\et}{\mathrm{\acute{e}t}}
\newcommand{\fp}{\mathrm{fp}}
\newcommand{\ft}{\mathrm{ft}}
\newcommand{\lf}{\mathrm{lf}}
\newcommand{\nd}{\mathrm{nd}}
\newcommand{\op}{\mathrm{op}}
\newcommand{\dR}{\mathrm{dR}}
\newcommand{\DS}{\mathrm{DS}}
\newcommand{\dec}{\mathrm{dec}}
\newcommand{\odd}{\mathrm{odd}}
\newcommand{\ord}{\mathrm{ord}}
\newcommand{\qis}{\mathrm{qis}}
\newcommand{\tgr}{\mathrm{gr}}
\newcommand{\tCE}{\mathrm{CE}}
\newcommand{\tVA}{\mathrm{VA}}
\newcommand{\cois}{\mathrm{co}}
\newcommand{\even}{\mathrm{even}}
\newcommand{\tbar}{\mathrm{bar}}
\newcommand{\trst}{\mathrm{rst}}
\newcommand{\tKos}{\mathrm{Kos}}
\newcommand{\tLie}{\mathrm{Lie}}
\newcommand{\tBRST}{\mathrm{BRST}}
\DeclareMathOperator{\ad}{ad}
\DeclareMathOperator{\gr}{gr}
\DeclareMathOperator{\id}{id}
\DeclareMathOperator{\pr}{pr}
\DeclareMathOperator{\Cl}{Cl}
\DeclareMathOperator{\Eq}{Eq}
\DeclareMathOperator{\CE}{CE}
\DeclareMathOperator{\sgn}{sgn}
\DeclareMathOperator{\chg}{charge}
\DeclareMathOperator{\cCl}{\ol{Cl}}
\DeclareMathOperator{\chCl}{Cl_{ch}}
\DeclareMathOperator{\coCl}{Cl_{co}}
\DeclareMathOperator{\Der}{Der}
\DeclareMathOperator{\End}{End}
\DeclareMathOperator{\Hom}{Hom}
\DeclareMathOperator{\Img}{Im}
\DeclareMathOperator{\Ker}{Ker}
\DeclareMathOperator{\Kos}{Kos}
\DeclareMathOperator{\Lie}{Lie}
\DeclareMathOperator{\Map}{Map}
\DeclareMathOperator{\Mor}{Mor}
\DeclareMathOperator{\Pol}{Pol}
\DeclareMathOperator{\Ran}{Ran}
\DeclareMathOperator{\Res}{Res}
\DeclareMathOperator{\Sym}{Sym}
\DeclareMathOperator{\Tot}{Tot}
\DeclareMathOperator{\BRST}{BRST}
\DeclareMathOperator{\Cone}{Cone}
\DeclareMathOperator{\Pois}{Pois}
\DeclareMathOperator{\Spec}{Spec}
\DeclareMathOperator{\uHom}{\ul{Hom}}
\DeclareMathOperator{\uEnd}{\ul{End}}
\DeclareMathOperator{\uMap}{\ul{Map}}
\DeclareMathOperator{\Specm}{Specm}
\DeclareMathOperator{\cBRST}{BRST_{cl}}
\DeclareMathOperator{\dBRST}{BRST^{\prime}_{cl}}
\DeclareMathOperator{\coBRST}{BRST_{co}}
\newcommand{\Bo}{\mathsf{Bo}_2}
\newcommand{\HS}{\mathsf{HS}}
\newcommand{\KL}{\mathsf{KL}}
\newcommand{\MT}{\mathsf{MT}}
\newcommand{\chMT}{\mathsf{MT}_{\ch}}
\newcommand{\coMT}{\mathsf{MT}_{\cois}}
\newcommand{\VA}{\mathsf{VA}}
\newcommand{\Sch}{\mathsf{Sch}}
\newcommand{\dgKL}{\mathsf{dgKL}}
\newcommand{\cVec}{\mathsf{Vec}}
\newcommand{\sVec}{\mathsf{sVec}}
\newcommand{\gVec}{\mathsf{gVec}}
\newcommand{\dgVec}{\mathsf{dgVec}}
\newcommand{\ca}{\mathsf{Comu}}
\newcommand{\dga}{\mathsf{dguAlg}}
\newcommand{\cdga}{\mathsf{dguCom}}
\newcommand{\dgla}{\mathsf{dgLie}}
\newcommand{\dgpa}{\mathsf{dgPA}}
\newcommand{\dgVA}{\mathsf{dgVA}}
\newcommand{\dgVL}{\mathsf{dgVL}}
\newcommand{\dgVP}{\mathsf{dgVP}}
\newcommand{\PMod}[1]{#1\hy\mathsf{PMod}}
\newcommand{\VMod}[1]{#1\hy\mathsf{VMod}}
\newcommand{\VPMod}[1]{#1\hy\mathsf{VPMod}}
\newcommand{\dgMod}[1]{#1 \hy \mathsf{dgMod}}
\newcommand{\dgVMod}[1]{#1 \hy \mathsf{dgVMod}}
\newcommand{\dgVPMod}[1]{#1\hy\mathsf{dgVPMod}}
\newcommand{\eLQCoh}[1]{\mathop{\mathsf{L}_\mathsf{QCoh}^{#1}}}
\DeclareMathOperator{\DMod}{\mathsf{DMod}}
\DeclareMathOperator{\QCoh}{\mathsf{QCoh}}
\DeclareMathOperator{\DQCoh}{\mathsf{D_{QCoh}}}
\DeclareMathOperator{\LQCoh}{\mathsf{L_{QCoh}}}
\DeclareMathOperator{\PQCoh}{\mathsf{PoisQCoh}}
\DeclareMathOperator{\LPQCoh}{\mathsf{L_{PoisQCoh}}}
\newcommand{\ddgVec}{\mathbf{dgVec}}
\newcommand{\ddgla}{\mathbf{dgLie}}
\newcommand{\dgPMod}[1]{#1 \hy \mathsf{dgPMod}}
\newcommand{\ddgMod}[1]{#1 \hy \mathbf{dgMod}}
\newcommand{\ddgVPMod}[1]{#1 \hy \mathbf{dgVPMod}}
\newcommand{\h}{\mathrm{h}}
\newcommand{\iC}{\EuScript{C}}
\newcommand{\iH}{\EuScript{H}}
\newcommand{\iS}{\EuScript{S}}
\newcommand{\idSt}{\mathbf{dSt}}
\newcommand{\idSch}{\mathbf{dSch}}
\newcommand{\idAff}{\mathbf{dAff}}
\newcommand{\icdga}{\mathbf{dguCom}}
\newcommand{\abs}[1]{\left| #1 \right|}
\newcommand{\sabs}[1]{\bigl| #1 \bigr|}
\newcommand{\lsp}[1]{\left< #1 \right>_{\mathrm{lin}}}
\newcommand{\rst}[2]{\left. #1 \right|_{#2}}
\newcommand{\pair}[1]{\left< #1 \right>}
\begin{document}

\title{Derived gluing construction of chiral algebras}
\author{Shintarou Yanagida}
\address{Graduate School of Mathematics, Nagoya University. 
Furocho, Chikusaku, Nagoya, Japan, 464-8602.}
\email{yanagida@math.nagoya-u.ac.jp}
\thanks{This work is supported by JSPS KAKENHI Grant Number 19K03399.}
\date{April 21, 2020}

\begin{abstract}
We discuss the gluing construction of class $\mathcal{S}$ chiral algebras
in derived setting.
The gluing construction in non-derived setting was introduced by Arakawa to
construct a family of vertex algebras of which the associated varieties give 
genus zero Moore-Tachikawa symplectic varieties. 
Motivated by the higher genus case, we introduce a dg vertex algebra version
$\mathsf{MT}_{\mathrm{ch}}$ of the category of Moore-Tachikawa 
symplectic varieties, where a morphism is given by a dg vertex algebra 
equipped with action of the universal affine vertex algebra,
and composition of morphisms is given by the BRST reduction.
We also show that the procedure taking the associated scheme of 
gives a functor from $\mathsf{MT}_{\mathrm{ch}}$ to the category 
$\mathsf{MT}$ of derived Moore-Tachikawa varieties,
which would imply compatibility of gluing constructions in both categories.
\end{abstract}

\maketitle

{\footnotesize \tableofcontents}

\setcounter{section}{-1}
\section{Introduction}
\label{s:intro}

\subsection{Backgrounds}

Let $G$ be a simply connected semisimple linear 
algebraic group over the complex field $\bbC$.
In \cite{A18}, Arakawa introduced a family of vertex algebras
\begin{equation}\label{eq:0:chS}
 \bfV^{\calS}_{G,b} \quad (b \in \bbZ_{\ge 1})
\end{equation}
called  the \emph{genus zero chiral algebras of class $\calS$}.
They are designed as ``chiral quantization" of genus zero 
\emph{Moore-Tachikawa symplectic varieties},
and the design results in the \emph{associativity} property of the family.
The associativity is encoded by BRST reduction of vertex algebras,
and this article concerns a technical issue on the reduction
when one wants to study higher genus cases.

\subsubsection{Moore-Tachikawa symplectic varieites}
\label{sss:0:MT}

In order to explain the detail, 
let us give a short recollection on Moore-Tachikawa symplectic varieties.
See also \cite[\S 1]{A18} for a brief explanation.

In \cite{MT}, concerning expected properties of Higgs branches of 
Sicilian theories, Moore and Tachikawa proposed a two-dimensional topological 
field theory which target holomorphic symplectic varieties.
Mathematically speaking, they conjectured for each semisimple algebraic group $G$
over $\bbC$ the existence of a symmetric monoidal functor 
\[
 \eta_G: \Bo \longto \HS
\]
{}from the category $\Bo$ of 2-bordisms to the category $\HS$ of 
holomorphic symplectic varieties
which satisfies some axioms.

The source category $\Bo$ is a familiar one in the context of 
topological field theory.
We use the following description of $\Bo$:
\begin{itemize}[nosep]
\item 
An object is a closed oriented one-dimensional manifold, 
or a disjoint union of $S^1$'s.

\item
A morphism from $B_1$ to $B_2$ is the diffeomorphism class of an oriented 
two-dimensional manifold $\Sigma$ with boundary $(-B_1)\sqcup B_2$.
We denote by $\Sigma_{g,b}$ the class of an oriented surface of 
genus $g$ with $b$ boundary components.
In particular, 
the tube $\Sigma_{0,2}$ represents the identity morphism $\id_{S^1}$.

\item
Composition of morphisms is given by gluing.
In particular, we have 
\begin{equation}\label{eq:0:glue}
 \Sigma_{0,b'} \circ \Sigma_{0,b} =  \Sigma_{0,b+b'-2}.
\end{equation}
\end{itemize}
Disjoint union $\sqcup$ gives $\Bo$ a symmetric monoidal structure.

The target category $\HS$ is described as follows.
\begin{itemize}[nosep]
\item 
An object is a semisimple algebraic group over $\bbC$.

\item
A morphism from $G_1$ to $G_2$ is a possibly singular symplectic variety $X$
over $\bbC$ with a $\bbC^{\times}$-action scaling the symplectic form 
by weight $2$ together with Hamiltonian action of $G_1 \times G_2$ 
satisfying some regularity condition.
The identity $\id_G \in \Hom_{\HS}(G,G)$ is the cotangent bundle $T^* G$
with the left and right multiplication of $G$.

\item
Composition of  $X \in \Hom_{\HS}(G_1,G_2)$ and
$X' \in \Hom_{\HS}(G_2,G_3)$ is given by the Hamiltonian reduction
of the product with respect to the diagonal $G_2$-action:
\[
 X' \circ X := (X^{\op} \times X') \gq \Delta(G_2) = \mu^{-1}(0)/\Delta(G_2).
\]
Here $X^{\op}$ denotes the symplectic variety $X$ with the opposite symplectic
structure, and the morphism $\mu: X^{\op} \times X' \to \frkg_2^*$ is
the momentum map $\mu(x,y):=-\mu_X(x)+\mu_{X'}(y)$ with $\mu_X$ and $\mu_{X'}$ 
the $\frkg_2^*$-component of the momentum map $X \to \frkg_1^* \times \frkg_2^*$
and $X' \to \frkg_2^* \times \frkg_3^*$ respectively.
The reduction doesn't touch the $G_1$- and $G_3$-actions, so that we have 
$X' \circ X \in \Hom_{\HS}(G_1,G_3)$.
\end{itemize}
The cartesian product of groups and varieties gives $\HS$ 
a symmetric monoidal structure.

Let us denote 
\[
 W_G^b := \eta_G(\Sigma_{0,b}),
\]
where $\Sigma_{0,b}$ is an oriented surface of genus $0$ 
with $b$ boundary components.
It encodes the genus zero part of the functor $\eta_G$.
The functor $\eta_G$ should satisfy
$\eta_G(S^1)=G$ and $W_G^2=T^* G$.
We refer \cite[\S 3]{MT} for the full axiom of $\eta_G$.

In \cite{BFN}, concerning mathematical construction of the Coulomb branches of 
three-dimensional supersymmetric gauge theory via 
perverse sheaves on affine Grassmannians, 
Braverman, Finkelberg and Nakajima constructed 
the genus zero Moore-Tachikawa varieties $W_G^b := \eta_G(\Sigma_{0,b})$ 
in a uniform way, and showed 
\begin{equation}\label{eq:0:MT}
 W_G^1 \simeq G \times \EuScript{S}, \quad 
 W_G^2 \simeq T^* G, \quad 
 W_G^{b'} \circ W_G^b \simeq W_G^{b+b'-2},
\end{equation}
where $\EuScript{S} \subset \frkg^*$ denotes the Slodowy slice.
The third isomorphism reflects the gluing \eqref{eq:0:glue} in $\Bo$,
and we call it the \emph{gluing condition}.
In particular, the genus zero part of the functor $\eta_G$ is established.

\subsubsection{The 4d/2d duality and chiral algebras of class $\calS$}
\label{sss:0:42}

Next we explain the 4d/2d duality.
In \cite{BLLPRvR}, Beem, Lemos, Liendo, Peelaers, Rastelli and van Rees proposed
a ``functorial" construction 
\begin{equation}\label{eq:0:42}
 \calT \longmapsto V_{\calT}
\end{equation}
of conformal vertex algebras $V_{\calT}$ from a four-dimensional $\mathcal{N}=2$
superconformal field theory (4d SCFT for short) $\calT$.
Among such four-dimensional theories, 
we have the \emph{theory $\calT^{\calS}_{G,\Sigma}$ of class $\calS$} 
attached to a complex semisimple algebraic group $G$ 
and a punctured Riemann surface $\Sigma$.
We will not go into the detail and 
refer the exposition \cite{Ta} for mathematicians.
The vertex algebra obtained from $\calT^{\calS}_{G,\Sigma}$ 
by the above ``functor" is called the \emph{chiral algebras of class $\calS$}.
We denote it by $V^{\calS}_{G,\Sigma}$.

A clue to identify the vertex algebra $V^{\calS}_{G,\Sigma}$ is to consider 
an ``invariant" of the physical theory $\calT^{\calS}_{G,\Sigma}$.
The attachment $\calT \mapsto V_{\calT}$ is one of such invariants.
Another interesting invariant is the hyperk\"ahler manifold $\Higgs(\calT)$
called the \emph{Higgs branch} of $\calT$.
See \cite{Ta} for more information.
In \cite{BR}, Beem and Rastelli conjectured that for any 4d SCFT $\calT$
there is an isomorphism 
\begin{equation}\label{eq:0:BR}
 \Higgs(\calT) \stackrel{?}{\simeq} \Specm(R_{V_\calT})
\end{equation}
of holomorphic symplectic varieties, 
where the right hand side denotes the \emph{associated variety}
of the vertex algebra $V_\calT$. 
See \S \ref{sss:0:ch} for more information, 
and Definition \ref{dfn:li:C2} for the precise definition of $R_{V_\calT}$.

Now we can explain the result in \cite{A18}.
Arakawa considered the following ``chiral quantization" $\eta_G^{\tVA}$ of 
the Moore-Tachikawa functor $\eta_G$.
Let us denote by $V_{k}(\frkg)$ the universal affine vertex algebra at level $k$
for $\frkg=\Lie(G)$, and by $h^\vee$ the dual Coxeter number of $\frkg$
(see \S \ref{sss:li:uava} for the detail).
He considered a symmetric monoidal functor
\[
 \eta_G^{\tVA}: \Bo \longto \VA,
\]
where the category $\VA$ is roughly explained as follows.
See \S \ref{ss:ch:gl} for the detail.
\begin{itemize}[nosep]
\item
An object is a semisimple algebraic group.
\item
A morphism from $G_1$ to $G_2$ is a vertex algebra $V$ equipped with 
a vertex algebra morphism 
$V_{-h_1^{\vee}}(\frkg_1) \otimes V_{-h_2^{\vee}}(\frkg_2) \to V$ 
and satisfying some conditions.

\item
Composition of 
$V \in \Hom_{\VA}(G_1,G_2)$ and $V' \in \Hom_{\VA}(G_2,G_3)$
is given by the relative BRST reduction:
\begin{equation}\label{eq:0:BRST}
 V' \circ V := H^{\sinf+0}(\wh{\frkg}_{-2 h^{\vee}_2},\frkg_2, V^{\op} \otimes V').
\end{equation}
\end{itemize}
The condition on $\eta_G^{\VA}$ is that for any surface 
$\Sigma \in \Mor(\Bo)$, the vertex algebra $\eta_G^{\tVA}(\Sigma)$ should
coincide with $V^{\calS}_{\Sigma}$, the chiral algebra of class $\calS$. 

Arakawa constructed in \cite{A18} the genus zero part 
of the functor $\eta_G^{\VA}$.
In other words, he constructed the image 
\[
 \bfV^{\calS}_{G,b} := \eta_G^{\VA}(\Sigma_{0,b})
\]
of the genus $0$ surface $\Sigma_{0,b}$ with $b$ boundaries,
and checked a ``chiral quantization" of \eqref{eq:0:MT}:
\begin{equation}\label{eq:0:chgl}
 \bfV^{\calS}_{G,1} \simeq H^0_{\DS}(\calD^{\ch}_G), \quad
 \bfV^{\calS}_{G,2} \simeq \calD^{\ch}_G, \quad
 \bfV^{\calS}_{G,b'} \circ \bfV^{\calS}_{G,b} \simeq \bfV^{\calS}_{G,b+b'-2},
\end{equation}
Here $\calD^{\ch}_G$ denotes the algebra of chiral differential operators 
on $G$ at the critical level, 
and $H^0_{\DS}$ denotes the quantum Drinfeld-Sokolov reduction.
The third relation is called the \emph{associativity} in \cite{A18}.
Moreover Arakawa showed
\begin{equation}\label{eq:0:RV=W}
 R_{\bfV^{\calS}_{G,b}} = \bbC[W^{b}_{G}],
\end{equation}
where the right hand side denotes the coordinate ring.
Thus he solved the conjecture \eqref{eq:0:BR} for 
the genus zero class $\calS$ theories $\calT=\calT^{\calS}_{\Sigma_{0,b}}$.


\subsubsection{Chiral quantization}
\label{sss:0:ch}

So far we have used the word ``\emph{chiral quantization}" several times.
Let us clarify what it means.
Since it is also a good place to recall the \emph{associated schemes} of vertex
algebras, let us explain the chiralization of a Poisson structure.
\begin{enumerate}[nosep]
\item
We start with a \emph{Poisson algebra} $R$, i.e., 
a commutative algebra with a Poisson bracket.
It corresponds to an affine Poisson scheme $\Spec(R)$.

\item
Recall the \emph{arc space}, or the \emph{$\infty$-jet space}, 
$J_\infty(Y)$ of a scheme $Y$.
It is a scheme having the universal property
\[
 \Hom_{\Sch}(\Spec(A),J_\infty(Y))=\Hom_{\Sch}(\Spec(A[[t]]),Y)
\]
for any commutative algebra $A$,
where $\Sch$ denotes the category of schemes.
See \S \ref{ss:jet:ord} for the precise statement.

\item
By \cite[\S 2.3]{A15}, the coordinate ring $J_\infty(R)$ of the arc space
$J_\infty(\Spec(R))$ has a structure of \emph{vertex Poisson algebra}
induced naturally from the original Poisson algebra structure on $R$.
See \S \ref{ss:li:dgvp} for the detail.

\item
By \cite{Li}, for a vertex algebra $V$, the associated graded space $\gr^F V$
with respect to the \emph{Li filtration} $F^{\bl}V$ 
has a structure of vertex Poisson algebra.
The component 
\[
 R_V :=  F^0 V/F^1 V
\]
is a Poisson algebra,
which is called \emph{Zhu's $C_2$-algebra} \cite{Z}.
The corresponding affine Poisson scheme $X_V:=\Spec(R_V)$
is called the \emph{associated scheme} of $V$.
We have a natural surjective morphism $J_\infty(R_V) \surj \gr V$
of vertex Poisson algebras.
See \S \ref{sss:li:li} for the detail.
\end{enumerate}

Now we cite from \cite[Definition 2.1]{A18} the terminology:
Let $V$ be a vertex algebra.
If Zhu's $C_2$-algebra $R_V$ is isomorphic to a Poisson algebra $R$, then we call
$V$ a \emph{chiral quantization} of the affine Poisson scheme $X=\Spec(R)$.
If moreover $V$ is separated (Definition \ref{dfn:li:sep}), $R_V$ is reduced
and the surjection $J_{\infty}(R_V) \surj \gr^F V$ is an isomorphism,
then $V$ is called a \emph{strict chiral quantization} of $X$.

Using the terminology above, we can restate the relation \eqref{eq:0:RV=W} as:
The genus zero chiral algebra $\bfV^{\calS}_{G,b}$ of class $\calS$ is 
a strict chiral quantization of 
the genus zero Moore-Tachikawa varieties $W^{b}_{G}$.
We also have a functorial picture
\[
 \bfV^{\calS}_{G,b'} \circ \bfV^{\calS}_{G,b} 
 \underset{\eqref{eq:0:chgl}}{\simeq} \bfV^{\calS}_{G,b+b'-2}
 \, \xrightarrow{R_{(-)}} \, 
 W_G^{b'} \circ W_G^b \underset{\eqref{eq:0:MT}}{\simeq} W_G^{b+b'-2}, 
\]
and in this sense we can say that the relations \eqref{eq:0:chgl}
are chiral quantization of \eqref{eq:0:MT}.
We call this picture the \emph{compatibility of gluing constructions}.

It is then natural to expect that the procedure $R_{(-)}$ 
taking Zhu's $C_2$-algebra makes the following diagram commutative:
\begin{equation}\label{diag:0:com}
 \xymatrix{
  \Bo \ar@{=}[d] \ar[rr]^{\eta_G^{\VA}} & & \VA \ar[d]^{R_{(-)}} \\
  \Bo \ar[rr]_{\eta_G} & & \HS}
\end{equation}
We can say that \cite{A18} established this commutativity 
restricting to the genus zero part of $\Bo$.
His work can be regarded as a part of the mathematical formulation of 
the 4d/2d duality ``functor" \eqref{eq:0:42}.

\subsection{Derived gluing of vertex algebras --- Organization of the text}
\label{ss:0:org}

The proof in \cite{A18} of the compatibility of gluing constructions for 
genus zero chiral algebras of class $\calS$ is based on cohomology vanishing 
in the BRST reduction \eqref{eq:0:BRST}.
As mentioned in the footnote in \cite[p.3]{A18},
there is a subtlety on this point in higher genus case.
The issue is that the momentum map associated to the Hamiltonian action 
can be non-flat in higher genus case,
for which we don't have a clean cohomology vanishing.
As a result, we don't know explicit description of 
the chiral algebra $\bfV^{\calS}_{\Sigma}$ of class $\calS$ 
for a higher genus surface $\Sigma$ at this moment.

It was lucky for the author to take a lecture series by Arakawa 
on the article \cite{A18} in the end of November, 2019.
In the lecture Arakawa suggested to use \emph{derived symplectic geometry} to 
overcome this difficulty.
The aim of this article is to give a first step to fulfill his suggestion.

The main materials in this text are as follows:
\begin{itemize}[nosep]
\item
We introduce a differential graded vertex algebra analogue 
\[
 \chMT
\]
of the category $\MT$ of derived Moore-Tachikawa varieties.
A morphism in $\chMT$ is a differential graded  vertex algebra $V$,
and we denote by $V' \wtc V$ a composition of morphisms.
\item 
The \emph{compatibility of derived gluing constructions}
\[
 R_{(V' \wtc V)} \simeq R_{V'} \wtc R_V
\]
for \emph{differential graded} vertex algebras $V$ and $V'$ of certain type.
Here $R_V$ denotes \emph{Zhu's $C_2$-algebra} of $V$ as for the ordinary 
vertex algebra, and the symbol $\simeq$ denotes quasi-isomorphism as 
\emph{homotopy Poisson algebras}.
Equivalently, the functor $R_{(-)}$ induces
\[
 R_{(-)}: \chMT \longto \MT.
\]
\end{itemize} 
 
Below we give an overview of this text 
along the line of the arguments in \S \ref{sss:0:ch}.
Let us start with the derived version of Poisson algebras.
Hereafter we use the word ``dg" to mean ``differential graded".

\subsubsection{Derived symplectic and Poisson geometry}
\label{sss:0:sp}

The idea of using derived symplectic geometry to realize Moore-Tachikawa varieties
is, as far as the author understands, originally due to Calaque 
\cite[Concluding remarks]{C1}, \cite[Example 3.5]{C2}.
He introduced a derived version of the category $\MT$ where composition of 
morphisms is given by the derived intersection of Lagrangian structures.
This approach enables us to consider Hamiltonian reduction for 
non-flat momentum maps.
See \S \ref{ss:pr:MT} for more information.

In this text we use affine \emph{derived Poisson geometry} 
instead of derived symplectic geometry.
Derived Poisson geometry was introduced in \cite{CPTVV}
as a natural Poisson analogue of derived symplectic geometry \cite{PTVV}.
As in the ordinary Poisson and symplectic structures,
we have an equivalence of non-degenerate Poisson and symplectic structures.
See \cite{C1, C2} for more information on derived symplectic geometry
in the present context, and \cite{S2} for a review of derived Poisson geometry.
The reason for us to use derived Poisson geometry is that Poisson 
structures appear naturally if we regard vertex algebras as chiral quantization,
as we saw in \S \ref{sss:0:ch}.

The aim of the beginning \S \ref{s:sp} is to give 
a recollection on shifted Poisson structures.
Since what we need in this text is an affine version, we mainly treat
those structures on commutative dg algebras.
The main object is \emph{$\bbP_n$-algebra} (Definition \ref{dfn:sp:sp}).
The case $n=1$ corresponds to dg (non-shifted) Poisson algebra.
Essentially we only need this $n=1$ case, but in order to introduce 
\emph{coisotropic structures} to define \emph{derived Hamiltonian reduction}
later, we treat general shifted Poisson structures.

In the course of preparations in \S\S \ref{ss:sp:dg}--\S \ref{ss:sp:dga},
we explain notations on dg objects and algebraic structures on them.
In particular, we denote by
\[
 \CE(\frkl,M)
\]
the \emph{Chevalley-Eilenberg complex} for a dg Lie algebra $\frkl$ 
and a dg $\frkl$-module $M$.
See \S \ref{sss:dga:CE}, Definition \ref{dfn:dga:CE} for the detail.
Recall also that the correspondence $\CE(\frkl,-)$ is functorial,
which will be used repeatedly in the following explanation.


We will also use the \emph{Kirillov-Kostant Poisson algebra}:
For a dg Lie algebra $\frkl$, the symmetric algebra $\Sym(\frkl)$ has a 
structure of dg Poisson algebra whose Poisson bracket is uniquely determined by
the Lie bracket of $\frkl$ and the Leibniz rule.
See \S \ref{sss:sp:KK} for the detail.

\subsubsection{Derived Hamiltonian reduction of shifted Poisson algebra}

We next consider a derived analogue of 
Hamiltonian reduction of shifted Poisson algebras.
For the reduction of ordinary Poisson algebras, we refer \cite[Chapter 5]{LPV}.

As mentioned in the previous \S \ref{sss:0:sp}, 
Calaque \cite{C2} formulated the gluing of Moore-Tachikawa varieties
as derived intersection of Lagrangians in derived symplectic schemes 
with Hamiltonian group action.
The corresponding procedure in the shifted Poisson structure 
is given by derived intersection of \emph{coisotropic structures}
introduced in \cite{CPTVV}.
In this text we use an equivalent but alternative approach of Safronov \cite{S},
which will be reviewed in \S \ref{s:pr}.
This approach has an advantage in the point that the connection to 
the \emph{classical BRST complex} is clear.

The coisotropic structure and Hamiltonian reduction is explained in \S \ref{ss:pr:pr}.
Let $\frkl$ be a dg Lie algebra and $R$ a dg Poisson algebra.
Noticing that $R$ can be regarded as a dg Lie algebra, 
we call a morphism $\mu: R \to \frkl$ of dg Lie algebras a \emph{momentum map}
(see Definition \ref{dfn:sp:Ham} and Remark \ref{rmk:sp:mu}).
For such $\mu$, the morphism $\CE(\frkl,\mu)$ is coisotropic
in the sense of Definition \ref{dfn:pr:cois}.
Using coisotropic morphisms, we define 
\begin{equation}\label{eq:0:dHr}
 R \gq^{\bbL}_{\mu} \Sym(\frkl) := \CE(\frkl,\bbk) 
 \tbotimes^{\bbL}_{\CE(\frkl,\Sym \frkl)} \CE(\frkl,R),
\end{equation}
and call it the \emph{derived Hamiltonian reduction} of $R$ 
with respect to the momentum map $\mu$.
It has a structure of \emph{$\wh{\bbP}_1$-algebra}, which can be restated as
homotopy Poisson algebra (Definition \ref{dfn:sp:whP}).

The classical BRST complex, 
originally defined in non-dg setting by Kostant and Sternberg \cite{KS}, 
will be introduced in \S \ref{ss:pr:BRST}.
For the triple $(\frkl, R, \mu)$ as above,
we define the \emph{classical BRST complex} $\cBRST(\frkl,R,\mu)$ by 
\[
 \cBRST(\frkl,R,\mu) := (\CE(\frkl,\Kos(\frkl,R,\mu)),d_{\cl}),
\]
where $\Kos(\frkl,R,\mu)$ denotes the \emph{Koszul complex} of $\mu^{-1}(0)$,
and the differential $d_{\tBRST}$ is given by the BRST charge.
See Definition \ref{dfn:pr:cBRST} for the detail.

The connection between the derived Hamiltonian reduction is explained in 
Proposition \ref{prp:pr:HBp}:
For a finite-dimensional Lie algebra $\frkl$, we have an quasi-isomorphism
\begin{equation}\label{eq:0:HB}
 R \gq^{\bbL}_{\mu} \Sym(\frkl) \dqiseq \cBRST(\frkg,R,\mu)
\end{equation}
of homotopy Poisson algebras.

In \S \ref{ss:pr:MT}, we introduce 
the \emph{category $\MT$ of derived Moore-Tachikawa varieties}, 
which is essentially the same with the category introduced by Calaque \cite{C2}
mentioned before.
An object is a semisimple algebraic group $G$, 
which is identified with its Lie algebra $\frkg:=\Lie(G)$.
A morphism from $G_1$ to $G_2$ is a non-degenerate $\wh{\bbP}_1$-algebra $R$
with a momentum map $\mu_R=\mu_R^1+\mu_R^2: \frkg_1 \oplus \frkg_2 \to R$.
Composition of $R \in \Hom_{\MT}(G_1,G_2)$ and $R' \in \Hom_{\MT}(G_2,G_3)$
is given by
\begin{equation}\label{eq:0:dgl}
 R' \wtc R := \cBRST \bigl(\frkg_2,R^{\op} \otimes R', \mu \bigr)
 \simeq \bigl(R^{\op} \otimes R'\bigr) \gq^{\bbL}_{\mu} \Sym(\frkg_2)
\end{equation}
with $\mu:=-\mu_R^2+\mu_{R'}^1$.
This is the \emph{derived gluing} of Moore-Tachikawa varieties.
See Definition \ref{dfn:pr:MT} and Remark \ref{rmk:pr:MT} for the detail.

\subsubsection{Jet and arc spaces for derived schemes, 
and dg vertex Poisson algebras}

As explained in \S \ref{sss:0:ch}, in order to go intro the vertex world, 
we consider \emph{arc space} of Poisson schemes.
Thus, for the purpose of this text, we need a derived analogue of the theory of
jet and arc spaces, which is a kind of exercise of derived algebraic geometry.
Since there seems no explicit literature,
we will give an explicit account it in \S \ref{s:jet}.

For the explanation below, let us fix some notations.
Let $R$ be a commutative dg algebra.
Then there is a commutative dg algebra $J_\infty(R)$ such that 
$\Hom(R,A[[t]]) \simeq \Hom(J_\infty(R),A)$ for any commutative dg algebra $A$
(see Lemma \ref{lem:jet:daff} for the precise statement).
We call it the \emph{arc space} of $R$ (precisely speaking, 
we should call it the derived coordinate ring of the arc space).
We have a natural embedding $R \inj J_\infty(R)$.

In the context of derived gluing, we should consider 
a dg Poisson algebra $R$ and its arc space $J_\infty(R)$.
The arc space inherits the Poisson structure of $R$,
which should be of infinite-dimensional nature.
As explained in \S \ref{sss:0:ch}, for a non-dg Poisson algebra $R$,
$J_\infty(R)$ has a structure of \emph{vertex Poisson algebra}
by the work of Arakawa \cite[\S 2.3]{A12}.
Thus we should study a dg version of vertex Poisson algebra.
Such a notion is in fact included in the theory of \emph{coisson algebra}
introduced by Beilinson and Drinfeld \cite[2.6]{BD}.
In this text we explain a special case of their theory in \S \ref{s:li}.

We will explain standard notions on vertex algebras, vertex Poisson algebras
and \emph{Li's canonical filtration} in \S \ref{s:li}.
Along the way, we also introduce dg versions of these ``vertex" notions.
Since these dg versions may not be standard, 
we list up the references of definitions:
\begin{itemize}[nosep]
\item 
\emph{Dg vertex algebras}: Definition \ref{dfn:li:dgva}.

\item
\emph{Dg modules over a dg vertex algebra}: Definition \ref{dfn:li:dgMod}.

\item
\emph{Dg vertex Poisson algebras}: Definition \ref{dfn:li:dgvp}.

\item
\emph{Li filtration of a dg vertex algebra}: Definition \ref{dfn:li:li}.
\end{itemize}

In this introduction, the following statement is sufficient:
For a dg Poisson algebra $R$, the arc space $J_\infty(R)$ has a unique structure
of dg vertex Poisson algebra such that 
$a_{(n)}b= \delta_{n,0} \{a,b\}_R$ for $a,b \in R \subset J_\infty(R)$.
In particular, for a dg Lie algebra $\frkl$,
the arc space $J_\infty(\Sym(\frkl))$ of the Kirillov-Kostant Poisson algebra 
(see the end of \S \ref{sss:0:sp}) is a dg vertex Poisson algebra.

\subsubsection{Derived gluing of vertex Poisson algebras}

In \S \ref{s:co}, we introduce a vertex Poisson analogue 
of the derived gluing \eqref{eq:0:dgl} and define the category $\coMT$
of ``coisson Moore-Tachikawa varieties".

Let $\frkl$ be a dg Lie algebra.
In the vertex Poisson world, the corresponding notion of momentum map is 
given by a morphism  $\mu_{\cois}: J_{\infty}(\Sym(\frkl)) \to P$ of 
dg vertex Poisson algebras, called a \emph{coisson momentum map}.
The coisson BRST complex is defined to be 
\[
 \BRST_{\cois}(J_\infty(\frkl),P,\mu_{\cois}) := 
 (P \otimes \coCl(\frkl), d_{\cois}),
\]
where $\coCl(\frkl)$ denotes 
the \emph{Clifford vertex Poisson algebra} (Definition \ref{dfn:li:coCl}).
See Definition \ref{dfn:co:coBRST} for the detail of th coisson BRST complex.


A vertex Poisson analogue of the gluing \eqref{eq:0:dgl} is defined 
in the following way: Let $\frkg_1$, $\frkg_2$ and $\frkg_3$ be the Lie algebras
of semisimple Lie groups $G_1$, $G_2$ and $G_3$.
Let also  $\mu_P: J_{\infty}(\Sym(\frkg_1 \oplus \frkg_2)) \to P$, 
$\mu_{P'}: J_{\infty}(\Sym(\frkg_2 \oplus \frkg_3)) \to P'$
be morphisms of dg vertex Poisson algebras.
Then we define the \emph{coisson gluing} to be 
\[
 P' \wtc P := 
 \BRST_{\cois}(J_\infty(\frkg_2),P^{\op} \otimes P',\mu)
\]
Here $P^{\op}$ denotes the \emph{opposite} dg vertex Poisson algebra of $P$.
See Definition \ref{dfn:co:glue} for the detail.

Using this operation, we introduce the category $\coMT$ 
(Definition \ref{dfn:co:coMT}).
An object is the same as that of the category $\MT$,
and a morphisms from $\frkg_1$ to $\frkg_2$ is a dg vertex Poisson algebra $P$
equipped with a morphism $J_\infty(\Sym(\frkg_1 \oplus \frkg_2)) \to P$
and satisfying some finiteness condition.
Composition of morphism is given by the coisson gluing.

For a dg vertex Poisson algebra $P$, 
we have a dg Poisson algebra $R^{\cois}_P := P/\Img(T)$,
where $T$ denotes the translation of $P$.
See Definition \ref{dfn:li:dgapa} for the detail.
The main statement in \S \ref{s:co} is that 
this construction induces a functor
\[
 R^{\cois}_{(-)}: \coMT \longto \MT.
\]
See Proposition \ref{prp:co:glue} and Theorem \ref{thm:co:glue} for the detail.

\subsubsection{Derived gluing of dg vertex algebras}

In the final \S \ref{s:ch} we study the gluing for dg vertex algebras.

We denote by $V_k(\frkg)$ the universal affine vertex algebra at level $k$
for a finite dimensional semisimple Lie algebra $\frkg$ as in \S \ref{sss:0:42}.
In the vertex world, the corresponding notion of momentum map is 
a morphism $\mu_V: V_k(\frkg) \to V$ of dg vertex algebras, 
called a \emph{chiral momentum map}.
See Remark \ref{rmk:ch:chmm} for 
the difference of the terminology of \cite{A18} and ours.
Such a datum $(V,\mu_V)$ will be called a dg vertex algebra object 
in $\dgVMod{V_k(\frkg)}$.

For a dg vertex algebra object $(V,\mu_V)$ in $\dgVMod{V_k(\frkg)}$, 
we have the \emph{BRST complex}
\[
 \BRST(\wh{\frkg}_{k},V,\mu) := (V \otimes \Wedge^{\sinf}(\frkg),d_{\cl}),
\]
where $\Wedge^{\sinf}(\frkg)$ denotes the \emph{free fermionic vertex algebra}.
See Definition \ref{dfn:ch:BRST} for the detail.

For another dg vertex algebra object $(V',\mu_{V'})$ in $\dgVMod{V_l(\frkg)}$, 
we define the \emph{chiral gluing} $V' \wtc V \in \dgVMod{V_{k+l}(\frkg)}$ to be
\[
 V^{\op} \wtc V' := \BRST(\wh{\frkg}_{k+l},V \otimes V',\mu)
\]
with $\mu: V^{\op} \otimes V'$ defined by $\mu(a,b) := -\mu_V(a) +\mu_{V'}(b)$.

Using the chiral gluing, we introduce the category $\chMT$ 
which is a vertex algebra analogue of the category $\MT$.
An object is the same as that of $\MT$,
and a morphisms from $\frkg_1$ to $\frkg_2$ is a dg vertex algebra $V$
equipped with a chiral momentum map $V_k(\frkg_1) \otimes V_l(\frkg_2) \to V$
and satisfying some finiteness condition.
Composition of morphism is given by the chiral gluing.
See Definition \ref{dfn:ch:chMT} for the precise description.

We then have a functor $R_{(-)}: \chMT \to \MT$ 
induced by the procedure $V \mapsto R_V$ taking Zhu's $C_2$-algebra.
We also have a functor $\gr^F: \chMT \to \coMT$ induced by the procedure
$V \mapsto \gr^F V$ taking the associated graded space of the Li filtration.
They sit in a commutative diagram
\[
 \xymatrix{ \chMT \ar[r]^{\gr^F} \ar[d]_{R}& \coMT \ar[d]^{R^{\cois}} \\ 
              \MT \ar@{=}[r] & \MT}
\]
It would imply compatibility of gluing constructions in \eqref{diag:0:com}
(replacing $\VA$ by $\chMT$ and $\HS$ by $\MT$).
See Theorem \ref{thm:ch:main} for the detail.

\subsection{Global notation}
\label{ss:0:ntn}

Here is a list of global notations.
\begin{enumerate}[nosep]
\item
$\delta_{m,n}$ denotes the Kronecker delta.

\item
The symbol $\bbN$ denotes the set of non-negative integers.

\item
The words \emph{ring} and \emph{algebra} mean associative ones 
unless otherwise stated.

\item
The word \emph{dg} means differential graded.



\item
\label{i:ntn:inf}
On $\infty$-categories.
\begin{enumerate}[nosep,label=(\roman*)]
\item 
We follow \cite{Lu1} for the terminology on $\infty$-categories.
In particular, an \emph{$\infty$-category} is a simplicial set satisfying 
the weak Kan condition \cite[Definition 1.1.2.4]{Lu1}.

\item
For an $\infty$-category $\iC$, 
we write $x \in \iC$ to mean that $x$ is an \emph{object} of $\iC$,
i.e., a vertex of the simplicial set $\iC$.

\item
The \emph{homotopy category} \cite[\S 1.1.4]{Lu1} 
of an $\infty$-category $\iC$ is denoted by $\h \iC$.

\item
The $\infty$-category $\iS$ of \emph{spaces} \cite[Definition 1.2.16.1]{Lu1} is
defined to be the simplicial nerve of the simplicial category of Kan complexes.
Its object will be called a space.

\item
We denote $\iH := \h \iS$.
We have the notion of \emph{homotopy group} $\pi_n X$ for $X \in \iH$.

\item
For objects $x,y$ of an $\infty$-category $\iC$, 
we denote by $\Map_{\iC}(x,y) \in \iH$ the \emph{mapping space} from 
$x$ to $y$ \cite[Definition 1.2.2.1]{Lu1}.
It is the homotopy type of the space representing the maps $x \to y$ 
in the homotopy category of the simplicial category attached to $\iC$.
We also denote $\Hom_{\iC}(x,y) := \pi_0 \Map_{\iC}(x,y)$.

\item
For an $\infty$-category $\iC$ and $x,y,z \in \iC$, 
we denote the \emph{pullback} by $x \times_y z$
if they exist \cite[\S 4.4.2]{Lu1}.

\item
For an $\infty$-category $\iC$, its \emph{opposite $\infty$-category} 
\cite[\S 1.2.1]{Lu1} is denoted by $\iC^{\op}$.

\item
An ordinary category is identified with its nerve
and regarded as an $\infty$-category.
We use sans-serif font to denote ordinary categories
such as the category $\dgVec$ of complexes.

\item
\label{i:ntn:inf:dg}
A dg category is identified 
with its differential graded nerve \cite[\S 1.3.1]{Lu2}
and regarded as an $\infty$-category.
We use bold letters to denote dg categories
such as the dg category $\ddgVec$ of complexes.
\end{enumerate}
\end{enumerate}

\begin{ack}
The author thanks Tomoyuki Arakawa for his detailed explanation on \cite{A18}
in the intensive course at Nagoya University in November, 2019.
\end{ack}

\newpage

\section{Poisson algebras in dg setting}
\label{s:sp}

We work over a fixed field $\bbk$ of characteristics $0$.

\subsection{Dg convention}
\label{ss:sp:dg}

For the definiteness, let us start with our dg convention.

\subsubsection{Graded and dg linear spaces}

Let $\cVec$ be the category of linear spaces and linear maps over $\bbk$.
We denote by $\Hom_{\bbk}(-,-) = \Hom_{\cVec}(-,-)$ 
the linear space of linear maps, 
by $V^*:= \Hom_{\bbk}(V,\bbk)$ the linear dual of $V \in \cVec$,
and by $\otimes_{\bbk}$ the tensor product of linear spaces.
The braiding (or the commutativity) isomorphism on the tensor product 
is an isomorphism  $V \otimes W \simto W \otimes V$
in $\cVec$ given by $v \otimes w \mapsto w \otimes v$.
These give $\cVec$ a structure of $\bbk$-linear unital symmetric monoidal category
with the unit $\bbk$.

\begin{dfn}\label{dfn:dg:gVec}
\begin{enumerate}[nosep]
\item 
A \emph{graded linear space} is a linear space 
equipped with an extra $\bbZ$-grading.
We express the $\bbZ$-grading by superscript as 
$V^{\bl}=\bigoplus_{n \in \bbZ} V^n$.
An element $v \in V^n$ for some $n \in \bbZ$ will be called \emph{homogeneous},
and for such an element we denote $\abs{v}:=n$.

\item
\label{i:dg:gVec:mor}
A \emph{morphism} of graded linear spaces is a homogeneous linear map of degree $0$.
For such a morphism $f: V \to W$, 
we denote $f^n := \rst{f}{V^n}: V^n \to W^n$.
We denote by $\gVec$ the category of graded linear space and their morphisms.


\item 
For a graded linear space $V$, we denote $V[1]$
the graded linear space with $V[1]^n := V^{n+1}$ for each $n \in \bbZ$.
\end{enumerate}
\end{dfn}

\begin{dfn}\label{dfn:dg:dgVec}
\begin{enumerate}[nosep]
\item 
A \emph{dg linear space} or a \emph{complex} is a pair $(V,d)$ consisting of 
a graded vector space $V$ and a morphism $d: V \to V[1]$ of graded vector spaces
satisfying $d^2=0$.
The morphism $d$ is called the \emph{differential},
and the $\bbZ$-grading on $V$ is called the \emph{degree} 
or the \emph{cohomological degree}.

\item
A \emph{morphism} of complexes is a morphism of graded linear spaces
which respects the differentials.
In particular, a morphism $f: V \to W$ of complexes preserves 
the $\bbZ$-grading: $f(V^n) \subset W^n$ for any $n \in \bbZ$.

\item
We denote by $\dgVec$ the category of complexes and their morphisms.
\end{enumerate}
\end{dfn}

\begin{rmk*}
\begin{enumerate}[nosep]
\item 
We denote a complex simply by $V = (V,d)$ if no confusion may occur.
We also denote $(V^{\bl},d_V)$ to emphasize the $\bbZ$-grading 
and that the differential is attached to $V$.

\item
As for the differential $d$ of a complex $V$, 
we have $d^n: V^n \to V^{n+1}$ and $d^{n+1} d^n = 0$
using the notation in Definition \ref{dfn:dg:gVec} \eqref{i:dg:gVec:mor},

\item
Hereafter we regard a graded linear space as a complex with trivial differential.
Thus we regard $\gVec \subset \dgVec$ as a full subcategory.
\end{enumerate}
\end{rmk*}

A category enriched over $\dgVec$ will be called a \emph{dg category}.
The complexes of morphisms in a dg category 
are called the \emph{hom complexes}.
A typical example of a dg category is the dg category of complexes.
Let us recall the precise definition.

\begin{dfn}\label{dfn:dg:uHom}
\begin{enumerate}
\item \label{i:dg:uHom} 
For $V, W \in \dgVec$, we define $\uHom(V,W)$ to be the complex of which
\begin{itemize}[nosep]
\item
the underlying graded linear space is given by
\[
 \uHom(V,W) = \bigoplus_{n \in \bbZ} \uHom(V,W)^n, \quad 
 \uHom(V,W)^n := \prod_{i \in \bbZ}\Hom_{\bbk}(V^i,W^{i+n}), 
\]
\item
and the differential is given by $d f:= d_W \circ f - (-1)^n f \circ d_V$
for $f \in \uHom(V,W)^n$.
\end{itemize}
We call $\uHom(V,W)$ the \emph{internal hom} in $\dgVec$.
An element of $\uHom(V,W)^n$ will be called 
a \emph{homogeneous linear map of cohomological degree $n$}.

\item
We denote by $\ddgVec$ the dg category of which the objects are complexes
and the hom complexes are given by $\uHom(-,-)$.
\end{enumerate}
\end{dfn}

Let us recall the standard symmetric monoidal structure on $\dgVec$.

\begin{dfn}\label{dfn:dg:dgVmon}
Let $V,W \in \dgVec$. 
\begin{enumerate}[nosep]
\item
\label{i:dg:gVmon:tens}
The \emph{tensor product} $V \otimes W$ in $\dgVec$ is a complex of which
\begin{itemize}[nosep]
\item 
the underlying linear space is $V \otimes_{\bbk} W$,
\item
the $\bbZ$-grading is given by 
\[
 (V \otimes W)^n := \tboplus_{r+s=n} V^r \otimes_{\bbk} W^s,
\]
\item
and the differential is given by 
\[
 d_{V \otimes W}(v \otimes w) := d_V v \otimes w + (-1)^{\abs{v}} v \otimes d_W w
\]
for any homogeneous $v \in V$ and any $w \in W$.
\end{itemize}

\item
For homogeneous linear maps $f: V \to V'$ and $g: W \to W'$ of complexes,
we define the linear map $f \otimes g: V \otimes W \to V' \otimes W'$ by
\[
 (f \otimes g)(v \otimes w) := (-1)^{\abs{g}\abs{v}} f(v) \otimes g(w)
\]
for homogeneous $v \in V$ and $w \in W$.
This rule will be called the \emph{Koszul sign rule}.
In particular, we have a tensor product for a morphism of complexes.

\item
\label{i:dg:dgVmon:braid}
and the \emph{braiding isomorphism} in $\dgVec$ is defined to be 
\[
 V \otimes W \longsimto W \otimes V, \quad 
 v \otimes w \longmapsto (-1)^{\abs{v}\abs{w}} w \otimes v
\]
for homogeneous $v \in V$ and $w \in W$.
\end{enumerate}
\end{dfn}

These give the category $\dgVec$ a structure of unital symmetric monoidal
category with the unit $\bbk$, which is denoted by $\dgVec^{\otimes}$.
We also have the notion of a \emph{monoidal dg category},
and the dg category $\ddgVec$ of complexes has a structure of 
unital symmetric monoidal dg category, which is denoted by $\ddgVec^{\otimes}$.

Let us recall the shift functor on $\dgVec$:

\begin{dfn*}
\begin{enumerate}[nosep]
\item 
We denote by $\bbk[1]$ the complex whose underlying graded linear space is 
$(\bbk[1])^n = \delta_{n,0} \bbk$ and whose differential is $0$.

\item
The \emph{shift functor} $[1]$ on $\dgVec$ is given by 
$V \mapsto V[1] := \bbk[1] \otimes V$. 
In particular, we have $d^n_{V[1]}= -d^{n+1}_V$ for any $n \in \bbZ$.
The inverse is denoted by $[-1]$, and for $m \in \bbZ$ 
the $m$-th repetition is denoted by $[m]$.
\end{enumerate}
\end{dfn*}

Here are some basic constructions of complexes.

\begin{eg}\label{eg:dg:TS}
Let $V$ be a complex.
\begin{enumerate}[nosep]
\item \label{i:dg:TS:T}
We define the complex $T(V)$ by 
\[
 T(V) := \tboplus_{p \in \bbN} V^{\otimes p}
\]
and call it the \emph{tensor space} of $V$.
In particular, the $\bbZ$-grading of 
$v_1 \otimes \cdots \otimes v_p \in V^{\otimes p}$ 
is given by $\abs{v_1 \otimes \cdots \otimes v_p} := \sum_{i=1}^p \abs{v_i}$.
We call the $\bbN$-grading given by the tensor power $p$ the \emph{weight}.

\item
We define the \emph{symmetric tensor space} $\Sym(V)$  of $V$ to be the complex
\[
 \Sym(V) := \tboplus_{p \in \bbN} V^{\otimes p}/\frkS_p.
\]
where the $p$-th symmetric group $\frkS_p$ acts on $V^{\otimes p}$ by permutation.
Similarly as in \eqref{i:dg:TS:T},
we call the $\bbN$-grading given by the tensor power $p$ the \emph{weight}.
\end{enumerate}
\end{eg}

For later reference, we give:

\begin{eg}\label{eg:dg:dgW}
\begin{enumerate}[nosep]
\item
\label{i:dg:S1=W:1}
Let $\bbk[1]$ be the shifted one-dimensional linear space
regarded as a complex with trivial differential.
Consider the shifted symmetric tensor space $\Sym(\bbk[1])$.
Its weight $n$ component is $(\bbk[1])^{\otimes n}/\frkS_n$,
where the $n$-th symmetric group $\frkS_n$ acts by permutation.
Using the braiding isomorphism in $\dgVec$ 
(Definition \ref{dfn:dg:dgVmon} \eqref{i:dg:dgVmon:braid}),
we can identify
\[
 (\bbk[1])^{\otimes n} \simeq \text{the signature representation}
\]
as $\frkS_n$-representations.
Thus $\Sym(\bbk[1])$ can be regarded as 
a sequence of signature representations of symmetric groups.

\item
Let $V$ be a complex and 
consider the shifted symmetric tensor space $\Sym(V[1])$.
By the consideration in \eqref{i:dg:S1=W:1}, we can identify 
\[
 (V[1])^{\otimes p} \simeq \Wedge^p(V)
\]
as linear spaces, 
where the right hand side denotes the exterior product space.
Abusing terminology, we call the complex $\Sym(V[1])$
the \emph{exterior product space} of $V$.
\end{enumerate}
\end{eg}

We close this part by recalling the cohomology and the cone.

\begin{dfn}\label{dfn:dg:coh}
\begin{enumerate}[nosep]
\item 
The \emph{cohomology} $H(V,d)$ of a complex $(V,d)$
is defined to be the graded linear space given by
$H^n(V,d) := \Ker(d: V^n \to V^{n+1})/\Img(d: V^{n-1} \to V^{n})$.
We denote $H^{\bl}(V,d)$ to emphasize the $\bbZ$-grading.
If confusion will not occur, we simply denote $H(V):=H(V,d)$.

\item
A complex $V$ is called \emph{acyclic} if $H(V)=0$.

\item
For a morphism $f:V \to W$ in $\dgVec$,
the induced morphism $H(V) \to H(W)$ in $\gVec$ is denoted by $H(f)$.
Thus, we have a functor $H: \dgVec \to \gVec$.

\item
A morphism $f: V \to W$ is called a \emph{quasi-isomorphism}
if the morphism $H(f): H(V) \to H(W)$ is an isomorphism in $\gVec$.
\end{enumerate}
\end{dfn}

\begin{dfn}\label{dfn:dg:cone}
For a morphism $f: V \to W$ of complexes, 
we define the \emph{mapping cone} of $f$ to be the complex
\[
 \Cone(f) := V[1] \oplus W, \quad 
 d_{\Cone(f)} := 
 \begin{bmatrix}d_{V[1]} & 0 \\ f[1] & d_{W}\end{bmatrix}.
\]
\end{dfn}

Let us also recall the following standard fact:

\begin{fct}\label{fct:dg:qis}
Let $f: V \to W$ be a morphism in $\dgVec$.
Then $f$ is a quasi-isomorphism 
if and only if the mapping cone $\Cone(f)$ is acyclic.

In particular, for a complex $V$, 
the mapping cone $\Cone(\id_V)$ of the identity $\id_V: V \to V$ is acyclic.
\end{fct}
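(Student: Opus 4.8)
The plan is to deduce the statement from the long exact cohomology sequence attached to the tautological short exact sequence of complexes
\[
 0 \longto W \xr{\iota} \Cone(f) \xr{\pi} V[1] \longto 0,
\]
where, in the decomposition $\Cone(f) = V[1] \oplus W$ of Definition \ref{dfn:dg:cone}, we set $\iota(w) := (0,w)$ and $\pi(v,w) := v$. First I would check that $\iota$ and $\pi$ are morphisms of complexes; this is immediate from the block form of $d_{\Cone(f)}$, whose upper-left entry is $d_{V[1]}$, whose lower-right entry is $d_W$, and whose remaining entry $f[1]$ sends the $V[1]$-summand into $W$. Since the sequence is split in each degree as graded linear spaces, it is exact as a sequence of complexes, and applying the functor $H$ produces a long exact sequence
\[
 \cdots \longto H^n(W) \xr{H(\iota)} H^n(\Cone(f)) \xr{H(\pi)} H^n(V[1]) \xr{\delta} H^{n+1}(W) \longto \cdots .
\]

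The only point requiring a little care is the identification of the connecting homomorphism $\delta$. Running the usual diagram chase, a cocycle $v \in (V[1])^n = V^{n+1}$ lifts along $\pi$ to $(v,0) \in \Cone(f)^n$; applying $d_{\Cone(f)}$ gives $(d_{V[1]}v,\, f[1](v)) = (0,\pm f(v))$ since $dv = 0$; pulling back along $\iota$ then identifies $\delta$, under the identification $H^n(V[1]) = H^{n+1}(V)$, with $\pm H^{n+1}(f)$. The sign, dictated by the Koszul conventions underlying $d_{V[1]} = -d_V$, is irrelevant below. Thus, up to signs, the long exact sequence reads
\[
 \cdots \longto H^n(V) \xr{H^n(f)} H^n(W) \longto H^n(\Cone(f)) \longto H^{n+1}(V) \xr{H^{n+1}(f)} H^{n+1}(W) \longto \cdots .
\]

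From here the equivalence is purely formal. If $f$ is a quasi-isomorphism, every $H^n(f)$ is an isomorphism, and exactness at $H^n(W)$ and at $H^n(V[1])$ forces both $H(\iota)$ and $H(\pi)$ to vanish, so that $H^n(\Cone(f)) = 0$ for all $n$, i.e.\ $\Cone(f)$ is acyclic. Conversely, if $\Cone(f)$ is acyclic, then for each $n$ the exact segment $0 = H^n(\Cone(f)) \to H^{n+1}(V) \xr{H^{n+1}(f)} H^{n+1}(W) \to H^{n+1}(\Cone(f)) = 0$ shows that $H^{n+1}(f)$ is an isomorphism, whence $f$ is a quasi-isomorphism. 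The ``in particular'' clause is then the special case $f = \id_V$, which is trivially a quasi-isomorphism; alternatively, one checks directly that the degree $-1$ endomorphism of $\Cone(\id_V) = V[1] \oplus V$ sending $(v,w) \mapsto (w,0)$ is a contracting homotopy, so that $\Cone(\id_V)$ is acyclic. No step is a genuine obstacle here; the only thing to watch is the sign bookkeeping in $d_{\Cone(f)}$ and in $\delta$, and since all we use is that the relevant maps are isomorphisms or zero, those signs do not affect the argument.
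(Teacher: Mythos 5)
Your proof is correct. The paper records this statement as a recalled standard fact without giving an argument, and what you write is exactly the standard proof it implicitly relies on: the degreewise split short exact sequence $0 \to W \to \Cone(f) \to V[1] \to 0$, its long exact cohomology sequence with the connecting map identified (up to sign) with $H(f)$, and the formal two-way deduction; your sign bookkeeping and the contracting homotopy $(v,w) \mapsto (w,0)$ for $\Cone(\id_V)$ are likewise fine.
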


\subsection{Algebraic structures in dg setting}
\label{ss:sp:dga}

We collect here the terminology of algebraic structures in the dg setting.
The most appropriate language here is the theory of operad,
but we avoid to use it since we don't need such a full generality. 
We continue to work over a field $\bbk$ with characteristics $0$.

Recall the symmetric monoidal structures $\otimes$ on 
$\dgVec$ (Definition \ref{dfn:dg:dgVmon}).

\subsubsection{Dg algebras}

We start with ring objects in $\dgVec$.

\begin{dfn}\label{dfn:dga:dga}
\begin{enumerate}[nosep]
\item 
A \emph{dg algebra} (\emph{dga} for short) is a unital associative ring object 
in the  monoidal category $\dgVec$. 
In other words, it is a triple $(A, \cdot, u)$ consisting of 
\begin{itemize}
\item 
a complex $A=(A,d_A)$,
\item
a morphism $\cdot: A \otimes A \to A$ in $\dgVec$ 
called the \emph{multiplication}, and 
\item
a morphism $u: \bbk \to A$ in $\dgVec$ called the \emph{unit} (\emph{morphism})
\end{itemize}
satisfying the standard axioms of associativity and unitality.
We often omit the symbol $\cdot$ of the multiplication 
and denote $a b := a \cdot b$ for $a,b \in A$.
We also denote the image of the unit $1_{\bbk}$ of the field $\bbk$ under 
the unit morphism by $1_A :=u(1_{\bbk}) \in A$ and call it 
the \emph{unit} (\emph{element}) of $A$.

\item
A \emph{morphism} of dgas is a morphism in $\dgVec$ 
which respects the ring structures.
We denote by $\dga$ the category of dgas and their morphisms
(the letter $\mathsf{u}$ indicates that we consider unital objects).

\item
The \emph{commutator} on a dga $A$ is denoted by $[-,-]$.
It is defined by 
\[
 [a,b] := a b - (-1)^{\abs{a} \abs{b}} b a
\]
for homogeneous elements $a,b \in A$ 
and is extended by linearity.
\end{enumerate}
\end{dfn}

\begin{rmk*}
\begin{enumerate}[nosep]
\item 
We include unital condition in the definition.

\item
Let us recall a more down-to-earth definition of a dga.
Let $A$ be a dga in the above sense.
Since the multiplication $\cdot: A \otimes A \to A$ is a morphism in $\dgVec$,
it commutes with the differentials $d_{A \otimes A}$ and $d_A$.
Recalling the differential of the tensor product of complexes
(Definition \ref{dfn:dg:dgVmon}), we have 
$d_{A \otimes A}(a b)=(d_A a) \otimes b + (-1)^{\abs{a}}a \otimes d_A b$
for $a,b \in A$.
Then the commutativity of $\cdot$ and the differentials can be expressed as 
\begin{equation}\label{eq:dga:Leib}
 d_A(a b) = (d_A a)b+(-1)^{\abs{a}}a(d_A b).
\end{equation}
This is nothing but the \emph{Leibniz rule},
and the differential $d_A$ is a derivation with respect to the multiplication
(see Definition \ref{dfn:dga:der} below).

Thus, we can restate the definition as:
A dga is a triple  $(A, \cdot, 1_A)$ consisting of 
\begin{itemize}[nosep]
\item 
a complex $A=(A,d_A)$,
\item
an associative multiplication $\cdot: A \times A \to A$
which preserves the $\bbZ$-grading and 
satisfies the Leibniz rule \eqref{eq:dga:Leib}, and 
\item
a unit element $1_A \in A^0$ with respect to the multiplication $\cdot$.
\end{itemize}
\end{enumerate}
\end{rmk*}

\begin{eg}\label{eg:dga:dga}
Let $V$ be a complex.
\begin{enumerate}[nosep]
\item
\label{i:dga:dga:T}
The \emph{tensor algebra} $T(V)$ is a typical example of a dga.
The underlying complex is the tensor space 
(Example \ref{eg:dg:TS}),
and the multiplication is given by the concatenation of the tensor:
$(v_1 \otimes \cdots \otimes v_p) \cdot (w_1 \otimes \cdots \otimes w_q) := 
 v_1 \otimes \cdots \otimes v_p \otimes w_1 \otimes \cdots \otimes w_q$.

\item 
\label{i:dga:dga:E}
The \emph{endomorphism algebra} $\uEnd(V)$ is another typical example. 
The underlying graded linear space is 
$\bigoplus_{n \in \bbZ} \End_{\bbk}(V)^n=\bigoplus_{n \in \bbZ} \Hom_{\bbk}(V,V)^n$,
the differential is $d_{\uEnd(V)}f=d_V \circ f-(-1)^{\abs{f}}f \circ d_V$
(see Definition \ref{dfn:dg:uHom} \eqref{i:dg:uHom}),
and the multiplication is given by the composition.
In particular, we have the commutator $[-,-]$ on $\uEnd(V)$.
\end{enumerate}
\end{eg}

Let us also recall the monoidal structure on $\dga$.

\begin{dfn}\label{dfn:dga:dgamon}
The \emph{tensor product} $A \otimes B$ of dgas $A$ and $B$ 
is defined to be the graded algebra whose underlying graded linear space is 
$A \otimes B \in \dgVec$ and whose multiplication is given by 
\[
 (a_1 \otimes b_1) \cdot (a_2 \otimes b_2) := 
 (-1)^{\abs{a_2} \abs{b_1}} a_1 a_2 \otimes b_1 b_2
\]
for homogeneous elements.
The tensor product of morphisms of dgas are defined by the same formula.

This tensor product gives $\dga$ a structure of a monoidal category.
\end{dfn}

We also have the notion of modules over a dga:

\begin{dfn*}
Let $A$ be a dga.
\begin{enumerate}[nosep]
\item 
A \emph{left dg $A$-module} $M$ is a complex equipped with 
a morphism $.: A \otimes M \to M$ in $\dgVec$ called 
the (left) \emph{action} satisfying the standard axioms of 
associativity and unitality.

\item
A \emph{morphism} $f: M \to N$ of left dg $A$-modules $M$ and $N$ 
is a morphism in $\dgVec$ which respects the $A$-module structures.

\item
We denote the category of left dg $A$-modules by $\dgMod{A}$.
\end{enumerate}
\end{dfn*}

Explicitly, a left dg $A$-module $M$ is a complex equipped with 
a homogeneous bilinear map $.: A \times M \to M$ of cohomological degree 0
satisfying the Leibniz rule 
\[
 d_M(a.m)=d_A(a).m+(-1)^{\abs{a}}a.d_M(m)
\]
for $a \in A$ and homogeneous $m \in M$.

For left dg $A$-modules $M$ and $N$, we define the complex $\uHom_A(M,N)$ by
\begin{align*}
&\uHom_A(M,N) := \tboplus_{n \in \bbZ}\Hom_A(M,N[n]),  \\
&\Hom_A(M,N[n]):= \{f \in \Hom(M,N[n]) \mid 
 \text{$f$ respects dg $A$-module structures of $M$ and $N[n]$}\}.
\end{align*}
Thus $\uHom_A(M,N)$ is a subcomplex of 
$\uHom(M,N)=\bigoplus_{n \in \bbZ}\uHom(M,N[n])$.
Here we used the fact that 
the shift $M[n]$ is naturally a left dg $A$-module.
The complex $\uHom_A(M,N)$ is naturally a left dg $A$-module.
Thus we have the following definition:

\begin{dfn}\label{dfn:dga:uHom_A}
Let $A$ be a dga.
\begin{enumerate}[nosep]
\item 
For left dg $A$-modules $M,N$, 
we call $\uHom_A(M,N)$ the \emph{dg $A$-module of morphisms of $A$-modules}.
\item
We denote by $\ddgMod{A}$ the resulting dg category of left dg $A$-modules.
\end{enumerate}
\end{dfn}

We also have the notions of \emph{right dg $A$-modules}, \emph{dg $A$-bimodules}
and \emph{left/right/both-side dg ideals}.
The details are omitted.

\subsubsection{Commutative dg algebras}

Next we introduce commutative algebra objects in $\dgVec$.

\begin{dfn*}
\begin{enumerate}[nosep]
\item
A \emph{commutative dga} (\emph{cdga} for short)
is a dga whose commutator always vanishes.

\item
We denote by $\cdga$ the subcategory of $\dga$ spanned by cdgas
(the letter $\mathsf{u}$ indicates that we consider unital objects). 
\end{enumerate}
\end{dfn*}

\begin{eg}\label{eg:dga:SW}
Let $V$ be a complex.
\begin{enumerate}[nosep]
\item 
\label{i:dg:SW:S}
Recall the symmetric tensor space $\Sym(V)$ (Example \ref{eg:dg:TS}).
It is a cdga with the multiplication induced by that on 
the tensor algebra $T(V)$ (Example \ref{eg:dga:dga} \eqref{i:dga:dga:T}).
We denote the resulting cdga by the same symbol $\Sym(V)$
and call it the \emph{symmetric tensor algebra} of $V$.

\item
The exterior product space $\Sym(V[1])$ (Example \ref{eg:dg:dgW})
is also a cdga with the multiplication 
induced by that on the tensor algebra $T(V)$.
We denote the resulting cdga by the same symbol $\Sym(V[1])$
and call it the \emph{exterior algebra} of $V$.
\end{enumerate}
\end{eg}

The category $\cdga$ inherits the monoidal structure of $\dga$ 
(Definition \ref{dfn:dga:dgamon}),
and the braiding isomorphisms on $\dgVec$ (Definition \ref{dfn:dg:dgVmon})
makes $\cdga$ a symmetric monoidal category.

We also have the notion of modules over cdgas.

\begin{dfn*}
Let $A$ be a cdga.
\begin{enumerate}[nosep]
\item 
A \emph{dg $A$-module} $M$ is a left dg $A$-module
where $A$ is regarded as a dga.
We denote the category of dg $A$-modules by $\dgMod{A}$.

\item
The category $\dgMod{A}$ has a structure of a symmetric monoidal category.
We denote the tensor product by $\otimes_A$.
\end{enumerate}
\end{dfn*}

We also have the dg category of dg $A$-modules,
whose hom complex from $M$ to $N$ is defined to be the subcomplex 
$\uHom_A(M,N) \subset \uHom(M,N)$ of morphisms 
respecting the dg $A$-module structures.

\begin{ntn}\label{ntn:dga:uHom_A}
For a cdga $A$,
we denote by $\ddgMod{A}$ the dg category of dg $A$-modules,
and denote by $\uHom_A(-,-)$ the hom complex in $\ddgMod{A}$.
\end{ntn}

Let us introduce some dg modules over cdgas
which will appear repeatedly in the following subsections.

\begin{eg}\label{eg:dga:TS}
Let $A$ be a cdga and $M$ be a dg $A$-module.
\begin{enumerate}[nosep]
\item 
We denote the iterated tensor products of $M$ as 
$M^{\otimes_A 2} := M \otimes_A M$,
$M^{\otimes_A 3} := M \otimes_A M \otimes_A M$ and so on.
We define the dg $A$-module $T_A(M)$ by 
\[
 T_A(M) := \tboplus_{p \in \bbN}M^{\otimes_A p}
\]
and call it the \emph{tensor algebra} of $M$ over $A$.
\item
We define the dg $A$-module $\Sym_A(M)$ by 
\[
 \Sym_A(M) := \tboplus_{p \in \bbN} M^{\otimes_A p}/\frkS_n,
\]
where the $p$-th symmetric group $\frkS_p$ acts on $M^{\otimes_A p}$
by permutation.
We call it the \emph{symmetric tensor algebra} of $M$ over $A$.
\end{enumerate}
These dg $A$-modules have the extra $\bbN$-grading given by the tensor power $p$.
We call this extra grading the \emph{weight} grading, 
following the terminology in Example \ref{eg:dg:TS}.
\end{eg}

\subsubsection{Dg Lie algebras}

We finally introduce Lie algebra objects.

\begin{dfn}\label{dfn:dga:dgla}
\begin{enumerate}[nosep]
\item
A \emph{graded Lie algebra} is a Lie algebra object 
in the symmetric monoidal category $\gVec$.
In other words, it is a graded linear space $\frkl$ 
together with a bilinear map $[-.-]: \frkl \times \frkl \to \frkl$ 
called the \emph{Lie bracket} satisfying
\begin{enumerate}[nosep, label=(\roman*)]
\item 
$[x,y]=(-1)^{1+\abs{x} \abs{y}}[y,x]$ for homogeneous $x,y \in \frkl$, and 
\item
$(-1)^{\abs{z}\abs{x}}[x,[y,z]]+(-1)^{\abs{x}\abs{y}}[y,[z,x]]
+(-1)^{\abs{y}\abs{z}}[z,[x,y]]=0$ 
for homogeneous $a,b,c \in \frkl$.
\end{enumerate}

\item
A \emph{dg Lie algebra} is a Lie algebra object 
in the symmetric monoidal category $\dgVec$.
In other words, it is a triple $(\frkl,d,[-,-])$ consisting of 
a graded linear space $\frkl$, a morphism $d \in \Hom_{\gVec}(\frkl,\frkl[1])$, 
and a bilinear map $[-.-]: \frkl \times \frkl \to \frkl$ such that 
\begin{enumerate}[nosep, label=(\roman*)]
\item
$(\frkl,d)$ is a complex, 
\item 
$(\frkl,[-,-])$ is a graded Lie algebra, and 
\item
$d [x,y]=[d x,y]+(-1)^{\abs{x}}[x, d y]$ for homogeneous $x,y \in \frkl$.
\end{enumerate}
We have a natural notion of morphisms of dg Lie algebras, and 
denote by $\dgla$ the category of dg Lie algebras and their morphisms.
\end{enumerate}
\end{dfn}

Let us give an example of a dg Lie algebra arising from a dga.
Recall that we impose 

\begin{eg}\label{eg:dga:dgla:uEnd}
Let $A=(A^{\bl},d_A,\cdot)$ be a dga, 
and $[-,-]$ be the commutator in $A$ (Definition \ref{dfn:dga:dga}).
Then the triple $A_L := (A^{\bl},d_A,[-,-])$ is a dg Lie algebra.
The correspondence $A \mapsto A_L$ gives rise to a functor
\[
 (-)_L: \dga \longto \dgla.
\]
In particular, the endomorphism algebra $\uEnd(V)$ of a complex $V$
(Example \ref{eg:dga:dga} \eqref{i:dga:dga:E}) 
has a structure of dg Lie algebra.
\end{eg}

Let us now recall the notion of modules over a dg Lie algebra.

\begin{dfn*}
Let $\frkl$ be a dg Lie algebra.
\begin{enumerate}[nosep]
\item 
A \emph{dg $\frkl$-module} is a complex $M$ equipped with 
a morphism $\rho: \frkl \to \uEnd(M)$ of dg Lie algebras, where $\uEnd(M)$ 
is regarded as a dg Lie algebra by Example \ref{eg:dga:dgla:uEnd}.

\item
A \emph{morphism} of dg $\frkl$-modules is a morphism of complexes
which respects the dg $\frkl$-module structures.

\item
The category of dg $\frkl$-modules is denoted by $\dgMod{\frkl}$.
It is naturally a symmetric monoidal category. 
The tensor product is denoted by $\otimes_{\frkl}$.
\end{enumerate}
\end{dfn*}

For a dg $\frkl$-module $M$,
the action of $x \in \frkl$ on $m \in M$ 
is denoted by $x.m := \rho(x)(m)$.

For the explanation of another example of a dga, we introduce:

\begin{dfn}\label{dfn:dga:der}
Let $A$ be a dga and $M$ be a left $A$-module.
\begin{enumerate}[nosep]
\item 
Let $n \in \bbZ$.
A \emph{derivation of degree $n$ from $A$ to $M$}, 
or an \emph{$n$-derivation from $A$ to $M$},  
is a morphism $\theta \in \Hom_{\dgVec}(A,M[n])$ such that 
for any homogeneous $a,b \in A$ we have 
\[
 \theta(a b) = (-1)^{(\abs{a}+n)\abs{b}} b.\theta(a)+(-1)^{n \abs{a}} a.\theta(b).
\]

\item
\label{i:dga:der:2}
For $n \in \bbZ$,
we denote by $\Der(A,M)^n$ the set of $n$-derivations from $A$ to $M$, 
which is naturally a linear space.
We define the subcomplex $\Der(A,M) \subset \uHom(A,M)$ to be 
\[
 \Der(A,M):=\tboplus_{n \in \bbZ} \Der(A,M)^n.
\]
The differential is given by 
$d_{\Der(A,M)}\theta=d_{\uHom(A,M)}\theta
 =d_M \theta-(-1)^{\abs{\theta}}\theta d_A$.
Then $\Der(A,M)$ is naturally a left dg $A$-module,
and we call it the \emph{left dg $A$-module of derivations}.

\item
\label{i:dga:der:3}
For $M=A$, we denote $\Der(A)^n:=\Der(A,A)^n$ and $\Der(A):=\Der(A,A)$.
An element of $\Der(A)^n$ is called an \emph{$n$-derivation on $A$}.

\end{enumerate}
\end{dfn}

\begin{rmk*}
\begin{enumerate}
\item
In the item \eqref{i:dga:der:2} we can check 
$d_{\uHom(A,M)}\theta \in \Der(A,M)$ by
\begin{align*}
(d_{\uHom(A,M)}\theta)(a b)
&= d_M \theta(a b) - (-1)^{n}\theta(d_A(a b)) \\
&= d_M \bigl((-1)^{(\abs{a}+n)\abs{b}}b.\theta(a)+(-1)^{n\abs{a}}a.\theta{b}\bigr)
   - (-1)^{n} \theta\bigl((d_A a) b+(-1)^{\abs{a}}a(d_A b)\bigr) \\
&= (-1)^{(\abs{a}+n)\abs{b}}
   \bigl((d_A b).\theta(a)+(-1)^{\abs{b}}b.d_M\theta(a)\bigr) 
  +(-1)^{n\abs{a}}\bigl((d_A a).\theta(b)+(-1)^{\abs{a}} a.d_M \theta(b)\bigr) \\
&\phantom{=}
  -(-1)^{n} \bigl((-1)^{(\abs{a}+n+1)\abs{b}} b.\theta(d_A a)
                            +(-1)^{n(\abs{a}+1)}(d_A a).\theta(b)\bigr) \\
&\phantom{=}
 -(-1)^{\abs{a}+n}\bigl((-1)^{(\abs{a}+n)(\abs{b}+1)}(d_A b).\theta(a)
                                  +(-1)^{n\abs{a}}a.(\theta d_A b)\bigr) \\
&= (-1)^{(\abs{a}+n+1)\abs{b}}b.\bigl(d_M \theta(a)-(-1)^n \theta(d_A a)\bigr)
  +(-1)^{(n+1)\abs{a}} a.\bigl(d_M \theta(b)-(-1)^n \theta(d_A b)\bigr) \\
&= (-1)^{(\abs{a}+n+1)\abs{b}} b.(d_{\uHom(A,M)}\theta)(a)
  +(-1)^{(n+1)\abs{a}} a.(d_{\uHom(A,M)}\theta)(b)
\end{align*}
for $\theta \in \Der(A,M)^n$ and homogeneous $a,b \in A$.

\item 
For a cdga $A$, 
we can rephrase the definition in \eqref{i:dga:der:3} as follows.
An $n$-derivation $\theta$ on $A$ is a homogeneous linear endomorphism on $A$ 
of degree $n$ such that 
$[\theta,a]=\theta(a)$ in the endomorphism algebra $\uEnd(A)$
(Example \ref{eg:dga:dga} \eqref{i:dga:dga:E}).
Here in the left hand side 
we regard $a \in A \subset \uEnd(A)$ as a multiplication operator from left.
\end{enumerate}
\end{rmk*}

Now we can explain another example of a dg Lie algebra.

\begin{eg}\label{eg:dga:T}
For a cdga $A$, consider the dg $A$-module $\Der(A)$ 
of derivations on $A$ (Definition \ref{dfn:dga:der} \eqref{i:dga:der:3}).
Also recall the dg Lie algebra $\uEnd(A)$, 
where we regard $A$ as a complex (Example \ref{eg:dga:dgla:uEnd}).
Since we have $\Der(A) \subset \uEnd(A)$ as complexes, 
we can consider the commutator $[\theta,\varphi]$ 
for $\theta,\varphi \in \Der(A)$.
Then we can check that $[\theta,\varphi] \in \Der(A)$, 
so that the triple $(\Der(A),d_{\Der(A)},[-,-])$ is a dg Lie algebra.
For later citation, we denote 
\[
 \bbT_A := (\Der(A),d_{\Der(A)},[-,-])
\]
and call $\bbT_A$ the \emph{dg Lie algebra of derivations on $A$}.
\end{eg}

\subsubsection{Chevalley-Eilenberg complex}
\label{sss:dga:CE}

In this subsection we recall the Chevalley-Eilenberg complex
associated to a dg Lie algebra and its dg module.
The material is more or less standard.
We borrow some notations from \cite[1.4.5]{BD}.
Let us fix a dg Lie algebra $\frkl=(\frkl,d_{\frkl},[-,-]_{\frkl})$ in this part.

Recall the mapping cone of a morphism of complexes (Definition \ref{dfn:dg:coh}).
We denote by $\frkl_{\dagger}$ 
the mapping cone of the identify $\id_{\frkl}: \frkl \to \frkl$. 
Thus, it is a complex given by
\[
 \frkl_{\dagger} :=\Cone(\id_{\frkl}) = \frkl[1] \oplus \frkl, \quad 
 d = \begin{bmatrix}d_{\frkl[1]} & 0 \\ \id_{\frkl}[1] & d_{\frkl}\end{bmatrix}.
\]
It is acyclic by Fact \ref{fct:dg:qis}.

We denote an element of $\frkl_{\dagger}^n=\frkl^{n+1} \oplus \frkl^n$ 
as $(x,y)$ with $x \in \frkl^{n+1}$ and $y \in \frkl^n$.
Then we have 
\[
 d^n(x,y)=(-d_{\frkl}^{n+1} x,x+d_{\frkl}^n y) \in 
 \frkl^{n+2} \oplus \frkl^{n+1}=\frkl_{\dagger}^{n+1}.
\]
Using the same notation, 
we can endow $\frkl_{\dagger}$ with a dg Lie algebra structure by
\[
 [(x,y),(x',y')] := ([x,y']_{\frkl}+[y,x']_{\frkl},[y,y']_{\frkl}).
\]
We have an injective morphism of dg Lie algebras
\[
 \frkl \longinj \frkl_{\dagger}, \quad y \longmapsto (0,y).
\]
By this injection, the mapping cone $\frkl_{\dagger}$ is also a dg $\frkl$-module.

Next recall the \emph{enveloping algebra} $U(\frkl)$.
It is the quotient dga $T(\frkl)/I$ of the tensor algebra $T(\frkl)$ 
(Example \ref{eg:dga:dga} \eqref{i:dga:dga:T})
by the both-side dg ideal $I$ generated by terms of the form
$x \otimes y - (-1)^{\abs{x}\abs{y}}-[x,y]$ with $x,y \in \frkl$.
The correspondence $\frkl \mapsto U(\frkl)$ gives a functor 
\[
 U: \dgla \longto \dga,
\]
which is left adjoint to the functor 
$(-)_L:\dga \to \dgla$ in Example \ref{eg:dga:dgla:uEnd}.
Thus we have an adjunction $U: \dgla \rightleftarrows \dga: (-)_L$
of functors of categories.
Let us also recall that there is a functorial equivalence
\[
 \dgMod{\frkl} \longsimto \dgMod{U(\frkl)}
\]
between the category of dg $\frkl$-modules and that of dg $U(\frkl)$-modules.
Hereafter we identify these categories.

In particular, the universal enveloping algebra $U(\frkl_{\dagger})$
of the dg Lie algebra $\frkl_{\dagger}$ is a dga,
and it is also a dg $U(\frkl)$-module since $\frkl_{\dagger}$ is a dg $\frkl$-module.
Then we can obtain the following statements by direct calculations.

\begin{lem}\label{lem:dga:UCone}
Let $\frkl$ be a dg Lie algebra.
\begin{enumerate}[nosep]
\item 
\label{i:dga:UCone:UC=SU}
As a graded linear space, we have 
$U(\frkl_{\dagger}) \simeq \Sym(\frkl[1]) \otimes U(\frkl)$.

\item
\label{i:dga:UCone:dUC}
Under the isomorphism $\Sym(\frkl[1]) \simeq \Lambda(\frkl)$,
the differential $d_{U(\frkl_{\dagger})}$ is given by 
\begin{align*}
d_{U(\frkl_{\dagger})}(x_1\wedge \cdots \wedge x_p \otimes u)
&=\tsum_{i=1}^p (-1)^{\sum_{a=1}^{i-1}(\abs{x_p}+1)}
  x_1 \wedge \cdots \wedge d_{\frkl}x_i \wedge \cdots \wedge x_n \otimes u \\
&+\tsum_{1 \le i < j \le p}(-1)^{(\abs{x_i}+1)\sum_{a=i+1}^{j-1}(\abs{x_a}+1)}
  x_1 \wedge \cdots \wh{x_i} \cdots \wedge [x_i,x_j] \wedge \cdots \wedge x_n \otimes u \\
&+\tsum_{i=1}^p (-1)^{(\abs{x_i}+1)\sum_{a=i+1}^{p}(\abs{x_a}+1)}
  x_1 \wedge \cdots \wh{x_i} \cdots  \wedge x_n \otimes x_i u \\
&+(-1)^{\sum_{a=1}^{p}(\abs{x_a}+1)}x_1 \wedge \cdots \wedge x_n \otimes d_{U(\frkl)}u.
\end{align*}
for $x_i \in \frkl$ and $u \in U(\frkl)$.
\end{enumerate}
\end{lem}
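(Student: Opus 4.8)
The plan is to prove both statements by a direct computation using the standard PBW-type filtration argument together with the explicit presentation of $U(\frkl_\dagger)$. First I would recall that $\frkl_\dagger = \frkl[1] \oplus \frkl$ as a graded linear space, with the bracket $[(x,y),(x',y')] = ([x,y']_\frkl + [y,x']_\frkl, [y,y']_\frkl)$. For part \eqref{i:dga:UCone:UC=SU}, the claim is purely about the underlying graded associative algebra, so I would momentarily forget the differential. The sub-linear-space $\frkl = 0 \oplus \frkl \subset \frkl_\dagger$ is a dg Lie subalgebra, while $\frkl[1] = \frkl[1] \oplus 0$ is an abelian graded Lie subalgebra which is moreover an ideal in the graded Lie algebra $\frkl_\dagger$ (its bracket with anything lands in $\frkl[1] \oplus 0$, and the bracket of two elements of $\frkl[1]$ vanishes since it only sees the second coordinates). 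Hence $\frkl_\dagger$ is a semidirect product $\frkl[1] \rtimes \frkl$ of graded Lie algebras, and the PBW theorem for graded Lie algebras (valid since $\bbk$ has characteristic $0$) gives a canonical isomorphism of graded linear spaces $U(\frkl_\dagger) \simeq U(\frkl[1]) \otimes U(\frkl) \simeq \Sym(\frkl[1]) \otimes U(\frkl)$, the last step because $\frkl[1]$ is abelian so its enveloping algebra is the symmetric algebra. This establishes \eqref{i:dga:UCone:UC=SU}, and under the sign-twisted identification $\Sym(\frkl[1]) \simeq \Lambda(\frkl)$ (Example \ref{eg:dg:dgW}) we get the graded linear space $\Lambda(\frkl) \otimes U(\frkl)$ appearing in \eqref{i:dga:UCone:dUC}.

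Next, for part \eqref{i:dga:UCone:dUC}, I would compute the differential $d_{U(\frkl_\dagger)}$ explicitly. The differential on $U(\frkl_\dagger)$ is the unique derivation of the dga $U(\frkl_\dagger)$ extending the internal differential of $\frkl_\dagger$, which on an element $(x,y) \in \frkl_\dagger^n$ is $(-d_\frkl x, x + d_\frkl y)$ as recorded in the excerpt. So on a generator $x \in \frkl[1] \subset \frkl_\dagger$, i.e.\ $(x,0)$, the differential produces $(-d_\frkl x, x)$, whose two components correspond respectively to the ``$d_\frkl x$ in the $\Lambda(\frkl)$-slot'' term (first line of the claimed formula, with the appropriate Koszul sign from moving past $x_1,\dots,x_{i-1}$) and the ``$x$ passing from the exterior slot into $U(\frkl)$'' term (third line). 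On a generator $y \in \frkl \subset \frkl_\dagger$, i.e.\ $(0,y)$, the differential is $(0, d_\frkl y)$, giving the $d_{U(\frkl)}u$ term (fourth line). The bracket terms (second line) arise because to move two exterior generators $x_i, x_j$ past each other in $U(\frkl_\dagger)$ one must pay the commutator $[(x_i,0),(x_j,0)] = ([0,0],[0,0]) = 0$ in $\frkl_\dagger$ — wait, that vanishes — so actually the bracket term must be produced differently: it comes from the Leibniz rule applied to the product, since $d_{U(\frkl_\dagger)}$ is a derivation and the defining identity of $U(\frkl_\dagger)$ relates $x_i \otimes x_j$ to a bracket. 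More precisely, working in $U(\frkl_\dagger)$, the element $x_1 \wedge \cdots \wedge x_p \otimes u$ is a specific (anti)symmetrized product of the $(x_a, 0)$ and an element $u$ realized via the $(0,-)$ copy; applying the Leibniz rule and then reorganizing each resulting term back into PBW normal form $\Lambda(\frkl)\otimes U(\frkl)$ using the relations of $\frkl_\dagger$ produces exactly the four families of terms, with the stated signs tracking the Koszul signs incurred during the reordering.

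The main obstacle, and the only genuinely laborious part, is bookkeeping the Koszul signs in the rearrangement for part \eqref{i:dga:UCone:dUC}: one must be scrupulous about the degree shift implicit in $\frkl[1]$ (so a degree-$|x_i|$ element of $\frkl$ contributes $|x_i|+1$ when regarded in the exterior slot, which is why every exponent in the displayed formula involves $|x_a|+1$), about the sign conventions in the differential of a tensor product of complexes and in the semidirect-product bracket, and about which element is moved past which when normalizing. I would organize this by first treating the two extreme cases $p=0$ (pure $U(\frkl)$, giving only the last term) and $p=1$ (a single exterior generator, giving the first, third and fourth terms, and no bracket term), verifying the signs there, and then inducting on $p$ using the derivation property $d_{U(\frkl_\dagger)}(\xi \cdot \eta) = d_{U(\frkl_\dagger)}(\xi)\cdot\eta + (-1)^{|\xi|}\xi\cdot d_{U(\frkl_\dagger)}(\eta)$ with $\xi = x_1$ and $\eta = x_2\wedge\cdots\wedge x_p\otimes u$: the $x_1$-past-$x_j$ reorderings in $d_{U(\frkl_\dagger)}(\eta)$ interact with the leading $x_1$ to generate the $i=1$ bracket terms, and the inductive hypothesis supplies the rest. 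Since the statement explicitly says ``by direct calculations'', I would present the $p\le 1$ base cases in detail and indicate that the general case follows by this induction, leaving the full sign verification to the reader or to a remark.
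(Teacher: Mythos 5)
The paper offers no argument for this lemma beyond asserting it follows ``by direct calculations'', and your overall plan -- PBW for the semidirect product $\frkl_{\dagger}=\frkl[1]\rtimes\frkl$ (with $\frkl[1]$ an abelian graded ideal) for part (1), then computing $d_{U(\frkl_{\dagger})}$ as the derivation extending $d_{\frkl_{\dagger}}$ and renormalizing into the PBW form $\Sym(\frkl[1])\otimes U(\frkl)$ for part (2) -- is precisely that calculation; part (1) is fine as you state it.

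At the one genuinely delicate point, however, your mechanism for the bracket terms is garbled. You correctly observe that $[(x_i,0),(x_j,0)]=0$, so the second family of terms cannot come from a relation between two exterior generators, and your patch (``the defining identity of $U(\frkl_{\dagger})$ relates $x_i\otimes x_j$ to a bracket'') appeals to exactly the relation you just noted is trivial. The actual source is the mixed bracket: since $d_{\frkl_{\dagger}}(x_i,0)=(-d_{\frkl}x_i,\,x_i)$, the differential creates, besides the first-family term, the element $(0,x_i)\in\frkl\subset\frkl_{\dagger}$ sitting in position $i$; to restore normal form it must be pushed rightward past $(x_{i+1},0),\ldots,(x_p,0)$, and each swap uses $(0,x_i)(x_j,0)=\pm(x_j,0)(0,x_i)+([x_i,x_j],0)$. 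The terms where $(0,x_i)$ traverses everything give the third family ($x_i u$), and the brackets give the second family. Consequently, in your induction with $\xi=x_1$, the $i=1$ bracket terms arise from $d(\xi)\cdot\eta$, not, as you write, from ``the $x_1$-past-$x_j$ reorderings in $d(\eta)$'' interacting with the leading $x_1$: by the inductive hypothesis $d(\eta)$ is already in normal form, and left multiplication by $(x_1,0)$ needs no reordering at all. With this correction the induction closes; note also that the degree of $x_i$ in the shifted slot is $\abs{x_i}-1$, which matches the $\abs{x_i}+1$ in the displayed signs only modulo $2$, and the identification $\Sym(\frkl[1])\simeq\Lambda(\frkl)$ itself carries signs, so the sign bookkeeping you defer is exactly where the remaining work lies.
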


Finally,  let us recall the dg $A$-module $\uHom_A(-,-)$ of morphisms 
for a dga $A$ (Definition \ref{dfn:dga:uHom_A}).

\begin{dfn}\label{dfn:dga:CE}
Let $\frkl$ be a dg Lie algebra.
\begin{enumerate}[nosep]
\item
For a dg $\frkl$-module $M$,
we define the \emph{Chevalley-Eilenberg} (\emph{cochain}) \emph{complex} 
$\CE(\frkl,M)$ to be the dg $\frkl$-module 
\[
 \CE(\frkl,M) := \uHom_{U(\frkl)}(U(\frkl_{\dagger}),M).
\]
The differential is denoted by $d_{\tCE}$.

\item
We denote the functor $M \mapsto \CE(\frkl,M)$ by
\[
 \CE(\frkl,-): \dgMod{\frkl} \longto \dgMod{\frkl}.
\]
\end{enumerate}
\end{dfn}

\begin{rmk}\label{rmk:dga:CE}
\begin{enumerate}[nosep]
\item
\label{i:dga:CE:CE=W}
By Lemma \ref{lem:dga:UCone} \eqref{i:dga:UCone:UC=SU}, we have 
\[
 \CE(\frkl,M) \simeq \uHom(\Sym(\frkl[1]),M)
\]
as a graded linear space. 
Thus, we have $\CE(\frkl,M) \simeq \Hom(\Wedge \frkl,M)$ as a linear space,
which is the standard definition in the literature.

\item 
By definition, the differential $d_{\tCE}$ is given by
\[
 d_{\tCE}f = d_M f-(-1)^{\abs{f}}f d_{U(\frkl_{\dagger})}
\]
for a homogeneous element $f \in \Hom(\Wedge \frkl,M)$
of cohomological degree $\abs{f}$.
Denoting the weight decomposition by 
$f=\tsum_{n \in \bbN}f_p$, $f_p \in \Hom(\Wedge^p \frkl,M)$
and using the description 
$\CE(\frkl,M) \simeq \Hom(\Wedge \frkl,M)$ in \eqref{i:dga:CE:CE=W},
we can write down $d_{\tCE} f=\sum_{p\in\bbN}(d_{\tCE}f)_p$ as:
\begin{align*}
  (d_{\tCE} f)_p (x_1 \wedge \cdots \wedge x_p) 
&= d_{M}f_p(x_1 \wedge \cdots \wedge x_p) \\
&+\tsum_{i=1}^p (-1)^{\abs{f}+\sum_{a=1}^{i-1} (\abs{x_a}+1)}  
 f_p(x_1 \wedge \cdots \wedge (d_{\frkl}x_i) \wedge \cdots \wedge x_p) \\
&+\tsum_{1 \le i<j \le p} 
  (-1)^{\abs{f}+ (\abs{x_i}+\abs{x_j})\sum_{a=1}^{i-1}(\abs{x_a}+1) 
        +(\abs{x_j}+1)\sum_{a=i+1}^{j-1}(\abs{x_a}+1)} \\
&\phantom{+\tsum_{1 \le i<j \le p}1}
  f_{p-1}([x_i,x_j] \wedge x_1 \wedge \cdots \wh{x_i} 
          \cdots \wh{x_j} \cdots \wedge x_p) \\
&+\tsum_{i=1}^p 
 (-1)^{(\abs{x_i}+1)\bigl(\abs{f}+\sum_{a=1}^{i-1}(\abs{x_a}+1)\bigr)} 
 x_i.f_{p-1}(x_1 \wedge \cdots \wh{x_i} \cdots \wedge x_p).
\end{align*}
In the non-dg case $\abs{f}=\abs{x_i}=0$ and $d_{\frkl}=d_M=0$,
we recover the original Chevalley-Eilenberg differential.

\item\label{i:CE:ism}
If $\frkl$ is of finite dimension, 
then we have an isomorphism 
\[
 \CE(\frkl,M) \simeq \Sym(\frkl^*[-1]) \otimes M
\] 
as a graded linear space. 
Let us rewrite the differential $d_{\tCE}$ under this isomorphism.
Note that the graded Lie algebra $\frkl_\dagger$ 
(forget the differential) acts on $\Sym(\frkl^*[-1])$ in the way that 
$\frkl \subset \frkl_\dagger$ acts by the coadjoint action and 
$\frkl[1] \subset \frkl_\dagger$ acts by $-\{\cdot,\cdot\}$, where 
$\{\cdot,\cdot\}: \frkl^* \otimes \frkl \to \bbk$ denotes the canonical pairing.
Then there exits a unique differential $\delta_{\tCE}$ on $\Sym(\frkl^*[-1])$
such that the $\frkl_\dagger$-action is compatible.
Explicitly, $\rst{\delta_{\tCE}}{\frkl^*[-1]}$ is equal to the composition
$\frkl^*[-2] \xr{\nu} \frkl^*[-1] \otimes \frkl^*[-1] \xr{-\frac{1}{2}\cdot}
 \Sym^2(\frkl^*[-1])$,
where $\nu$ denotes the dual of the Lie bracket $[\cdot,\cdot]_{\frkl}$.
Finally, $d_{\tCE}$ is equivalent to 
$\delta_{\tCE} \otimes \id + f + \id \otimes d_M$,
where $f: M \to \frkl^* \otimes M$ denotes the dual of the $\frkl$-action.
\end{enumerate}
\end{rmk}

The Chevalley-Eilenberg complex has an extra structure.
For the explanation, we need:

\begin{dfn}\label{dfn:dga:shuffle}
Let $p,q \in \bbN$ and consider the symmetric group $\frkS_{p+q}$.
%
We denote by $\frkS_{p,q} \subset \frkS_{p+q}$
the subset of \emph{$(p,q)$-shuffles}, i.e.,
permutations $\sigma$ such that $\sigma(1)<\cdots<\sigma(p)$
and $\sigma(p+1)<\cdots<\sigma(p+q)$.
\end{dfn}

\begin{dfn}\label{dfn:dga:cup}
Let $\frkl$ be a dg Lie algebra.
\begin{enumerate}[nosep]
\item 
For dg $\frkl$-modules $M,N$,
we define the morphism 
$\cup: \CE(\frkl,M) \otimes \CE(\frkl,N) \to \CE(\frkl,M \otimes N)$
of dg $\frkl$-modules by
\begin{align*}
 (f \cup g)(x_1 \wedge \cdots \wedge x_{p+q}) := 
 \sum_{\sigma \in \frkS_{p,q}}
 \sgn(\sigma) (-1)^{\ve+\ve_1}
 f(x_{\sigma(1)} \wedge \cdots \wedge x_{\sigma(p)}) \otimes 
 g(x_{\sigma(p+1)} \wedge \cdots \wedge x_{\sigma(p+q)}), 
\end{align*}
for $f \in \CE(\frkl,M)(m)$ and $g \in \CE(\frkl,N)(n)$.
Here $\ve$ denotes the sign of the braiding isomorphisms 
for the permutation $\sigma$ of $x_i$'s, and 
$\ve_1 := p \abs{g}+\tsum_{i=1}^p \abs{x_{\sigma(i)}}(q+\abs{g})$.
We call the operation $\cup$ the \emph{cup product}.

\item
Let $A$ be a commutative ring object in $\dgMod{\frkl}$,
i.e., a cdga which is also a dg $\frkl$-module and 
the multiplication is a morphism in $\dgMod{\frkl}$.
Consider the composition
\[
 \CE(\frkl,A) \otimes \CE(\frkl,A) \xrr{\cup} \CE(\frkl,A \otimes A)
 \xrr{\CE(\frkl,\cdot)} \CE(\frkl,A),
\]
where the first $\cup$ denotes the cup product 
and the second $\CE(\frkl,\cdot)$ is the image of the multiplication of $A$
under the functor $\CE(\frkl,-)$ (Definition \ref{dfn:dga:CE}).
We denote this composition by the same symbol as
\[
 \cup: \CE(\frkl,A) \otimes \CE(\frkl,A) \longto \CE(\frkl,A).
\]
\end{enumerate}
\end{dfn}

Now we can check the following classical result.

\begin{lem}\label{lem:dga:CEa}
Let $A$ be a commutative ring object in $\dgMod{\frkl}$.
Then the Chevalley-Eilenberg complex $\CE(\frkl,A)$ with the cup product 
$\cup: \CE(\frkl,A) \otimes \CE(\frkl,A) \to \CE(\frkl,A)$ is a cdga.
We call it the \emph{Chevalley-Eilenberg cdga}.
\end{lem}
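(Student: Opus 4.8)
The plan is to verify the cdga axioms for $(\CE(\frkl,A),\cup)$ directly, treating the construction as built from two pieces: the cup product on Chevalley-Eilenberg complexes and the multiplication $A \otimes A \to A$ of the commutative ring object $A$ in $\dgMod{\frkl}$. First I would record what needs checking: that $\cup$ is associative, unital (with unit the image of $1_A$ under the canonical map $A \to \CE(\frkl,A)$ in weight $0$), a morphism of complexes (i.e. compatible with $d_{\tCE}$, equivalently a morphism in $\dgVec$), and graded-commutative. The differential compatibility and the module-map property will be automatic pieces: the first $\cup$ in Definition \ref{dfn:dga:cup} is asserted to be a morphism of dg $\frkl$-modules, and $\CE(\frkl,\cdot)$ is the image of a morphism of dg $\frkl$-modules under the functor $\CE(\frkl,-)$, so their composite is a morphism of dg $\frkl$-modules, hence in particular a morphism in $\dgVec$; this gives the Leibniz rule \eqref{eq:dga:Leib} for $d_{\tCE}$ against $\cup$ for free.

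For associativity and graded-commutativity I would reduce everything to the underlying graded-linear-space description $\CE(\frkl,A) \simeq \Hom(\Wedge \frkl, A)$ from Remark \ref{rmk:dga:CE} \eqref{i:dga:CE:CE=W}, where the cup product is the classical shuffle formula twisted by the commutativity and associativity constraints of $A$. Associativity of $\cup$ then follows from two inputs: the combinatorial identity that summing over $(p,q)$-shuffles then $(p{+}q,r)$-shuffles equals summing over $(p,q,r)$-shuffles, which is the same bookkeeping that makes the shuffle product on $\Hom(\Wedge\frkl,\bbk)$ associative, and the associativity of the multiplication on $A$. The Koszul signs $\ve$ (coming from the braiding in $\dgVec$ as the permutation acts on the $x_i$) and $\ve_1$ must be tracked so that the signs assembled from the two nested shuffle sums agree with those from the single triple-shuffle sum; this is the standard sign computation for the (graded) shuffle algebra and I would invoke it rather than redo it. Graded-commutativity similarly comes from the commutativity of the braiding together with the fact that a $(p,q)$-shuffle composed with the block transposition is a $(q,p)$-shuffle, the sign discrepancy being exactly $(-1)^{|f||g|}$ plus the internal $|x_i|$ contributions, again a known identity; commutativity of $A$ is needed to swap the two tensor factors before applying $\CE(\frkl,\cdot)$.

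Unitality is the most hands-on but least deep point: one checks that the weight-$0$ element $1 \in \Hom(\Wedge^0\frkl,A) = A$ corresponding to $1_A$ is a two-sided unit, which from the shuffle formula reduces to the single $\sigma = \id$ term and the unit axiom in $A$; one should also confirm $1_A$ is a cocycle, i.e. $d_{\tCE}(1_A) = 0$, which is clear from the differential formula in Remark \ref{rmk:dga:CE} \eqref{i:dga:CE:CE=W} since $d_A 1_A = 0$ and the action of $\frkl$ on $1_A$ is trivial. The main obstacle is none of these individually but the bookkeeping of Koszul signs: getting $\ve$ and $\ve_1$ to cancel correctly across the nested-versus-single shuffle comparison in associativity, and across the block swap in commutativity. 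My approach to that obstacle would be to phrase the cup product functorially — as induced on $\uHom_{U(\frkl)}(U(\frkl_\dagger), -)$ by the lax symmetric monoidal structure of the functor $\CE(\frkl,-)$, which in turn comes from the coalgebra structure on $U(\frkl_\dagger)$ (dually, on $\Sym(\frkl[1])$) — so that associativity and commutativity of $\cup$ follow formally from coassociativity and cocommutativity of that coalgebra together with the commutative-monoid structure of $A$, sidestepping explicit signs entirely. This is the standard "classical result" the lemma refers to, and I expect the author's proof to either cite it or perform exactly the shuffle-sign verification just outlined.
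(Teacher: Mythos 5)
Your proposal is correct, and in fact the paper offers no proof at all here: it introduces the cup product in Definition \ref{dfn:dga:cup} and then simply states Lemma \ref{lem:dga:CEa} as a ``classical result'' that can be checked, leaving the verification to the reader. Your outline --- the Leibniz rule coming for free from $\cup$ and $\CE(\frkl,\cdot)$ being morphisms of dg $\frkl$-modules, associativity and graded-commutativity via the shuffle sign bookkeeping (or, more cleanly, via the lax symmetric monoidal structure of $\CE(\frkl,-)$ induced by the cocommutative coalgebra structure on $\Sym(\frkl[1])$, dually $U(\frkl_\dagger)$), and the weight-zero unit check --- is exactly the routine verification the paper implicitly relies on, so there is nothing to compare against and no gap to report.
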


\subsection{Shifted Poisson algebras}

In this subsection we recollect basics on shifted Poisson structures 
following \cite{CPTVV, M, S}.
We continue to work over a field $\bbk$ with characteristics $0$.

\subsubsection{Definition}


\begin{dfn}\label{dfn:sp:sp}
Let $n \in \bbZ$.
A \emph{$\bbP_n$-algebra in $\dgVec$} is a data $(R,\cdot,\{-,-\})$ consisting of 
\begin{itemize}[nosep]
\item
a cdga $(R,\cdot)$ and 
\item
a morphism $\{-,-\}: R \otimes R \longto R[1-n]$ in $\dgVec$,
called the  \emph{$n$-Poisson bracket} of $R$
\end{itemize}
which satisfies the following conditions.
\begin{enumerate}[nosep, label=(\roman*)]
\item
$\{-,-\}$ gives a structure of dg Lie algebra on $R[n-1]$ 
(Definition \ref{dfn:dga:dgla}).

\item
The \emph{Leibniz rule}
\[
 \{f,g \cdot h\} = \{f,g\} \cdot h + (-1)^{\abs{g} \abs{h}}\{f,h\} \cdot g
\]
holds for homogeneous $f,g,h \in R$.
\end{enumerate}
We often omit to mention the category $\dgVec$ and just call it a $\bbP_n$-algebra.
We also call it a \emph{dg $n$-Poisson algebra}.
In the case $n=1$, we just call it a \emph{dg Poisson algebra}.
\end{dfn}

\begin{rmk}\label{rmk:sp:sp}
\begin{enumerate}[nosep]
\item
If we replace $\dgVec$ by $\cVec$, then a $\bbP_1$-algebra in $\cVec$
is nothing but a (commutative) \emph{Poisson algebra} in the ordinary sense.

\item
A $\bbP_2$-algebra (in $\gVec$) is nothing but a Gerstenhaber algebra.
See \cite[13.3.10--13.3.15]{LV} for the detail.

\item 
The symbol $\bbP_n$ indicates that we can give a definition by a dg operad 
on the symmetric monoidal dg category $\ddgVec$ of complexes.
We refer \cite{M} for the detail.

\end{enumerate}
\end{rmk}

Hereafter we often omit the symbol $\cdot$ and denote $r s := r \cdot s$,
and also denote just by $R$ the Poisson algebra $(R,\cdot,\{-,-\})$.

Let us also introduce the category of $\bbP_n$-algebras.

\begin{dfn*}
\begin{enumerate}[nosep]
\item 
A \emph{morphism} of $\bbP_n$-algebras is defined to be a morphism 
in $\cdga$ which respects the Poisson brackets.

\item
We define the \emph{tensor product} of $\bbP_n$-algebras 
$(R,\{-,-\}_R)$ and $(S,\{-,-\}_S)$ to be the one where
\begin{itemize}[nosep]
\item 
the underlying cdga is the tensor product 
$R \otimes S$ in $\cdga$, and 

\item
the Poison bracket $\{-,-\}$ is given by
\[
 \{r \otimes s, r' \otimes s'\} :=  
 \{r,r'\}_R \otimes (s s') + (-1)^{\abs{r'}\abs{s}} (r r') \otimes \{s,s'\}_S
\]
for homogeneous $r,r' \in R$ and $s,s' \in S$.
\end{itemize}

\item
We denote by $\bbP_n\hy\dgVec$ the category of $\bbP_n$-algebras 
and their morphisms.
It is a symmetric monoidal category with respect to the tensor product.
In the case $n=1$, we also denote $\dgpa := \bbP_1\hy\dgVec$.
\end{enumerate}
\end{dfn*}

%
%

Next we introduce notations for Poisson modules over $\bbP_n$-algebras.

\begin{dfn}\label{dfn:sp:mod}
\begin{enumerate}[nosep]
\item
\label{i:sp:mod:1}
Let $R$ be a $\bbP_n$-algebra.
A \emph{dg Poisson $R$-module $M$} is a complex $M$ equipped with two morphisms 
\[
 .: R \otimes M \longto M, \quad  \{-,-\}: R \otimes M \longto M
\]
in $\dgVec$ such that 
\begin{enumerate}[nosep,label=(\roman*)]
\item 
the morphism $.$ is a dg $R$-module structure where we regard $R \in \cdga$,
\item
$\{-,-\}$ is an $R[n-1]$-module structure where we regard $R[n-1] \in \dgla$, 
and
\item
\label{i:sp:mod:3}
the \emph{Leibniz rule}  
\[
 \{r,s.m\}=\{r,s\}.m+(-1)^{\abs{r}\abs{s}}s.\{r,m\}, \quad  
 \{r \cdot s,m\}=r.\{s,m\}+(-1)^{\abs{r}\abs{s}}s.\{r,m\}
\]
hold for any homogeneous $r,s \in R$ and $m \in M$.
\end{enumerate}
We denote by $\dgPMod{R}$ the category of dg Poisson $R$-modules.

\item
For a $\bbP_1$-Poisson algebra $R$ in $\cVec$, i.e.,
a Poisson algebra in the ordinary sense, 
we define a \emph{Poisson $R$-module} to be 
a linear space $M$ equipped with $.$ and $\{-,-\}$ 
with the same conditions as in \eqref{i:sp:mod:1}.
We denote by $\PMod{R}$ the category of Poisson $R$-modules.
\end{enumerate}
\end{dfn}

\subsubsection{Shifted polyvectors}

Let us explain the \emph{space of shifted polyvectors} with the \emph{Schouten bracket},
which is a standard example of a shifted Poisson algebra.
See \cite[3.3.2]{LPV} for the non-dg case.
We need some preliminaries.

%

Let $A$ be a cdga.
For a dg $A$-module $M$, we have the dg $A$-module $\Der(A,M)$ 
of derivations (Definition \ref{dfn:dga:der} \eqref{i:dga:der:2}).
The functor 
\[
 \Der(A,-): \dgMod{A} \longto \dgMod{A}, \quad M \longmapsto \Der(A,M) 
\]
commutes with limits so that it is corepresentable by a dg $A$-module.

\begin{ntn}\label{ntn:sp:Omega}
Let $A$ be a cdga.
We denote the dg $A$-module corepresenting the functor $\Der(A,-)$ by $\Omega_A^1$, 
and call it the \emph{module of K\"ahler differentials over $A$}.
\end{ntn}

We have an explicit description of $\Omega_A^1$.
It is a dg $A$-module 
\begin{itemize}[nosep]
\item 
generated over $A$ by the symbols $d a$ for each $a \in A$
with cohomological degree $\abs{d a}:= \abs{a}$, and
\item
the defining relation is 
\[
 d(a b) = (-1)^{\abs{a}\abs{b}} b.(d a) + (-1)^{\abs{a}} a.(d b).
\]
\end{itemize}
Let us give a simple example for later citation.

\begin{eg}\label{eg:sp:OmSymV}
Let $V$ be a complex, and $\Sym(V)$ be the symmetric tensor algebra 
(Example \ref{eg:dga:SW} \eqref{i:dg:SW:S}). 
Then the module $\Omega_{\Sym(V)}^1$ of K\"ahler differentials is 
a free dg $\Sym(V)$-module generated by $V$.
\end{eg}

The universality of corepresenting object says that 
for a dg $A$-module $M$ we have a functorial isomorphism 
\[
 \uHom_A(\Omega_A^1,M) \longsimto \Der(A,M).
\]
In particular, there is a morphism $d: A \to \Omega_A^1$ in $\dgMod{A}$,
and the above functorial isomorphism is given by 
$\alpha \mapsto \alpha \circ d$.
The morphism $d: A \to \Omega_A^1$ is nothing but 
the correspondence $a \mapsto d a$.

%

We need one more terminology.

\begin{dfn}\label{dfn:sp:wt}
A \emph{graded dg linear space}, or a \emph{graded complex} $M$ 
is a complex with an extra $\bbZ$-grading.
We call the extra $\bbZ$-grading the \emph{weight} of $M$ and denote by
$M=\bigoplus_{p \in \bbZ} M(p)$.
We denote the cohomological grading by $M(p)=\bigoplus_{n \in \bbZ}M(p)^n$.
Thus we have 
\[
 M=\tboplus_{p \in \bbZ} M(p) = 
 \tboplus_{p \in \bbZ}\tboplus_{n \in \bbZ}M(p)^n.
\]
\end{dfn}

\begin{eg*}
We have already introduced complexes with extra $\bbN$-gradings.
\begin{enumerate}[nosep]
\item
Let $V$ be a complex. 
Then the tensor space (algebra) $T(V)$ and the symmetric tensor space (algebra)
$\Sym(V)$ in Example \ref{eg:dg:TS} are graded complexes.
Explicitly, we have 
\[
 T(V)(p)=V^{\otimes p}, \quad 
 \Sym(V)(p) = V^{\otimes p}/\frkS_p \quad (p \in \bbN).
\] 

\item
Let $A$ be a cdga and $M$ be a dg $A$-module.
Then the tensor algebra $T_A(M)$ and the symmetric tensor algebra $\Sym_A(M)$
in Example \ref{eg:dga:TS} are graded complexes.
Explicitly, we have 
\[
 T_A(M)(p)=M^{\otimes_A p}, \quad 
 \Sym_A(M)(p) = M^{\otimes_A p}/\frkS_p \quad (p \in \bbN).
\]
\end{enumerate}
\end{eg*}

Now we can introduce \emph{shifted polyvector fields} over a cdga.
Recall the hom complex $\uHom_A(M,N) \in \dgMod{A}$ for dg $A$-modules $M$ and $N$
(Notation \ref{ntn:dga:uHom_A}). 

\begin{dfn*}
Let $n \in \bbZ$ and $A$ be a cdga.
We define the graded complex $\Pol(A,n)$ of \emph{$n$-shifted polyvector fields} by
\[
 \Pol(A,n) := \uHom_{A}(\Sym_A(\Omega_A^1[n+1]),A)
\]
with the weight grading 
$\Pol(A,n)(p) := \uHom_A(\Sym_A(\Omega_A^1[n+1])(p),A)$.
An element of $\Pol(A,n)$ will be called a \emph{polyvector}.
\end{dfn*}

Let us explain the Poisson algebra structure on $\Pol(A,n)$.
For that, we use the following notation in \cite{S}:

\begin{ntn}\label{ntn:sp:itp}
Let $n$ and $A$ be as above.
We use the formal symbol $d_{\dR}$ 
sitting in the cohomological degree $-(n+1)$ in $\Omega_A^1[n+1]$ and denote 
\[
 d_{\dR} a \in (\Omega_A^1[n+1])^{m-n-1} = (\Omega_A^1)^m
\]
for $a \in A^m$.
We also denote by 
$\itp: \Pol(A,n) \otimes \Sym_A(\Omega^1[n+1]) \to A$ the natural pairing.
Then, for $v \in \Pol(A,n)(p)$ and $a_i \in A$ ($i=1,\ldots,p$), we set
\begin{align}\label{eq:sp:itp}
 v(a_1,\ldots,a_p) :=
 v \itp (d_{\dR} \otimes \cdots \otimes d_{\dR})(a_1 \otimes \cdots \otimes a_p).
\end{align}
Here the term 
$(d_{\dR} \otimes \cdots \otimes d_{\dR})(a_1 \otimes \cdots \otimes a_p)$
is regarded as a monomial in the cdga $\Sym_A^p(\Omega_A^1[n+1])$.
\end{ntn}

In \eqref{eq:sp:itp}, 
the sign rule for multiplication is given by Definition \ref{dfn:dga:dgamon}, 
and that for commutation relation is given by Definition \ref{dfn:dg:dgVmon}.
For example, using $n^2 \equiv n \pmod{2}$, we have
\[
 v(a_1,a_2,a_3,\ldots,a_p) = 
 (-1)^{\abs{a_1}\abs{a_2}+n+1} v(a_2,a_1,a_3,\ldots,a_p).
\]

\begin{fct}[{\cite[\S\S1.1--1.2]{S}}]\label{fct:sp:pv}
Let $n \in \bbZ$ and $A$ be a cdga.
\begin{enumerate}[nosep]
\item
\label{i:sp:pv:cup}
For $v \in \Pol(A,n)(p)$ and $w \in \Pol(A,n)(q)$, 
the following formula defines  $v \cdot w \in \Pol(A,n)(p+q)$. 
\[
 (v \cdot w)(a_1,\ldots,a_{p+q}) 
 := \tsum_{\sigma \in \frkS_{p,q}} \sgn(\sigma)^n (-1)^{\ve+\ol{\ve}} 
 v(a_{\sigma(1)},\ldots,a_{\sigma(p)}) w(a_{\sigma(p+1)},\ldots,a_{\sigma(p+q)}).
\]
Here $\frkS_{p,q} \subset \frkS_{p+q}$ denotes the $(p,q)$-shuffles 
(Definition \ref{dfn:dga:shuffle}),
and $\ve$ denotes the sign of the braiding isomorphisms 
for the permutation $\sigma$ of $a_i$'s.
The sign $\ol{\ve}$ is given by 
\[
 \ol{\ve}:= \abs{w} (n+1) p + \sum_{i=1}^k \abs{a_{\sigma(i)}}((n+1)q+\abs{w}).
\]

\item 
\label{i:sp:pv:Sch}
For $v \in \Pol(A,n)(p)$ and $w \in \Pol(A,n)(q)$, 
the following formula gives $[v,w]_S \in \Pol(A,n)(p+q-1)$. 
\begin{align*}
 [v,w]_S(a_1,\ldots,a_{p+q-1}) :=
 &\sum_{\sigma \in \frkS_{q,p-1}} \sgn(\sigma)^{n+1}(-1)^{\ve+\ve_1}
  v(w(a_{\sigma(1)},\ldots,a_{\sigma(q)}),a_{\sigma(q+1)},\ldots,a_{\sigma(p+q-1)}) \\
-&\sum_{\sigma \in \frkS_{p,q-1}}  \sgn(\sigma)^{n+1}(-1)^{\ve+\ve_2}
  w(v(a_{\sigma(1)},\ldots,a_{\sigma(p)}),a_{\sigma(p+1)},\ldots,a_{\sigma(p+q-1)}).
\end{align*}
Here $\frkS_{p,q}$ and $(-1)^{\ve}$ are the same as \eqref{i:sp:pv:cup},
and the sign $\ve_i$ are given by 
\begin{align*}
&\ve_1 := (n+1)(\abs{w}+q)(p+1)+(n+1)\abs{v}, \\
&\ve_2 := (\abs{v}-(n+1)p)(\abs{w}-(n+1)q)+(n+1)(p+1)(\abs{w}+1)+(n+1)\abs{v}.
\end{align*}
We call the operation $[-,-]_S$ the \emph{Schouten bracket}.
It is of cohomological degree $-n-1$.

\item
The multiplication and the Schouten bracket define a 
dg $(n+2)$-Poisson algebra structure on $\Pol(A,n)$.
\end{enumerate}
\end{fct}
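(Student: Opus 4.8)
The plan is to verify directly the three defining properties of a $\bbP_{n+2}$-algebra (Definition \ref{dfn:sp:sp}) for the triple $(\Pol(A,n),\cdot,[-,-]_S)$: that $(\Pol(A,n),\cdot)$ is a cdga, that $[-,-]_S$ makes $\Pol(A,n)[n+1]$ into a dg Lie algebra, and that the Leibniz rule relating $\cdot$ and $[-,-]_S$ holds. The useful reformulation is to regard a polyvector $v\in\Pol(A,n)(p)$ as a $p$-multiderivation $A^{\otimes p}\to A$, graded symmetric in the sign convention of \eqref{eq:sp:itp}: this is exactly the universal property of $\Omega_A^1$ (Notation \ref{ntn:sp:Omega}), which together with the pairing $\itp$ identifies $\Pol(A,n)(p)=\uHom_A\bigl(\Sym_A(\Omega_A^1[n+1])(p),A\bigr)$ with the module of such multiderivations. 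Under this identification both operations are the shuffle formulas of Fact \ref{fct:sp:pv}, and the whole statement becomes a manipulation of $(p,q)$-shuffles together with the Leibniz rule defining derivations. When $\Omega_A^1$ is dualizable one may moreover identify $\Pol(A,n)\simeq\Sym_A(\bbT_A[-n-1])$ with $\bbT_A=\Der(A)$ (Example \ref{eg:dga:T}); in general one argues with multiderivations directly, which suffices.

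First I would establish the cdga structure. Associativity of the cup product follows from the two compatible decompositions of the set of $(p,q,r)$-shuffles, namely as a $(p,q)$-shuffle followed by a $(p+q,r)$-shuffle and as a $(q,r)$-shuffle preceded by a $(p,q+r)$-shuffle, once one checks that the sign $\ve+\ol{\ve}$ is additive along these decompositions; graded commutativity, with the sign governed by $\sgn(\sigma)^n$ and $\ol{\ve}$, follows from the block-reversing bijection $\frkS_{p,q}\simto\frkS_{q,p}$; the unit is $1\in A=\Pol(A,n)(0)$. Compatibility with the differential $d_{\Pol(A,n)}$, which is the internal-hom differential and acts by $d_A$ on values and through the arguments, is the assertion that $d_A$ is a derivation, propagated through the shuffle sum.

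Next I would establish the dg Lie structure on $\Pol(A,n)[n+1]$. Graded antisymmetry is obtained by matching each of the two sums in $[v,w]_S$ against one of the two sums in $[w,v]_S$ after relabelling shuffles, the agreement of signs being the delicate point; compatibility with $d_{\Pol(A,n)}$ again reduces to $d_A$ being a derivation. The graded Jacobi identity is the heart of the matter. Here I would argue that $[-,-]_S$ is the unique bracket which is a derivation in each slot (in the shifted sense of the Leibniz rule of Definition \ref{dfn:sp:sp}) and which on generators is given by
\[
 [a,b]_S=0,\qquad [\xi,a]_S=\xi(a),\qquad [\xi,\eta]_S=[\xi,\eta]_{\uEnd(A)}
\]
for $a,b\in A$ and $\xi,\eta\in\bbT_A$, and that such an extension automatically satisfies Jacobi once Jacobi holds on generators; the latter is precisely the Jacobi identity for the dg Lie algebra $\bbT_A$ (Example \ref{eg:dga:T}) together with the compatibility of its action on $A$ with its bracket, both already recorded in \S\ref{ss:sp:dga}. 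Passing from generators to all weights is then formal, using $\Pol(A,n)=\Sym_A(\bbT_A[-n-1])$ when available and the multiderivation description otherwise.

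Finally, the Leibniz rule relating $\cdot$ and $[-,-]_S$ is built into the characterization used above, so it holds by construction once verified on generators; alternatively it follows by splitting the shuffle sum defining $[v,w_1\cdot w_2]_S$ according to which of the two blocks the arguments of the output of $v$ land in. The main obstacle throughout is bookkeeping the Koszul and shuffle signs $\ve,\ol{\ve},\ve_1,\ve_2$: the combinatorial skeleton of each identity is the classical Schouten--Nijenhuis calculus on polyvector fields, merely reshuffled by the $n$-shift, but the cohomological grading and the $(n+1)$-shift make the sign verification the one genuinely laborious part. Since this is exactly the content of \cite[\S\S1.1--1.2]{S}, we confine ourselves to indicating these steps and refer there for the detailed sign computations.
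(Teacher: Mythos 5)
The paper does not actually prove this statement: it is recorded as a \emph{Fact} cited from \cite[\S\S 1.1--1.2]{S}, so there is no in-paper argument to compare against, and your proposal --- which sketches the direct verification and then defers the sign bookkeeping to the same reference --- is entirely compatible with the paper's treatment. Your outline of the verification is the standard one and is essentially correct: the cdga axioms via decomposition of shuffles, the dg Lie axioms on the $(n+1)$-shift, and the Leibniz rule linking them. One caveat worth flagging: your reduction of the Jacobi identity to generators $A$ and $\bbT_A$ presupposes that $\Pol(A,n)=\uHom_A(\Sym_A(\Omega_A^1[n+1]),A)$ is generated as an algebra by weight $0$ and weight $1$, which holds when $\Omega_A^1$ is dualizable (e.g.\ for the semi-free cdgas relevant later in the paper, cf.\ Example \ref{eg:sp:OmSymV}) but not for an arbitrary cdga; in the general case one must carry out the shuffle computation on the explicit formulas directly, as you acknowledge, and that computation is exactly what \cite{S} supplies. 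With that caveat the proposal is a fair summary of the cited proof rather than a gap.
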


Here is a dg version of the classical fact \cite[Proposition 3.5]{LPV}
on the Poisson structure and the Schouten bracket.
We omit the proof since it is essentially the same with the classical case.

\begin{lem}\label{lem:sp:PSvan}
Let $n\in \bbZ$, $A$ be a cdga, and 
$\pi \in \Pol(A,n-1)(2)^{n+1}$ be a polyvector field of weight $2$ 
(i.e., a bivector field) whose cohomological degree is $n+1$.
We define the bilinear map 
\[
 \{-,-\}_{\pi}: A \otimes A \longto A[1-n], \quad
 \{a,b\}_A := \pi(a,b),
\]
where we used Notation \ref{ntn:sp:itp}.
Then $\{-,-\}_{\pi}$ defines a $\bbP_n$-algebra structure on $A$ 
if and only if $[\pi,\pi]_S=0$.
\end{lem}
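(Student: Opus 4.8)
The plan is to check the two axioms of a $\bbP_n$-algebra (Definition \ref{dfn:sp:sp}) for the bracket $\{-,-\}_{\pi}$, and to observe that everything except the Jacobi identity is formal, while the Jacobi identity is exactly the vanishing of the Schouten self-bracket. This is the dg transcription of the classical computation in \cite[Proposition 3.5]{LPV}, so I would structure the argument the same way, the only new input being the Koszul signs. First I would dispose of the automatic parts. The Leibniz rule (condition (ii) of Definition \ref{dfn:sp:sp}) holds because $\pi$ lies in $\Pol(A,n-1)(2)=\uHom_A(\Sym^2_A(\Omega_A^1[n]),A)$, hence is $A$-linear in each variable after applying $d_{\dR}$; combined with the defining relation of $\Omega_A^1$ (Notation \ref{ntn:sp:Omega}) and the evaluation pairing of Notation \ref{ntn:sp:itp}, this says precisely that $\{a,-\}_{\pi}$ is a graded derivation. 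The graded antisymmetry needed for a bracket on $R[n-1]$ is automatic because a map out of the shifted symmetric square $\Sym^2_A(\Omega_A^1[n])$ is, by definition, invariant under the $\dgVec$ braiding (Definition \ref{dfn:dg:dgVmon}) on $\Omega_A^1[n]^{\otimes 2}$, and after undoing the shift by $n$ this invariance becomes exactly the antisymmetry axiom of Definition \ref{dfn:dga:dgla} for $R[n-1]$, i.e. $\pi(a,b)=\pm\,\pi(b,a)$ with the sign forced by the shift. Finally, compatibility of $\{-,-\}_{\pi}$ with $d_A$ — that it be a morphism in $\dgVec$, equivalently that $d$ be a derivation of the bracket on $R[n-1]$ — amounts to $d_{\Pol}\pi=0$; this closedness is what one builds into the notion of a bivector field in the dg setting (it is the weight-$2$ part of the Maurer--Cartan equation), so I would either state it as a standing assumption or absorb it into the phrase ``$\pi$ is a bivector field''.

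The substantive step is the equivalence: $\{-,-\}_{\pi}$ satisfies the graded Jacobi identity on $R[n-1]$ (condition (i) of Definition \ref{dfn:sp:sp}, via Definition \ref{dfn:dga:dgla}) if and only if $[\pi,\pi]_S=0$. Since $\Pol(A,n-1)(3)=\uHom_A(\Sym^3_A(\Omega_A^1[n]),A)$ and $\Omega_A^1$ is generated over $A$ by the symbols $d a$, a weight-$3$ polyvector is determined by its values on triples $d_{\dR}a\otimes d_{\dR}b\otimes d_{\dR}c$; hence $[\pi,\pi]_S=0$ in $\Pol(A,n-1)(3)$ if and only if $[\pi,\pi]_S(a,b,c)=0$ for all homogeneous $a,b,c\in A$. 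I would then evaluate $[\pi,\pi]_S(a,b,c)$ using the Schouten formula of Fact \ref{fct:sp:pv}(\ref{i:sp:pv:Sch}) with $v=w=\pi$, $p=q=2$, $\abs{v}=\abs{w}=n+1$: each of the two sums runs over the three $(2,1)$-shuffles of $\{1,2,3\}$ (Definition \ref{dfn:dga:shuffle}) and produces, up to signs $\sgn(\sigma)^{n+1}(-1)^{\ve+\ve_1}$ resp. $\sgn(\sigma)^{n+1}(-1)^{\ve+\ve_2}$, the three terms $\pi(\pi(a,b),c)$, $\pi(\pi(a,c),b)$, $\pi(\pi(b,c),a)$, i.e. the iterated brackets $\{\{a,b\}_{\pi},c\}_{\pi}$ and its relatives. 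Using $v=w$ together with the graded symmetry of $\pi$ already recorded, the two sums coincide up to an overall sign, so they add to $[\pi,\pi]_S(a,b,c)=\pm\,2\cdot J_{\pi}(a,b,c)$, where $J_{\pi}(a,b,c)$ is the graded Jacobiator of $\{-,-\}_{\pi}$ expressed in the $R[n-1]$-grading; here I use that $\bbk$ has characteristic $0$ to pass between $2J_{\pi}$ and $J_{\pi}$. As $\{-,-\}_{\pi}$ is already graded antisymmetric and satisfies Leibniz, vanishing of $J_{\pi}$ on all triples is exactly the Jacobi identity of Definition \ref{dfn:dga:dgla} for $R[n-1]$, and combining the three observations yields the lemma.

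The main obstacle is the sign bookkeeping, which is also precisely why the author defers the argument to the classical reference. One must match the signs $\sgn(\sigma)^{n+1}(-1)^{\ve+\ve_1}$ and $\sgn(\sigma)^{n+1}(-1)^{\ve+\ve_2}$ of Fact \ref{fct:sp:pv}(\ref{i:sp:pv:Sch}), specialised to $p=q=2$ and $\abs{v}=\abs{w}=n+1$, against the signs generated by writing out $\{a,\{b,c\}_{\pi}\}_{\pi}$, $\{b,\{c,a\}_{\pi}\}_{\pi}$, $\{c,\{a,b\}_{\pi}\}_{\pi}$ with the $\dgVec$ braiding of Definition \ref{dfn:dg:dgVmon} and the multiplication sign of Definition \ref{dfn:dga:dgamon}, and one must verify that for $v=w$ the signs $\ve_1,\ve_2$ conspire so that the two shuffle sums add rather than cancel, producing the factor $2$. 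In addition one has to keep straight the shift discrepancies — the ``$+2$'' between $\Pol(A,n-1)$ and its $\bbP_{n+1}$-structure, the degree $-n$ of the Schouten bracket there, and the translation between the grading of $A$ and that of $R[n-1]$. Apart from these essentially mechanical sign computations, the proof is formal.
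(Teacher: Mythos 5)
Your proposal is correct and is exactly the argument the paper has in mind: the paper omits the proof, deferring to the classical case \cite[Proposition 3.5]{LPV}, and your write-up is precisely that computation transported to $\dgVec$ with the Koszul signs, using the symmetry of $\Sym^2_A(\Omega^1_A[n])$ for graded antisymmetry, $A$-linearity of $\pi$ for Leibniz, and the specialization of the Schouten formula of Fact \ref{fct:sp:pv} at $p=q=2$, $v=w=\pi$ to identify $[\pi,\pi]_S(a,b,c)$ with (twice) the graded Jacobiator. Your remark that compatibility with $d_A$ requires $d_{\Pol}\pi=0$, and that this should be read into the hypothesis ``bivector field'', is a sensible clarification of a point the lemma's statement leaves implicit and does not affect the equivalence being proved.
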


\subsubsection{Kirillov-Kostant Poisson structure}
\label{sss:sp:KK}

In this part we recall the Kirillov-Kostant Poisson structure on a Lie algebra,
which gives an example of a Poisson algebra arising from Lie theory.

Let $\frkl$ be a dg Lie algebra over a field $\bbk$ with characteristics $0$.
We denote the Lie bracket by $[-,-]_{\frkl}$.
Then the symmetric algebra $\Sym(\frkl)=\Sym_{\bbk}(\frkl)$ of the 
underlying complex of $\frkl$ has a structure of dg Poisson algebra 
with the Poisson bracket $\{-,-\}_{\frkl}$ determined by 
\[
 \{x,y\}_{\frkl} := [x,y]_{\frkl} \quad 
 \text{for $x, y \in \frkl \subset \Sym(\frkl)$} 
\]
and by the Leibniz rule. 

\begin{ntn}\label{ntn:sp:KK}
For a dg Lie algebra $\frkl$,
the Poisson bracket $\{-,-\}_{\frkl}$ is called 
the \emph{Kirillov-Kostant Poisson bracket},
and the resulting dg Poisson algebra $(\Sym(\frkl),\{-,-\}_{\frkl})$ 
is called the \emph{Lie-Poisson algebra} of $\frkl$.
\end{ntn}


Recall that for a cdga $R$,  
we denote by $\bbT_R = \Der(R)$ the dg Lie algebra of derivations on $R$
(Example \ref{eg:dga:T}).
If moreover $R$ is a dg Poisson algebra with the Poisson bracket $\{-,-\}_R$, 
then we have a linear map 
\[
 D: R \longto \Der(R), \quad D(r):= \{r,-\}_R.
\]

\begin{dfn}\label{dfn:sp:Ham}
Let $R$ be a dg Poisson algebra.
A \emph{Hamiltonian $\frkl$-action} on $R$ is 
a morphism $a: \frkl \to \Der(R)$ of dg Lie algebras 
equipped with a dg Lie algebra morphism $\mu: \frkl \to R$ 
such that $D \circ \mu = a$.
In this case, the morphism $\mu$ is called the \emph{momentum map}.
\end{dfn}

\begin{rmk}\label{rmk:sp:mu}
\begin{enumerate}
\item 
More explicitly, the condition is 
\begin{equation}\label{eq:sp:mu}
 \{\mu(x),r\}_R=a(x)(r)
\end{equation}
for any $x \in \frkl$ and $r \in R$.
We call it the \emph{momentum map equation}.

\item\label{i:sp:mu:2}
An example is $R=\bbk$ with the trivial Poisson bracket.
The morphism $a$ and the momentum map $\mu$ are both trivial.

\item\label{i:sp:mu:3}
We can replace the momentum map $\mu: \frkl \to R$
by the induced map $\Sym \frkl \to R$ of dg Poisson algebras.
We denote it by the same symbol $\mu$ and also call it the momentum map.
\end{enumerate}
\end{rmk}

\subsection{Shifted Poisson structures for derived stacks}
\label{ss:sp:dag}

In this subsection we give a brief recollection on the theory of 
shifted Poisson structures on derived stacks \cite{CPTVV}.

\subsubsection{Affine derived Poisson schemes}
\label{sss:sp:daff}

Let us restate the notions on shifted Poisson algebras in terms of 
the language of \emph{affine derived schemes} in the sense of \cite{TVe,T14}.
The language of derived algebraic geometry will be introduced 
in the later \S \ref{sss:sp:dSt},
and the present part is a preparation for it.

Hereafter we use the language of $\infty$-categories.
See \S \ref{ss:0:ntn} for our terminology on $\infty$-categories.
In particular, we will identify a category with its nerve and a dg category with 
its dg nerve, so that we regard them as $\infty$-categories.

\begin{ntn}\label{ntn:sp:daff}
Here is a brief list of the notations on affine derived schemes.
\begin{enumerate}[nosep]
\item
In this part we assume that $\bbk$ contains $\bbQ$.
Although we can remove this assumption, we put it to make the text simple and 
compatible with the literatures of derived algebraic geometry.

\item
We denote by $\icdga$ the dg category of unital cdgas over $\bbk$,
and denote by $\icdga^{\le 0} \subset \icdga$ the full sub-dg-category 
spanned by objects concentrated in non-positive degrees.
As in \S \ref{ss:0:ntn} \eqref{i:ntn:inf} \ref{i:ntn:inf:dg},
we regard these dg categories as $\infty$-categories.

\item
\label{i:sp:daff:dt}
The $\infty$-categories $\icdga$ and $\icdga^{\le 0}$ are symmetric monoidal in 
the sense of \cite{Lu2} with the \emph{derived tensor product} $\otimes^{\bbL}$.
We also have a relative version: Given two morphisms $B \to A$ and $C \to A$
of cdgas, we have the derived tensor product $B \otimes^{\bbL}_A C$.
It is realized by the \emph{two-sided bar complex}. 
See Definition  \ref{dfn:pr:bcpx} for the detail.

\item
\label{i:sp:daff:daff}
The $\infty$-category $\idAff$ of \emph{affine derived schemes} over $\bbk$ is 
defined to be the opposite $\infty$-category of $\icdga^{\le 0}$.
For $R \in \icdga^{\le 0}$, 
we denote the corresponding affine derived scheme by $\Spec(R)$.
Conversely, for an affine derived scheme $X=\Spec(R)$, 
we call $R$ the \emph{coordinate} (\emph{derived}) \emph{ring} of $X$.

\item
The derived tensor product $B \otimes^{\bbL}_A C$ in \eqref{i:sp:daff:dt} is
transfered to the \emph{derived fiber product} or \emph{derived intersection}
$\Spec(B) \times^{\bbL}_{\Spec(A)}\Spec(C)$ in the $\infty$-category $\idAff$
of affine derived schemes.

\item
For an affine derived scheme $X=\Spec(R)$, 
we denote the \emph{structure sheaf} by $\shO_X$.
It is a sheaf of unital cdgas on the \'etale $\infty$-topos on $X$.
See \cite[Chap.\ 2.2]{TVe} for the detail.

\item
\label{i:sp:daff:LQCoh}
For an affine derived scheme $X=\Spec(R)$,
we denote by $\LQCoh(X)$ the $\infty$-category of quasi-coherent sheaves of 
$\shO_X$-modules over $X$.
\end{enumerate}
\end{ntn}

The equivalence of the (ordinary) category of quasi-coherent sheaves 
over an affine scheme and the category of modules 
over the corresponding commutative ring is enhanced to

\begin{fct}\label{fct:Lqc=Mod}
For an affine derived scheme $X=\Spec(R)$, 
the $\infty$-category $\LQCoh(X)$ is equivalent to the $\infty$-category
$\ddgMod{R}$ associated to the dg category of dg $R$-modules.
\end{fct}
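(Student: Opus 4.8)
The plan is to realise the asserted equivalence as a case of faithfully flat descent for dg modules; everything else is formal. Write $X = \Spec(R)$. On the small \'etale $\infty$-site of $X$ one has the global sections functor $\Gamma \colon \LQCoh(X) \to \ddgMod{R}$, $\shF \mapsto \shF(X)$, together with a candidate quasi-inverse: send a dg $R$-module $M$ to the assignment $(U \to X) \mapsto \shO_X(U) \otimes^{\bbL}_R M$ on affine \'etale charts, which is visibly a \emph{pre}sheaf of $\shO_X$-modules with the quasi-coherence property built in; write $M^{\sim}$ for it. By construction $\Gamma(M^{\sim}) \simeq M$ (take $U = X$), and for $\shF \in \LQCoh(X)$ the quasi-coherence condition identifies $\shF$ with $(\shF(X))^{\sim}$ as presheaves. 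Hence $\Gamma$ is fully faithful essentially tautologically, and the entire statement collapses to the single assertion that $M^{\sim}$ is in fact a \emph{sheaf}, i.e.\ that $M \mapsto \shO_X(U) \otimes^{\bbL}_R M$ satisfies \'etale descent.

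So the real work is the descent lemma: for $R \to R'$ faithfully flat (in particular for an \'etale cover) with \v{C}ech nerve $R^{\bullet}$, the canonical functor
\[
 \ddgMod{R} \longto \lim_{\Delta} \ddgMod{R^{\bullet}}
\]
is an equivalence of $\infty$-categories. I would prove this by the comonadic form of the Barr--Beck--Lurie theorem applied to the base-change functor $R' \otimes^{\bbL}_R (-) \colon \ddgMod{R} \to \ddgMod{R'}$: it is conservative (this is precisely faithful flatness), it admits a right adjoint (restriction of scalars), and it satisfies the remaining hypotheses of Barr--Beck--Lurie (preservation of the relevant totalisations), so $\ddgMod{R}$ is identified with comodules over the induced comonad $R' \otimes^{\bbL}_R (-)$ on $\ddgMod{R'}$; the standard \v{C}ech-nerve computation then rewrites that comodule category as $\lim_{\Delta} \ddgMod{R^{\bullet}}$. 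The ingredient that makes the cosimplicial diagram of module categories well defined and the comonad take the stated form is flat base change for $\otimes^{\bbL}$. All of this is available in the literature, e.g.\ \cite[Chap.\ 2.2]{TVe} and the $\infty$-categorical treatment in \cite{Lu2}, so in the text one would simply cite it; the sketch above is only to record where the real input sits.

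Granting the descent lemma, $M^{\sim}$ lands in $\LQCoh(X)$, $\Gamma \circ (-)^{\sim} \simeq \id$ on $\ddgMod{R}$ as noted, and for $\shF \in \LQCoh(X)$ the comparison $\shF \to (\shF(X))^{\sim}$ is a map of sheaves which is an equivalence on the affine \'etale charts by quasi-coherence, hence an equivalence; so $(-)^{\sim} \circ \Gamma \simeq \id$ as well. Thus $\Gamma$ and $(-)^{\sim}$ are mutually quasi-inverse and $\LQCoh(X) \simeq \ddgMod{R}$. The only genuinely hard ingredient is the descent lemma --- concretely the conservativity-plus-Barr--Beck--Lurie step together with flat base change --- and I expect that to be the single point where one must invoke serious input from derived algebraic geometry rather than formal manipulation with the quasi-coherence condition.
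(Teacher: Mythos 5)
The paper does not actually prove this statement: it is recorded as a \emph{Fact} imported from the derived algebraic geometry literature (the framework of \cite[Chap.\ 2.2]{TVe}, cf.\ also \cite{Lu2}), so there is no in-text argument to compare yours against. Your sketch is essentially the standard proof underlying that cited result --- global sections against the tilde construction $M \mapsto M^{\sim}$, with all the weight carried by faithfully flat (hence \'etale) descent for module $\infty$-categories established via the comonadic Barr--Beck--Lurie theorem --- so you are following the same route as the references the paper leans on. The one point to treat with care is the step you yourself flag: conservativity is the easy consequence of faithful flatness, whereas the preservation of the relevant totalisations is where flatness enters in a genuinely non-formal way, and that is precisely the part that should be cited (or proved) rather than asserted.
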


Moreover, $\LQCoh(X)$ is a stable $\infty$-category 
in the sense of \cite[Chap.\ 1]{Lu2}.
If $X$ is an affine (non-derived) scheme, 
then the homotopy category of $\LQCoh(X)$ is equivalent to
the derived category $\DQCoh(X)$ of quasi-coherent \'etale sheaves 
of $\shO_X$-modules on $X$.

Now the following definition should be a natural one.

\begin{dfn}\label{dfn:sp:adps}
\begin{enumerate}[nosep]
\item 
The $\infty$-category of \emph{affine derived $n$-Poisson schemes} is 
defined to be the opposite of the $\infty$-category $\bbP_n\hy\icdga^{\le 0}$
associated to the dg category of $\bbP_n$-algebras 
concentrated in non-positive cohomological degrees.

\item
\label{i:dp:adps:2}
An object of the above $\infty$-category is called 
an \emph{affine derived $n$-Poisson scheme}.
Following the notation of affine derived schemes, we denote by $X = \Spec(R)$
the object associated to $R \in \bbP_n\hy\icdga^{\le 0}$.

\item
\label{i:sp:adps:3}
In the case $n=1$ of \eqref{i:dp:adps:2},
the object is called an \emph{affine derived Poisson scheme}.
\end{enumerate}
\end{dfn}

\begin{eg*}
Recall the Kirillov-Kostant Poisson structure on 
a dg Lie algebra $\frkl$ (Notation \ref{ntn:sp:KK}).
If $\frkl$ is concentrated in non-positive cohomological degrees,
then we may regard $\Sym(\frkl)$ as the coordinate ring 
of the affine derived scheme $\frkl^*$.
Thus we have the affine derived $1$-Poisson scheme $\frkl^*$.
\end{eg*}

Let $X=\Spec(R)$ be an affine derived $n$-Poisson scheme.
Then the structure sheaf $\shO_X$ is an \'etale sheaf 
both of cdgas and of $n$-shifted dg Lie algebras 
whose local sections satisfy the Leibniz rule.

We denote by $\LPQCoh(X)$ the $\infty$-category of \'etale sheaves on $X$ 
which are both quasi-coherent sheaves of dg $\shO_X$-modules and sheaves of 
$n$-shifted dg Lie superalgebra $\shO_X$-modules whose local sections satisfy 
the relation Definition \ref{dfn:sp:mod} \ref{i:sp:mod:3}.
Then Fact \ref{fct:Lqc=Mod} naturally yields

\begin{lem*}
Let $X=\Spec(R)$ be an affine derived $n$-Poisson scheme.
Then there is an equivalence of $\infty$-categories 
\[
 \LPQCoh(X) \simeq \dgPMod{R}.
\]
\end{lem*}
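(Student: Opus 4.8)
The plan is to deduce the equivalence directly from Fact \ref{fct:Lqc=Mod}, which already identifies $\LQCoh(X)$ with $\ddgMod{R}$, by observing that a Poisson module structure is precisely an étale-local, $R$-linear enhancement that is carried across this identification. A sheaf in $\LPQCoh(X)$ is a quasi-coherent $\shO_X$-module $\shM$ together with a bracket map $\shO_X \otimes \shM \to \shM$ making $\shM$ a sheaf of $n$-shifted dg Lie $\shO_X$-modules and satisfying the two Leibniz relations of Definition \ref{dfn:sp:mod}\ref{i:sp:mod:3}; the point is that each of these data is a morphism of quasi-coherent sheaves, so under $\LQCoh(X) \simeq \ddgMod{R}$ it corresponds to the analogous morphism of dg $R$-modules, and the Leibniz relations, being equalities of maps of sheaves, are preserved.

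First I would record that the equivalence of Fact \ref{fct:Lqc=Mod} is symmetric monoidal, intertwining $\otimes_{\shO_X}$ with $\otimes_R$; this is what makes the action map $\shO_X \otimes \shM \to \shM$ and the bracket map $\shO_X \otimes \shM \to \shM$ go over, respectively, to a dg $R$-module structure $.\colon R \otimes M \to M$ and to a bracket $\{-,-\}\colon R \otimes M \to M$. Next I would note that, by the construction of the affine derived $n$-Poisson scheme $X=\Spec(R)$ recalled in \S \ref{sss:sp:daff}, the sheaf $\shO_X$ carries its $n$-shifted dg Lie structure as the sheafification of the bracket on $R[n-1]$, so a sheaf of $n$-shifted dg Lie $\shO_X$-modules corresponds to an $R[n-1]$-module in $\dgla$ exactly as demanded by Definition \ref{dfn:sp:mod}\ref{i:sp:mod:1}. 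Finally I would check that the associativity, unitality, and the two Leibniz compatibilities — all of them closed conditions cut out by equalities between maps of quasi-coherent sheaves — match up on the nose, and that morphisms on both sides are morphisms of quasi-coherent sheaves compatible with the two structure maps; this upgrades the identification of objects and morphisms to an equivalence of $\infty$-categories.

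The step that needs care is the $\infty$-categorical bookkeeping: transporting an extra algebraic structure through an equivalence of $\infty$-categories cannot be done naively by "moving maps around" as for ordinary categories. The clean route is to package $\dgPMod{R}$ as the $\infty$-category of dg modules over the Poisson enveloping algebra of $R$ (an associative dg algebra functorially attached to $R\in\bbP_n\hy\icdga^{\le 0}$), and likewise to realize $\LPQCoh(X)$ as sheaves of modules over the sheaf of enveloping algebras of $\shO_X$; affineness of $X$ then reduces the claim to the statement that dg modules over an associative dg algebra are the quasi-coherent sheaves of modules over the corresponding "affine" object, which is the Barr--Beck/affineness input already underlying Fact \ref{fct:Lqc=Mod}. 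Equivalently, one may exhibit both sides as algebras for one and the same colored $\infty$-operad built from $\otimes$ and the $n$-shifted Lie operad and invoke that a symmetric monoidal equivalence of base $\infty$-categories induces an equivalence on such algebras. Either way, once the operadic formulation is fixed the statement is formal; the only genuinely local input — étale descent for quasi-coherent sheaves on the affine $X$, together with the observation that the Poisson-module axioms are étale-local — is already subsumed in Fact \ref{fct:Lqc=Mod}.
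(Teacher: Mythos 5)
Your proposal is correct and follows essentially the same route the paper takes: the paper simply asserts that the lemma "naturally yields" from Fact \ref{fct:Lqc=Mod}, i.e.\ one transports the $R$-module and shifted Lie-bracket structure maps with their Leibniz compatibilities across the affine equivalence $\LQCoh(X)\simeq\ddgMod{R}$, which is exactly what you do. Your extra care about the $\infty$-categorical bookkeeping (packaging both sides as modules over a Poisson enveloping algebra, or as algebras over a fixed colored operad transported along a symmetric monoidal equivalence) is a sensible way of making rigorous what the paper leaves implicit.
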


\subsubsection{Shifted Poisson structures and shifted symplectic structures}
\label{sss:sp:dSt}

At this stage we recall the shifted Poisson structure for derived stacks
introduced in \cite[3.1]{CPTVV}.
We use the language of \emph{derived stacks} developed in \cite{TVe}
and its $\infty$-categorical presentation in \cite{T14}.
See Notation \ref{ntn:sp:daff} for our notation on the affine derived schemes,
and also \S \ref{ss:0:ntn} for our terminology on $\infty$-categories.

\begin{ntn}\label{ss:sp:dSt}
\begin{enumerate}[nosep]
\item
We work over a field $\bbk$ which contains $\bbQ$. 


\item
A \emph{geometric derived stack} means 
an $n$-geometric derived $D^-$-stack over $\bbk$ with some $n \in \bbZ_{\ge-1}$
in the sense of \cite[\S 1.3.3, \S 2.2.3]{TVe}.
In particular, the affine derived schemes 
(Notation \ref{ntn:sp:daff} \eqref{i:sp:daff:daff})
are $(-1)$-geometric derived stack.

\item
Algebraic stacks, algebraic spaces and schemes over $\bbk$ are regarded as 
geometric derived stacks as in the way of \cite[\S 2.1.2, \S 2.2.4]{TVe}.

\item 
For a geometric derived stack $X$, we denote by $\LQCoh(X)$ 
the derived $\infty$-category of quasi-coherent sheaves over $X$.
For an affine derived scheme $X=\Spec(R)$, it coincides with the one 
in Notation \ref{ntn:sp:daff} \eqref{i:sp:daff:LQCoh}.

\item
For a geometric derived stack $X$, the \emph{cotangent complex} 
\cite[Chap.\ 1.4]{TVe} is denoted by  $\bbL_X \in \LQCoh(X)$.
If $X$ is locally of finite presentation \cite[1.3.6]{TVe},
then $\bbL_X$ is dualizable \cite[\S 1.4.1]{TVe},
and we denote its dual by $\bbT_X := \bbL_X^{\vee} \in \LQCoh(X)$.
\end{enumerate}
\end{ntn}


Let $X$ be a geometric derived stack locally of finite presentation.
By \cite[3.1]{CPTVV}, we have a graded $\bbP_{n+1}$-algebra 
\[
 \Pol(X,n) = \bbR \Gamma(X,\Sym_{\shO_X}(\bbT_X[-n-1]))
\]
whose element is called a \emph{$n$-shifted polyvector} of $X$.
For an affine derived scheme $X=\Spec(R)$, 
the cdga $\Pol(X,n)$ is quasi-isomorphic to $\Sym_R(\bbT_R[-n-1])$,
where $\bbT_R$ is the dg Lie algebra of derivations on the cdga $R$
(Example \ref{eg:dga:T}).

\begin{dfn}[{\cite[Definition 3.1.1]{CPTVV}}]\label{dfn:sp:dSt}
The space of \emph{$n$-shifted Poisson structures} on $X$ is 
the object $\Pois(X,n) \in \iS$ given by 
\[
 \Pois(X,n) := \uMap_{\ddgla^{\tgr}}(\bbk(2)[-1],\Pol(X,n)[n+1]).
\]
Here $\bbk(2)$ indicates the shift of the weight grading 
(Definition \ref{dfn:sp:wt}),
and $\ddgla^{\tgr}$ denotes (the $\infty$-category associated to) 
the dg category of dg Lie algebras with extra weight gradings.
\end{dfn}

Finally we recall the relation between shifted symplectic and Poisson structures.

\begin{dfn*}
Let $X$ be a geometric derived stack locally of finite presentation.
An $n$-shifted Poisson structure $\pi$ on $X$ is \emph{non-degenerate} 
if the morphism $\pi^{\sharp}: \bbL_X \to \bbT_X[-n]$
induced by the underlying bivector field is an equivalence.
\end{dfn*}

By \cite[Theorem 3.2.4]{CPTVV}, there is an equivalence between 
the $\infty$-category of non-degenerate $n$-shifted Poisson structures
and that of \emph{shifted symplectic structures} in the sense of \cite{PTVV}
on a geometric derived stack locally of finite presentation.
We will not give the detail, and refer \cite{CPTVV} for the detail.

In this text we only need affine versions of these notions.
Let us collect them in:

\begin{dfn}\label{dfn:sp:nd}
Let $R$ be a cdga.
\begin{enumerate}[nosep]
\item
\label{i:sp:nd:nd}
A $\bbP_n$-algebra structure on $R$ is \emph{non-degenerate}
if the morphism $\pi^{\sharp}: \bbL_R \to \bbT_R[-n]$
induced by the underlying bivector field is a quasi-isomorphism of cdgas.

\item
A $\wh{\bbP}_n$-algebra structure on $R$ is \emph{non-degenerate} if the induced
$\bbP_n$-algebra structure on the cohomology $H(R)$ 
(Remark \ref{rmk:sp:whP} \eqref{i:sp:whP:coh})
is non-degenerate in the sense of \eqref{i:sp:nd:nd}.

\item 
\label{i:sp:nd:symp}
An \emph{$(n-1)$-shifted symplectic structure} on $R$ is defined to be 
a non-degenerate $n$-shifted Poisson structure on $R$.
\end{enumerate}
\end{dfn}

\section{Derived Hamiltonian reduction and classical BRST complex}
\label{s:pr}

In this section we review the work of Safronov \cite{S} on the reduction of 
shifted Poisson algebras and its relation to the classical BRST complex.
We work over a field $\bbk$ with characteristics $0$.

\subsection{Derived Hamiltonian reduction}
\label{ss:pr:pr}

In this subsection we cite from \cite[\S 1]{S}
the reduction of shifted Poisson structure 
in terms of derived coisotropic intersection.

\subsubsection{Coisotropic structure}

In this part we explain the coisotropic structure for shifted Poisson algebras.
It is an enhancement of the notion of coisotropic subschemes of 
affine Poisson schemes.
and also gives the notion of modules over shifted Poisson algebras.
In the next \S \ref{sss:pr:ci},
we explain the construction of homotopy shifted Poisson algebras 
via coisotropic structures.

Let $R$ be a $\bbP_n$-algebra.
Then by Lemma \ref{lem:sp:PSvan}, we have a bivector field 
$\pi_R \in \Pol(R,n-1)(2)$ with the Schouten bracket $[\pi_R,\pi_R]_S=0$
which corresponds to the Poisson bracket of $R$.
On the other hand, we have the dg $R$-module $\Omega_R^1$
of K\"ahler differentials (Notation \ref{ntn:sp:Omega}).
We denote its differential by $d_{\Omega_R^1}$.

\begin{dfn}\label{dfn:pr:Pc}
For a $\bbP_n$-algebra $R$, its \emph{Poisson center} is the $\bbP_{n+1}$-algebra
\[
 Z(R) = (\wh{\Pol}(R,n-1), d_{Z(R)}, \{\cdot,\cdot\}_{Z(R)})
\]
in $\dgVec$ where
\begin{itemize}[nosep]
\item 
the underlying linear space is defined to be the completion
\[
 \wh{\Pol}(R,n-1) := \uHom_R \bigl(\wh{\Sym}_R(\Omega_R^1[n]),R \bigr) 
\]
of $\Pol(R,n-1)=\uHom_R(\Sym_R(\Omega_R^1[n]),R)$ 
with respect to the weight grading, and 

\item
the dg $(n+1)$-Poisson structure is induced by 
that on $\Pol(R,n-1)$ (Fact \ref{fct:sp:pv}).
In particular, the Lie bracket is the Schouten bracket.
\end{itemize}
We also denote by 
\[
 p_R: Z(R) \longto R
\]
the morphism in $\cdga$ 
given by the projection to the weight $0$ part of polyvector fields.
\end{dfn}

We give an example of Poisson center which can be described explicitly.
Let $\frkl$ be a dg Lie algebra.
Recall the Kirillov-Kostant Poisson structure on $\Sym(\frkl)$ 
(Definition \ref{ntn:sp:KK}), which makes $\Sym(\frkl)$ a dg Poisson algebra.
Thus we can consider its Poisson center. 

\begin{lem}[{\cite[\S 2.1]{S}}]\label{lem:pr:Z=C}
The Poisson center $Z(\Sym(\frkl))$ of the Lie-Poisson algebra $\Sym(\frkl)$ is 
the dg $2$-Poisson algebra of which
\begin{itemize}[nosep]
\item 
the underlying cdga is  
the completion $\wh{\CE}(\frkl,R)$ of the Chevalley-Eilenberg cdga 
$\CE(\frkl,R)$ (Lemma \ref{lem:dga:CEa})
with respect to the weight grading, and

\item
the $2$-Poisson bracket is given by the Schouten bracket (Fact \ref{fct:sp:pv}).
\end{itemize}
\end{lem}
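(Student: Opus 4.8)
The plan is to follow the computation indicated in \cite[\S 2.1]{S}, identifying $Z(\Sym(\frkl))$ with $\wh{\CE}(\frkl,\Sym(\frkl))$ first at the level of weight-graded complexes and then as cdgas; the $2$-Poisson bracket on the right-hand side is, by Definition \ref{dfn:pr:Pc}, the Schouten bracket transported along this identification, so that point needs nothing. Throughout, $\Sym(\frkl)$ is viewed as a commutative ring object in $\dgMod{\frkl}$ via the adjoint action (which acts by derivations, since $\ad_x = \{x,-\}_{\frkl}$ by Notation \ref{ntn:sp:KK}), so that Lemma \ref{lem:dga:CEa} applies to it.

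First I would pin down the underlying graded linear space. Since $\Sym(\frkl)$ is a symmetric tensor algebra, Example \ref{eg:sp:OmSymV} gives that $\Omega^1_{\Sym(\frkl)}$ is the free dg $\Sym(\frkl)$-module generated by $\frkl$; hence $\Omega^1_{\Sym(\frkl)}[1]$ is free on $\frkl[1]$ and $\Sym_{\Sym(\frkl)}(\Omega^1_{\Sym(\frkl)}[1]) \simeq \Sym(\frkl) \otimes \Sym(\frkl[1])$ as a weight-graded dg $\Sym(\frkl)$-module. Applying $\uHom_{\Sym(\frkl)}(-,\Sym(\frkl))$ and using the tensor--hom adjunction over $\Sym(\frkl)$ yields
\[
 \Pol(\Sym(\frkl),0) = \uHom_{\Sym(\frkl)}\bigl(\Sym_{\Sym(\frkl)}(\Omega^1_{\Sym(\frkl)}[1]),\Sym(\frkl)\bigr)
 \simeq \uHom_{\bbk}\bigl(\Sym(\frkl[1]),\Sym(\frkl)\bigr),
\]
which by Remark \ref{rmk:dga:CE} \eqref{i:dga:CE:CE=W} is exactly $\CE(\frkl,\Sym(\frkl))$, compatibly with the weight gradings. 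Completing with respect to weight identifies $Z(\Sym(\frkl)) = \wh{\Pol}(\Sym(\frkl),0)$ with $\wh{\CE}(\frkl,\Sym(\frkl))$ as graded linear spaces.

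Next I would match the differential and the multiplication. The differential on $Z(\Sym(\frkl))$ is $d_{\Pol} + [\pi_{\frkl},-]_S$, where $d_{\Pol}$ is the internal hom differential built from $d_{\frkl}$ (through $d_{\Sym(\frkl)}$ on values and $d_{\Omega^1_{\Sym(\frkl)}}$ on arguments) and $\pi_{\frkl} \in \Pol(\Sym(\frkl),0)(2)$ is the bivector corresponding to the Kirillov--Kostant bracket. Evaluating on a general $f \in \uHom(\Wedge^p \frkl,\Sym(\frkl))$ against $x_1 \wedge \cdots \wedge x_{p+1}$ and comparing with the explicit formula for $d_{\tCE}$ in Remark \ref{rmk:dga:CE}, I would check that $d_{\Pol}$ reproduces the two terms of $d_{\tCE}$ involving $d_{\Sym(\frkl)}$ on values and $d_{\frkl}$ on arguments, while $[\pi_{\frkl},-]_S$ reproduces the remaining two terms: the bracket term, since $\pi_{\frkl}$ is assembled from $[-,-]_{\frkl}$, and the action term, since $\ad_x$ on $\Sym(\frkl)$ coincides with $\{x,-\}_{\frkl}$. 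Likewise, under the Koszul identification $\Sym(\frkl[1]) \simeq \Wedge(\frkl)$ the product of polyvectors on $\Pol(\Sym(\frkl),0)$ from Fact \ref{fct:sp:pv} \eqref{i:sp:pv:cup} matches the cup product of Definition \ref{dfn:dga:cup}, so the two cdga structures coincide; the $2$-Poisson bracket on the right-hand side is then by definition the Schouten bracket, and there is nothing further to verify.

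The main obstacle is the sign bookkeeping in this middle step: the odd shift $[1]$, the Koszul rule in the tensor--hom adjunction, and the shuffle signs in both the Schouten bracket and the cup product all contribute, and one must confirm they conspire to reproduce exactly the normalizations of $d_{\tCE}$ and $\cup$ fixed in \S \ref{sss:dga:CE} (in particular, the $\sgn(\sigma)$ in the cup product is accounted for precisely by passing from $\Sym(\frkl[1])$ to $\Wedge(\frkl)$). This is routine but delicate; organizing the verification by evaluating each operation on homogeneous elements, as in \cite{S}, keeps it manageable.
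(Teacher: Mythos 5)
Your proposal is correct and follows essentially the same route as the paper's proof: identify $\Omega^1_{\Sym(\frkl)}$ as the free dg $\Sym(\frkl)$-module on $\frkl$ (Example \ref{eg:sp:OmSymV}), deduce $\wh{\Pol}(\Sym(\frkl),0) \simeq \wh{\CE}(\frkl,\Sym(\frkl))$ as weight-graded spaces, and then compare the dg $2$-Poisson structures via Remark \ref{rmk:dga:CE}, Definition \ref{dfn:dga:cup} and Fact \ref{fct:sp:pv}. The only difference is that you spell out the comparison (splitting the center's differential as $d_{\Pol}+[\pi_{\frkl},-]_S$ and matching it term by term with $d_{\tCE}$, and matching the polyvector product with the cup product), which the paper leaves as a one-line ``can be checked by comparing.''
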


\begin{proof}
Denoting $R:=\Sym(\frkl)$, 
the module $\Omega_{R}^1$ of K\"ahler differentials 
is a free dg $R$-module generated by $\frkl$ (Example \ref{eg:sp:OmSymV}). 
Thus we have an isomorphism 
\[
 Z(R) = \wh{\Pol}(R,0) = \tprd_{p \in \bbN}\uHom_R(\Sym_R(\frkl[1])(p),R)
 \simeq \tprd_{p \in \bbN}\uHom(\Sym(\frkl[1])(p),R) \simeq \wh{\CE}(\frkl,R)
\]
of graded linear spaces.
The compatibility of the dg $2$-Poisson structure
can be checked by comparing that on $\CE(\frkl,R)$ 
(Remark \ref{rmk:dga:CE}, Definition \ref{dfn:dga:cup})
and that on $Z(R)$ (Fact \ref{fct:sp:pv}).
\end{proof}

Now we given the definition of coisotropic structure.

\begin{dfn}\label{dfn:pr:cois}
Let $R$ be a $\bbP_{n+1}$-algebra and $M$ be a $\bbP_n$-algebra.
A \emph{coisotropic morphism} is a morphism $f: R \to M$ in $\cdga$
equipped with a morphism $\wt{f}:R \to Z(M)$ of $\bbP_{n+1}$-algebras such that 
$f = p_M \circ \wt{f}$ holds in $\cdga$.
\[
 \xymatrix{
  R \ar[rd]_{f} \ar@{.>}[r]^(0.4){\wt{f}} & Z(M) \ar@{->>}[d]^{p_M} \\
  & M}
\]
\end{dfn}

\begin{rmk}\label{rmk:pr:cois}
\begin{enumerate}[nosep]
\item 
We can rewrite the definition of a coisotropic morphism as a family
\[
 \left\{f_p: A \longto \uHom_M\bigl(\Sym(\Omega_M^1[n])(p),M\bigr) 
        \mid p \in \bbN\right\}
\]
with $f_0=f:A \to M$ which satisfies some compatibility conditions. 
See \cite[\S 1.3]{S} for the detail.

\item
Let us explain the origin of the name ``coisotropic morphism".
The compatibility conditions imply that 
the kernel of $f_0$ is closed under the Poisson bracket.
Thus, if $A$ and $M$ are (non-dg) commutative algebras,
then $\Spec(M) \to \Spec(A)$ is a coisotropic subscheme in the ordinary sense.
See \cite[Remark 1.9]{S} for the detail.
\end{enumerate}
\end{rmk}

\subsubsection{Coisotropic intersection}
\label{sss:pr:ci}

We turn to a construction of homotopy shifted Poisson algebras 
by the ``derived" intersection of coisotropic structures,
It will be used for the Poisson reduction in the next \S \ref{sss:pr:pr}.

We start with the definition of homotopy shifted Poisson algebra.
Abstractly, we replace the Lie algebra structure 
by homotopy Lie algebra structure, i.e., $L_{\infty}$-algebra structure.
Explicitly, we have:

\begin{dfn}[{\cite[\S 1.2]{S}}]\label{dfn:sp:whP}
\begin{enumerate}[nosep]
\item 
A \emph{$\wh{\bbP}_n$-algebra in $\dgVec$}, or a \emph{homotopy $n$-Poisson algebra}
is a cdga $A=(A,d_A,\cdot)$ equipped with a family 
\[
 \left\{l_p: A^{\otimes p} \to A[(p-1)n-1] \mid p \in \bbN\right\}
\]
of morphisms in $\cdga$ satisfying the following relations.
\begin{enumerate}[nosep,label=(\roman*)]
\item
$l_1=d_A$, 

\item 
$l_p(a_1,\ldots,a_p)=
(-1)^{\abs{a_i}\abs{a_{i+1}}+n}l_p(a_1,\ldots,a_{i+1},a_i,\ldots,a_p)$.

\item 
$l_p(a_1,\ldots,a_p \cdot a_{p+1})= l_p(a_1,\ldots,a_p) \cdot a_{p+1}+
(-1)^{\abs{a_p}\abs{a_{p+1}}}l_p(a_1,\ldots,a_{p-1},a_{p+1}) \cdot a_p$.

\item
$\sum_{q=1}^p (-1)^{n q (p-q)} \sum_{\sigma \in \frkS_{q,p-q}} \sgn(\sigma)^n 
(-1)^{\ve} l_{p-q+1}(l_q(a_{\sigma(1)},\ldots,a_{\sigma(q)}),
 a_{\sigma(q+1)},\ldots,a_{\sigma(p)})=0$,\\
where $\frkS_{q,p-q}$ and $\ve$ are the same as Definition \ref{dfn:dga:cup}.
\end{enumerate}

\item
\label{i:sh:whP:op}
For a $\wh{\bbP}_n$-algebra $A$, we define the \emph{opposite algebra} 
$A^{\op}$ to be the $\wh{\bbP}_n$-algebra with 
the same underlying cdga and $l_p^{\op} := (-1)^{p+1}l_p$.
\end{enumerate}
\end{dfn}

\begin{rmk}\label{rmk:sp:whP}
\begin{enumerate}[nosep]
\item 
A $\bbP_n$-algebra is nothing but a $\wh{\bbP}_n$-algebra
with $l_p = 0$ for $p \ge 3$ and $l_2=\{-,-\}$.

\item
\label{i:sp:whP:coh}
For a $\wh{\bbP}_n$-algebra $A$, the underlying cohomology $H(A,d_A)$ 
carries a natural $\bbP_n$-algebra structure.

\item
As mentioned in Remark \ref{rmk:sp:sp}, there is a dg operad $\bbP_n$
such that a dg $n$-Poisson algebra is an algebra over $\bbP_n$ in $\dgVec$.
Similarly, we can construct an operad $\wh{\bbP}_n$ such that 
a homotopy $n$-Poisson algebra is nothing but 
an algebra over $\wh{\bbP}_n$.
\end{enumerate}
\end{rmk}

Finally we can explain:

\begin{fct}[{\cite[Theorem 1.18]{S}}]\label{fct:pr:ci}
Let $R$ be a $\bbP_{n+1}$-algebra, $M$ and $N$ be $\bbP_n$-algebras,
and $f:R \to M$ and $g:R \to N$ be coisotropic morphisms.
Then the two sided bar complex 
\[
 M \tbotimes_{f,R,g}^{\bbL} N = M \tbotimes^{\bbL}_R N
\]
has a structure of 
a $\wh{\bbP}_n$-algebra such that the projection 
$M^{\op} \otimes N \to M \otimes_R^{\bbL} N$
is a morphism of $\wh{\bbP}_n$-algebras.
\end{fct}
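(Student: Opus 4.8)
The plan is to work with an explicit model of the two-sided bar complex and to build the $\wh{\bbP}_n$-operations on it directly. Fix the bar model
\[
 B := \mathrm{Bar}(M,R,N) = \tboplus_{k \ge 0} M \otimes R^{\otimes k} \otimes N
\]
with the internal shifts and the bar differential $\partial$ built from the cdga $R$-algebra structures on $M$ and $N$ (through $f$ and $g$), so that $(B,d_B)$ with $d_B := d_{\mathrm{int}} + \partial$ is the cdga computing $M \otimes^{\bbL}_R N$ (Definition \ref{dfn:pr:bcpx}). Next, unpack the coisotropic data: by Remark \ref{rmk:pr:cois}, the coisotropic morphism $f : R \to M$ is a family $\{f_p : R \to \uHom_M(\Sym_M(\Omega_M^1[n])(p),M)\}_{p \ge 0}$ with $f_0 = f$, and the requirement that $\wt{f} : R \to Z(M)$ be a morphism of $\bbP_{n+1}$-algebras becomes a hierarchy of identities relating the $f_p$, the bracket of $R$, and the bracket of $M$. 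Here $f_1$ is the Hamiltonian-action derivation, while the $f_p$ with $p \ge 2$ are precisely the homotopies that will feed the higher brackets; one does the same for $g$, with a family $\{g_p\}$.

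The heart of the construction is the family $\{l_p : B^{\otimes p} \to B[(p-1)n-1]\}$. I would set $l_1 := d_B$; for $l_2$, on the bar-degree-zero part $M \otimes N = B(0)$ take the Poisson bracket of the tensor product $M^{\op} \otimes N$ (the opposite bracket on the $M$-factor, which is where the superscript $\op$ enters), and on the higher bar words $M \otimes R^{\otimes k} \otimes N$ add correction terms assembled from $f_1$ and $g_1$ applied through the $R$-slots. More generally $l_p$ is built from $f_{p-1}, g_{p-1}$ together with the lower $f_j, g_j$, summed over the shuffles $\frkS_{q,p-q}$ (Definition \ref{dfn:dga:shuffle}) with the Koszul signs prescribed in Definition \ref{dfn:sp:whP}. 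I would first verify the graded antisymmetry and the Leibniz rule of that definition, which are comparatively mechanical and in fact pin down the combinatorial normalization of the $l_p$.

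The main work, and the main obstacle, is the generalized Jacobi identity of Definition \ref{dfn:sp:whP}. I would organize the check by bar degree and by which of the inputs carry $R$-factors. The terms in which no input carries an $R$-factor collapse to the ordinary Jacobi identities for the $\bbP_n$-brackets of $M$ and of $N$, the two contributions being kept apart by a sign cancellation forced by the $\op$; the remaining terms pair off against the identities among the $f_p$ (resp.\ the $g_p$) that encode $\wt{f}$ (resp.\ $\wt{g}$) being a $\bbP_{n+1}$-morphism, together with $\partial^2 = 0$ and the compatibility of $\partial$ with $d_{\mathrm{int}}$. This sign-and-shuffle bookkeeping is essentially the whole difficulty; conceptually it says that the colored operad controlling a $\bbP_{n+1}$-algebra with two coisotropic $\bbP_n$-modules, once ``folded'' along the bar resolution of $R$, acts on $B$ through $\wh{\bbP}_n$, so that one could alternatively exhibit that operadic resolution and bypass the explicit formulas. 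Finally, the canonical map $M^{\op} \otimes N \to M \otimes^{\bbL}_R N$ is the inclusion of the summand $B(0)$ of bar degree zero, and since all correction terms and all $l_p$ with $p \ge 3$ vanish on $B(0)$ (there are no $R$-factors there), this inclusion strictly intertwines the operations $l_p$ and is therefore a morphism of $\wh{\bbP}_n$-algebras.
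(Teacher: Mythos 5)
Your overall strategy---putting the $\wh{\bbP}_n$-operations directly on the bar model $B=\bigoplus_k M\otimes R^{\otimes k}\otimes N$ and checking the axioms of Definition \ref{dfn:sp:whP} by hand---is in spirit an unwinding of Safronov's construction, but as written it has a genuine gap: the higher operations are never actually defined, and the verification you yourself identify as ``essentially the whole difficulty'' (the generalized Jacobi identity) is not carried out, nor is any mechanism supplied that would make the terms cancel. Moreover your guessed shape for $l_p$ (``built from $f_{p-1},g_{p-1}$ applied through the $R$-slots, one correction per slot, summed over shuffles'') does not match the structure that actually exists: in the transported operations the nonzero higher bracket $l_{p+1}$ takes a \emph{single} bar word and $p$ bar-degree-zero elements and evaluates $f_p$ of its last $R$-letter on them, cf.\ \eqref{eq:pr:lPc:Li}, with all other higher operations vanishing; so the combinatorics you would have to verify is different from the one you set up. There is also a closure issue you do not address: bracket-type operations mixing the $M$-side and the $N$-side do not obviously preserve the shape $M\otimes T(R[1])\otimes N$, and seeing that they do is precisely where the work lies.

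The paper (following Safronov) avoids the direct check entirely by a structural factorization: first the $\bbP_n$-bialgebra structure on $\wt{A}:=T(R[1])$ (Fact \ref{fct:pr:Pbalg}, shuffle product, deconcatenation, bar differential), then the coisotropic data $\{f_p\}$, $\{g_p\}$ are repackaged as left/right $\wh{\bbP}_n$-comodule structures on $\wt{A}\otimes M$ and $N\otimes\wt{A}$ (Fact \ref{fct:pr:lPc}), and finally the bar complex is identified, via coassociativity of $\Delta_{\dec}$, with the cotensor product $\wt{M}\otimes^{\wt{A}}\wt{N}$, which is exhibited as a $\wh{\bbP}_n$-\emph{sub}algebra of $\wt{M}\otimes\wt{N}$; the homotopy-Poisson coherences are thereby inherited from already-established structures rather than re-proved on $B$. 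Your last point is fine modulo the missing construction: on elements with no $R$-letters only $l_2$ survives and equals the brackets of $M$ and $N$ (with the opposite sign on the $M$-factor coming from the right-comodule construction), so the bar-degree-zero inclusion is indeed a strict morphism once the structure exists---but establishing that structure is the content of the theorem, and your proposal defers it rather than proves it.
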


Let us explain the detail.
Recall first the two-sided bar complex:

\begin{dfn}\label{dfn:pr:bcpx}
Let $A$ be a dg algebra, $L$ be a left dg $A$-module, 
and $M$ be a right dg $A$-module $M$.
We consider the double complex 
\[
 B^{-p,q} :=\tboplus_{i+j+k=q} L^i \otimes (A[1]^{\otimes p})^{j} \otimes M^k 
 \quad (p \in \bbN, q \in \bbZ)
\] 
with the differentials
\begin{align*}
d_v^{-p,q}: B^{-p,q} \longto B^{-p,q+1}, \quad
d_h^{-p,q}: B^{-p,q} \longto B^{-p+1,q}.
\end{align*}
The vertical differential $d_v$ is the one on the tensor product 
$B \otimes T(A[1]) \otimes C$ of complexes (Definition \ref{dfn:dg:dgVmon}).
The horizontal differential $d_h$ is given by 
\[
 d_h^{-p,q}[l|a_1|\cdots | a_p | m] =
 [l.a_1|\cdots|a_p|m] + \tsum_{i=1}^{p-1} 
 (-1)^i [l | \cdots | a_i a_{i+1} | \cdots | m ]+ 
 (-1)^p [l | \cdots | a_p.m].
\]
Here we denoted by 
\[
 [l|a_1|\cdots|a_p|m] \in L \otimes A^{\otimes p} \otimes M
\]
with $l \in L$, $a_i \in A$ and $m \in M$.
Note that we have 
$(A[1])^{\otimes p}=A^{\otimes p}[p]$.
Now the complex $L \otimes_A^{\bbL} M$ is defined to be the total complex
\[
 L \otimes_A^{\bbL} M := (\Tot(B^{\bl,\bl}),(-1)^p d_v^{-p,q}+d_h^{-p,q})
\]
\end{dfn}

Next we cite some constructions from \cite{S}.
The fist one is:

\begin{fct}[{\cite[Proposition 1.14]{S}}]\label{fct:pr:Pbalg}
Let $A$ be a $\bbP_{n+1}$-algebra.
Then the graded linear space
\[
 T(A[1]) = \tboplus_{p \in \bbN}A^{\otimes p}[p]
\]
has a \emph{$\bbP_n$-bialgebra} structure $(d_{\tbar},\cdot,\Delta_{\dec},\{-,-\})$.
\end{fct}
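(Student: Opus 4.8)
The plan is to make all four operations explicit and then verify, in order, the dg bialgebra axioms and the shifted-Poisson compatibilities, reducing each to the corresponding property of $A$. First I would note that $(T(A[1]),d_{\tbar},\cdot,\Delta_{\dec})$ is just the classical bar construction of the commutative dg algebra underlying $A$: the differential $d_{\tbar}$ is the internal differential of $A$ plus the merging part $[a_1|\cdots|a_p]\mapsto\sum_i(-1)^i[a_1|\cdots|a_ia_{i+1}|\cdots|a_p]$ built from the multiplication, the product $\cdot$ is the shuffle product indexed by $\frkS_{p,q}$, and $\Delta_{\dec}$ is deconcatenation. That these make $T(A[1])$ into a graded-commutative, cocommutative dg bialgebra — associativity/commutativity of the shuffle product, coassociativity/cocommutativity of $\Delta_{\dec}$, the Hopf compatibility, $d_{\tbar}^2=0$, and $d_{\tbar}$ being a biderivation and coderivation — is standard and uses only associativity and commutativity of the product of $A$; I would simply cite it.

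Next I would introduce the bracket $\{-,-\}$ of degree $1-n$. Since $(T(A[1]),\cdot,\Delta_{\dec})$ is the cofree conilpotent cocommutative coalgebra on $A[1]$ and the shuffle product is weight-additive, a biderivation for $\cdot$ that is also a coderivation in each variable for $\Delta_{\dec}$ is forced to have weight-decreasing behaviour $(p,q)\mapsto p+q-1$ and is determined by its corestriction $T(A[1])\otimes T(A[1])\to A[1]$, which must be concentrated on the weight-$(1,1)$ summand; I would declare that corestriction to be $\{-,-\}_A$ (suitably shifted). Writing this out yields the explicit formula: $\{[a_1|\cdots|a_p],[b_1|\cdots|b_q]\}$ is a signed sum over $1\le i\le p$, $1\le j\le q$ of shuffles of $[a_1|\cdots|\wh{a_i}|\cdots|a_p]$ and $[b_1|\cdots|\wh{b_j}|\cdots|b_q]$ with $[\{a_i,b_j\}_A]$ inserted. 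The content of this step is to pin down the Koszul signs so that the formula really is well defined (equivalently, really is a biderivation and co-biderivation).

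Then I would check that $(T(A[1]),d_{\tbar},\cdot,\{-,-\})$ is a $\bbP_n$-algebra. Graded antisymmetry and the Leibniz rule for $\cdot$ are built into the construction. The graded Jacobi identity I would reduce, via the biderivation property, to a triple of weight-one arguments, where it becomes exactly the graded Jacobi identity of $\{-,-\}_A$. The least formal point is compatibility with the bar differential, $d_{\tbar}\{x,y\}=\{d_{\tbar}x,y\}\pm\{x,d_{\tbar}y\}$: on the internal part of $d_{\tbar}$ it holds because $\{-,-\}_A$ is a chain map, while on the merging part $[a_i|a_{i+1}]\mapsto[a_ia_{i+1}]$ it is precisely the Leibniz rule $\{a_ia_{i+1},-\}_A=a_i\{a_{i+1},-\}_A\pm a_{i+1}\{a_i,-\}_A$ of the $\bbP_{n+1}$-structure on $A$. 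This is the step that genuinely uses that $A$ is a $\bbP_{n+1}$-algebra and not merely a cdga with a separately compatible Lie bracket. Finally I would verify that $\{-,-\}$ is a co-biderivation for $\Delta_{\dec}$ in the sense required of a $\bbP_n$-bialgebra (automatic from the Step~2 characterization, but still needing a combinatorial check matching the deconcatenation of a shuffle against the corestriction), and conclude.

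\textbf{Main obstacle.} The hard part is not conceptual but bookkeeping: making the sign conventions for the shuffle product, the bar differential, and the bracket mutually consistent, establishing well-definedness of the bracket in Step~2, and verifying the differential-compatibility in Step~3, all with the correct Koszul signs. An alternative that sidesteps much of this would be to invoke Koszul self-duality of the operad $\bbP_{n+1}$ up to shift, so that the cooperadic bar construction automatically produces a $\bbP_n$-bialgebra; but setting up curved operadic Koszul duality at this level of generality is a heavier hammer than the direct computation, so I would keep the hands-on route.
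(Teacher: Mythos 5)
First, a remark on the comparison itself: the paper does not prove this statement. It is imported verbatim as a Fact from \cite[Proposition 1.14]{S}, and what follows it in the text is only the explicit list of the four structure maps ($d_{\tbar}$, the shuffle product, $\Delta_{\dec}$, and the bracket) for later use. So your proposal has to be measured against the verification in \cite{S}, which is in substance the hands-on route you describe: write the operations down and check the axioms, the only place where the full $\bbP_{n+1}$-structure of $A$ (rather than a cdga with an unrelated bracket) enters being the compatibility of $\{-,-\}$ with the merging part of the bar differential via the Leibniz rule of $A$. You identify that point correctly, and the overall plan is the right one.

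There are, however, two concrete gaps as written. (i) The reduction of the graded Jacobi identity ``via the biderivation property to a triple of weight-one arguments'' does not work: the shuffle algebra $T(A[1])$ is not generated in weight one (already for two letters, $[x]\cdot[y]=[x|y]\pm[y|x]$ spans only a one-dimensional subspace of the two-dimensional space of words of length two), so the Leibniz rule alone cannot propagate Jacobi from $A[1]$ to all of $T(A[1])$. The repair is the coalgebraic mechanism you half-invoke in your second step: the Jacobiator is a coderivation in each slot with values in the cofree conilpotent \emph{coassociative} coalgebra $T^c(A[1])$ (deconcatenation is not cocommutative, contrary to what you write), hence vanishes if and only if its corestriction to $A[1]$ does, and that corestriction is the Jacobiator of $\{-,-\}_A$. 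The same uniqueness-of-coderivation-extension argument is what determines the bracket from its corestriction; your claim that the corestriction is \emph{forced} to be concentrated in weight $(1,1)$ is neither justified nor needed --- one chooses it there and uses uniqueness of the extension. (ii) Your verbal description of the resulting formula is too loose: it is not ``all shuffles of the two complements with $[\{a_i,b_j\}]$ inserted,'' but the restricted form recorded in the paper, namely $\bigl([a_1|\cdots|a_{i-1}]\cdot[b_1|\cdots|b_{j-1}]\bigr)\wedge[\{a_i,b_j\}]\wedge\bigl([a_{i+1}|\cdots|a_p]\cdot[b_{j+1}|\cdots|b_q]\bigr)$, in which the $a$-prefix shuffles only with the $b$-prefix before the bracket letter and the suffixes only with each other after it; the larger set of insertions would not satisfy the co-Leibniz property. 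With these two repairs (and the sign bookkeeping you already flag), the argument goes through and agrees with \cite{S}.
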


In other words, 
$\bigl(T(A[1]),d_{\tbar},\cdot,\{-,-\}\bigr)$ is a $\bbP_n$-algebra and 
\[
 \Delta_{\dec}: T(A[1]) \otimes T(A[1]) \longto T(A[1])
\]
is a coassociative comultiplication which is a morphism of $\bbP_n$-algebras.

In order to write down the $\bbP_n$-bialgebra structure,  
let us denote an element of $A^{\otimes p}$ 
by $[a_1|\cdots|a_p]$ with $a_i \in A$, following the notation in \cite{S}.
Then the structure is given by
\begin{itemize}[nosep]
\item 
The differential $d_{\tbar}$ is the bar differential:
\begin{align*}
d_{\tbar} [a_1|\cdots|a_p]=
&\tsum_{i=1}^p (-1)^{\sum_{j=1}^{i-1}\abs{a_j}+i-1}[a_1|\cdots|d_A a_i|\cdots|a_p] \\
+&\tsum_{i=1}^{p-1} (-1)^{\sum_{j=1}^{i}\abs{a_j}+i}
 [a_1|\cdots|a_i a_{i+1}|\cdots|a_p].
\end{align*}
Note that as a map on $T(A[1])$ it is indeed of cohomological degree $1$.

\item
The multiplication $\cdot$ is the shuffle product:
\begin{align}\label{eq:pr:shf}
[a_1|\cdots|a_p] \cdot [a_{p+1}|\cdots|a_{p+q}] = 
\tsum_{\sigma \in \frkS_{p,q}}(-1)^{\ve}[a_{\sigma(1)}|\cdots|a_{\sigma(p+q)}],
\end{align}
where $\frkS_{p,q}$ is the set of shuffles and 
$\ve$ is the same braiding sign as in Definition \ref{dfn:dga:cup}.

\item
The comultiplication $\Delta_{\dec}$ is the deconcatenation coproduct:
\begin{align*}
\Delta_{\dec}[a_1|\cdots|a_p]=
\tsum_{i=0}^p [a_1|\cdots|a_i] \otimes [a_{i+1}|\cdots|a_p].
\end{align*}

\item
The Lie bracket is 
\begin{align*}
\{[a_1|\cdots|a_p],[b_1|\cdots|b_q]\}
=\tsum_{i=1}^p \tsum_{j=1}^q 
 &(-1)^{\ve_1+\abs{a_i}+n+1}
 ([a_1|\cdots|a_{i-1}] \cdot [b_1|\cdots|b_{j-1}]) \wedge [\{a_i,b_j\}] \\
 &\wedge ([a_{i+1}|\cdots|a_p] \cdot [b_{j+1}|\cdots|b_q]),
\end{align*}
where $\wedge$ denotes the concatenation:
$[a_1|\cdots|a_i] \wedge [a_{i+1}|\cdots|a_p] = [a_1|\cdots|a_p]$.
\end{itemize}

\begin{rmk*}
The bialgebra structure coincides with the bialgebra 
$(T(V),\mu',\Delta)$ in \cite[\S 1.3.2]{LV},
which is the one of the two standard bialgebra structures 
on the tensor space $T(V)$.
\end{rmk*}

The second construction is:

\begin{fct}[{\cite[Proposition 1.17]{S}}]\label{fct:pr:lPc}
Let $A$ be a $\bbP_{n+1}$-algebra, $M$ be a $\bbP_n$-algebra 
and $f:A \to M$ be a coisotropic morphism.
Then the tensor product $T(A[1]) \otimes M$ of graded linear spaces 
has a \emph{left $\wh{\bbP}_n$-comodule} structure 
$(d_{\tbar},\cdot,\{l_k \mid k \in \bbN\},c)$ over 
the $\bbP_n$-bialgebra $T(A[1])$ in Fact \ref{fct:pr:Pbalg}.
\end{fct}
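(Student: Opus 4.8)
The plan is to construct the four pieces $(d_{\tbar},\cdot,\{l_k \mid k\in\bbN\},c)$ by hand and then verify the axioms of a left $\wh{\bbP}_n$-comodule over the $\bbP_n$-bialgebra $T(A[1])$ of Fact \ref{fct:pr:Pbalg}, closely following the strategy behind the coisotropic intersection theorem (Fact \ref{fct:pr:ci}). First I would unpack the coisotropic datum: by Remark \ref{rmk:pr:cois} and \cite[\S 1.3]{S}, the morphism $\wt{f}: A \to Z(M)$ of $\bbP_{n+1}$-algebras amounts to a family $\{f_p : A \to \uHom_M(\Sym_M(\Omega_M^1[n])(p),M) \mid p \in \bbN\}$ with $f_0 = f$, and the statement that $\wt{f}$ is a $\bbP_{n+1}$-algebra morphism translates into a tower of quadratic (Maurer--Cartan type) relations among the $f_p$. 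These relations, together with $[\pi_M,\pi_M]_S = 0$ for the bivector $\pi_M$ of $M$ (Lemma \ref{lem:sp:PSvan}), are exactly the input that will make the higher Jacobi identities below hold.

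The coaction is the easy piece: define $c: T(A[1])\otimes M \to T(A[1]) \otimes (T(A[1])\otimes M)$ by deconcatenation in the $T(A[1])$-factor, $c([a_1|\cdots|a_p]\otimes m) = \sum_{i=0}^p [a_1|\cdots|a_i] \otimes ([a_{i+1}|\cdots|a_p]\otimes m)$; coassociativity and counitality are inherited from $\Delta_{\dec}$. The multiplication $\cdot$ on $T(A[1])\otimes M$ is the shuffle product \eqref{eq:pr:shf} on the $T(A[1])$-slots combined with the multiplication of $M$, with Koszul signs from Definition \ref{dfn:dg:dgVmon}; this makes $T(A[1])\otimes M$ a commutative comodule algebra over $(T(A[1]),\cdot,\Delta_{\dec})$. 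The differential $d_{\tbar}$ is the bar differential of $T(A[1])$ extended by a term that absorbs the last $A$-slot into $M$ through the module action $f = f_0$, plus $\id\otimes d_M$; then $d_{\tbar}^2 = 0$ follows from $d_A^2 = d_M^2 = 0$ and the Leibniz rule for $f$. One sets $l_1 := d_{\tbar}$.

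The heart of the construction is the family $\{l_k\}_{k\ge 2}$. For each $k$ I would take a sum of two kinds of terms: (a) the ``naive'' bracket obtained by transporting the $\bbP_n$-bialgebra bracket of $T(A[1])$ (Fact \ref{fct:pr:Pbalg}) and the Poisson bracket of $M$ onto $T(A[1])\otimes M$ via the shuffle product and the deconcatenation coproduct; and (b) correction terms in which $k-1$ of the arguments are fed, through the higher components $f_{p}$ of the coisotropic datum, against the K\"ahler differentials carried by the $M$-factor and then paired via $\itp$ into $M$. Signs follow the shuffle conventions of Definition \ref{dfn:dga:cup} and Fact \ref{fct:sp:pv}. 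One then checks the axioms of Definition \ref{dfn:sp:whP}: graded symmetry and the Leibniz rule for each $l_k$ are formal once signs are fixed; the $L_\infty$-type identities $\sum_q (-1)^{nq(k-q)}\sum_{\sigma}\sgn(\sigma)^n(-1)^{\ve}\, l_{k-q+1}(l_q(a_{\sigma(1)},\ldots),\ldots) = 0$ unwind, after cancellation of shuffle terms, precisely into the Maurer--Cartan relations among the $f_p$ and $[\pi_M,\pi_M]_S = 0$. Finally one verifies that $c$ intertwines $d_{\tbar}$, $\cdot$ and the $l_k$ with the $\bbP_n$-bialgebra structure on $T(A[1])$, which again reduces to the compatibility equations of $\{f_p\}$.

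The main obstacle is the verification of the higher Jacobi identities for the $l_k$ in part (b): it is a long sign-bookkeeping computation in which each term arising from nesting $l_q$ inside $l_{k-q+1}$ must be matched against a term of the Maurer--Cartan tower of $\{f_p\}$, while the purely $T(A[1])$-part is handled by the Schouten-bracket Jacobi identity of Fact \ref{fct:sp:pv}. This is exactly the content of \cite[Proposition 1.17]{S}, whose proof I would follow; the remaining items are routine Koszul-sign computations.
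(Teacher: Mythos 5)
The paper does not actually prove this statement: it is imported from \cite[Proposition 1.17]{S} (hence stated as a Fact), and what follows it in the text is only the explicit description of the structure $(d_{\tbar},\cdot,\{l_k\},c)$, with no verification of the axioms. Your plan --- unpack the coisotropic datum into the family $\{f_p\}$, build the bar differential with the extra term $[a_1|\cdots|f(a_p)m]$, take the shuffle product combined with the multiplication of $M$, take deconcatenation as the coaction, and reduce the higher Jacobi identities to the Maurer--Cartan relations among the $f_p$ --- is exactly Safronov's argument, and since you end by saying you would follow his proof, at the level of strategy you and the paper are doing the same thing, namely deferring the hard verification to \cite{S}. Those parts of your description that you make explicit ($d_{\tbar}$, the multiplication, the coaction) agree with the structure recorded in the paper.

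The one substantive discrepancy is your part (a) in the construction of the $l_k$. In the asserted structure there are no ``transported'' bracket terms coming from the $\bbP_n$-bialgebra bracket of $T(A[1])$: on generators the only nonvanishing operations are $l_1=d_{\tbar}$, $l_2([m_1],[m_2])=[\{m_1,m_2\}_M]$, and $l_{p+1}([a_1|\cdots|a_q|1_M],[m_1],\ldots,[m_p])=\pm\,[a_1|\cdots|a_{q-1}|f_p(a_q)(m_1,\ldots,m_p)]$, all other operations being zero; the bracket of $T(A[1])$ lives on the bialgebra over which one takes the comodule, not as a summand of the comodule operations. Adding your (a)-type terms would yield a different structure, and compatibility with $d_{\tbar}$ (which contains the $f(a_p)m$ term) would then break down, precisely because $f$ respects brackets only up to the higher homotopies $f_p$ --- absorbing that failure is the whole role of the $f_p$-terms. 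Correspondingly, no ``purely $T(A[1])$ part handled by the Schouten Jacobi identity'' occurs in the verification. With the list of operations corrected, the rest of your outline (reduction of the $L_\infty$ identities to the relations among the $f_p$ and to $[\pi_M,\pi_M]_S=0$) has the right shape and is indeed the content of \cite[Proposition 1.17]{S}.
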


In other words,
$\bigl(T(A[1]) \otimes M,d_{\tbar},\cdot,\{l_k \mid k \in \bbN\}\bigr)$ 
is a $\wh{\bbP}_n$-bialgebra and
\[
 c: T(A[1]) \otimes M \longto T(A[1]) \otimes (T(A[1]) \otimes M)
\]
is a coassociative left coaction map 
which is also a morphism of $\wh{\bbP}_n$-algebras.

As in the explanation of the $\bbP_n$-bialgebra $T(A[1])$,
we denote by $[a_1|\cdots|a_p|m]$ an element of $A^{\otimes p} \otimes M$.
Then the left $\wh{\bbP}_n$-comodule structure is given as follows.
\begin{itemize}[nosep]
\item 
The differential $d_{\tbar}$ is the bar differential:
\begin{align*}
d_{\tbar} [a_1|\cdots|a_p|m]=
&\tsum_{i=1}^p (-1)^{\sum_{j=1}^{i-1}\abs{a_j}+i-1}
 [a_1|\cdots|d_A a_i|\cdots|a_p|m] 
+(-1)^{\sum_{j=1}^p \abs{a_j}+p} [a_1|\cdots|a_p|d_M m] \\
+&\tsum_{i=1}^{p-1} (-1)^{\sum_{j=1}^{i}\abs{a_j}+i}
  [a_1|\cdots|a_i a_{i+1}|\cdots|a_p]
+(-1)^{\sum_{j=1}^p \abs{a_j}+p} [a_1|\cdots|f(a_p)m].
\end{align*}

\item
The multiplication on the component $T(A[1])$ is the shuffle product 
\eqref{eq:pr:shf}, and 
the one on the component $M$ is that of the given cdga structure of $M$.

\item
As for the $L_\infty$-operations $l_p$'s, 
it is enough to give them for the arguments in $T(A[1])$ or in $M$ 
since we have $[a_1|\cdots|a_p|m]=[a_1|\cdots|a_p|1_M]\cdot[m]$,
They are given by
\begin{align}\label{eq:pr:lPc:Li}
\begin{split}
&l_{p+1}([a_1|\cdots|a_q|1_M],[m_1],\cdots,[m_p]) 
=(-1)^{(\sum_{i=1}^q \abs{a_i}+q)(1-n p)}
 [a_1|\cdots|a_{q-1}|f_p(a_q)(m_1,\ldots,m_p)], \\
&l_2([m_1],[m_2])=[\{m_1,m_2\}_M],
\end{split}
\end{align}
and all the other operations are defined to be zero.
Here $f_p: A \to \uHom_M(\Sym(\Omega^1_M[n])(p),M)$ is the map associated to
the coisotropic morphism explained in Remark \ref{rmk:pr:cois},
and we use the notation \eqref{eq:sp:itp} to get 
$f_p(a_q)(m_1,\ldots,m_p) \in M$.

\item
The coaction map $c$ is the one induced by the multiplication on $T(A[1])$.
\end{itemize}

Finally we can explain the outline of the proof of Fact \ref{fct:pr:ci}.
Let $A$ be a $\bbP_{n+1}$-algebra, $L,M$ be a $\bbP_n$-algebra, 
and $A \to L$, $A \to N$ be coisotropic morphisms.
We denote by $\wt{A}:=T(A[1])$ the $\bbP_n$-bialgebra (Fact \ref{fct:pr:Pbalg}),
and by $\wt{N}:=\wt{A} \otimes N$ the left $\wh{\bbP}_n$-comodule over $\wt{A}$
(Fact \ref{fct:pr:lPc}).
In an opposite way, we can construct a right $\wh{\bbP}_n$-comodule structure 
over $\wt{A}$ on $\wt{M}:=M \otimes \wt{A}$.

The tensor product $\wt{M} \otimes \wt{N}$ has a $\wh{\bbP}_n$-structure
induced by those on $\wt{M}$ and $\wt{N}$.
Now consider the cotensor product
\[
 \wt{M} \otimes^{\wt{A}} \wt{N} := \Eq(\wt{M} \otimes \wt{N} 
 \underset{\id \otimes c_N}{\overset{c_M \otimes \id}{\rightrightarrows}} 
 \wt{M} \otimes \wt{A} \otimes \wt{N}),
\]
where $c_M$ and $c_N$ denote coactions on $\wt{M}$ and $\wt{N}$ respectively,
and the equalizer means the strict equalizer in the category $\dgVec$.
Then we can check that the cotensor product is a $\wh{\bbP}_n$-subalgebra 
of $\wt{M} \otimes \wt{N}$.

It is now enough to construct an isomorphism
$\wt{M} \otimes^{\wt{A}} \wt{N} \simeq M \otimes^{\bbL}_A N$ of cdgas.
Note that the coassociativity of the comultiplication 
$\Delta: \wt{A} \to \wt{A} \otimes \wt{A}$ yields an isomorphism 
\[
 \wt{\Delta}: \wt{A} \longsimto \Eq(\wt{A} \otimes \wt{A}
 \underset{\id \otimes \Delta}{\overset{\Delta \otimes \id}{\rightrightarrows}} 
 \wt{A} \otimes \wt{A} \otimes \wt{A})
\]
of cdgas.
Thus we have the desired isomorphism of cdgas:
\[
  M \otimes_A^{\bbL} N = M \otimes \wt{A} \otimes N 
  \xrightarrow[\id \otimes \wt{\Delta} \otimes \id]{\sim}
  \Eq(M \otimes \wt{A}^{\otimes 2} \otimes N \rightrightarrows 
      M \otimes \wt{A}^{\otimes 3} \otimes N) = \wt{M} \otimes^{\wt{A}} \wt{N}.
\]

\subsubsection{Poisson reduction with respect to momentum maps}
\label{sss:pr:pr}

In this part we explain the work of Safronov 
on the reduction of Poisson algebras following \cite[\S 2]{S}.

Let $R$ be a dg Poisson algebra with a Hamiltonian $\frkl$-action 
(Definition \ref{dfn:sp:Ham}),
and denote by $\mu: \frkl \to R$ the associated momentum map.
Then $R$ can be regarded as a dg $\frkl$-module, and the momentum map gives 
a morphism $\mu: \Sym(\frkl) \to R$ of dg Poisson algebras
(see Remark \ref{rmk:sp:mu} \eqref{i:sp:mu:3}).
Thus the functor $\CE(\frkl,-)$ yields a morphism 
\[
 \CE(\frkl,\mu): \CE(\frkl,\Sym(\frkl)) \to \CE(\frkl,R)
\]
in $\dgMod{\frkl}$.
Since $\Sym(\frkl)$ and $R$ are commutative ring objects in $\dgMod{\frkl}$,
$\CE(\frkl,-)$ yields the Chevalley-Eilenberg cdgas (Lemma \ref{lem:dga:CEa}),
and the map $\CE(\frkl,\mu)$ is also a morphism in $\cdga$.

If moreover $\frkl$ is of finite dimension and concentrated in non-positive 
cohomological degrees, then by Remark \ref{rmk:dga:CE} \eqref{i:CE:ism} we have 
\[
 \CE(\frkl,R) \simeq \Sym(\frkl^*[-1]) \otimes R
\]
as cdgas.
Thus, the $1$-Poisson structure on $R$ induces 
a $1$-Poisson structure on $\CE(\frkl,R)$.
On the other hand, as for the Poisson center $Z(\Sym(\frkl))$ 
of the dg Poisson algebra $\Sym(\frkl)$, we have 
\[
 Z(\Sym(\frkl)) \simeq \wh{\CE}(\frkl,\Sym(\frkl)) 
 = \uHom(\wh{\Sym}(\frkl[1]),\Sym(\frkl))
 = \uHom(\Sym(\frkl[1]),\Sym(\frkl)) = \CE(\frkl,\Sym(\frkl)).
\]
Here the first isomorphism is the one in Lemma \ref{lem:pr:Z=C},
and the second equality comes from 
$\wh{\Sym}(\frkl[1])=\wh{\Wedge}(\frkl)=\Wedge(\frkl)=\Sym(\frkl[1])$,
where we used the assumption on $\frkl$.

The present situation is summarized as the following diagram in $\cdga$:
\[
 \xymatrix@C0.5em{
  Z(\Sym(\frkl)) \ar@{=}[r] & \CE(\frkl,\Sym(\frkl)) \ar[rrd]_{\CE(\frkl,\mu)} 
  & & Z(\CE(\frkl,R)) \ar@{->>}[d] \\
  & & & \CE(\frkl,R) \ar@{=}[r] & \Sym(\frkl^*[-1]) \otimes R
 }
\]

We now cite a key fact from \cite{S}.
Note that the cdga $\CE(\frkl,\Sym(\frkl))$ is generated by 
$\CE(\frkl,\bbk)$ and $\frkl$.

\begin{fct}[{\cite[Proposition 2.5]{S}}]\label{fct:pr:CEmu}
Let $\frkl$ be a dg Lie algebra which is of finite dimension and 
concentrated in non-positive cohomological degrees.
Let also $R$ be a dg Poisson algebra with a Hamiltonian $\frkl$-action, and denote
by $\mu: \Sym(\frkl) \to R$ the associated momentum map (of dg Poisson algebras).
Then the morphism
\[
 \CE(\frkl,\mu): \CE(\frkl,\Sym(\frkl)) \longto \CE(\frkl,R)
\]
of cdgas is a coisotropic morphism (Definition \ref{dfn:pr:cois}) with the lifting
\[
 \CE(\frkl,\Sym(\frkl)) \longto Z(\CE(\frkl,R)) \simeq 
 \CE(\frkl,\wh{\Sym}(\bbT_M[-1])\otimes\wh{\Sym}(\frkl))
\]
given by the natural embedding 
$\CE(\frkl,\bbk) \inj \CE(\frkl,\wh{\Sym}(\bbT_M[-1])\otimes\wh{\Sym}(\frkl))$
and $x \mapsto \mu(x)-x$ for $x \in \frkl$.
\end{fct}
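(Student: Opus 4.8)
The plan is to exhibit the coisotropic lift explicitly on a set of generators and then check that it is a well-defined morphism of $\bbP_2$-algebras compatible with $\CE(\frkl,\mu)$. Before doing so I would make the target $Z(\CE(\frkl,R))$ and its $\bbP_2$-structure completely explicit. As recalled just before the statement, the hypotheses on $\frkl$ yield cdga identifications $\CE(\frkl,\Sym(\frkl))\simeq\Sym(\frkl^*[-1])\otimes\Sym(\frkl)$ and $\CE(\frkl,R)\simeq\Sym(\frkl^*[-1])\otimes R$; computing the K\"ahler differentials of the polynomial-type extension $R\to\CE(\frkl,R)$ and unwinding the definition of $\wh{\Pol}(-,0)$ (Definition \ref{dfn:pr:Pc}) then gives
\[
 Z(\CE(\frkl,R))\;\simeq\;\CE\bigl(\frkl,\wh{\Sym}_R(\bbT_R[-1])\otimes\wh{\Sym}(\frkl)\bigr),
\]
where $\bbT_R=\Der(R)$ carries the commutator, $\frkl$ acts on $\bbT_R$ by the Lie derivative induced from $a$ and on $\wh{\Sym}(\frkl)$ by the adjoint action. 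Under this identification: the projection $p_{\CE(\frkl,R)}$ of Definition \ref{dfn:pr:Pc} is the projection onto the weight-$0$ summand $\CE(\frkl,R)$; the Schouten bracket of Fact \ref{fct:sp:pv}, restricted to the weight-$1$ summand $\bbT_R[-1]\oplus\frkl$, is the commutator of derivations on $\bbT_R$ and the Lie bracket on $\frkl$; and the bracket of a weight-$1$ element with a weight-$0$ element is evaluation of the corresponding vector field, which for $x\in\frkl$ is the fundamental derivation $a(x)$ of $R$. I would write all of this out, since the Koszul signs from Fact \ref{fct:sp:pv} and Remark \ref{rmk:dga:CE} have to be tracked.

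Using that $\CE(\frkl,\Sym(\frkl))$ is generated as a cdga by the subalgebra $\CE(\frkl,\bbk)$ together with the copy $\frkl\subset\Sym(\frkl)\subset\CE(\frkl,\Sym(\frkl))$, and that a morphism of $\bbP_2$-algebras out of it is determined by its restriction to such a generating set (product, Schouten bracket and differential all being recovered through Leibniz), I would then \emph{define} the candidate lift $\wt\mu$ on generators: on $\CE(\frkl,\bbk)$ by the canonical embedding $\CE(\frkl,\bbk)\inj Z(\CE(\frkl,R))$, and on $x\in\frkl$ by $x\mapsto\mu(x)-x$, where $\mu(x)\in R$ sits in weight $0$ and $x\in\frkl\subset\wh{\Sym}(\frkl)$ in weight $1$. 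Three verifications remain: (i) $\wt\mu$ intertwines the differentials; (ii) $\wt\mu$ intertwines the Schouten brackets; (iii) $p_{\CE(\frkl,R)}\circ\wt\mu=\CE(\frkl,\mu)$. Item (iii) is immediate, since the weight-$0$ component of $\mu(x)-x$ is $\mu(x)$ and $\CE(\frkl,\mu)(x)=\mu(x)$. For (i), on a generator $x\in\frkl$ one has $d_{\tCE}x=d_{\frkl}x+\sum_i\xi^i[e_i,x]$ in the source (Remark \ref{rmk:dga:CE}) and a parallel expression for $d_{\tCE}(\mu(x)-x)$ in the target whose action part involves $a(e_i)(\mu(x))$; the two match because $\mu$ is a chain map and satisfies $a(e_i)(\mu(x))=\{\mu(e_i),\mu(x)\}_R=\mu([e_i,x])$, which is the momentum map equation (Remark \ref{rmk:sp:mu}) together with $\mu$ being a morphism of dg Lie algebras — and it is in matching these that the sign in $\mu(x)-x$, rather than $\mu(x)+x$, gets pinned down. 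For (ii), by the biderivation property it suffices to check $\{\wt\mu(x),\wt\mu(y)\}_S=\mu([x,y])-[x,y]$ for $x,y\in\frkl$ (the brackets involving the $\CE(\frkl,\bbk)$-generators reducing, after the canonical embedding, to the corresponding identities in the Poisson center, where constant-coefficient cochains are central); expanding $\{\mu(x)-x,\mu(y)-y\}_S$ into four Schouten brackets and evaluating each via the dictionary above — $\{\mu(x),\mu(y)\}_S=\{\mu(x),\mu(y)\}_R=\mu([x,y])$, the two cross terms each a sign times $a(x)(\mu(y))=\mu([x,y])$ by the momentum map equation, and $\{x,y\}_S$ a sign times $[x,y]$ — the copies of $\mu([x,y])$ collapse to a single one while the fundamental-vector-field term contributes $-[x,y]$.

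The main obstacle is the bookkeeping in the two structural steps: first, pinning down the identification of $Z(\CE(\frkl,R))$ and its $\bbP_2$-structure with enough care that the Koszul signs in the Schouten bracket (Fact \ref{fct:sp:pv}) and in $d_{\tCE}$ (Remark \ref{rmk:dga:CE}) are under control on the generators, and second, carrying out the sign-sensitive verification of (i) and (ii). Conceptually there is essentially a single Poisson-geometric input — the momentum map equation $\{\mu(x),r\}_R=a(x)(r)$, equivalently the $\frkl$-equivariance of $\mu$ — and everything else is a matter of making the Chevalley-Eilenberg and Poisson-center formalisms interact consistently.
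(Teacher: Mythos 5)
You are reconstructing a statement that the paper itself does not prove: it is imported verbatim as a Fact from \cite[Proposition 2.5]{S}, so there is no internal proof to compare against. Your skeleton — make $Z(\CE(\frkl,R))\simeq\CE(\frkl,\wh{\Sym}_R(\bbT_R[-1])\otimes\wh{\Sym}(\frkl))$ explicit, define the lift on the generators $\CE(\frkl,\bbk)$ and $x\mapsto\mu(x)-x$, then check compatibility with the differential, the bracket and the projection — is indeed the route of Safronov's direct verification. But the execution of your step (ii) is wrong, and the error matters because you locate the entire momentum-map input there.

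In the Poisson center as defined in Definition \ref{dfn:pr:Pc}, the bracket is the Schouten bracket of Fact \ref{fct:sp:pv}: it lowers polyvector weight by one and has cohomological degree $-1$. Hence for the weight-zero elements $\mu(x),\mu(y)\in R\subset\CE(\frkl,R)$ one has $\{\mu(x),\mu(y)\}_S=0$, not $\{\mu(x),\mu(y)\}_R$ (your identity is also degree-inconsistent: the left-hand side has degree $\abs{x}+\abs{y}-1$, the right-hand side $\abs{x}+\abs{y}$). Likewise, under the identification above the weight-one generator $x\in\frkl$ corresponds to the vertical derivation of $\Sym(\frkl^*[-1])\otimes R$ dual to $\frkl^*[-1]$ (a contraction $\iota_x$ annihilating $R$), not to the fundamental derivation $a(x)$; so the cross terms $\{x,\mu(y)\}_S$ vanish, and $\{x,y\}_S=0$ because two constant vertical derivations commute. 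The correct bracket check on the $\frkl$-generators is therefore the trivial identity $0=0$; the only nontrivial bracket identities involve the pairings $\{\xi,x\}$ with $\xi\in\frkl^*[-1]$, and these (together with $\{\xi,\mu(x)\}_S=0$) are one place where the sign in $\mu(x)-x$ is actually fixed. The genuine content — the momentum map equation \eqref{eq:sp:mu} — enters instead in the compatibility of differentials, which your step (i) does not fully account for: the differential of the Poisson center is twisted by the Poisson structure of $\CE(\frkl,R)$ (this twist is exactly what makes Lemma \ref{lem:pr:Z=C} identify $Z(\Sym(\frkl))$ with the Chevalley-Eilenberg cdga), so $d_{Z}\mu(x)$ contains the Hamiltonian vector field $\{\mu(x),-\}_R\in\bbT_R[-1]$, while $d_{Z}(-x)$ contains $-[d_{\tCE},\iota_x]$, whose $\bbT_R$-component is $-a(x)$; the required cancellation of these two terms is precisely \eqref{eq:sp:mu}, and it is this cancellation (not the matching of Chevalley-Eilenberg equivariance terms you describe) that pins down $\mu(x)-x$ rather than $\mu(x)+x$. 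As written, your (i) records only the equivariance of $\mu$ and your (ii) attributes to the Schouten bracket identities it does not satisfy, so the verification does not close without this repair.
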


In particular, we can apply this claim to the trivial $\frkl$-module $R=\bbk$ 
by Remark \ref{rmk:sp:mu} \eqref{i:sp:mu:2}, and find that the morphism 
\[
 \CE(\frkl,0): \CE(\frkl,\Sym(\frkl)) \to \CE(\frkl,\bbk)
\]
is coisotropic.
Given another dg Poisson algebra $R$ with momentum map $\mu$, 
we can then apply Fact \ref{fct:pr:ci} to the coisotropic morphisms
$\CE(\frkl,\mu)$ and $\CE(\frkl,0)$ to obtain
a $\wh{\bbP}_1$-algebra structure on the complex 
\[
 \CE(\frkl,\bbk) 
 \tbotimes^{\bbL}_{\CE(\frkl,0),\CE(\frkl,\Sym(\frkl)),\CE(\frkl,\mu)} 
 \CE(\frkl,R).
\]

\begin{dfn}\label{dfn:pr:dhr}
Let $\frkl$, $R$ and $\mu$ be as in Fact \ref{fct:pr:CEmu}.
The $\wh{\bbP}_1$-algebra obtained above is denoted by 
\[ 
 R \gq^{\bbL}_{\mu} \Sym(\frkl) := 
 \CE(\frkl,\bbk) \otimes^{\bbL}_{\CE(\frkl,\Sym(\frkl))} \CE(\frkl,R),
\]
and is called the \emph{derived Hamiltonian reduction} of $R$ 
with respect to the momentum map $\mu$.
\end{dfn}

Recall now the non-degeneracy of shifted Poisson structure 
(Definition \ref{dfn:sp:nd} \eqref{i:sp:nd:nd}).
We have a relative analogue:
A coisotropic morphism $f: A \to M$ from a $\bbP_{n+1}$-algebra $A$ to 
a $\bbP_n$-algebra $M$ is called \emph{non-degenerate}
if the Poisson structure on $A$ is non-degenerate
and the induced morphism $\bbL_{M/A} \to \bbT_A[-n]$ is an equivalence,
where $\bbL_{M/A}$ is the relative cotangent complex.

\begin{lem}\label{lem:pr:int}
Let $\frkl$, $R$ and $\mu: \Sym(\frkl) \to R$ be as in Fact \ref{fct:pr:CEmu},
and assume that the Poisson structure on $R$ is non-degenerate.
\begin{enumerate}[nosep]
\item 
\label{i:pr:int:mu}
The coisotropic morphism $\CE(\frkl,\mu):\CE(\frkl,\Sym(\frkl)) \to \CE(\frkl,R)$
is non-degenerate.

\item
\label{i:pr:int:nd}
The Poisson structure on the derived Hamiltonian reduction 
$R \gq^{\bbL}_{\mu} \Sym(\frkl)$ is non-degenerate.
\end{enumerate}
\end{lem}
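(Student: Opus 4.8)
The plan is to deduce both assertions from the shifted-symplectic formalism, combining the explicit description of the relevant coisotropic morphism provided by Fact~\ref{fct:pr:CEmu} with the Lagrangian-intersection theorem of \cite{CPTVV, S}.

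\emph{Part~\eqref{i:pr:int:mu}.} Recall that a coisotropic morphism $f\colon A\to M$ from a $\bbP_2$-algebra $A$ to a $\bbP_1$-algebra $M$ is non-degenerate when the Poisson structure on $A$ is non-degenerate and the comparison morphism $\bbL_{M/A}\to\bbT_A[-1]$ is a quasi-isomorphism; here $A=\CE(\frkl,\Sym(\frkl))$ and $M=\CE(\frkl,R)$. For the first condition I would use Lemma~\ref{lem:pr:Z=C} to identify $A$ with the Poisson center $Z(\Sym(\frkl))$ and, since $\frkl$ is finite-dimensional with trivial differential, with $\Sym(\frkl^{*}[-1]\oplus\frkl)$ equipped with its Schouten bracket --- that is, with the coordinate cdga of the $1$-shifted cotangent bundle $T^{*}[1]B\frkl$ of the (formal) classifying stack of $\frkl$. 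Every shifted cotangent bundle is shifted symplectic \cite{PTVV, CPTVV}, so this $\bbP_2$-structure is non-degenerate; alternatively, the defining quasi-isomorphism can be checked directly from the generators-and-relations presentation of $A$. For the second condition I would unwind the explicit lifting $\wt{f}\colon A\to Z(M)$ of Fact~\ref{fct:pr:CEmu}, which is the canonical inclusion on $\CE(\frkl,\bbk)$ and sends $x\mapsto\mu(x)-x$ for $x\in\frkl$: this identifies the morphism dual to $f$ with the stacky momentum map $[\Spec R/\frkl]\to T^{*}[1]B\frkl$. Computing $\bbL_{M/A}$ from the transitivity triangle $M\otimes^{\bbL}_{A}\bbL_A\to\bbL_M\to\bbL_{M/A}$, one sees that the comparison $\bbL_{M/A}\to\bbT_A[-1]$ becomes an equivalence exactly once one invokes the non-degeneracy $\bbL_R\simeq\bbT_R[-1]$ of $R$. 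In other words, a momentum map always yields a \emph{Lagrangian}, not merely coisotropic, morphism to $T^{*}[1]B\frkl$; this is the affine counterpart of the constructions of Calaque \cite{C1, C2} and Safronov \cite{S}.

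\emph{Part~\eqref{i:pr:int:nd}.} By Definition~\ref{dfn:pr:dhr}, $R\gq^{\bbL}_{\mu}\Sym(\frkl)=\CE(\frkl,\bbk)\tbotimes^{\bbL}_{\CE(\frkl,\Sym(\frkl))}\CE(\frkl,R)$ is the derived intersection, inside the $1$-shifted symplectic cdga $\CE(\frkl,\Sym(\frkl))$, of the two coisotropic morphisms $\CE(\frkl,0)$ and $\CE(\frkl,\mu)$. Part~\eqref{i:pr:int:mu} applied to $\mu$ shows that $\CE(\frkl,\mu)$ is non-degenerate, and applied to the trivial momentum map $R=\bbk$ --- for which the non-degeneracy hypothesis is vacuous, since $\bbL_{\bbk}=\bbT_{\bbk}=0$ --- shows that $\CE(\frkl,0)$ is non-degenerate; hence both morphisms are Lagrangian. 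The Lagrangian-intersection theorem in its affine, coisotropic form (\cite[\S 3.2]{CPTVV}, \cite{S}) then endows the derived intersection with a $0$-shifted symplectic --- that is, non-degenerate homotopy Poisson --- structure, which is by construction the $\wh{\bbP}_1$-structure produced in Fact~\ref{fct:pr:ci}. Passing to cohomology gives the non-degeneracy in the sense of Definition~\ref{dfn:sp:nd}.

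The single step that is not purely formal is the cotangent-complex bookkeeping in Part~\eqref{i:pr:int:mu}: tracking the $\frkl$- and $\frkl^{*}[-1]$-summands, the Chevalley--Eilenberg twists of the differentials, and the various degree shifts, so that the comparison $\bbL_{M/A}\to\bbT_A[-1]$ is literally the morphism induced by the coisotropic structure and is literally a quasi-isomorphism once the non-degeneracy of $R$ is used. Everything else is a direct application of the shifted-symplectic machinery of \cite{PTVV, CPTVV, S}.
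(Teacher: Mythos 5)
Your proposal is correct and takes essentially the same route as the paper: part (2) is exactly the paper's argument --- pass from non-degenerate coisotropic morphisms to Lagrangian structures (the Melani--Safronov equivalence) and apply the Lagrangian-intersection theorem of Calaque/PTVV --- while for part (1) the paper simply defers to the standard non-derived computation, which is precisely the cotangent-complex bookkeeping you sketch via the identification of $\CE(\frkl,\Sym(\frkl))$ with functions on $T^{*}[1]B\frkl$. Your explicit treatment of the second leg $\CE(\frkl,0)$ (non-degenerate vacuously since $\bbL_{\bbk}=\bbT_{\bbk}=0$) spells out a step the paper leaves implicit, but does not change the argument.
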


\begin{proof}
\begin{enumerate}[nosep]
\item
The non-derived case is standard, and the same proof works.

\item
By \cite[Theorem 4.23]{MS}, a coisotropic structure is equivalent to 
a \emph{Lagrangian structure} between shifted symplectic structures.
The derived intersection of Lagrangians is also a Lagrangian
by \cite[Theorem 3.1]{C2}. 
Combining these results and \eqref{i:pr:int:mu}, we have the consequence.
\end{enumerate}
\end{proof}

\subsection{Classical BRST complex}
\label{ss:pr:BRST}

In this subsection we introduce the classical BRST complex 
for a dg Poisson algebra, and explain the relation to the Poisson reduction 
revealed by Safronov \cite[\S 2.3]{S}.
We continue to work over a field $\bbk$ with characteristics $0$.

We begin with the recollection on the \emph{classical Clifford algebra}.
First we give an implicit definition following \cite[1.4.18, Examples (iii)]{BD}.
\begin{enumerate}[nosep]
\item 
\label{i:pr:cCl}
Let $\frkl$ be a dg Lie algebra, and 
$\frkl^\flat=\frkl \oplus \bbk$ be a one-dimensional central extension. 
We define the \emph{twisted symmetric algebra} $\Sym^\flat(\frkl)$ to be 
the quotient of the symmetric algebra $\Sym(\frkl^\flat)$ by the ideal generated
by the difference of embeddings  
$\bbk = \Sym(\frkl^\flat)^0 \inj \Sym(\frkl^\flat)$ and
$\bbk \inj \frkl^\flat=\Sym(\frkl^\flat)^1 \inj \Sym(\frkl^\flat)$.
This quotient inherits the Poisson bracket from 
the Kirillov-Kostant bracket on $\Sym(\frkl^\flat)$ (\S \ref{sss:sp:KK}).
Thus we obtain a dg Poisson algebra $\Sym^\flat(\frkl)$.

\item
Let $U$ be a complex.
We denote by $U^* = \uHom(U,\bbk)$ the dual complex, and by 
$\pair{\cdot,\cdot}: U^* \otimes U \to \bbk$ the natural pairing.
Regard $U[1] \oplus U^*[-1]$ as a commutative dg Lie algebra,
and let $(U[1] \oplus U^*[-1])^\flat$ be the one-dimensional central extension
determined by the pairing $\pair{\cdot,\cdot}$.
Now we apply the construction \eqref{i:pr:cCl} to $(U[1] \oplus U^*[-1])^\flat$,
and obtain a dg Poisson algebra $\Sym^\flat(U[1] \oplus U^*[-1])$.
\end{enumerate}

\begin{dfn}\label{dfn:pr:cCl}
For a complex $U$, we denote the above dg Poisson algebra by
\[
 \cCl(U) := \Sym^\flat(U[1] \oplus U^*[-1])
\]
and call it the \emph{classical Clifford algebra}.
\end{dfn}

On the naming we refer Remark \ref{rmk:li:coCl}.

The classical Clifford algebra $\cCl(U)$ is explicitly described as follows.
\begin{itemize}[nosep]
\item
As a commutative dg algebra, we have 
$\cCl(U) = \Sym(U[1] \oplus U^*[-1])$.

\item
Taking a linear basis $\{u_i \mid i \in I\}$ of $U$ and its dual basis of $U^*$,
we denote the corresponding algebra generators by 
\[
 \ol{\psi}_i   \in U  [1 ], \quad 
 \ol{\psi}^*_i \in U^*[-1]  \quad (i \in I).
\] 
The Poisson bracket is determined by the Leibniz rule and 
\[
 \{\ol{\psi}_i,\ol{\psi}_j\} = 0 = \{\ol{\psi}^*_i,\ol{\psi}^*_j\}, \quad 
 \{\ol{\psi}_i,\ol{\psi}^*_j\} = \delta_{i,j} \quad (i,j \in I).
\]

\item
$\cCl(U)$ has an extra $\bbZ$-grading, called the \emph{charge grading},
given by 
\[
 \chg(\ol{\psi}_i) = -1, \quad \chg(\ol{\psi}^*_i) =1.
\]
\end{itemize}

If $U$ is concentrated in cohomological degree $0$,
then we recover the standard definition
\[
 \cCl(U) \simeq \Wedge(U) \otimes \Wedge(U^*)
\]
as the tensor product of the exterior algebras.
The charge grading is given by
\[
 \cCl^n(U) = 
 \tboplus_{\substack{p, q \in \bbN \\ -p+q=n}}
 \Wedge^{p}(U) \otimes \Wedge^{q}(U^*).
\]

Next we introduce the classical BRST complex.
Following \cite[1.4.23--1.4.25]{BD},
we give a construction in the next lemma.
We can check the statements by direct computation, and omit the proofs.

\begin{lem}\label{lem:pr:cBRST}
Let $\frkl$ be a dg Lie algebra, and $R$ be a dg Poisson algebra 
equipped with a Hamiltonian $\frkl$-action (Definition \ref{dfn:sp:Ham}). 
The momentum map is denoted by $\mu: \frkl \to R$.
\begin{enumerate}[nosep]
\item
The adjoint action of $\frkl$ on itself yields a morphism of dg Lie algebras
\[
 \beta: \frkl \longto \frkl \otimes \frkl^* \longsimto \frkl[1] \otimes \frkl^*[-1] 
 \longinj \cCl(\frkl)^0.
\]

\item
Recall the contractible dg Lie algebra $\frkl_\dagger$ (\S \ref{sss:dga:CE}).
Let
\[
 \ell: \frkl_{\dagger} \longto \cCl(\frkl) \otimes R
\]
be the morphism of graded linear spaces given by
\[
 \ell^{0}:=1 \otimes \mu + \beta \otimes 1: \frkl \longto \cCl(\frkl)^0 \otimes R, 
 \quad
 \ell^{-1}: \frkl[1] \longinj \cCl(\frkl)^{-1} \longto \cCl(\frkl) \otimes R.
\]
Then $\ell$ is a morphism of graded Lie algebras.

\item
We define the following elements.
\begin{itemize}[nosep]
\item
$\wt{\mu} \in \frkl^* \otimes R \subset (\cCl(\frkl) \otimes R)^1$ 
is the element corresponding to $\mu$.
\item 
$\beta'  \in \frkl^* \otimes \cCl(\frkl)^0$  is the element corresponding to $\beta$.
\item
$\beta'' \in \cCl(\frkl)^1$ is the image of $\beta'$
by the product map $\frkl^*[-1] \otimes \cCl(\frkl) \to \cCl(\frkl)$.
\item
$\wt{\beta} \in (\cCl(\frkl) \otimes R)^1$ is the image of $\beta''$
by $\cCl(\frkl) \to \cCl(\frkl) \otimes R$.
\end{itemize}
Then the \emph{classical BRST charge} 
\[
 \ol{Q} := \wt{\mu}+\tfrac{1}{2}\wt{\beta} \in (\cCl(\frkl) \otimes R)^1
\]
satisfies $\{\ol{Q},\ol{Q}\}=0$,
where $\{-,-\}$ denotes the Poisson bracket of the tensor product
$\cCl(\frkl) \otimes R$ of graded Poisson algebras.
\end{enumerate}
\end{lem}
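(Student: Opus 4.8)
The plan is to establish the three assertions in turn; parts (1) and (2) are essentially bookkeeping, while part (3), the vanishing $\{\ol{Q},\ol{Q}\}=0$, carries the content. For (1), I would start from the fact that the adjoint representation is a morphism of dg Lie algebras $\ad\colon\frkl\to\uEnd(\frkl)$ (Example \ref{eg:dga:dgla:uEnd}), and reduce the claim to showing that the inclusion $\iota\colon\uEnd(\frkl)\simeq\frkl[1]\otimes\frkl^*[-1]\inj\cCl(\frkl)^0$ intertwines the commutator bracket with the Poisson bracket of $\cCl(\frkl)$ and respects the differentials. Writing $\{u_i\}$ for a basis of $\frkl$ and $\ol{\psi}_i\in\frkl[1]$, $\ol{\psi}^*_i\in\frkl^*[-1]$ for the corresponding generators as in Definition \ref{dfn:pr:cCl}, this is checked on the quadratic generators $\ol{\psi}_i\ol{\psi}^*_j$ using the Leibniz rule together with $\{\ol{\psi}_i,\ol{\psi}^*_j\}=\delta_{i,j}$ and $\{\ol{\psi}_i,\ol{\psi}_j\}=\{\ol{\psi}^*_i,\ol{\psi}^*_j\}=0$; it is the standard realization of $\mathfrak{gl}(\frkl)$ inside the classical Clifford algebra, and compatibility with $d_{\cCl(\frkl)}$ follows from $d_{\frkl}$ being a derivation of $[-,-]_{\frkl}$ and of the canonical pairing. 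Composing with $\ad$ yields $\{\beta(x),\beta(y)\}=\beta([x,y])$.

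For (2), I would first observe that in the tensor product $\cCl(\frkl)\otimes R$ of $\bbP_1$-algebras an element coming purely from $\cCl(\frkl)$ Poisson-commutes with an element coming purely from $R$, so $\{1\otimes r,\,a\otimes 1\}=0$. Combining this with (1) and with the identity $\{\mu(x),\mu(y)\}_R=\mu([x,y])$---which is exactly the statement that $\mu$ is a morphism of dg Lie algebras (Definition \ref{dfn:sp:Ham})---one obtains, for $x,y\in\frkl$,
\[
 \{\ell^{0}(x),\ell^{0}(y)\}=1\otimes\mu([x,y])+\beta([x,y])\otimes 1=\ell^{0}([x,y]),
\]
the cross terms vanishing by the Poisson-commutativity just noted. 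The remaining brackets in $\frkl_{\dagger}=\frkl[1]\oplus\frkl$, with $[(x,y),(x',y')]=([x,y']+[y,x'],[y,y'])$, are handled similarly: two elements of $\frkl[1]$ bracket to zero, and their images $\ol{\psi}_i\otimes 1$ under $\ell^{-1}$ Poisson-commute; while $\{\ell^{0}(y),\ell^{-1}(u_j)\}=\{\beta(y),\ol{\psi}_j\}_{\cCl}\otimes 1=\ol{\psi}_{[y,u_j]}\otimes 1$, which is $\ell^{-1}$ applied to the $\frkl[1]$-component of the relevant $\frkl_{\dagger}$-bracket. Hence $\ell$ is a morphism of graded Lie algebras.

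For (3), the strategy is to expand, using that the $1$-Poisson bracket is symmetric on cohomological degree $1$,
\[
 \{\ol{Q},\ol{Q}\}=\{\wt{\mu},\wt{\mu}\}+\{\wt{\mu},\wt{\beta}\}+\tfrac14\{\wt{\beta},\wt{\beta}\},
\]
and to sort the right-hand side by its monomial type in the Clifford generators. Writing $\wt{\mu}=\sum_i\ol{\psi}^*_i\otimes\mu(u_i)$ and recalling that $\wt{\beta}$ is the image of the cubic element $\beta''\in\cCl(\frkl)^1$ assembled from the structure constants of $\frkl$: the term $\{\wt{\mu},\wt{\mu}\}$ involves only the $R$-bracket (the $\cCl(\frkl)$-part vanishes since $\{\ol{\psi}^*_i,\ol{\psi}^*_j\}=0$), and by equivariance of $\mu$ it equals $\sum_{i,j}\pm\,\ol{\psi}^*_i\ol{\psi}^*_j\otimes\mu([u_i,u_j])$; the term $\{\wt{\mu},\wt{\beta}\}$, obtained by contracting the single $\ol{\psi}$ inside $\beta''$ against an $\ol{\psi}^*$ of $\wt{\mu}$, produces the same expression with the opposite sign, so these two terms cancel. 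The last term $\tfrac14\{\wt{\beta},\wt{\beta}\}$ is a purely ``ghost'' expression, cubic in the $\ol{\psi}^*$ and linear in the $\ol{\psi}$, whose vanishing is precisely the Jacobi identity of $\frkl$ expressed through the computation of part (1). Therefore $\{\ol{Q},\ol{Q}\}=0$.

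The main obstacle will be the sign bookkeeping in part (3): one must keep track of the Koszul signs coming from the shifts $[1]$ and $[-1]$ in $\cCl(\frkl)=\Sym^\flat(\frkl[1]\oplus\frkl^*[-1])$, of the charge grading, and of the dg $\frkl$-module structure on $R$, and verify that the two groups of terms cancel with exactly the right relative signs. Conceptually there is nothing beyond the classical Kostant--Sternberg identity \cite{KS} (see also \cite[1.4.23--1.4.25]{BD}): $\{\ol{Q},\ol{Q}\}=0$ is the associated-graded shadow of the vanishing $d_{\cl}^{2}=0$ of the vertex BRST complex, so one could alternatively deduce it by degeneration from the chiral statement of \S\ref{s:ch}; but I would prefer the self-contained direct verification outlined above.
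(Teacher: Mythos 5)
Your outline is correct and takes essentially the same route as the paper: the paper omits the proof, declaring the lemma a direct computation following \cite[1.4.23--1.4.25]{BD} (i.e.\ the Kostant--Sternberg computation \cite{KS} transported to the dg/graded setting), and your three steps --- the $\mathfrak{gl}$-realization inside $\cCl(\frkl)$, the vanishing of cross brackets in the tensor product, and the cancellation of $\{\wt{\mu},\wt{\mu}\}$ against $\{\wt{\mu},\wt{\beta}\}$ with $\{\wt{\beta},\wt{\beta}\}$ killed by Jacobi --- are exactly that computation. The Koszul-sign bookkeeping you flag is indeed the only remaining work.
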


Now we can introduce:

\begin{dfn}\label{dfn:pr:cBRST}
The \emph{classical BRST complex} $\cBRST(\frkl,R,\mu)$ is the dg Poisson algebra 
which is the tensor product
\[
 \cBRST(\frkl,R,\mu) := \cCl(\frkl) \otimes R
\]
as a graded Poisson algebra and the differential is given by
$d_{\cl}:=d_{\cCl(\frkl) \otimes R}+\{\ol{Q},-\}$, where the first term is
the differential of the tensor product of complexes.
The cohomology is denoted by
\[
 H^{\sinf+\bl}_{\cl}(\frkl,R,\mu) := H^{\bl}\cBRST(\frkl,R,\mu),
\]
which is a graded Poisson algebra.
\end{dfn}

Let us recall some properties of the classical BRST complex
following \cite[1.4.26]{BD}.
We omit the proofs.

\begin{lem}\label{lem:pr:BRST2}
Let $(\frkl,R,\mu)$ be the same as in Lemma \ref{lem:pr:cBRST}.
We also use the symbol $\ell$ there.
\begin{enumerate}[nosep]
\item 
$\ell: \frkl_{\dagger} \to \cCl(\frkl) \otimes R$ 
is a morphism of dg Lie algebras.

\item
We have $[\ol{Q},\ell^{-1}]=\ell^0$.

\item
Let $J$ be the dg ideal of $\cBRST(\frkl,R,\mu)$ generated by $\ol{\psi}^*_i$'s.
Then the morphism 
$\Sym(\frkl^*[-1]) \otimes R \to \cCl(\frkl) \otimes R/J$
of graded algebras is surjective,
and the kernel is the ideal generated by $\mu(\frkl)$.

\item 
Denoting $\ol{R}:=R/\mu(\frkl)R$, 
we have $\cBRST(\frkl,R,\mu)/J \simeq \ol{R} \otimes \Sym(\frkl^*[-1])$ as cdgas.

\item
The differential on $\cBRST(\frkl,R,\mu)/J$ induced by $d_{\cl}$ 
coincides with the Chevalley-Eilenberg differential on 
$\ol{R} \otimes \Sym(\frkl^*[-1]) \simeq \CE(\frkl,\ol{R})$,
where we regard $\ol{R}$ as a $\frkl$-module in terms of the action induced by 
$\mu$ and the Poisson bracket.
Thus we have the surjection of cdgas
\[
 \cBRST(\frkl,R,\mu) \longsurj \CE(\frkl,\ol{R}).
\]

\item
On the zero-th cohomology we have a morphism
$H^{\sinf+0}_{\cl}(\frkl,R,\mu) \to \ol{R}^\frkl$
of commutative algebras,
where $\ol{R}^\frkl$ denotes the $\frkl$-invariant part of $\ol{R}$.
Moreover, it is a morphism of Poisson algebras, where $\ol{R}^\frkl$ is 
regarded as a Poisson algebra with the induced Poisson structure from $R$.
\end{enumerate}
\end{lem}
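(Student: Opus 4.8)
The plan is to verify the six assertions in turn, carrying out (1)--(5) as explicit computations inside the graded Poisson algebra $\cCl(\frkl)\otimes R$ equipped with the differential $d_{\cl}=d_{\cCl(\frkl)\otimes R}+\{\ol{Q},-\}$, following \cite[1.4.26]{BD}, and then deducing (6) from the cdga surjection produced in (5). Throughout I would fix a homogeneous basis $\{e_i\}$ of $\frkl$, write $\ol{\psi}_i,\ol{\psi}^*_i$ for the corresponding generators of $\cCl(\frkl)$, and use the relations $\{\ol{\psi}_i,\ol{\psi}^*_j\}=\delta_{i,j}$, $\{\ol{\psi}_i,\ol{\psi}_j\}=0=\{\ol{\psi}^*_i,\ol{\psi}^*_j\}$ together with the Leibniz rule and the fact that $R$ and $\cCl(\frkl)$ Poisson-commute; the only non-formal inputs are the explicit shape of the BRST charge $\ol{Q}=\wt\mu+\tfrac12\wt\beta$ and of the morphism $\ell$ from Lemma \ref{lem:pr:cBRST}.

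For (2) I would compute $[\ol{Q},\ell^{-1}(x)]$ for $x\in\frkl$ directly: pairing $\wt\mu=\sum_i\ol{\psi}^*_i\otimes\mu(e_i)$ with $\ell^{-1}(x)\in\frkl[1]$ via $\{\ol{\psi}^*_i,\ol{\psi}_j\}=\delta_{i,j}$ yields the $R$-term $1\otimes\mu(x)$, while pairing $\tfrac12\wt\beta$, which carries the adjoint structure constants, yields the term $\beta(x)\otimes1$, so the sum is $\ell^0(x)=1\otimes\mu(x)+\beta(x)\otimes1$. Given (2), assertion (1) reduces to compatibility of $\ell$ with differentials, since Lemma \ref{lem:pr:cBRST}(2) already gives $\ell$ as a morphism of graded Lie algebras: on $\frkl[1]\subset\frkl_\dagger$ the cone differential sends $x\mapsto(-d_\frkl x,x)$, whose image under $\ell$ is $\ell^{-1}(-d_\frkl x)+\ell^0(x)$, matching $d_{\cl}\ell^{-1}(x)$ because $d_{\cCl(\frkl)\otimes R}$ restricted to $\frkl[1]\subset\cCl(\frkl)^{-1}$ contributes the first summand and $\{\ol{Q},\ell^{-1}(x)\}=\ell^0(x)$ contributes the second; on the copy $\frkl\subset\frkl_\dagger$ it is the statement that $d_{\cl}$ kills $\ell^0(\frkl)$ modulo the differential of $\frkl$, again a short check.

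For (3)--(5) I would evaluate $d_{\cl}$ on the $\cCl(\frkl)$-generators lying in the dg ideal $J$ modulo $J$: computing $d_{\cl}\ol{\psi}_i=\ell^0(e_i)+(\text{a }d_\frkl\text{-term})$ and observing that the $\beta$-part of $\ell^0(e_i)$ lies in $J$, one finds that $J$ also contains $\mu(\frkl)$, so that the obvious map $\Sym(\frkl^*[-1])\otimes R\to\cCl(\frkl)\otimes R/J$ is surjective with kernel the ideal generated by $\mu(\frkl)$ and $\cBRST(\frkl,R,\mu)/J\simeq\Sym(\frkl^*[-1])\otimes\ol R$ as graded algebras with $\ol R=R/\mu(\frkl)R$; this is (3) and (4). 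Then I would trace the induced differential term by term -- the $\tfrac12\wt\beta$-bracket reproduces the structure-constant part and the $\wt\mu$-bracket reproduces the part coming from the action $x.r=\{\mu(x),r\}$ -- to see that it is the Chevalley--Eilenberg differential, so the isomorphism upgrades to one of cdgas, $\cBRST(\frkl,R,\mu)/J\simeq\CE(\frkl,\ol R)$, after writing $\CE(\frkl,\ol R)\simeq\Sym(\frkl^*[-1])\otimes\ol R$ via Remark \ref{rmk:dga:CE} \eqref{i:CE:ism}; this is (5).

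Finally (6): the cdga surjection $\phi\colon\cBRST(\frkl,R,\mu)\longsurj\CE(\frkl,\ol R)$ of (5) induces a ring homomorphism $H^{\sinf+0}_{\cl}(\frkl,R,\mu)\to H^0\CE(\frkl,\ol R)$ with image in the invariants $\ol R^\frkl$. To see it is Poisson I would filter $\cCl(\frkl)\otimes R$ by antighost number (the number of $\ol{\psi}_i$-factors): a single Poisson bracket neither raises this number nor lowers it by more than one, and the bracket coming from $R$ preserves it, so for $d_{\cl}$-closed charge-zero $\alpha,\beta$ with antighost-number-zero components $\alpha_0,\beta_0\in R$ the antighost-number-zero component of $\{\alpha,\beta\}$ is exactly $\{\alpha_0,\beta_0\}_R$; moreover the lowest-filtration part of the equation $d_{\cl}\alpha=0$ forces $\{\mu(x),\alpha_0\}\in I:=\mu(\frkl)R$ for all $x\in\frkl$, so $\alpha_0$ lies in the Poisson normalizer of the coisotropic ideal $I$ (coisotropic since $\{\mu(x),\mu(y)\}=\mu([x,y])\in I$) and $\phi(\alpha)$ is its class in $\ol R^\frkl=N(I)/I$. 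Hence $\phi(\{\alpha,\beta\})=\{\alpha_0,\beta_0\}_R\bmod I=\{\phi(\alpha),\phi(\beta)\}$, the last bracket being the one induced on $N(I)/I$ from $R$. I expect the main obstacle to be the sign bookkeeping in (3)--(5) -- reconciling the Koszul conventions of Section \ref{ss:sp:dg} across $d_{\cCl(\frkl)\otimes R}$, the bracket with $\ol{Q}$, and the Chevalley--Eilenberg differential -- so that the identification in (5) is genuinely an isomorphism of cdgas rather than merely of graded algebras; once that is secured, (6) follows from the short filtration argument above.
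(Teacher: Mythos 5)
Your proposal is correct and coincides with the argument the paper itself omits (it only cites \cite[1.4.26]{BD}): the direct verification of (1)--(5) from the explicit form of $\ol{Q}$ and $\ell$ given in Lemmas \ref{lem:pr:cBRST} and \ref{lem:pr:olQ}, followed by the antighost-number filtration argument for (6), is exactly the intended computation. Note only that in (3)--(5) you have tacitly taken $J$ to be the dg ideal generated by the $\ol{\psi}_i$ (the generators coming from $\frkl[1]$), which is the reading the statement requires --- with the $\ol{\psi}^*_i$ as literally printed the dg ideal would be the ordinary one and the quotient would be the Koszul side $\Sym(\frkl[1])\otimes R$, so (3) would fail --- hence your computation of $d_{\cl}\ol{\psi}_i$ is the right one and silently corrects an apparent typo in the statement.
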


The following lemma can be checked by direct computation.
It recovers the original definition by Kostant and Sternberg \cite{KS}
in the finite-dimensional  case.

\begin{lem}\label{lem:pr:olQ}
Assume that the dg Lie algebra $\frkl$ is of finite dimension.
Choose a linear basis $\{x_i \mid i \in I\}$ of $\frkl$,
and let $\ol{\psi}_i \in \frkl[1] \subset \cCl(\frkl)$, 
$\ol{\psi}^*_i \in \frkl^*[-1] \subset \cCl(\frkl)$
be as before.
\begin{enumerate}[nosep]
\item
The classical BRST charge is expressed as 
\[
 \ol{Q} = 
 \sum_{i \in I} 
 \mu(x_i) \otimes \ol{\psi}^*_i 
-\frac{1}{2} \sum_{i,j,k \in I} c_{i j}^k
  \otimes  \ol{\psi}^*_i \ol{\psi}^*_j \ol{\psi}_k.
\]
Here 
$c_{i j}^k$ denotes the structure constant of $\frkl$: 
$[x_i,x_j]=\sum_{i,j,k \in I}c_{i j}^k x_k$.

\item 
For $r \in R$ we have
$\{\ol{Q},r \otimes 1\}=\sum_i \{\mu(x_i),r\}_R \otimes\ol{\psi}^*_i$.
\item
For $\eta =\sum_k \eta_k \ol{\psi}_k^* \in \Wedge^1(\frkl^*)$ we have
$\{\ol{Q},1 \otimes \eta\}=
-\frac{1}{2}\sum_{i,j,k}\eta_k c^k_{i j}\ol{\psi}^*_i \ol{\psi}^*_j$,
\item
For $y=\sum_i y_i \ol{\psi}_i \in \Wedge^1(\frkl)$ we have
$\{\ol{Q},1 \otimes y\}=\sum_i y_i \mu(x_i)+
\sum_{i,j,k} c^k_{i j} y_j \ol{\psi}^*_i \ol{\psi}_k$,
\end{enumerate}
\end{lem}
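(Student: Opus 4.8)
The plan is to verify the four identities by straightforward computation in the graded Poisson algebra $\cBRST(\frkl,R,\mu) = \cCl(\frkl) \otimes R$, starting from the explicit description of the classical BRST charge $\ol{Q} = \wt\mu + \tfrac12 \wt\beta$ given in Lemma \ref{lem:pr:cBRST}, and using only the Leibniz rule together with the elementary bracket relations of $\cCl(\frkl)$, namely $\{\ol\psi_i,\ol\psi_j\}=\{\ol\psi^*_i,\ol\psi^*_j\}=0$ and $\{\ol\psi_i,\ol\psi^*_j\}=\delta_{i,j}$, and the fact that $\mu$ is a morphism of dg Lie algebras so that $\{\mu(x_i),\mu(x_j)\}_R = \mu([x_i,x_j]) = \sum_k c_{ij}^k\,\mu(x_k)$.

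First I would establish item (1), the formula for $\ol{Q}$. Unwinding the definitions in Lemma \ref{lem:pr:cBRST}: the element $\wt\mu \in \frkl^*\otimes R$ corresponding to $\mu$ is $\sum_{i\in I}\mu(x_i)\otimes\ol\psi^*_i$ in the chosen basis (here $\ol\psi^*_i$ is the image in $\cCl(\frkl)$ of the dual basis vector $x^*_i\in\frkl^*$, shifted into degree $1$). For the $\wt\beta$ term, the morphism $\beta:\frkl\to\frkl[1]\otimes\frkl^*[-1]\inj\cCl(\frkl)^0$ is dual to the adjoint action, so $\beta(x_k) = -\sum_{i,j}c_{ij}^k\,\ol\psi^*_i\,\ol\psi_j$ (the sign and index placement to be pinned down from the pairing convention $\pair{\cdot,\cdot}$ used to define the central extension); pushing $\beta'$ through the product map $\frkl^*[-1]\otimes\cCl(\frkl)\to\cCl(\frkl)$ contributes an extra $\ol\psi^*$, yielding $\wt\beta = -\sum_{i,j,k}c_{ij}^k\otimes\ol\psi^*_i\,\ol\psi^*_j\,\ol\psi_k$ up to reordering, and the factor $\tfrac12$ in $\ol{Q}$ gives the stated coefficient. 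The main bookkeeping is in matching signs arising from the Koszul rule (Definition \ref{dfn:dg:dgVmon}) and the shifts $[1]$, $[-1]$; once the conventions are fixed, antisymmetry of $c_{ij}^k$ in $i,j$ (from the graded Jacobi identity for $\frkl$, and here $\frkl$ ordinary) makes the expression well defined.

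Items (2), (3), (4) then follow by applying $\{\ol{Q},-\}$ term by term. For (2), with $r\otimes 1\in R\subset\cBRST$: the $\wt\beta$ part of $\ol{Q}$ lies in $\cCl(\frkl)$ and Poisson-commutes with $R$, while $\{\wt\mu, r\otimes 1\} = \sum_i\{\mu(x_i),r\}_R\otimes\ol\psi^*_i$ by the Leibniz rule since $\{\ol\psi^*_i, r\}=0$; this gives the formula. For (3), with $1\otimes\eta$, $\eta=\sum_k\eta_k\ol\psi^*_k\in\Wedge^1(\frkl^*)$: the $\wt\mu$ part commutes with $\ol\psi^*_k$ (as $\{\ol\psi^*_i,\ol\psi^*_k\}=0$ and $\mu(x_i)\in R$), and $\{-\tfrac12\sum c_{ij}^k\ol\psi^*_i\ol\psi^*_j\ol\psi_k,\ \ol\psi^*_\ell\} = -\tfrac12\sum c_{ij}^\ell\ol\psi^*_i\ol\psi^*_j$ by Leibniz using $\{\ol\psi_k,\ol\psi^*_\ell\}=\delta_{k,\ell}$ (the other factors contributing nothing); summing against $\eta_\ell$ gives the claim. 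For (4), with $y=\sum_i y_i\ol\psi_i\in\Wedge^1(\frkl)$: now $\{\wt\mu,\ol\psi_i\}=\sum_j\mu(x_j)\{\ol\psi^*_j,\ol\psi_i\}=\mu(x_i)$ up to sign, producing $\sum_i y_i\mu(x_i)$, and the $\wt\beta$ bracket with $\ol\psi_i$ picks out the $\ol\psi_k$ factor via $\{\ol\psi_i,\ol\psi_k\}=0$ but the $\ol\psi^*$ factors via $\{\ol\psi_i,\ol\psi^*_j\}=\delta_{i,j}$, leaving $\sum_{i,j,k}c_{ij}^k y_j\ol\psi^*_i\ol\psi_k$ after relabelling.

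The only genuine obstacle is sign consistency: every term above carries signs from the Koszul rule applied to the degree-$1$ and degree-$(-1)$ generators, from the identifications $\frkl[1]\otimes\frkl^*[-1]\simeq$ (central extension data), and from the conventions in Notation \ref{ntn:sp:itp}. I would fix a single convention for the pairing $\pair{\cdot,\cdot}$ and for the basis identifications at the outset, compute $\{\ol{Q},\ol{Q}\}=0$ as a consistency check (it already appears in Lemma \ref{lem:pr:cBRST}, so any sign error would surface there), and then the four formulae drop out; since the statement says these "can be checked by direct computation," I would record the verification of (1) in full detail and indicate that (2)--(4) are immediate specializations, exactly as the surrounding lemmas (Lemma \ref{lem:pr:cBRST}, Lemma \ref{lem:pr:BRST2}) are treated.
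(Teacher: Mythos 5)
Your proposal is correct and follows the same route the paper intends: the paper omits the proof, remarking only that the lemma ``can be checked by direct computation,'' and your verification---unwinding $\ol{Q}=\wt{\mu}+\tfrac12\wt{\beta}$ from Lemma \ref{lem:pr:cBRST} in the chosen basis and then applying the Leibniz rule with the elementary brackets $\{\ol{\psi}_i,\ol{\psi}^*_j\}=\delta_{i,j}$, $\{\ol{\psi}_i,\ol{\psi}_j\}=\{\ol{\psi}^*_i,\ol{\psi}^*_j\}=0$---is exactly that computation. The only caveat, which you already flag, is fixing the sign conventions for the shifts and the pairing once and for all; your suggestion to cross-check against $\{\ol{Q},\ol{Q}\}=0$ is a sensible way to do so.
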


Let us also recall the relation between the classical BRST complex 
and the \emph{Koszul complex}.

\begin{dfn}\label{dfn:pr:dBRST}
Let $\frkl,R,\mu$ be as in Definition \ref{lem:pr:cBRST}.
\begin{enumerate}[nosep]
\item 
\label{i:pr:dBRST:Kos}
The \emph{Koszul cdga} is the cdga 
\[
 \Kos(\frkl,R,\mu) := (\Sym(\frkl[1]) \otimes R,d,\cdot)
\]
of which
\begin{itemize}[nosep]
\item 
the commutative graded algebra structure $(\Sym(\frkl[1]) \otimes R,\cdot)$
is given by the tensor product of the graded algebras $\Sym(\frkl[1])$ and $R$
(Definition \ref{dfn:dga:dgamon} with forgetting the differentials), and

\item
the differential is $d=d_{\Sym(\frkl) \otimes R} + d_{\tKos}$, 
where $d_{\Sym(\frkl) \otimes R}$ is the tensor product differential
(Definition \ref{dfn:dg:dgVmon}),
and $d_{\tKos}$ is the \emph{Koszul differential} given by
\[
 d_{\tKos}(x_1 \wedge \cdots \wedge x_p \otimes r)
=\tsum_{i=1}^p (-1)^{\sum_{a=1}^{i-1}(\abs{x_a}+1)+\abs{x_i}
                     +\abs{x_i}\sum_{a=i+1}^p(\abs{x_a}+1)} 
 x_1 \wedge \cdots \wh{x_i} \cdots \wedge x_p \otimes \mu(x_i)r.
\]
\end{itemize}
It is a commutative ring object in $\dgMod{\frkl}$.

\item
\label{i:pr:dBRST}
Assume that $\frkl$ is finite dimensional. 
We define the dg Poisson algebra 
\[
 \dBRST(\frkl,R,\mu)
\]
as follows.
\begin{itemize}[nosep]
\item 
The underlying cdga is the Chevalley-Eilenberg cdga
\[
 \CE(\frkl,\Kos(\frkl,R,\mu)) 
= \uHom(\Sym(\frkl[1]),\Sym(\frkl[1])\otimes R),
\]
which is isomorphic to 
\[
 \Sym(\frkl^*[-1] \oplus \frkl[1]) \otimes R
\]
under the assumption on $\frkl$.

\item
The underlying graded Lie algebra is the tensor product  
$\Sym(\frkl^*[-1] \oplus \frkl[1]) \otimes R$ 
whose Lie bracket $\{-,-\}$ is given by the Leibniz rule and 
\[
 \{x,y\}=\{f,g\}=0, \quad  \{f,x\}=f(x) \quad (x,y \in \frkl, \ f,g \in \frkl^*).
\]
\end{itemize}
\end{enumerate}
\end{dfn}

\begin{rmk*}
In \eqref{i:pr:dBRST} the differential of the Chevalley-Eilenberg dga 
is indeed a derivation with respect to the Lie bracket 
due to the momentum map equation \eqref{eq:sp:mu}.
\end{rmk*}

Using the explicit form of the classical BRST charge (Lemma \ref{lem:pr:olQ}),
we can check:

\begin{lem*}
Let $\frkl,R,\mu$ be as in Definition \ref{dfn:pr:dBRST} \eqref{i:pr:dBRST}.
Then we have an isomorphism of dg Poisson algebras
\[
 \dBRST(\frkl,R,\mu) \simeq \cBRST(\frkl,R,\mu).
\]
\end{lem*}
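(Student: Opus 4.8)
The plan is to exhibit an explicit isomorphism of graded linear spaces and then check that it intertwines both the differentials and the Poisson brackets. First I would note that both sides have, as underlying graded Poisson algebras, the tensor product $\Sym(\frkl^*[-1] \oplus \frkl[1]) \otimes R$: for $\dBRST$ this is Definition \ref{dfn:pr:dBRST} \eqref{i:pr:dBRST} directly, while for $\cBRST$ we have $\cBRST(\frkl,R,\mu) = \cCl(\frkl) \otimes R$ with $\cCl(\frkl) = \Sym(\frkl[1] \oplus \frkl^*[-1])$, and the Poisson bracket $\{\ol{\psi}_i,\ol{\psi}^*_j\} = \delta_{i,j}$ matches the bracket $\{f,x\} = f(x)$ on the $\dBRST$ side under the identification $\ol{\psi}_i \leftrightarrow x_i \in \frkl[1]$, $\ol{\psi}^*_i \leftrightarrow x_i^* \in \frkl^*[-1]$ (with $f$ playing the role of $\ol{\psi}^*$). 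So the identity map on the underlying graded Poisson algebra is the candidate isomorphism, and the only thing to verify is that it is a chain map, i.e.\ that the two differentials agree.

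The key step is therefore the differential comparison. On the $\cBRST$ side the differential is $d_{\cl} = d_{\cCl(\frkl) \otimes R} + \{\ol{Q},-\}$ with $\ol{Q} = \sum_i \mu(x_i) \otimes \ol{\psi}^*_i - \frac{1}{2}\sum_{i,j,k} c_{ij}^k \otimes \ol{\psi}^*_i \ol{\psi}^*_j \ol{\psi}_k$ by Lemma \ref{lem:pr:olQ}. On the $\dBRST$ side the differential is the Chevalley-Eilenberg differential $d_{\tCE}$ of $\CE(\frkl, \Kos(\frkl,R,\mu))$, which decomposes (using Remark \ref{rmk:dga:CE}) into the part $\delta_{\tCE}$ acting on the $\Sym(\frkl^*[-1])$ ``ghost'' factor via the dual of the Lie bracket, the internal differentials $d_R$ and $d_\frkl$, and the part encoding the $\frkl$-action on the Koszul cdga; and $\Kos(\frkl,R,\mu)$ itself carries the Koszul differential $d_{\tKos}$ with $d_{\tKos}(x_i \otimes r) = \pm\, \mu(x_i) r$. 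I would compute both differentials on the algebra generators — on $r \otimes 1$ for $r \in R$, on $1 \otimes \ol{\psi}^*_k \in \frkl^*[-1]$, and on $1 \otimes \ol{\psi}_k \in \frkl[1]$ — and match term by term. Parts (2), (3), (4) of Lemma \ref{lem:pr:olQ} give exactly $\{\ol{Q}, r \otimes 1\} = \sum_i \{\mu(x_i),r\}_R \otimes \ol{\psi}^*_i$, $\{\ol{Q}, 1 \otimes \ol{\psi}_k^*\} = -\frac{1}{2}\sum_{i,j} c^k_{ij} \ol{\psi}^*_i \ol{\psi}^*_j$, and $\{\ol{Q}, 1 \otimes \ol{\psi}_k\} = \mu(x_k) + \sum_{i,j} c^k_{ij} \ol{\psi}^*_i \ol{\psi}_k$ — which are precisely the $\frkl$-action term, the Chevalley-Eilenberg ghost differential $\delta_{\tCE}$, and the combination of the Koszul differential $d_{\tKos}$ with the coadjoint action appearing in $d_{\tCE}$ on the $\dBRST$ side. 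Since both differentials are derivations with respect to the commutative product (for $d_{\cl}$ because $\{\ol{Q},-\}$ is a Poisson derivation and $d_{\cCl \otimes R}$ obeys the Leibniz rule; for $d_{\tCE}$ by construction), agreement on generators and on $R$ forces agreement everywhere. Finally, because the identity map is manifestly an isomorphism of graded Poisson algebras and now a chain map, it is an isomorphism of dg Poisson algebras, as claimed.

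The main obstacle I expect is bookkeeping of Koszul signs: the two sides organize the tensor factors differently (the ghost-antighost factor $\cCl(\frkl)$ versus the split $\Sym(\frkl^*[-1])$ in the Chevalley-Eilenberg direction and $\Sym(\frkl[1])$ in the Koszul direction), and the shift conventions $[1]$, $[-1]$ together with the braiding isomorphism of Definition \ref{dfn:dg:dgVmon} \eqref{i:dg:dgVmon:braid} will produce numerous $(-1)^{\abs{x}\abs{y}}$ factors that must be reconciled against the signs in the explicit $d_{\tCE}$ formula of Remark \ref{rmk:dga:CE} and in the Koszul differential of Definition \ref{dfn:pr:dBRST}. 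I would handle this by fixing once and for all the identification $\ol{\psi}_i = x_i \in \frkl[1]$, $\ol{\psi}^*_i = x_i^* \in \frkl^*[-1]$, writing every element in the ordered monomial form $\ol{\psi}^*_{i_1} \cdots \ol{\psi}^*_{i_a} \ol{\psi}_{j_1} \cdots \ol{\psi}_{j_b} \otimes r$, and verifying the match only on these ordered generators, invoking the derivation property to extend. Since the excerpt explicitly says ``Using the explicit form of the classical BRST charge, we can check'' and this lemma is cited from \cite[1.4.23--1.4.25]{BD} with proofs routinely omitted, I would present the sign verification as a direct computation and not grind through every case in the text.
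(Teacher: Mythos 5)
Your proposal is correct and follows essentially the same route the paper indicates: the paper omits the proof, saying only that the isomorphism ``can be checked'' by direct computation using the explicit form of the classical BRST charge in Lemma \ref{lem:pr:olQ}, and your plan—identify both underlying graded Poisson algebras with $\Sym(\frkl^*[-1]\oplus\frkl[1])\otimes R$, match $\{\ol{Q},-\}$ against the Chevalley--Eilenberg and Koszul differentials on generators via parts (2)--(4) of that lemma, and extend by the derivation property—is exactly that computation. The only caution is minor bookkeeping: your displayed formula for $\{\ol{Q},1\otimes\ol{\psi}_k\}$ has a slightly garbled summation index compared with Lemma \ref{lem:pr:olQ} (4), so keep the indices straight when writing out the Koszul-plus-coadjoint term.
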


Now we explain the work of Safronov \cite[\S 2.3]{S}.
Let $(\frkl,R,\mu)$ be as above.
Then we have the Poisson reduction  
$\CE(\frkl,\bbk) \otimes^{\bbL}_{\CE(\frkl,\Sym \frkl)} \CE(\frkl,R)$.
(Definition \ref{dfn:pr:dhr}).

\begin{fct}[{\cite[Corollary 2.7]{S}}]\label{fct:pr:HBc}
The derived Hamiltonian reduction 
$R \gq^{\bbL}_{\mu} \Sym(\frkl) = 
 \CE(\frkl,\bbk) \otimes^{\bbL}_{\CE(\frkl,\Sym \frkl)} \CE(\frkl,R)$
(Definition \ref{dfn:pr:dhr}) is quasi-isomorphic to
the classical BRST complex as cdgas:
\[
  R \gq^{\bbL}_{\mu} \Sym(\frkl) \dqiseq \cBRST(\frkl,R,\mu)
\] 
\end{fct}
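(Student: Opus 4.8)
The plan is to exhibit an explicit small model for the two-sided bar complex defining $R \gq^{\bbL}_{\mu} \Sym(\frkl)$ and identify it on the nose with $\dBRST(\frkl,R,\mu) = \CE(\frkl,\Kos(\frkl,R,\mu))$; combined with the isomorphism $\dBRST(\frkl,R,\mu) \simeq \cBRST(\frkl,R,\mu)$ of the preceding lemma, this gives the claim. So it suffices to construct a quasi-isomorphism of cdgas
\[
 \CE(\frkl,\bbk) \otimes^{\bbL}_{\CE(\frkl,\Sym(\frkl))} \CE(\frkl,R)
 \dqiseq \CE(\frkl,\Kos(\frkl,R,\mu)).
\]

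First I would replace $\CE(\frkl,\bbk)$ by a cofibrant model over $A := \CE(\frkl,\Sym(\frkl))$. The coisotropic morphism $\CE(\frkl,0)\colon A \to \CE(\frkl,\bbk)$ exhibits $\CE(\frkl,\bbk)$ as the strict quotient of $A$ by the dg ideal generated by $\frkl \subset \Sym(\frkl) \subset A$. Fix a basis $\{x_i\}$ of $\frkl$ with structure constants $c^k_{ij}$, and write $\ol{\psi}_i \in \frkl[1]$, $\ol{\psi}^*_i \in \frkl^*[-1] \subset A$ for the corresponding generators. I take the Koszul-type complex $P := \Sym(\frkl[1]) \otimes A$, free as a graded $A$-module on $\Sym(\frkl[1])$, with derivation differential $d_P := d_A + \partial$, where $\partial(\ol{\psi}_i) := x_i - \sum_{j,k} c^k_{ji}\,\ol{\psi}^*_j\,\ol{\psi}_k$; the second summand is the correction forced by $d_P^2 = 0$, dictated by the $\frkl$-equivariance of the situation and dual to the lifting $x \mapsto \mu(x)-x$ of Fact \ref{fct:pr:CEmu}. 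Filtering by the $\Sym(\frkl[1])$-weight, the associated graded of $\Cone(P \to \CE(\frkl,\bbk))$ is the plain Koszul complex on the polynomial generators $x_i$ of $A$, which is acyclic in positive weight; hence $P \to \CE(\frkl,\bbk)$ is a quasi-isomorphism, $P$ is a semifree commutative $A$-dg-algebra, and so $P \otimes_A \CE(\frkl,R)$ computes the derived tensor product as a cdga.

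Next I would compute the bar complex strictly through this replacement: $P \otimes_A \CE(\frkl,R) \simeq \Sym(\frkl[1]) \otimes \CE(\frkl,R)$, with differential obtained from $d_P$ by pushing its $A$-valued terms into $\CE(\frkl,R)$ along $\CE(\frkl,\mu)$, so that $x_i \mapsto \mu(x_i)$ while $\ol{\psi}^*_j \mapsto \ol{\psi}^*_j$. As a graded linear space this equals $\Sym(\frkl[1]) \otimes \Sym(\frkl^*[-1]) \otimes R$, which is precisely the underlying space of $\CE(\frkl,\Kos(\frkl,R,\mu)) = \uHom(\Sym(\frkl[1]), \Sym(\frkl[1]) \otimes R)$. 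I would then unwind both differentials by hand using Remark \ref{rmk:dga:CE} and Lemma \ref{lem:pr:olQ}: the curvature term on $\Sym(\frkl^*[-1])$, the internal $d_R$, the Chevalley--Eilenberg action term $\sum_j \ol{\psi}^*_j\,\{\mu(x_j),-\}_R$, the Koszul term $\ol{\psi}_i \mapsto \mu(x_i)$, and the adjoint-action term $\sum_{j,k} c^k_{ji}\,\ol{\psi}^*_j\,\ol{\psi}_k$ produced by the correction in $\partial$. These are exactly the five families of contributions of $d_{\cl} = d_{\cCl(\frkl)\otimes R} + \{\ol{Q},-\}$ for the BRST charge $\ol{Q}$ of Lemma \ref{lem:pr:olQ}, so this identifies $R \gq^{\bbL}_{\mu} \Sym(\frkl)$ with $\dBRST(\frkl,R,\mu)$ as cdgas, completing the proof.

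The main obstacle I expect is this second step together with the sign bookkeeping in the last comparison: one must check that the $\frkl$-equivariance correction is exactly right so that $d_P^2 = 0$ (this uses the Jacobi identity for the $c^k_{ij}$, and when $\frkl$ is not concentrated in degree $0$ the $d_{\frkl}$-term requires an extra, analogous correction in $\partial$), and then that base change along $\CE(\frkl,\mu)$ turns this correction into the adjoint-action part of the Chevalley--Eilenberg differential of $\Kos(\frkl,R,\mu)$. A more structural alternative would be to prove once and for all that $\CE(\frkl,-)$ sends a derived relative tensor product $\bbk \otimes^{\bbL}_{\Sym(\frkl)} R$ (which is $\Kos(\frkl,R,\mu)$ when $\Sym(\frkl) \to R$ is $\mu$) to $\CE(\frkl,\bbk) \otimes^{\bbL}_{\CE(\frkl,\Sym(\frkl))} \CE(\frkl,R)$, using that $\CE(\frkl,-)$ is lax symmetric monoidal via the cup product (Definition \ref{dfn:dga:cup}) and preserves the relevant bar-type realizations; but establishing that monoidality statement is of comparable difficulty, so I would keep the explicit Koszul model as the primary argument.
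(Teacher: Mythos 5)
Your route is correct in outline but genuinely different from the paper's. The paper never builds an explicit semifree resolution of $\CE(\frkl,\bbk)$ over $A=\CE(\frkl,\Sym(\frkl))$: it instead proves two quasi-isomorphisms $\cBRST(\frkl,R,\mu) \to \CE(\frkl,\bbk \otimes^{\bbL}_{\Sym(\frkl)} R) \leftarrow R \gq^{\bbL}_{\mu}\Sym(\frkl)$, the first by applying the functor $\CE(\frkl,-)$ to the antisymmetrization quasi-isomorphism $\Sym(\frkl[1])\otimes R \to \bbk\otimes^{\bbL}_{\Sym(\frkl)}R$ of $\frkl$-modules, the second by factoring the two-sided bar complex as $V\otimes W$ with $W=\Sym(\frkl^*[-1])\otimes^{\bbL}_{\Sym(\frkl^*[-1])}\Sym(\frkl^*[-1])$ and contracting $W$ onto $\Sym(\frkl^*[-1])$ via the multiplication map and its splitting. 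Your argument instead replaces $\CE(\frkl,\bbk)$ by the Koszul--BRST algebra $P\simeq\CE(\frkl,\Kos(\frkl,\Sym(\frkl),\id))$, computes the pushout strictly, and matches the resulting differential with $d_{\cl}$ via Lemma \ref{lem:pr:olQ}; this is a legitimate alternative. What each buys: the paper's functoriality-plus-contraction route avoids all sign and Jacobi bookkeeping (the explicit matching with $\ol{Q}$ is only needed later, for the Poisson statement of Proposition \ref{prp:pr:HBp}), while your route produces a small explicit model, makes the appearance of the BRST differential transparent, and adapts directly to dg $\frkl$ with the extra correction you note.

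One concrete step needs repair: the acyclicity argument for $\Cone(P\to\CE(\frkl,\bbk))$. Filtering by the $\Sym(\frkl[1])$-weight does not work as stated, because the Koszul term $\ol{\psi}_i\mapsto x_i$ is precisely the weight-lowering component and therefore disappears in the associated graded, which is then not the plain Koszul complex. Filter instead by the $\Sym(\frkl^*[-1])$-weight (the ghost number): the Chevalley--Eilenberg curvature, the adjoint correction $\sum_{j,k}c^k_{ji}\,\ol{\psi}^*_j\,\ol{\psi}_k$, and the action term on $\Sym(\frkl)$ all strictly raise this weight, while the Koszul term preserves it, so the associated graded is the ordinary Koszul complex of $\Sym(\frkl)$ tensored with $\Sym(\frkl^*[-1])\otimes\Sym(\frkl[1])$; since $\frkl$ is finite dimensional the filtration is finite and you conclude that $P\to\CE(\frkl,\bbk)$ is a quasi-isomorphism. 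With this fix, and the $d_P^2=0$ and base-change verifications you already flag, your argument goes through.
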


For later use, we copy the proof in loc.\ cit.

\begin{proof}
We will construct the following quasi-isomorphisms of cdgas:
\begin{equation}\label{eq:pr:HBc}
 \cBRST(\frkl,R,\mu) \xrightarrow[\ \qis \ ]{\sim}
 \CE(\frkl,\bbk \otimes^{\bbL}_{\Sym(\frkl)} R) \xrightarrow[\ \qis \ ]{\sim}
 R \gq^{\bbL}_{\mu} \Sym(\frkl).
\end{equation}

Let us build the first quasi-isomorphism.
We have the following quasi-isomorphism of $\frkl$-modules:
\[
 \Sym(\frkl[1]) \otimes R  \underset{\qis}{\longsimto} 
 \bbk \otimes^{\bbL}_{\Sym \frkl} R, \quad
 x_1 \wedge \cdots \wedge x_p \otimes r \longmapsto \sum_{\sigma \in \Sigma_p}
 (-1)^{\ve}[x_{\sigma(1)} | \cdots | x_{\sigma(p)} | r].
\]
Here we used the notation for the two-sided bar complex 
in Definition \ref{dfn:pr:bcpx}, and the signature $\ve$ comes from 
the braiding isomorphisms for the permutation $\sigma$ of $x_i$'s.
By the functoriality of $\CE(\frkl,-)$, we have the desired
quasi-isomorphism of cdgas:
\[
 \cBRST(\frkl,R,\mu) \xrightarrow[\ \qis \ ]{\sim}
 \CE(\frkl,\bbk \otimes^{\bbL}_{\Sym(\frkl)} R).
\]

Next we consider the second quasi-isomorphism.
As we argued in Definition \ref{dfn:pr:dBRST} \eqref{i:pr:dBRST}, 
for a $\frkl$-module $M$  we have an isomorphism 
\[
 \CE(\frkl,M) = \uHom(\Sym(\frkl[1]),M) \simeq \Sym(\frkl^*[-1]) \otimes M
\]
of cdgas since $\frkl$ is finite dimensional.
Then we have 
\[
 R \gq^{\bbL}_{\mu} \Sym(\frkl) = 
 \CE(\frkl,\bbk) \otimes^{\bbL}_{\CE(\frkl,\Sym(\frkl))} \CE(\frkl,R)
 \simeq V \otimes W
\]
as cdgas, where we set
\[
 V := \bbk \otimes^{\bbL}_{\Sym(\frkl)} R, \quad 
 W := \Sym(\frkl^*[-1]) \otimes^{\bbL}_{\Sym(\frkl^*[-1])} \Sym(\frkl^*[-1]). 
\]
Recall that the derived tensor product is presented by the two-sided bar complex
(Definition \ref{dfn:pr:bcpx}): 
$A \otimes_B^{\bbL} C = A \otimes T(B) \otimes C$,
where $T(-)$ denotes the tensor algebra of a dg algebra 
(Example \ref{eg:dga:dga} \eqref{i:dga:dga:T}).
Let us denote $W':=\Sym(\frkl^*[-1])$.
Then the multiplication map gives 
$W \simeq W' \otimes T(W') \otimes W' \to W'$.
It has a splitting
$W' \to W' \otimes T(W') \otimes W'$, $x \mapsto x \otimes 1 \otimes 1$.
Thus we have a quasi-isomorphism
$V \otimes W' \xrightarrow[\qis]{\sim} V \otimes W$ of cdgas, 
which yields the desired quasi-isomorphism of cdgas:
\[
 R \gq^{\bbL}_{\mu} \Sym(\frkl) \xrightarrow[\ \qis \ ]{\sim} V \otimes W' 
 \simeq \CE(\frkl,\bbk \otimes^{\bbL}_{\Sym(\frkl)} R).
\]
\end{proof}

Recall that $R \gq^{\bbL}_{\mu} \Sym(\frkl)$ is a homotopy Poisson algebra,
and $\cBRST(\frkl,R,\mu)$ is a dg Poisson algebra.
The following statement is a slight generalization of \cite[Remark 2.8]{S}:

\begin{prp}\label{prp:pr:HBp}
Let $\frkl$ be a finite-dimensional Lie algebra,
and $R$ be a dg Poisson algebra with Hamiltonian $\frkg$-action.
Then the quasi-isomorphism 
$R \gq^{\bbL}_{\mu} \Sym(\frkg) \dqiseq \cBRST(\frkg,R,\mu)$
in Fact \ref{fct:pr:HBc} gives an equivalence as homotopy Poisson algebras.
\end{prp}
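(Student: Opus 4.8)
The plan is to upgrade the chain of quasi-isomorphisms of cdgas in \eqref{eq:pr:HBc} to a chain of equivalences of $\wh{\bbP}_1$-algebras (homotopy Poisson algebras). The target $\cBRST(\frkl,R,\mu)$ is a strict dg Poisson algebra, so in particular a $\wh{\bbP}_1$-algebra with $l_p=0$ for $p\ge 3$; thus it suffices to show that each of the two arrows in \eqref{eq:pr:HBc} can be promoted to a morphism of $\wh{\bbP}_1$-algebras which is a quasi-isomorphism on underlying complexes. First I would recall that, by Fact \ref{fct:pr:ci}, the homotopy Poisson structure on $R \gq^{\bbL}_{\mu}\Sym(\frkl)$ is the one carried by the cotensor product $\wt M \otimes^{\wt A} \wt N$ with $A=\CE(\frkl,\Sym(\frkl))$, $M=\CE(\frkl,\bbk)$, $N=\CE(\frkl,R)$, realized inside $\wt M\otimes\wt N$; the $L_\infty$-operations $l_p$ are built from the coisotropic data $f_p$ of Fact \ref{fct:pr:CEmu} via formula \eqref{eq:pr:lPc:Li}. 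The key point is that, by Fact \ref{fct:pr:CEmu}, the lifting of $\CE(\frkl,\mu)$ to the Poisson center is given in the very explicit form $\CE(\frkl,\bbk)\inj Z(\CE(\frkl,R))$ together with $x\mapsto \mu(x)-x$, so the higher $f_p$ for $p\ge 2$ are controlled by the part of $Z(\CE(\frkl,R))$ coming from $\wh{\Sym}(\frkl)$, i.e.\ by the Koszul directions $\ol\psi_i$ and their duals.

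Next I would make the identification with the BRST side concrete. Under $\CE(\frkl,-)\simeq \Sym(\frkl^*[-1])\otimes(-)$ and $\Kos(\frkl,R,\mu)=\Sym(\frkl[1])\otimes R$, the cdga $R\gq^{\bbL}_\mu\Sym(\frkl)$ becomes, after the contraction $W'\xr{\sim}W$ in the proof of Fact \ref{fct:pr:HBc}, identified with $\Sym(\frkl^*[-1]\oplus\frkl[1])\otimes R = \dBRST(\frkl,R,\mu)$ as a cdga. I would then check that, under this identification, the $L_\infty$-operations $l_p$ produced by Fact \ref{fct:pr:lPc} are: $l_1=d_{\cl}$, $l_2=\{-,-\}$ the strict Poisson bracket of $\dBRST(\frkl,R,\mu)$ given in Definition \ref{dfn:pr:dBRST} \eqref{i:pr:dBRST}, and $l_p=0$ for $p\ge 3$. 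The operations $l_2$ come from $f_0=\CE(\frkl,\mu)$ and $f_1$, which by the explicit lift encode exactly the pairing $\{f,x\}=f(x)$ and the $\frkl$-module structures; the vanishing $l_p=0$ for $p\ge3$ follows because the coisotropic lift in Fact \ref{fct:pr:CEmu} is \emph{linear} in $\frkl$ (the term $\mu(x)-x$), so $f_p=0$ for $p\ge 2$, hence the corresponding $l_{p+1}$ in \eqref{eq:pr:lPc:Li} vanish for $p\ge 2$. This is essentially the content of \cite[Remark 2.8]{S}, and the present statement only differs in that $R$ is allowed to be dg; since none of these formulas sees whether $R$ is concentrated in degree $0$, the same computation goes through verbatim.

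Finally I would assemble the equivalence: the first arrow $\cBRST(\frkl,R,\mu)\xr{\sim}\CE(\frkl,\bbk\otimes^{\bbL}_{\Sym(\frkl)}R)$ of \eqref{eq:pr:HBc} is induced by applying the functor $\CE(\frkl,-)$ to a quasi-isomorphism of commutative ring objects in $\dgMod{\frkl}$, and by Lemma \ref{lem:dga:CEa} together with the naturality of the Poisson-center construction it is a morphism of $\wh{\bbP}_1$-algebras; the second arrow is the contraction $V\otimes W'\xr{\sim}V\otimes W$, which is a map of cdgas splitting a multiplication map and hence (again by the explicit form of the $l_p$ above) compatible with the homotopy Poisson structure. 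Composing, and invoking that a quasi-isomorphism of $\wh{\bbP}_1$-algebras is an equivalence of homotopy Poisson algebras (Remark \ref{rmk:sp:whP} \eqref{i:sp:whP:coh} plus the operadic description in Remark \ref{rmk:sp:whP}), gives the claim. I expect the main obstacle to be the bookkeeping in verifying $l_p=0$ for $p\ge 3$ and matching $l_2$ with the strict bracket on $\dBRST$ under all the identifications and sign conventions; conceptually nothing new happens beyond \cite{S}, but one must be careful that passing from Lie algebras to dg Lie algebras and from ordinary to dg Poisson algebras $R$ introduces only the Koszul signs already built into \eqref{eq:pr:lPc:Li} and into Definition \ref{dfn:pr:dBRST}, and does not create extra higher operations.
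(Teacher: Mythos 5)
Your overall route is the same as the paper's: reduce to comparing the $\wh{\bbP}_1$-structure produced by Fact \ref{fct:pr:ci} (via the comodule formulas \eqref{eq:pr:lPc:Li}) with the strict BRST bracket, using the explicit coisotropic lift of Fact \ref{fct:pr:CEmu}. However, there is a genuine gap at the step you flag as the key point: the claim that the lift being ``linear in $\frkl$'' forces $f_p=0$ for $p\ge 2$, hence $l_p=0$ for $p\ge 3$, is false. The lift $\wt{f}:\CE(\frkl,\Sym(\frkl))\to Z(\CE(\frkl,R))$ is a morphism of $\bbP_2$-algebras, so it is only \emph{prescribed} on the generators ($\CE(\frkl,\bbk)$ goes to weight $0$, and $x\mapsto \mu(x)-x$ has weight components $0$ and $1$); on a product $x_1\cdots x_p\in\Sym^p(\frkl)$ it is forced to be $\wt{f}(x_1\cdots x_p)=\prod_{i}(\mu(x_i)-x_i)$, whose weight-$p$ component $(\pm 1)\,x_1\cdots x_p$ is nonzero. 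Consequently $f_p\neq 0$ for all $p\ge 2$, and by \eqref{eq:pr:lPc:Li} the operations $l_{p+1}$ on $\wt{A}\otimes\CE(\frkl,R)$ (and on the cotensor product realizing $R\gq^{\bbL}_{\mu}\Sym(\frkl)$) do not vanish for $p\ge 2$: for instance $l_3$ picks up the bivector part $x\wedge y$ of $\wt{f}(xy)$ acting by double contraction in the $\frkl^*[-1]$ directions. So the derived Hamiltonian reduction is \emph{not} identified with $\dBRST(\frkl,R,\mu)$ as a strict $\bbP_1$-algebra by this mechanism, and your plan of exhibiting both arrows of \eqref{eq:pr:HBc} as strict morphisms between (essentially) strict objects collapses at this point.

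The paper's proof avoids this issue by proving less at the strict level: it unwinds the equalizer description with $\wt{A}=T(\CE(\frkg,\Sym(\frkg))[1])$, $\wt{L}$, $\wt{M}$, and then checks only that the composite \eqref{eq:pr:HBc} intertwines the binary operation $l_2$ (coming from $\{\cdot,\cdot\}_M$ on $\CE(\frkg,R)$ together with the weight-one component $f_1$, described by $f_1(a\otimes 1)=\pr_1(a)$ for $a\in\Sym(\frkg^*[-1])$ and by $\mu(x)-x$ on $x\in\frkg$) with the BRST bracket $\{\cdot,\cdot\}_{\tBRST}$ on $\cCl(\frkg)\otimes R$; no vanishing of $l_p$, $p\ge 3$, is asserted. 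If you want to salvage your stronger formulation, you would need an honest $\infty$-morphism or homotopy-transfer argument relating the nontrivial higher operations on the bar-resolved side to the strict structure on $\cBRST(\frkg,R,\mu)$, rather than the claimed vanishing of $f_p$; the rest of your argument (functoriality of $\CE(\frkl,-)$ for the first arrow, the contraction $V\otimes W'\to V\otimes W$ for the second, and the observation that the dg setting only adds Koszul signs) is consistent with what the paper does.
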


\begin{proof}
It is enough to check that the composition \eqref{eq:pr:HBc}
respects the Poisson bracket $\{\cdot,\cdot\}_{\tBRST}$ on the classical BRST
complex $\cBRST(\frkg,R,\mu)$ and the map $l_2$ of the $L_\infty$-structure on
the derived Hamiltonian reduction $R \gq^{\bbL}_{\mu} \Sym(\frkg)$.

Recall that  we have 
\[
 \cBRST(\frkg,R,\mu) = \cCl(\frkg) \otimes R
\]
as graded Poisson algebras, where $\cCl(\frkg)$ denotes the classical 
Clifford algebra, 
and the Poisson bracket can be written as  
$\{\cdot,\cdot\}_{\tBRST}=
 \{\cdot,\cdot\}_{\cCl}\cdot_M + \cdot_{\cCl} \{\cdot,\cdot\}_{\tBRST}$.

On the $L_\infty$-structure of $R \gq^{\bbL}_{\mu} \Sym(\frkg)$,
let us unwind the proof of Fact \ref{fct:pr:ci} explained in \S \ref{sss:pr:ci}.
Denoting 
\[
 A:=\CE(\frkg,\Sym(\frkg)), \ \wt{A}:=T(A[1]), \  
 L:=\CE(\frkg,\bbk), \ \wt{L}:=L \otimes \wt{A}, \ 
 M:=\CE(\frkg,R), \ \wt{M}:=\wt{A} \otimes M,
\] 
we have 
\[
 R \gq^{\bbL}_{\mu} \Sym(\frkg) \simeq \Eq(\wt{L} \otimes \wt{M}
 \rightrightarrows \wt{L} \otimes \wt{A} \otimes \wt{M}),
\]
and the $L_\infty$-structure comes from that on $\wt{L} \otimes \wt{M}$.
It is enough to compute $l_2$ on $\wt{L}$ and $\wt{M}$.
We only argue that on $\wt{M}=\wt{A} \otimes M$, 
which is given by \eqref{eq:pr:lPc:Li}.

The operation coming from $M$ is $\{\cdot,\cdot\}_M$, 
which is obviously compatible with the part of $\{\cdot,\cdot\}_{\tBRST}$. 
Unwinding the definitions, the operation coming from $\wt{A}$ is 
given by the first line of \eqref{eq:pr:lPc:Li} with $p=1$, 
$f_1: A \to \frkg^*[-1] \oplus \bbT_R[-1] \oplus \frkg$,
which is the first component of the lifting $A \to Z(M)$ of 
the coisotropic morphism $\CE(\frkg,\mu): A \to M$.
Under the identification $A=\CE(\frkg,\Sym(\frkg)) \simeq S^* \otimes \Sym(\frkg)$ 
with $S^*:=\Sym(\frkg^*[-1])$, the map $f_1$ is determined by
the image of $S^*$ and $\frkg$.
Recall the description of $f_1$ from Fact \ref{fct:pr:CEmu},
we have $f_1(a \otimes 1)=\pr_1(a)$ for $a \in S^*$, 
$\pr_1: S^* \surj \frkg^*[-1]$, and 
$f_1(1 \otimes x)=\mu(x) - x$ for $x \in \frkg$.
Using this description, 
we can check that under the composition \eqref{eq:pr:HBc}
the two Poisson structures are compatible.
\end{proof}

For later use, we also give a infinite-dimensional modification 
of Proposition \ref{prp:pr:HBp}.

\begin{dfn}\label{prp:pr:rst}
Let $V$ be a linear space equipped with a decomposition 
$V=\bigoplus_{\gamma \in \Gamma}V_\gamma$ by an abelian group $\Gamma$ 
such that each component $V_\gamma$ is finite-dimensional.
\begin{enumerate}[nosep]
\item 
We call such a decomposition a \emph{finite-dimensional $\Gamma$-decomposition}.
\item
We set $V^{\vee}:=\bigoplus_{\gamma \in \Gamma}V_\gamma^*$
and call it the \emph{restricted dual} of $V$.
\item
For another linear space $W$, we denote 
$\Hom^{\trst}(V,W):=\bigoplus_{\gamma \in \Gamma}\Hom(V_{\gamma},W)
\simeq V^{\vee} \otimes W$.
\end{enumerate}
\end{dfn}

\begin{prp}\label{prp:pr:HBinf}
Let $\frkg$ be a Lie algebra,
and $R$ be a Poisson algebra with a Hamiltonian $\frkg$-action.
Assume that $\frkg$ has a finite-dimensional $\Gamma$-decomposition
$\frkg=\bigoplus_{\gamma \in \Gamma} \frkg_\gamma$
and the $\frkg$-action on $M$ is locally finite.
Then replacing $\Hom$ by $\Hom~{\trst}$ in the definition of 
the Chevalley-Eilenberg complexes, we have an quasi-isomorphism
\[
 R \gq^{\bbL}_{\mu} \Sym(\frkg) \dqiseq \cBRST(\frkg,R,\mu)
\]
of homotopy Poisson algebras.
\end{prp}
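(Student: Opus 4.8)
The plan is to repeat the proofs of Fact~\ref{fct:pr:HBc} and Proposition~\ref{prp:pr:HBp} essentially verbatim, with the linear dual $\frkg^*$ replaced everywhere by the restricted dual $\frkg^\vee$ and every Chevalley--Eilenberg complex $\CE(\frkg,M)=\Hom(\Wedge\frkg,M)$ replaced by its restricted variant $\CE^{\trst}(\frkg,M):=\Hom^{\trst}(\Wedge\frkg,M)=(\Wedge\frkg)^\vee\otimes M$. The first point to settle is bookkeeping: the finite-dimensional $\Gamma$-decomposition of $\frkg$ induces one on $\Wedge\frkg$ (refining by the weight grading if necessary), and by hypothesis $R$ carries a compatible decomposition with locally finite $\frkg$-action; consequently the Koszul complex $\Kos(\frkg,R,\mu)$, the classical Clifford algebra $\cCl(\frkg)=\Sym(\frkg[1]\oplus\frkg^\vee[-1])$ of Definition~\ref{dfn:pr:cCl}, and the BRST complex $\cCl(\frkg)\otimes R$ all acquire finite-dimensional $\Gamma$-decompositions. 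I would then check that $\CE^{\trst}(\frkg,-)$ is a well-defined functor on the resulting class of $\frkg$-modules, is monoidal via the cup product of Definition~\ref{dfn:dga:cup}, and that the restricted-dual analogues of Lemma~\ref{lem:dga:UCone}, Remark~\ref{rmk:dga:CE} and Lemma~\ref{lem:dga:CEa} hold; each identity is a locally finite sum in the $\Gamma$-grading, so the proofs are unchanged.

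Next I would verify that Safronov's two constructions survive the restriction. The coisotropic-intersection machinery (Facts~\ref{fct:pr:Pbalg}, \ref{fct:pr:lPc}, \ref{fct:pr:ci}) is purely formal --- it manufactures a $\wh{\bbP}_1$-structure on a two-sided bar complex out of the bialgebra and comodule structures on $T(A[1])$ and does not see the size of $\frkg$ --- so it applies verbatim once $\CE^{\trst}(\frkg,0)$ and $\CE^{\trst}(\frkg,\mu)$ are coisotropic. For the latter, the key observation (already reflected in the appearance of $\frkg^\vee$ in $\cCl(\frkg)$) is that the lifting of Fact~\ref{fct:pr:CEmu} is given by the same formulas: the natural embedding on $\CE^{\trst}(\frkg,\bbk)$ and $x\mapsto\mu(x)-x$ on $\frkg$; this lands in the restricted Poisson center $\wh{\CE}^{\trst}$ because $\mu$ and the coadjoint action are $\Gamma$-homogeneous, so the restricted-dual form of Lemma~\ref{lem:pr:Z=C} applies. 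This produces the $\wh{\bbP}_1$-algebra $R\gq^{\bbL}_{\mu}\Sym(\frkg)=\CE^{\trst}(\frkg,\bbk)\otimes^{\bbL}_{\CE^{\trst}(\frkg,\Sym\frkg)}\CE^{\trst}(\frkg,R)$. On the other side, using a $\Gamma$-homogeneous basis $\{x_i\}$ of $\frkg$ I would check that the BRST charge $\ol{Q}=\sum_i\mu(x_i)\otimes\ol{\psi}^*_i-\tfrac12\sum_{i,j,k}c^k_{ij}\otimes\ol{\psi}^*_i\ol{\psi}^*_j\ol{\psi}_k$ of Lemma~\ref{lem:pr:olQ} defines an operator $\{\ol{Q},-\}$ on $\cCl(\frkg)\otimes R$: on each homogeneous element only finitely many terms contribute, since each structure constant $c^k_{ij}$ is supported in the single $\Gamma$-degree $\gamma_i+\gamma_j$, whose graded piece of $\frkg$ is finite-dimensional, and since the $\frkg$-action on $R$ is locally finite. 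Hence $d_{\cl}=d_{\cCl(\frkg)\otimes R}+\{\ol{Q},-\}$ is well-defined, $\{\ol{Q},\ol{Q}\}=0$ still holds by the same computation, and the restricted-dual version of the identification $\dBRST(\frkg,R,\mu)\simeq\cBRST(\frkg,R,\mu)$ (Definition~\ref{dfn:pr:dBRST}) goes through.

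With these in hand the quasi-isomorphisms \eqref{eq:pr:HBc} are reproduced. The Koszul resolution $\Sym(\frkg[1])\otimes R\to\bbk\otimes^{\bbL}_{\Sym\frkg}R$ is a statement about the ring $\Sym\frkg$, not about $\frkg$, so it is unaffected; applying $\CE^{\trst}(\frkg,-)$, which preserves quasi-isomorphisms since $(\Wedge\frkg)^\vee$ is free and the computation is done grading-piece by grading-piece, yields the first map, whose target is identified with $\cBRST(\frkg,R,\mu)$ via $\CE^{\trst}(\frkg,\Sym(\frkg[1])\otimes R)\simeq\Sym(\frkg^\vee[-1])\otimes\Sym(\frkg[1])\otimes R=\cCl(\frkg)\otimes R$. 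The second map comes from the splitting of the multiplication $\Sym(\frkg^\vee[-1])\otimes^{\bbL}_{\Sym(\frkg^\vee[-1])}\Sym(\frkg^\vee[-1])\to\Sym(\frkg^\vee[-1])$, again formal. Finally the homotopy-Poisson compatibility is checked exactly as in the proof of Proposition~\ref{prp:pr:HBp}: one traces the operation $l_2$ through the cotensor-product presentation of the coisotropic intersection and matches it with $\{-,-\}_{\tBRST}=\{-,-\}_{\cCl}\cdot_R+\cdot_{\cCl}\{-,-\}_R$, using that the relevant component of the coisotropic lifting is $a\otimes 1\mapsto\pr_1(a)\in\frkg^\vee[-1]$ and $1\otimes x\mapsto\mu(x)-x$. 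I expect the main obstacle to lie entirely in the first two paragraphs --- setting up the refined grading and checking that $\CE^{\trst}(\frkg,-)$, $\cCl(\frkg)$ and above all the differential $d_{\cl}$ are well-defined --- after which there is no new idea, every verification being the finite-dimensional one carried out one $\Gamma$-graded piece at a time.
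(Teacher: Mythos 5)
Your proposal is correct and follows essentially the same route as the paper: the paper's own proof simply observes that the finite-dimensionality of $\frkg$ was used in Fact \ref{fct:pr:HBc} and Proposition \ref{prp:pr:HBp} only through the identification $\CE(\frkg,-)=\uHom(\Sym(\frkg[1]),-)\simeq\Sym(\frkg^*[-1])\otimes(-)$, which survives once $\Hom$ is replaced by $\Hom^{\trst}$, and then invokes the same argument. Your write-up carries out this replacement in full detail (gradings, coisotropic lifting, well-definedness of $d_{\cl}$), which is consistent with, though far more explicit than, the paper's two-line verification.
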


\begin{proof}
In the proof of Fact \ref{fct:pr:HBc} (and that of Proposition \ref{prp:pr:HBp}),
we used the finite-dimensional assumption only at the isomorphism
$\CE(\frkg,-) = \uHom(\Sym(\frkg[1]),-) \simeq \Sym^*(\frkg[-1]) \otimes -$
for the Chevalley-Eilenberg complex.
Thus under the assumption and the replacement we can apply the same argument, 
and have the consequence from Proposition \ref{prp:pr:HBp}.
\end{proof}

\begin{rmk}\label{rmk:pr:HBp}
Let us give a connection to the argument in \cite[\S 3, p.\ 8]{A18}.
For simplicity we set $\bbk:=\bbC$.
We make the following assumptions:
\begin{enumerate}[nosep,label=(\roman*)]
\item
$\frkg$ is the Lie algebra of a linear algebraic group $G$.

\item 
$R$ is a Poisson algebra equipped with Hamiltonian $\frkg$-action.
We denote $X:= \Spec(R)$ and consider it as an affine Poisson scheme 
with Hamiltonian $G$-action.

\item
The momentum map $\mu_X: X \to \frkg^*$ corresponding to $\mu_R$
is flat as a morphism of schemes.
\end{enumerate} 
Then we have the non-derived Hamiltonian reduction $X \gq G$, 
which is isomorphic to $\mu^{-1}(0)/G$ as a scheme.
On the other hand, from Proposition \ref{prp:pr:HBp} we have an isomorphism 
\[
 \cBRST(\frkg,R,\mu) \simeq R \gq_{\mu} \Sym(\frkg) = \bbC[\mu^{-1}(0)/G]
\] 
of graded Poisson algebras.
Here the symbol $\gq_{\mu}$ in the middle is the non-derived 
Hamiltonian reduction of Poisson algebra.
Thus, taking cohomology, we have 
\[
 H^\bl \cBRST(\frkg,R,\mu) \simeq 
 H^\bl \bbC[\mu^{-1}(0)/G] \simeq 
 \bbC[\mu^{-1}(0)] \otimes H^{\bl}(G,\bbC)
\]
and recover the computation of the classical BRST cohomology in \cite{K}.
\end{rmk}

\subsection{Moore-Tachikawa varieties in derived setting}
\label{ss:pr:MT}

The main purpose of this subsection is to recall 
the proposal of Calaque \cite{C2} 
which gives a derived geometric approach to Moore-Tachikawa varieties.
We make a slight modification and treat affine derived schemes only.

We work over $\bbC$ in this subsection.
As in the latter half of \S \ref{sss:sp:KK}, 
we consider simply connected semisimple algebraic groups over $\bbC$.
We identify such a group $G$ and its Lie algebra $\frkg:=\Lie(G)$.
Recall Definition \ref{dfn:sp:Ham} of Hamiltonian $\frkg$-action
and that we denote by $R^{\op}$ the opposite algebra 
of a $\wh{\bbP}_n$-algebra $R$ (Definition \ref{dfn:sp:whP} \eqref{i:sh:whP:op}).




\begin{dfn}[{c.f.\ \cite[Example 3.5]{C2}}]\label{dfn:pr:MT}
We define the \emph{category $\MT$ of derived Moore-Tachikawa varieties} 
by the following description.
\begin{itemize}[nosep]
\item 
An object is a simply connected semi-simple algebraic group $G$ over $\bbC$.
We identify it with the associated Lie algebra $\frkg$.

\item
A morphisms from $\frkg_1$ to $\frkg_2$ is an equivalence classes of 
$\wh{\bbP}_1$-algebras $R$ in $\dgVec$
together with Hamiltonian ($\frkg_1 \oplus \frkg_2$)-action.  
The momentum map is denoted by 
$\mu_R=\mu_R^1+\mu_R^2: \frkg_1 \oplus \frkg_2 \to R$.

\item
The composition of $R \in \Hom_{\MT}(\frkg_1,\frkg_2)$ and 
$R' \in \Hom_{\MT}(\frkg_2,\frkg_3)$ is given by
\[
 R' \wtc R := 
 \bigl(R^{\op} \otimes R'\bigr) \gq^{\bbL}_{\mu} \Sym(\frkg_2)
\]
where the momentum map $\mu: \frkg_2 \to R^{\op} \otimes R'$ is given by 
$\mu := -\mu_{R}^2 \otimes 1 + 1 \otimes \mu_{R'}^1$.
We call $R' \wtc R$ the (\emph{derived}) \emph{gluing}.
\end{itemize}
We denote by $\MT^{\nd} \subset \MT$ the full subcategory
spanned by non-degenerate $\wh{\bbP}_1$-algebras $R$
(Definition \ref{dfn:sp:nd}).
\end{dfn}

\begin{rmk}\label{rmk:pr:MT}
\begin{enumerate}[nosep]
\item
The gluing $R' \wtc R$ has a Hamiltonian ($\frkg_1 \oplus \frkg_3$)-action 
with the momentum map $-\mu_{R}^1 \otimes 1+1 \otimes \mu_{R'}^2$.
Thus $\wtc$ is well-defined.
The associativity of composition holds since $\gq^{\bbL}_{\mu}$ is realized 
as a pushforward in the $\infty$-category $\cdga$.
Note also that if $R$ and $R'$ is non-degenerate, then $R' \wtc R$ is also 
non-degenerate by Lemma \ref{lem:pr:int} \eqref{i:pr:int:nd}. 
Thus $\MT^{\nd}$ is surely a full subcategory.

\item
As we recalled in \S \ref{sss:0:MT},
the original proposal of Moore and Tachikawa in \cite{MT} is given 
in terms of (non-derived) holomorphic symplectic varieties
with Hamiltonian actions and an additional $\bbC^*$-action.

\item
\label{i:pr:MT:C}
We explain the definition of the category by Calaque \cite[Example 3.5]{C2}.
Objects are the same, and a morphism from $G_1$ to $G_2$ is a derived 
$0$-symplectic scheme $(X,\omega_X)$ equipped with 
Hamiltonian $(G_1 \times G_2)$-action.
We denote by $\mu_X: X \to \frkg^*$ the corresponding momentum map.
The composition of $X \in \Hom(G_1,G_2)$ and $Y \in \Hom(G_2,G_3)$ is given by
\[
 [\bigl((X^{\op} \times Y) \gq^{\bbL} \Delta(G_2)\bigr)/(G_1 \times G_3)] \simeq
 [X^{\op}/(G_1 \times G_2)] \times^{\bbL}_{[\frkg_2^*/G_2]} [Y/(G_2 \times G_3)]
\]
with the following notations:
\begin{itemize}[nosep]
\item 
$X^{\op}$ denotes the derived scheme $X$ with the opposite $0$-symplectic
structure $-\omega_X$.
\item
For a derived scheme $Z$ with $G$-action, 
the symbol $[Z/G]$ denotes the quotient derived stack.
\item
In the left hand side $(X^{\op} \times Y) \gq^{\bbL} \Delta(G_2)$ 
denotes the derived symplectic reduction \cite[Example 3.4]{C2}
with respect to the diagonal $G_2$-action and 
the momentum map $-\mu_X+\mu_Y: X^{\op} \times Y \to \frkg_2^*$.
\item
The right hand side denotes the derived Lagrangian intersection
\cite[Example 3.2]{C2}.
\end{itemize}

We can partially recover Calaque's definition from $\MT^{\nd}$ in an easy manner.
By Definition \ref{dfn:sp:nd} \eqref{i:sp:nd:symp} of shifted symplectic 
structure, we can restate that a morphism in $\MT$ is a cdga 
equipped with a $0$-symplectic structure and a Hamiltonian action.
Adding the condition that all the cdgas are concentrated in non-positive 
cohomological degree, we recover the description in \eqref{i:pr:MT:C}
by translating the construction on cdgas to affine derived schemes.
\end{enumerate}
\end{rmk}

\subsection{The case of $G$-equivariant Poisson algebras}
\label{ss:pr:G}

In this subsection we translate the arguments on classical BRST reduction 
discussed in the beginning of \cite[\S3]{A18} into our language.
We work over $\bbC$ here.

Let $G$ be a simply connected semisimple algebraic group.
We denote by $\frkg:=\Lie(G)$ the Lie algebra of $G$.
We regard the linear dual $\frkg^*$ as an affine scheme with the coordinate ring
$\bbC[\frkg^*] = \Sym(\frkg)$, 
so that the Lie-Poisson algebra structure makes $\frkg^*$ 
an \emph{affine Poisson scheme}.

For the Lie-Poisson algebra $\Sym(\frkg)$,
a Poisson $\Sym(\frkg)$-module $M$ in $\cVec$ (Definition \ref{dfn:sp:mod}) 
is nothing but a $\Sym(\frkg)$-module 
together with a morphism $\ad: \frkg \to \End(M)$ of Lie algebras
such that $\ad(x)(s.m)=\{x,s\}.m+s.\ad(x)(m)$ for any $x \in \frkg$,
$s \in \Sym(\frkg)$ and $m \in M$.
We can recast such a Poisson module in terms of sheaves over 
the affine Poisson scheme $\frkg^*$.

\begin{ntn}\label{ntn:pr:Gsch}
Let us consider a scheme $X$ over $\bbC$.
\begin{enumerate}[nosep]
\item 
If $X$ is a $G$-scheme, then 
we denote by $\QCoh^G(X)$ the category of $G$-equivariant quasi-coherent sheaves
of $\shO_{X}$-modules. 

\item
If $X$ is a Poisson scheme, then 
we denote by $\PQCoh(X)$ the category of sheaves on $X$ 
which are both quasi-coherent sheaves of $\shO_X$-modules 
and sheaves of Lie algebra $\shO_X$-modules whose local sections satisfy 
the relation in Definition \ref{dfn:sp:mod} \ref{i:sp:mod:3}.
\end{enumerate}
\end{ntn}

Regarding $\frkg^*$ as a $G$-scheme by the coadjoint action,
we find that an object of $\QCoh^G(\frkg^*)$ has a structure of a sheaf of 
Lie algebra $\shO_{\frkg^*}$-modules induced by the coadjoint action of $G$.
Thus we have an embedding 
\[
 \QCoh^G(\frkg^*) \longinj \PQCoh(\frkg^*).
\]
On the other hand, since $\frkg^* = \Spec(\bbC[\frkg^*])=\Spec(\Sym(\frkg))$ 
is an affine Poisson scheme,
we have an equivalence of categories
\[
 \PQCoh(\frkg^*) \longsimto \PMod{\Sym(\frkg)}.
\]

For a given $\shM \in \QCoh^G(\frkg^*)$, let us denote by $M$
the corresponding Poisson $\Sym(\frkg)$-module under the composition 
$\QCoh^G(\frkg^*) \inj \PQCoh(\frkg^*) \simeq \PMod{\Sym(\frkg)}$.
Then we can see that the Lie algebra morphism $\ad: \frkg \to \End(M)$ corresponds to 
the Lie algebra action of $\shO_{\frkg^*}$ on $\shM$ induced by the $G$-action.
In total, we have:

\begin{lem}\label{lem:MT:QCoh}
We have an equivalence of categories
\[
 \QCoh^G(\frkg^*) \simeq \PMod{\Sym(\frkg)}^{\lf},
\]
where $\PMod{\Sym(\frkg)}^{\lf}$ denotes the full subcategory of 
$\PMod{\Sym(\frkg)}$ spanned by those objects 
on which the adjoint action of $\frkg$ is locally finite.
\end{lem}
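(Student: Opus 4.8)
The plan is to construct the equivalence as a restriction of the chain of equivalences and embeddings already assembled in the text, and to identify the essential image on the one side and the local-finiteness condition on the other. First I would recall that $\frkg^* = \Spec(\Sym(\frkg))$ is affine, so by the discussion preceding the statement we have the embedding $\QCoh^G(\frkg^*) \longinj \PQCoh(\frkg^*)$ and the equivalence $\PQCoh(\frkg^*) \longsimto \PMod{\Sym(\frkg)}$. Composing, we get a fully faithful functor $\QCoh^G(\frkg^*) \longinj \PMod{\Sym(\frkg)}$, and the content of the lemma is precisely the description of its essential image.

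Next I would identify the image. Given $\shM \in \QCoh^G(\frkg^*)$, the $G$-equivariant structure differentiates to a $\frkg$-action $\ad\colon \frkg \to \End(M)$ on the corresponding $\Sym(\frkg)$-module $M$, and — as the text notes — this is exactly the Lie algebra action of $\shO_{\frkg^*}$ coming from the coadjoint $G$-action, which is the Poisson-module structure map. Since $G$ is an algebraic group, any $G$-equivariant quasi-coherent sheaf on an affine $G$-scheme has the property that its module of global sections is a union of finite-dimensional $G$-stable (hence $\frkg$-stable) subspaces; i.e.\ the $\frkg$-action is locally finite. Conversely, given a Poisson $\Sym(\frkg)$-module $M$ on which the adjoint $\frkg$-action is locally finite, the locally finite $\frkg$-action exponentiates to an algebraic $G$-action (here one uses that $G$ is simply connected, so integrability of the $\frkg$-module is unobstructed), and the compatibility relation in Definition \ref{dfn:sp:mod}\ref{i:sp:mod:3} for the Poisson bracket $\{x,s.m\} = \{x,s\}.m + s.\{x,m\}$ together with the fact that $\{x,-\}$ on $\Sym(\frkg)$ integrates to the coadjoint action says precisely that the $G$-action is compatible with the $\shO_{\frkg^*}$-module structure, i.e.\ defines a $G$-equivariant structure on $\shM$. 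One then checks these two assignments are mutually inverse, which is routine since both are identities on underlying modules.

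Finally I would note that full faithfulness is inherited from the fully faithful embedding above together with the observation that a morphism of $\Sym(\frkg)$-modules between two locally finite $\frkg$-modules is $\frkg$-equivariant if and only if it is $G$-equivariant (again using simple connectedness of $G$), so morphisms match up on both sides; hence the restricted functor $\QCoh^G(\frkg^*) \simto \PMod{\Sym(\frkg)}^{\lf}$ is an equivalence.

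I expect the main obstacle to be the integrability step: passing from a locally finite $\frkg$-module to a genuine algebraic $G$-module requires care. The locally finite hypothesis lets one reduce to finite-dimensional $\frkg$-submodules, each of which integrates to a representation of the simply connected group $G$; one must check these integrations are compatible on overlaps (they are, by uniqueness of integration) and that the resulting $G$-action on $M$ is algebraic (it is, being a colimit of finite-dimensional algebraic representations) and compatible with the $\Sym(\frkg)$-module structure. The last compatibility is where the Poisson Leibniz rule is used, and spelling it out is the only genuinely nontrivial verification; everything else is formal bookkeeping with the equivalences already in place.
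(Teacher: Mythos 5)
Your proposal is correct and follows essentially the same route as the paper: the paper derives the lemma directly from the preceding chain $\QCoh^G(\frkg^*) \longinj \PQCoh(\frkg^*) \simeq \PMod{\Sym(\frkg)}$ together with the observation that the differentiated $G$-action on global sections is exactly the adjoint $\frkg$-action of the Poisson module structure, leaving the local-finiteness/integrability identification of the essential image implicit. Your only addition is to spell out the integration step (locally finite $\frkg$-modules exponentiating to rational $G$-modules, using that $G$ is simply connected, and checking the Leibniz-rule compatibility), which is a correct and welcome elaboration of what the paper takes as standard.
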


The category $\QCoh^G(\frkg^*)$ is a symmetric monoidal category 
by the tensor product $\otimes_{\shO_{\frkg^*}}$,
so that we have the notion of \emph{Poisson algebra objects} therein.
Using the equivalence in Lemma \ref{lem:MT:QCoh} 
and unwinding the definition, we recover the claim in \cite[\S 3, p.\ 8]{A18}:

\begin{lem}\label{lem:pr:pao}
A Poisson algebra object in the symmetric monoidal category $\QCoh^G(\frkg^*)$ is 
a Poisson algebra $R$ equipped with a morphism $\mu_R: \Sym(\frkg) \to R$ of 
Poisson algebras under which the adjoint action of $\frkg$ is locally finite.
\end{lem}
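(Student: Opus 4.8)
The plan is to transport the assertion through the equivalence $\QCoh^G(\frkg^*) \simeq \PMod{\Sym(\frkg)}^{\lf}$ of Lemma \ref{lem:MT:QCoh}, which is moreover monoidal for the tensor product $\otimes_{\shO_{\frkg^*}} = \otimes_{\Sym(\frkg)}$, and then to unwind what a Poisson algebra object becomes on the module side. First I would note that the monoidal unit of $\PMod{\Sym(\frkg)}$ is the Lie-Poisson algebra $\Sym(\frkg)$ itself (Notation \ref{ntn:sp:KK}) regarded as a Poisson module over itself, so a commutative algebra object $R$ comes with a unit morphism $\mu_R \colon \Sym(\frkg) \to R$ of Poisson modules and a multiplication $R \otimes_{\Sym(\frkg)} R \to R$ which is again a morphism of Poisson modules. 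Reading off the axioms of Definition \ref{dfn:sp:mod}, this says exactly that $R$ is a commutative $\bbC$-algebra, $\mu_R$ is a $\bbC$-algebra homomorphism with $s.r = \mu_R(s)r$, the inherited bracket action $\{-,-\}_{\mathrm{mod}}\colon \Sym(\frkg)\otimes R \to R$ is a Lie-algebra action by derivations of $R$, and $\{s,\mu_R(t)\}_{\mathrm{mod}} = \mu_R(\{s,t\}_{\frkg})$ for $s,t \in \Sym(\frkg)$; membership in $\PMod{\Sym(\frkg)}^{\lf}$ adds that the restricted action of $\frkg \subset \Sym(\frkg)$ is locally finite.

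Next I would bring in the Poisson bracket of the algebra object, namely a $\bbC$-bilinear biderivation $\{-,-\}_R\colon R \otimes R \to R$ making $(R,\cdot,\{-,-\}_R)$ a Poisson algebra in $\cVec$ and required to be compatible with the ambient Poisson-$\Sym(\frkg)$-module structure; the compatibility is the identity $\{\mu_R(s),r\}_R = \{s,r\}_{\mathrm{mod}}$ for $s\in\Sym(\frkg)$, $r\in R$ (so that the internal bracket restricts along $\mu_R$ to the module bracket), together with $\frkg$-equivariance of $\{-,-\}_R$. I would then show that these conditions force $\mu_R$ to be a morphism of Poisson algebras: on the Lie generators $x,y \in \frkg$ one has $\mu_R(\{x,y\}_{\frkg}) = \{x,\mu_R(y)\}_{\mathrm{mod}} = \{\mu_R(x),\mu_R(y)\}_R$ by the last relation of the previous paragraph and the compatibility identity, and the general case follows from the Leibniz rule on both sides and the biderivation property of $\{-,-\}_R$. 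Conversely, given a Poisson algebra $R$ with a Poisson algebra morphism $\mu_R\colon \Sym(\frkg)\to R$ whose induced adjoint $\frkg$-action is locally finite, I would set $s.r := \mu_R(s)r$ and $\{s,r\}_{\mathrm{mod}} := \{\mu_R(s),r\}_R$, check the Poisson-module axioms of Definition \ref{dfn:sp:mod} directly from the Leibniz rule for $\{-,-\}_R$ and the fact that $\mu_R$ is Poisson, observe that $R$ then lies in $\PMod{\Sym(\frkg)}^{\lf}$ by hypothesis, and check that the multiplication and $\{-,-\}_R$ are morphisms of the relevant type; by Lemma \ref{lem:MT:QCoh} this produces the desired Poisson algebra object of $\QCoh^G(\frkg^*)$, and the two constructions are mutually inverse.

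The verifications of the module and algebra-object axioms are routine bookkeeping, with no signs since every object here is concentrated in degree $0$. The main point needing care is the middle step: one must pin down precisely which compatibility between the internal bracket $\{-,-\}_R$ and the Poisson-$\Sym(\frkg)$-module structure is packaged into ``Poisson algebra object in $\QCoh^G(\frkg^*)$'' --- in particular that $\{-,-\}_R$ is only required to be $\bbC$-bilinear and not $\Sym(\frkg)$-bilinear (the latter would force $\mu_R(\Sym(\frkg))$ to be Poisson-central) --- and then to see that this compatibility is equivalent to ``$\mu_R$ is a morphism of Poisson algebras'' rather than the weaker ``$\mu_R$ is a ring map whose image acts by derivations''. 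Once that is settled the lemma follows immediately; it also records that the locally finite adjoint $\frkg$-action on $R$ is exactly the extra datum cutting $\PMod{\Sym(\frkg)}^{\lf}$ out of $\PMod{\Sym(\frkg)}$, which is where the $G$-equivariance built into $\QCoh^G(\frkg^*)$ enters.
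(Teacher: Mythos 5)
Your proposal is correct and follows essentially the same route as the paper, which proves the lemma simply by invoking the equivalence $\QCoh^G(\frkg^*) \simeq \PMod{\Sym(\frkg)}^{\lf}$ of Lemma \ref{lem:MT:QCoh} and ``unwinding the definition'' of a Poisson algebra object. Your write-up just supplies the bookkeeping the paper leaves implicit, and your flagged subtlety (that the internal bracket is $\bbC$-bilinear rather than $\Sym(\frkg)$-bilinear, so that the compatibility with the Poisson-module structure amounts exactly to $\mu_R$ being a Poisson morphism) is precisely the point the paper's terse argument glosses over.
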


In particular, a Poisson algebra object $R$ is equipped with Hamiltonian 
$\frkg$-action, and the morphism $\mu_R$ is the corresponding momentum map.
Thus we can consider Hamiltonian reduction of Poisson algebra objects.

We have an obvious dg analogue of the above arguments.
Recall Notation \ref{ntn:sp:daff} \eqref{i:sp:daff:LQCoh}: 
We denote by $\LQCoh(\frkg^*)$ 
the $\infty$-category of quasi-coherent sheaves of $\shO_{\frkg^*}$-modules,
whose homotopy category is equivalent to the derived category $\DQCoh(\frkg^*)$. 
It is a symmetric monoidal category under the derived tensor product 
$\otimes^{\bbL}_{\shO_{\frkg^*}}$.
We also have the $\infty$-category $\eLQCoh{G}(\frkg^*)$ of $G$-equivariant sheaves,
which is also equivalent to $\LQCoh([\frkg^*/G])$ of sheaves over 
the quotient stack $[\frkg^*/G]$.
The $\infty$-category $\eLQCoh{G}(\frkg^*)$ is also symmetric monoidal, 
and we can consider Poisson algebra objects therein. 

As in the arguments for Lemma \ref{lem:pr:pao},
a Poisson algebra object in $\eLQCoh{G}(\frkg^*)$ is nothing but 
a dg Poisson algebra $R$ equipped with a morphism $\mu_R: \Sym(\frkg) \to R$ of 
dg Poisson algebras under which the adjoint action of $\frkg$ is locally finite.

Now let us consider Hamiltonian reduction of Poisson algebra objects.
We are interested in the derived gluing in the category $\MT$, 
or the composition of morphisms therein (Definition \ref{dfn:pr:MT}).
Thus, instead of considering the reduction of an object $R$,
let us consider the reduction of tensor product $R^{\op} \otimes R'$
where $R^{\op}$ denotes the Poisson algebra $(R,-\{\cdot,\cdot\}_R)$ 
equipped with the momentum map $-\mu_R$.
Then the derived Hamiltonian reduction corresponding to 
the composition of morphisms is described in the following way.

\begin{prp}\label{prp:pr:glue}
Let $R$ and $R'$ be Poisson algebra objects in $\eLQCoh{G}(\frkg^*)$.
We denote by $\mu_R$ and $\mu_{R'}$ the corresponding momentum maps respectively,
and define 
$\mu:=-\mu_R \otimes 1 + 1 \otimes \mu_{R'}: \Sym(\frkg) \to R^{\op} \otimes R'$. 
Then we have an quasi-isomorphism of homotopy Poisson algebras
\[
 \cBRST(\frkg,R^{\op}\otimes R',\mu) \dqiseq 
 R' \wtc R = (R^{\op} \otimes R') \gq^{\bbL}_{\mu} \Sym(\frkg).
\] 
\end{prp}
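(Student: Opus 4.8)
The plan is to obtain the statement as an essentially immediate consequence of Proposition \ref{prp:pr:HBp}, applied to the dg Poisson algebra $R^{\op} \otimes R'$. First I would recall, from the dg analogue of Lemma \ref{lem:pr:pao} recorded just above, that a Poisson algebra object in $\eLQCoh{G}(\frkg^*)$ is precisely a dg Poisson algebra together with a momentum map from $\Sym(\frkg)$ under which the adjoint $\frkg$-action is locally finite. Hence $R$ and $R'$ are dg Poisson algebras carrying Hamiltonian $\frkg$-actions (Definition \ref{dfn:sp:Ham}) with momentum maps $\mu_R: \Sym(\frkg) \to R$ and $\mu_{R'}: \Sym(\frkg) \to R'$, and passing to the opposite algebra $R^{\op}$ (Definition \ref{dfn:sp:whP} \eqref{i:sh:whP:op}) merely flips the sign of the Poisson bracket, so $-\mu_R$ is a momentum map for $R^{\op}$ inducing the same underlying $\frkg$-action on $R$ as $\mu_R$ does.

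Next I would verify that the tensor product $R^{\op} \otimes R'$, formed in $\dgpa$, is a dg Poisson algebra with a Hamiltonian $\frkg$-action whose momentum map is $\mu := -\mu_R \otimes 1 + 1 \otimes \mu_{R'}: \Sym(\frkg) \to R^{\op} \otimes R'$. Since the tensor-product Poisson bracket acts componentwise, this reduces to checking that $\mu$ is a morphism of dg Poisson algebras and that the momentum map equation $\{\mu(x),z\}_{R^{\op} \otimes R'} = a(x)(z)$ holds, with $a$ the diagonal $\frkg$-action built from $R$ and $R'$; this is the single routine computation in the proof, amounting to bookkeeping with the sign rule of Definition \ref{dfn:dg:dgVmon} together with the opposite-bracket convention. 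Crucially, $\frkg = \Lie(G)$ is a finite-dimensional Lie algebra concentrated in degree $0$ because $G$ is semisimple algebraic, so the hypotheses of Proposition \ref{prp:pr:HBp} are met directly, without recourse to local finiteness or to the variant Proposition \ref{prp:pr:HBinf}.

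Applying Proposition \ref{prp:pr:HBp} with $\frkl = \frkg$ and with the dg Poisson algebra $R^{\op} \otimes R'$ equipped with the Hamiltonian $\frkg$-action just described then yields a quasi-isomorphism of homotopy Poisson algebras
\[
 \cBRST(\frkg, R^{\op} \otimes R', \mu) \dqiseq (R^{\op} \otimes R') \gq^{\bbL}_{\mu} \Sym(\frkg),
\]
and by Definition \ref{dfn:pr:MT} the right-hand side is, by construction, the derived gluing $R' \wtc R$, which completes the argument. The hard part here is not conceptual --- there is no genuine obstacle once Proposition \ref{prp:pr:HBp} is in hand --- but rather ensuring that the momentum map $\mu$ on $R^{\op} \otimes R'$ is normalized with signs consistent among Definition \ref{dfn:pr:MT}, the opposite-algebra convention of Definition \ref{dfn:sp:whP} \eqref{i:sh:whP:op}, and the tensor-product Poisson structure, so that Proposition \ref{prp:pr:HBp} applies verbatim.
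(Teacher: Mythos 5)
Your proposal is correct and matches the paper's own argument, which simply states that the proposition is a direct consequence of Proposition \ref{prp:pr:HBp}; your additional verifications (that a Poisson algebra object in $\eLQCoh{G}(\frkg^*)$ is a dg Poisson algebra with Hamiltonian $\frkg$-action, that the tensor product $R^{\op}\otimes R'$ carries the momentum map $\mu$, and that $\frkg$ is finite-dimensional so no appeal to Proposition \ref{prp:pr:HBinf} is needed) only make explicit what the paper leaves implicit.
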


\begin{proof}
This is a direct consequence of Proposition \ref{prp:pr:HBp}.
\end{proof}

\begin{rmk}\label{rmk:pr:glue}
Let us continue Remark \ref{rmk:pr:HBp} and give a connection to 
the argument in \cite[\S 3, pp.\ 8--9]{A18}.
We make the following assumptions on $R$:
\begin{enumerate}[nosep,label=(\roman*)]
\item 
$R$ is a Poisson algebra object in $\QCoh^{G}(\frkg^*)$.
We denote $X:=\Spec(R)$, 
and consider $X$ as a affine Poisson scheme with Hamiltonian $G$-action.

\item
There is a closed subscheme $S \subset X$ such that the action map gives 
an isomorphism $G \times S \simto X$.

\item
The momentum map $\mu_X: X \to \frkg^*$ corresponding to $\mu_R$
is flat as a morphism of schemes.
\end{enumerate} 
Let $R'$ be another Poisson object in $\QCoh^{G}(\frkg^*)$.
Denoting $X':= \Spec(R')$ the corresponding affine Poisson scheme
with the momentum map $\mu_{X'}: X' \to \frkg^*$ and $X^{\op}:=\Spec(R^{\op})$,
we have the affine Poisson scheme $X^{\op} \times X$ 
with the flat momentum map $\mu(x,x'):=-\mu_X(x)+\mu_{X'}(x')$.
Thus we have the non-derived Hamiltonian reduction 
$(X^{\op} \times X')\gq \Delta(G)$.
We also have
\[
 \mu^{-1}(0) \simeq X \times_{\mu_X,\frkg^*,\mu_{X'}} X' 
 \simeq G \times (S \times_{\mu_S,\frkg^*,\mu_{X'}}X'),
\]
where $\mu_S:=\rst{\mu_X}{S}: S \to \frkg^*$ is the restriction.
Thus we have 
\[
 (X^{\op} \times X')\gq \Delta(G) \simeq \mu^{-1}(0)/G \simeq 
 S \times_{\frkg^*}X'
\]
as schemes.
On the other hand, from Proposition \ref{prp:pr:glue} we have an isomorphism 
\[
 \cBRST(\frkg,R^{\op} \otimes R',\mu) \simeq 
 (R^{\op} \otimes R') \gq_{\mu} \Sym(\frkg) 
 \simeq \bbk[(X^{\op} \times X')\gq \Delta(G)]
\] 
of graded Poisson algebras.
Here the symbol $\gq_{\mu}$ in the middle denotes 
the non-derived Hamiltonian reduction of Poisson algebra.
Combining these isomorphisms, we have 
\[
 H^\bl \cBRST(\frkg,R^{\op} \otimes R',\mu) \simeq 
 H^\bl \bbk[\mu^{-1}(0)/G] \simeq 
 \bbk[S \times_{\frkg^*}X'] \otimes H^{\bl}(G,\bbC).
\]
Thus we recover the formula \cite[(13)]{A18}.
\end{rmk}

\section{Jet and arc spaces in derived setting}
\label{s:jet}

\subsection{Jet and arc spaces for ordinary schemes}
\label{ss:jet:ord}

Let us recall the definition of jet spaces in the ordinary setting.
We refer \cite[\S 7.6]{BLR} and \cite{EM} for the detail.

Let $\bbk$ be a field, and $\Sch$ be the category of schemes over $\bbk$.

\begin{fct}[{\cite[\S 7.6, Theorem 4]{BLR}}]\label{fct:jet:Jn}
For $n \in \bbN$, 
there exists a functor $J_n: \Sch \to \Sch$ such that 
such that for any $S \in \Sch$  
we have a functorial bijection
\[
 \Hom_{\Sch}(S,J_n(X)) \simeq 
 \Hom_{\Sch}(S \times_{\Spec(\bbk)} \Spec(\bbk[t]/(t^{n+1})),X).
\]
We call $J_n(X)$ the \emph{$n$-th jet space of $X$}.
\end{fct}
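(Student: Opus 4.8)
The plan is to realize $J_n(X)$ as a \emph{Weil restriction of scalars} and to verify the universal property first for affine $X$, then globally by gluing. Write $\ve_n := \Spec(\bbk[t]/(t^{n+1}))$, a finite free $\bbk$-scheme of rank $n+1$, so that the asserted property reads $\Hom_{\Sch}(S, J_n(X)) \simeq \Hom_{\Sch}\bigl(S \times_{\Spec(\bbk)} \ve_n, X\bigr)$ functorially in $S \in \Sch$; equivalently, $J_n(X)$ should represent the restriction of scalars of $X \times_{\Spec(\bbk)} \ve_n$ along the finite free morphism $\ve_n \to \Spec(\bbk)$. Since the presheaf $S \mapsto \Hom_{\Sch}(S \times_{\Spec(\bbk)}\ve_n, X)$ is a Zariski sheaf on $\Sch$, and a scheme is determined by its functor of points on affine schemes, it is enough to exhibit a scheme representing it, and for this one may work Zariski-locally on $X$.

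First I would treat the affine case $X = \Spec(A)$ with a presentation $A = \bbk[x_i \mid i \in I]/(f_j \mid j \in J)$. Introduce indeterminates $x_i^{(0)}, \dots, x_i^{(n)}$ for $i \in I$, substitute $x_i \mapsto \sum_{k=0}^{n} x_i^{(k)} t^k$ into each $f_j$, expand modulo $t^{n+1}$, and read off the coefficient of each $t^l$ to obtain polynomials $f_j^{(l)}$ in the $x_i^{(k)}$ for $0 \le l \le n$. Put $J_n(X) := \Spec\bigl(\bbk[x_i^{(k)}]/(f_j^{(l)})\bigr)$. For any $\bbk$-algebra $B$, a morphism $\Spec(B) \to J_n(X)$ is the same datum as a family of elements $b_i^{(k)} \in B$ annihilating all the $f_j^{(l)}$, which is exactly a $\bbk$-algebra homomorphism $A \to B[t]/(t^{n+1})$, i.e.\ a morphism $\Spec(B) \times_{\Spec(\bbk)} \ve_n \to X$. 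This verifies the universal property for affine $X$, and in particular shows that the construction does not depend on the chosen presentation.

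Next I would globalize by gluing. The inclusion of constants $\bbk \inj \bbk[t]/(t^{n+1})$ ($t \mapsto 0$) induces a projection $p_X \colon J_n(X) \to X = J_0(X)$, and the key lemma is that for every open immersion $U \inj X$ one has $J_n(U) = p_X^{-1}(U) = J_n(X) \times_X U$. Indeed the closed immersion $S \inj S \times_{\Spec(\bbk)} \ve_n$ is a nilpotent thickening, hence a homeomorphism of underlying topological spaces, so a morphism $S \times_{\Spec(\bbk)}\ve_n \to X$ lands in the open subscheme $U$ if and only if its restriction along $S \inj S \times_{\Spec(\bbk)}\ve_n$ does. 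Consequently, for an affine open cover $\{X_\alpha\}$ of $X$, the affine schemes $J_n(X_\alpha)$ carry $J_n(X_\alpha) \times_{X_\alpha}(X_\alpha \cap X_\beta) = J_n(X_\alpha \cap X_\beta)$ as overlaps and glue along these to a scheme $J_n(X)$ representing the functor; a morphism $X \to Y$ induces $J_n(X) \to J_n(Y)$ by the universal property, so we obtain a functor $J_n \colon \Sch \to \Sch$. The arc space $J_\infty(X)$ is then the inverse limit $\lim_n J_n(X)$ along the truncation maps $J_{n+1}(X) \to J_n(X)$ coming from $\bbk[t]/(t^{n+2}) \surj \bbk[t]/(t^{n+1})$.

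The step I expect to require the most care is precisely this gluing/representability argument: one must check that the affine charts $J_n(X_\alpha)$ are glued compatibly, which reduces to the identity $J_n(U) \simeq J_n(X) \times_X U$ for $U \inj X$ open, and hence to the standard fact that Weil restriction along the finite free morphism $\ve_n \to \Spec(\bbk)$ preserves open immersions and commutes with base change along them. For a complete treatment of this point, as well as of the further properties of $J_n$ used in the sequel, I would simply cite \cite[\S 7.6]{BLR}.
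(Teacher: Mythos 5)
Your proposal is correct and is essentially the argument behind the cited result: the paper itself takes this Fact from \cite[\S 7.6, Theorem 4]{BLR} without reproving it, but its explicit affine description in Remark \ref{rmk:jet:aff} (truncated Taylor coefficients) and its proof of the derived analogue (Proposition \ref{prp:jet:Jn}, via the affine case plus the open/monomorphism pullback lemma and gluing, following \cite[Lemma 2.3]{EM}) follow exactly the same two-step strategy you describe. No gaps to report.
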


\begin{cor}\label{cor:jet:Jn}
By the defining condition as a functor of points, we have
\begin{enumerate}[nosep] 
\item
$J_n(X)$ is unique up to a canonical isomorphism.
\item
$J_0(X) \simeq X$.
\item\label{i:cor:jet:Jn:pi}
For $m,n \in \bbN$ with $m \ge n$, 
there is a morphism $\pi_{m,n}: J_m \to J_n$ of functors 
induced by the truncation $\bbk[t]/(t^{m+1}) \surj \bbk[t]/(t^{n+1})$. 
These morphisms satisfy $\pi_{n,p} \circ \pi_{m,n} = \pi_{m,p}$
for $m \ge n \ge p$, so that we have an inverse system 
$\{\pi_{m,n} \mid m,n \in \bbN, m \ge n\}$ on the direct set $\{\bbN, \le\}$.
In particular, for $n \in \bbN$, 
we have the following commutative diagram in $\Sch$.
\[
 \xymatrix{J_n(X) \ar[r]^{J_n(f)} \ar[d]_{\pi^X_{n,0}} & 
           J_n(Y) \ar[d]^{\pi^Y_{n,0}} \\ X \ar[r]_{f} & Y}
\]
\end{enumerate}
\end{cor}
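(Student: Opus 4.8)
The plan is to deduce all three assertions formally from the Yoneda lemma applied to the universal property recorded in Fact~\ref{fct:jet:Jn}. Write $h_Z := \Hom_{\Sch}(-,Z)$ for the functor of points of a scheme $Z$, and set $F_{n,X} := \Hom_{\Sch}(- \times_{\Spec(\bbk)} \Spec(\bbk[t]/(t^{n+1})), X)$ for the functor that $J_n(X)$ represents, so that Fact~\ref{fct:jet:Jn} supplies a natural isomorphism $h_{J_n(X)} \simeq F_{n,X}$.

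For part~(1), I would note that if two schemes $Z$ and $Z'$ both represent $F_{n,X}$, then $h_Z \simeq F_{n,X} \simeq h_{Z'}$, and the Yoneda lemma promotes this to a unique isomorphism $Z \simto Z'$ compatible with the two representability identifications; this is the claimed canonical isomorphism. For part~(2), since $\bbk[t]/(t) \simeq \bbk$, for every $S$ there is a canonical identification $S \times_{\Spec(\bbk)} \Spec(\bbk[t]/(t)) \simeq S$, whence $F_{0,X}(S) \simeq \Hom_{\Sch}(S,X) = h_X(S)$ naturally in $S$; thus $h_{J_0(X)} \simeq h_X$ and Yoneda gives $J_0(X) \simeq X$.

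For part~(3), the surjection $\bbk[t]/(t^{m+1}) \surj \bbk[t]/(t^{n+1})$ for $m \ge n$ induces a closed immersion $\Spec(\bbk[t]/(t^{n+1})) \inj \Spec(\bbk[t]/(t^{m+1}))$; base-changing along $S$ and precomposing gives a natural transformation $F_{m,X} \Rightarrow F_{n,X}$, and this transformation is natural also in $X$ since precomposition commutes with postcomposition by any $f: X \to Y$. Transporting through the representability isomorphisms and applying Yoneda yields the morphism of functors $\pi_{m,n}: J_m \to J_n$. The relation $\pi_{n,p}\circ\pi_{m,n} = \pi_{m,p}$ for $m\ge n\ge p$ then follows because the composite of the corresponding truncation maps is again the truncation map and the passage from truncation maps to scheme morphisms is contravariantly functorial; this produces the inverse system over $\{\bbN,\le\}$. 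Finally the displayed commutative square is exactly the naturality in $X$, evaluated at $f$, of the natural transformation $\pi^{(-)}_{n,0}: J_n \to J_0 \simeq \id_{\Sch}$.

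I expect no genuine obstacle: the entire corollary is bookkeeping with functors of points and Yoneda, once Fact~\ref{fct:jet:Jn} is granted. The single point deserving an explicit line is the double naturality (in $S$ and in $X$) of the transition maps, so that the $\pi_{m,n}$ really assemble into morphisms of functors $\Sch\to\Sch$ rather than just morphisms of schemes for each fixed $X$; but this is immediate from the functoriality of $S \mapsto S\times_{\Spec(\bbk)}\Spec(\bbk[t]/(t^{n+1}))$.
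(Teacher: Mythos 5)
Your proof is correct and follows exactly the route the paper intends: the corollary is stated as an immediate consequence of the representability property in Fact~\ref{fct:jet:Jn}, and your Yoneda-lemma bookkeeping (uniqueness of the representing object, $\bbk[t]/(t)\simeq\bbk$ for $J_0$, precomposition with the closed immersions induced by truncation for the transition maps, and naturality of $\pi_{n,0}$ for the commutative square) is precisely the argument the paper leaves implicit.
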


We denote by $\Sch^{\ft}$ the full subcategory of schemes of finite type over $\bbk$.
We then have

\begin{fct}\label{fct:jet:EM2.2}
Let $n \in \bbN$ be arbitrary.
\begin{enumerate}[nosep]
\item
If a morphism $f$ in $\Sch$ is smooth, then so is $J_n(f)$
\cite[\S 7.6, Proposition 5]{BLR}.

\item 
$J_n$ gives a functor $J_n: \Sch^{\ft} \to \Sch^{\ft}$.
In other words, for a scheme $X$ of finite type over $\bbk$,
the $n$-th jet space $J_n(X)$ is also of finite type over $\bbk$
\cite[\S 7.6, Proposition 5]{BLR}.

\item
For any $X \in \Sch^{\ft}$, 
the morphism $\pi_{n,0}: J_n(X) \to J_{n,0}(X)=X$ is affine
\cite[\S 2]{EM}.
\end{enumerate}
\end{fct}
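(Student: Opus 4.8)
The plan is to reduce all three assertions to the affine case together with a handful of structural properties of the functors $J_n$, each of which drops out of the universal property recorded in Fact~\ref{fct:jet:Jn}. The properties I would establish first are: (i) \emph{compatibility with open immersions}, $J_n(U) = (\pi^X_{n,0})^{-1}(U)$ for an open subscheme $U \subset X$; (ii) \emph{preservation of fibre products}, $J_n(X \times_Y Z) \simeq J_n(X) \times_{J_n(Y)} J_n(Z)$, in particular $J_n(\Spec \bbk) \simeq \Spec \bbk$; (iii) the computation $J_n(\bbA^1_\bbk) \simeq \bbA^{n+1}_\bbk$, and hence $J_n(\bbA^m_Y) \simeq \bbA^{m(n+1)}_\bbk \times_{\Spec \bbk} J_n(Y)$; and (iv) \emph{compatibility with \'etale morphisms}, $J_n(X) \simeq X \times_Y J_n(Y)$ when $X \to Y$ is \'etale. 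Each of these is proved by the same Yoneda-style argument: apply $\Hom_{\Sch}(S,-)$, rewrite via Fact~\ref{fct:jet:Jn} as a Hom out of $S' := S \times_{\Spec \bbk} \Spec(\bbk[t]/(t^{n+1}))$, and use that $S \hookrightarrow S'$ is a nilpotent closed immersion, hence a universal homeomorphism; (ii) is purely the universal property of fibre products applied after the base change $(-) \times_{\Spec \bbk} \Spec(\bbk[t]/(t^{n+1}))$, while (iv) additionally invokes the formal \'etaleness of $X \to Y$ for the thickening $S \hookrightarrow S'$.

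Granting (i)--(iv), I would first dispatch the easier assertions. For (2): by (i) an affine open cover $\{U_i = \Spec A_i\}$ of $X$ pulls back to the open cover $\{J_n(U_i) = (\pi^X_{n,0})^{-1}(U_i)\}$ of $J_n(X)$, so one may assume $X = \Spec A$ with $A = \bbk[x_1,\dots,x_m]/(f_1,\dots,f_r)$ finitely generated. Then $X \hookrightarrow \bbA^m_\bbk$ is closed, and the explicit Hasse--Schmidt description of the jet algebra --- substitute $x_i = \sum_{k=0}^n x_i^{(k)} t^k$ into each $f_j$ and collect powers of $t$ --- exhibits $J_n(X)$ as the closed subscheme of $J_n(\bbA^m_\bbk) = \bbA^{m(n+1)}_\bbk$ cut out by the $r(n+1)$ polynomials thus obtained; in particular $J_n(X)$ is of finite type over $\bbk$. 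Assertion (3) follows at once: affineness is local on the target, each $(\pi^X_{n,0})^{-1}(U_i) = J_n(U_i)$ is affine because $J_n(\Spec A)$ is $\Spec$ of the jet algebra just exhibited, and therefore $\pi^X_{n,0}$ is an affine morphism.

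For the smoothness assertion (1), I would use that smoothness may be tested \'etale-locally on source and target. Given a smooth $f \colon X \to Y$, work \'etale-locally so that $f$ factors as $X \xrightarrow{g} \bbA^m_Y \xrightarrow{p} Y$ with $g$ \'etale and $p$ the projection. Applying (iv) to $g$, the morphism $J_n(g)$ identifies $J_n(X)$ with the base change $J_n(\bbA^m_Y) \times_{\bbA^m_Y} X$, so $J_n(X) \to J_n(\bbA^m_Y)$ is \'etale; and by (iii), $J_n(\bbA^m_Y) = \bbA^{m(n+1)}_\bbk \times_{\Spec \bbk} J_n(Y) \to J_n(Y)$ is smooth, being a projection off an affine space. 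Composing, and using the functoriality in Corollary~\ref{cor:jet:Jn} to match this with $J_n(f) = J_n(p) \circ J_n(g)$, one concludes that $J_n(f)$ is smooth.

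The step I expect to be the main obstacle is (iv), the \'etale base-change property, since it is the one input using more than the underlying homeomorphism $S \hookrightarrow S'$: it rests on the formal \'etaleness of $X \to Y$ for the nilpotent thickening $S \hookrightarrow S'$, and one must check that base-changing this thickening along an arbitrary test scheme $S$ keeps the defining ideal nilpotent (clear, as $(t)^{n+1} = 0$ is preserved under $\otimes_\bbk$). A related subtlety worth flagging in the write-up is that $J_n$ does \emph{not} commute with closed immersions, i.e.\ $J_n(Z) \neq J_n(X) \times_X Z$ in general --- for instance the $n$-jet space of a $\bbk$-point of $\bbA^1_\bbk$ is again a point, whereas $J_n(\bbA^1_\bbk) \times_{\bbA^1_\bbk} \mathrm{pt} = \bbA^n_\bbk$ --- so in step (2) one genuinely needs the explicit jet-algebra description rather than a formal base-change argument; I would isolate that description as a preliminary lemma before using it.
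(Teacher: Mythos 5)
The statement you are proving is presented in the paper as a quoted \emph{Fact}, with no proof supplied: items (1)--(2) are attributed to \cite[\S 7.6, Proposition 5]{BLR} and item (3) to \cite[\S 2]{EM}, so there is no in-paper argument to compare against line by line. Your reconstruction is correct and follows the standard route of those references. Your preliminary lemmas are exactly the ingredients the paper itself records nearby: the \'etale base-change property (iv) is the cartesian-diagram statement from \cite[Lemma 2.9]{EM} that the paper quotes immediately after Fact \ref{fct:jet:EM2.2}, the explicit affine jet algebra you use in step (2) is the paper's Remark \ref{rmk:jet:aff}, and (i)--(iii) follow from the universal property in Fact \ref{fct:jet:Jn} just as you argue. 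Your warning that $J_n$ does not commute with closed immersions (so the affine case genuinely needs the explicit jet algebra, not a base-change argument) is well taken and correct. Two minor points worth tightening in a write-up: the factorization of a smooth morphism through an \'etale map to $\bbA^m_Y$ already exists Zariski-locally on the source, so you can avoid \'etale localization altogether; if you do localize \'etale-locally on $X$, you should add that the induced map $J_n(X') \to J_n(X)$ is \'etale and surjective (being the base change of $X' \to X$ by (iv)), so that smoothness of $J_n(X') \to J_n(Y)$ descends to $J_n(f)$. With these remarks your argument is complete and consistent with the cited sources.
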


By \cite[Lemma 2.9]{EM}, 
if $f: X \to Y$ is an \'etale morphism in $\Sch^{\ft}$,
then for every $n \in \bbN$ the commutative diagram in 
Corollary \ref{cor:jet:Jn} \eqref{i:cor:jet:Jn:pi} is cartesian.
Using this fact, we can prove

\begin{fct*}[{\cite[Lemma 2.9, Remark 2.10]{EM}}]
Let $f: X \to Y$ be a morphism in $\Sch^{\ft}$, 
and let $n \in \bbN$ be arbitrary.
If $f$ is \'etale, then  $J_n(f)$ is also \'etale.
\end{fct*}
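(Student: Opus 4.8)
The plan is to deduce \'etaleness of $J_n(f)$ from the base-change stability of \'etale morphisms, using the cartesian square recorded just before the statement. First I would note that an \'etale morphism is in particular smooth, so Fact \ref{fct:jet:EM2.2} already yields that $J_n(f)\colon J_n(X)\to J_n(Y)$ is smooth; moreover both $J_n(X)$ and $J_n(Y)$ again lie in $\Sch^{\ft}$ by the same Fact, so $J_n(f)$ is a smooth morphism of finite-type $\bbk$-schemes, hence locally of finite presentation. It therefore remains only to upgrade ``smooth'' to ``\'etale'', equivalently to check that $J_n(f)$ is unramified (of relative dimension $0$).

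For this I would invoke the fact cited from \cite[Lemma 2.9]{EM}: since $f$ is \'etale, the square in Corollary \ref{cor:jet:Jn}~\eqref{i:cor:jet:Jn:pi} is cartesian, i.e.\ the canonical morphism $J_n(X)\to J_n(Y)\times_Y X$ induced by $J_n(f)$ and $\pi^X_{n,0}$ is an isomorphism. Under this identification $J_n(f)$ becomes the projection $J_n(Y)\times_Y X\to J_n(Y)$, which is precisely the base change of $f\colon X\to Y$ along $\pi^Y_{n,0}\colon J_n(Y)\to Y$. Since \'etale morphisms are stable under arbitrary base change and $f$ is \'etale, this projection — hence $J_n(f)$ — is \'etale, which is the claim.

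Given the cited cartesian-square property, the argument is essentially formal, so there is no real obstacle beyond that input. If one wanted a self-contained proof instead, the main point to establish would be exactly that $J_n$ commutes with \'etale base change: one checks this first for open immersions, where it is immediate from the functor-of-points description $\Hom_{\Sch}(S,J_n(X))\simeq\Hom_{\Sch}(S\times_{\Spec(\bbk)}\Spec(\bbk[t]/(t^{n+1})),X)$ of Fact \ref{fct:jet:Jn}, and then propagates it to \'etale morphisms either via the local structure theory of \'etale maps or by a direct formally-\'etale computation testing the above adjunction against square-zero extensions $S'\hookrightarrow S$. That compatibility is the content extracted from \cite{EM} here, so in the present write-up it can simply be quoted.
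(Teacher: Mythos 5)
Your argument is correct and matches the paper's intended route: the paper also deduces the Fact by citing that the square of Corollary \ref{cor:jet:Jn}~\eqref{i:cor:jet:Jn:pi} is cartesian for \'etale $f$ (from \cite[Lemma 2.9]{EM}), so that $J_n(f)$ is a base change of $f$ along $\pi^Y_{n,0}$ and hence \'etale. Your opening paragraph on smoothness is harmless but superfluous, since the base-change step already gives \'etaleness directly.
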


\begin{rmk}\label{rmk:jet:aff}
Following \cite[\S3]{EM} and \cite[\S3]{AM},
we give an explicit description of $J_n(X)$ for an affine $X$.
We assume the characteristics of $\bbk$ is $0$.
\begin{enumerate}[nosep]
\item 
\label{i:jet:aff:A^N}
First we assume $X=\bbA^N=\Spec(\bbk[x^1,\ldots,x^N])$.
Then we have 
\[
 J_n(\bbA^N) = \Spec(\bbk[x^i_{(-j-1)} \mid i=1,\ldots,N, \, j=0,\ldots,n]).
\]
Indeed, for an affine scheme $S=\Spec(A)$,
a morphism $a: \Spec(A[t]/(t^{n+1})) \to \bbA^N$ 
corresponds to a morphism $a^*: k[x^i] \to A[t]/(t^{n+1})$.
We set $a^*(x^i)=\sum_{j=0}^n a^i_{(-j-1)}t^j/j!$.
Then we have the morphism 
$\alpha^*: \bbk[x^i_{(-j-1)}] \to A$, $(x^i_{(-j-1)}) \mapsto (a^i_{(-j-1)})$
corresponding to an $A$-valued point $\alpha: \Spec(A) \to J_N(\bbA^N)$.
The correspondence $a \mapsto \alpha$ gives the desired functorial bijection
\[
 \Hom_{\Sch}(\Spec(A) \times_{\Spec(\bbk)} \Spec(\bbk[t]/(t^{n+1})),\bbA^N)
 \simeq \Hom_{\Sch}(\Spec(A),J_n(\bbA^N)).
\]

\item
\label{i:jet:aff:aff}
Next we consider the case $X=\Spec(R)$, 
$R=\bbk[x^1,\ldots,x^N]/I$ with $I=(f_1,\ldots,f_M)$.
Define a $0$-derivation $T$ on $\bbk[x^i_{(-j-1)}]$ 
(Definition \ref{dfn:dga:der} \eqref{i:dga:der:3}) 
by $T x^i_{(-j)} = j x^i_{(-j-1)}$ for $j \in \bbZ_{>0}$.
Then we have 
\begin{align*}
&J_n(\Spec(R)) = \Spec(J_n(R)), \\
&J_n(R) := \bbk[x^i_{(-j-1)} \mid i=1,\ldots,N, \, j=0,\ldots,n]/
           (T^k f_l \mid k=0,\ldots,n, \, l=1,\ldots,M).
\end{align*}
\end{enumerate}
\end{rmk}

Now we turn to the arc space.
By Corollary \ref{cor:jet:Jn} \eqref{i:cor:jet:Jn:pi} and 
Fact \ref{fct:jet:EM2.2}, we have an inverse system 
$\{\pi_{m,n}: J_m(X) \to J_n(X) \mid m,n \in \bbN, m \ge n\}$
of affine morphisms in $\Sch^{\ft}$.
Thus the limit exists in $\Sch$.
In the affine case $X=\Spec(R)$, the limit is also affine.

\begin{dfn}\label{dfn:jet:arc}
\begin{enumerate}[nosep]
\item 
Let $X$ be a scheme over $\bbk$.
We denote the limit of $\{\pi_{m,n}: J_m(X) \to J_n(X)\}$ 
in $\Sch$ by $J_{\infty}(X)$,
and call it the \emph{arc space} or the \emph{$\infty$-jet space} of $X$.
We also denote by $\psi_n: J_{\infty}(X) \to J_n(X)$ the projection.

\item
For a commutative ring $R$, 
we denote by $J_{\infty}(R)$ the commutative ring whose spectrum 
gives the arc space: $\Spec(J_{\infty}(R)) = J_{\infty}(\Spec(R))$.
\end{enumerate}
\end{dfn}

The properties of the $n$-jet space $J_n(X)$ are inherited by $J_\infty(X)$.
For example, we have: 

\begin{lem*}
\begin{enumerate}[nosep]
\item 
For $X \in \Sch^{\ft}$ and a commutative algebra $A$ over $\bbk$, 
there is a bijection 
\[
  \Hom_{\Sch}(\Spec(A),J_\infty(X)) \simeq \Hom_{\Sch}(\Spec(A[[t]]),X).
\]
\item
The correspondence $X \mapsto J_\infty(X)$ gives a functor $\Sch^{\ft} \to \Sch$.
\item
If $f: X \to Y$ is an \'etale morphism, then there is a cartesian diagram
\[
 \xymatrix{J_\infty(X) \ar[r]^{J_\infty(f)} \ar[d]_{\psi^X_{0}} & 
           J_\infty(Y) \ar[d]^{\psi^Y_{0}} \\ X \ar[r]_{f} & Y}
\]
\end{enumerate}
\end{lem*}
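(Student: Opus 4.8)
The three assertions are consequences of the corresponding statements for the finite jet spaces $J_n$ recorded in Fact \ref{fct:jet:EM2.2} and the remarks thereafter, passed through the limit $J_\infty(X)=\lim_n J_n(X)$. So the plan is to reduce each clause to the finite level and then take limits, being careful that the relevant universal properties and base-change diagrams survive the limit because the transition maps $\pi_{m,n}$ are affine.

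\emph{Clause (1): the adjunction.} First I would compute
\[
 \Hom_{\Sch}(\Spec(A),J_\infty(X))
 \simeq \Hom_{\Sch}(\Spec(A),\lim_n J_n(X))
 \simeq \lim_n \Hom_{\Sch}(\Spec(A),J_n(X)).
\]
Here the second step is the universal property of the limit in $\Sch$, which is legitimate because the inverse system $\{\pi_{m,n}\}$ consists of affine morphisms over $X\in\Sch^{\ft}$ (Fact \ref{fct:jet:EM2.2}), so the limit exists in $\Sch$ and represents the displayed functor. Next I would apply Fact \ref{fct:jet:Jn} termwise to rewrite the right-hand side as
\[
 \lim_n \Hom_{\Sch}\bigl(\Spec(A)\times_{\Spec(\bbk)}\Spec(\bbk[t]/(t^{n+1})),X\bigr),
\]
and then identify this with $\Hom_{\Sch}(\Spec(A[[t]]),X)$: since $X$ is of finite type, any morphism $\Spec(A[[t]])\to X$ factors through some affine open and is determined by a ring map out of the finitely generated $\bbk$-algebra $\calO_X(U)$, which in turn factors through $A[t]/(t^{n+1})$ for $n$ large; compatibility under the truncations $\bbk[t]/(t^{m+1})\surj\bbk[t]/(t^{n+1})$ matches the maps $\pi_{m,n}$, giving the bijection with $\lim_n \Hom(\Spec(A[t]/(t^{n+1})),X)=\Hom(\Spec(A[[t]]),X)$. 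The affine ring-level statement $\Hom(R,A[[t]])\simeq\Hom(J_\infty(R),A)$ then follows by taking $X=\Spec(R)$, $S=\Spec(A)$.

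\emph{Clause (2): functoriality.} This is formal: Corollary \ref{cor:jet:Jn}\,\eqref{i:cor:jet:Jn:pi} gives, for each $n$, a natural transformation $J_n(f):J_n(X)\to J_n(Y)$ compatible with the $\pi_{m,n}$'s, i.e. a morphism of inverse systems; passing to limits in $\Sch$ gives $J_\infty(f):J_\infty(X)\to J_\infty(Y)$, and the limit of compositions is the composition of limits, so $J_\infty$ is a functor $\Sch^{\ft}\to\Sch$. The commuting square with $\psi^X_0,\psi^Y_0$ comes from the $n=0$ projection.

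\emph{Clause (3): base change along \'etale maps.} By the cited result of Ein--Musta\c{t}\u{a} (\cite[Lemma 2.9]{EM}), for $f$ \'etale and every $n\in\bbN$ the square
\[
 \xymatrix{J_n(X) \ar[r]^{J_n(f)} \ar[d]_{\pi^X_{n,0}} & J_n(Y) \ar[d]^{\pi^Y_{n,0}} \\ X \ar[r]_{f} & Y}
\]
is cartesian, i.e. $J_n(X)\simeq X\times_Y J_n(Y)$. I would take the limit over $n$ of both sides: on the left one gets $J_\infty(X)$ by definition; on the right, since fiber product commutes with the (cofiltered) limit over $n$ and $X\to Y$ is fixed, $\lim_n\bigl(X\times_Y J_n(Y)\bigr)\simeq X\times_Y\bigl(\lim_n J_n(Y)\bigr)=X\times_Y J_\infty(Y)$. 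Compatibility of the $\pi_{n,0}$'s with this identification is what guarantees the limiting square remains cartesian, yielding $J_\infty(X)\simeq X\times_Y J_\infty(Y)$, which is the asserted cartesian diagram with the $\psi_0$'s.

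\emph{Main obstacle.} The only genuinely delicate point is the interchange of limits in clause (1): one must be sure that $\Hom(\Spec(A[[t]]),X)$ really is $\lim_n \Hom(\Spec(A[t]/(t^{n+1})),X)$ for $X$ of finite type — equivalently, that a map to $X$ out of the $t$-adically complete ring $A[[t]]$ is detected at finite level. This is where the finite-type hypothesis is used essentially, and where I would be most careful (reducing to an affine open of $X$ with finitely generated coordinate ring, then noting each generator lands in some $A[t]/(t^{n+1})$ and a uniform $n$ works). Everything else is the formal bookkeeping of cofiltered limits of affine schemes.
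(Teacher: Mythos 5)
Your overall plan (pass the finite-level statements of Fact \ref{fct:jet:Jn} and \cite[Lemma 2.9]{EM} through the cofiltered limit, using that the transition maps $\pi_{m,n}$ are affine so that the limit exists and $\Hom(\Spec(A),\lim_n J_n(X))\simeq\lim_n\Hom(\Spec(A),J_n(X))$, and that fiber products commute with limits in clause (3)) is the right one, and clauses (2) and (3) are handled correctly. The paper itself gives no proof here, so the comparison is with the standard argument, which your outline essentially follows.

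However, your treatment of the key identification in clause (1), namely $\Hom_{\Sch}(\Spec(A[[t]]),X)\simeq\lim_n\Hom_{\Sch}(\Spec(A[t]/(t^{n+1})),X)$, has a genuine gap. First, the claim that ``any morphism $\Spec(A[[t]])\to X$ factors through some affine open'' is false for a general commutative algebra $A$: $\Spec(A[[t]])$ is not local, and its image need not lie in a single chart (this only works when $A$ is local, e.g.\ a field). Second, the assertion that the resulting ring map $\calO_X(U)\to A[[t]]$ ``factors through $A[t]/(t^{n+1})$ for $n$ large'' is not meaningful: images of generators are honest power series and do not lie in any truncation; what one does is compose with the truncations $A[[t]]\surj A[t]/(t^{n+1})$, which is the easy direction of the bijection and needs no hypothesis. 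The genuinely delicate direction, which your sketch does not address, is surjectivity: given a compatible family $\gamma_n\colon\Spec(A[t]/(t^{n+1}))\to X$, one must algebraize it to an arc $\Spec(A[[t]])\to X$. The standard argument uses that all $\Spec(A[t]/(t^{n+1}))$ share the underlying space of $\Spec(A)$: choose finitely many $f_i\in A$ generating the unit ideal such that $\gamma_0(D(f_i))$ lands in an affine open $U_i\subset X$; then the whole family over $D(f_i)$ lands in $U_i$, the affine case (where $\Hom(\calO(U_i),-)$ commutes with the limit $A_{f_i}[[t]]=\lim_n A_{f_i}[t]/(t^{n+1})$, with no finite-type hypothesis needed) produces morphisms $\Spec(A_{f_i}[[t]])\to U_i\subset X$, and these glue because the overlaps are again quasi-compact and separated and because the $f_i$ generate the unit ideal in $A[[t]]$, so the $\Spec(A_{f_i}[[t]])$ cover $\Spec(A[[t]])$. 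This is where quasi-compactness and quasi-separatedness (guaranteed by $X\in\Sch^{\ft}$) enter, rather than through the mechanism you describe. With this step repaired, the rest of your argument goes through.
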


Let us consider the case when 
$X$ is an affine scheme of finite type over $\bbk$ of characteristics $0$.
Expressing $X=\Spec(R)$ with $R=\bbk[x^1,\ldots,x^N]/(f_1,\ldots,f_M)$, 
we have $J_\infty(\Spec(R)) = \Spec(J_\infty(R))$ with 
\begin{align*}
J_\infty(R) := \bbk[x^i_{(-j-1)}\mid i=1,\ldots,N, \, j \in \bbN]/
                (T^k f_l \mid k \in \bbN, \, l=1,\ldots,M).
\end{align*}
Note that the $\bbk$-algebra $J_\infty(R)$ inherits the $0$-derivation $T$ on 
$J_n(\bbk[x^1,\ldots,x^N])$.
We denote the induced $0$-derivation on $J_\infty(R)$ by the same symbol $T$.
Then the description above yields:

\begin{fct}[{\cite[Remark 3.1]{EM}}]\label{fct:jet:da}
For a commutative algebra $R$ of finite type, we have a morphism 
$j: R \to J_\infty(R)$ of algebras such that given a commutative algebra $R'$ 
with a $0$-derivation $T'$ and a morphism $j': R \to R'$ of algebras, 
there is a unique morphism 
$h: J_\infty(R) \to R'$ of algebras making the diagram 
\[
 \xymatrix{R \ar[rr]^j \ar[rd]_{j'} & & J_\infty(R) \ar@{.>}[ld]^h \\ & R'}
\] 
commute and satisfying $T' h = h T$, i.e., giving a morphism 
$h: (J_\infty(R),T) \to (R',T')$ of differential algebras.
\end{fct}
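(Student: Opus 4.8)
The plan is to verify the universal property directly from the explicit presentation of $J_\infty(R)$ recalled just above the statement. Fix a presentation $R = \bbk[x^1,\ldots,x^N]/(f_1,\ldots,f_M)$, so that $J_\infty(R) = \bbk[x^i_{(-j-1)} \mid 1 \le i \le N,\ j \in \bbN]/(T^k f_l \mid k \in \bbN,\ 1 \le l \le M)$ with $T$ the $0$-derivation determined by $T x^i_{(-j)} = j\,x^i_{(-j-1)}$, and $j: R \to J_\infty(R)$ the morphism $x^i \mapsto x^i_{(-1)}$. The elementary point, which is where the hypothesis $\operatorname{char}\bbk = 0$ enters, is that in the polynomial ring $P := \bbk[x^i_{(-j-1)}]$ one has $x^i_{(-j-1)} = \tfrac{1}{j!}\,T^j x^i_{(-1)}$; consequently $J_\infty(R)$ is generated as a $\bbk$-algebra by the elements $T^j\big(j(x^i)\big)$.

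First I would settle uniqueness: any morphism of algebras $h: J_\infty(R) \to R'$ with $h \circ j = j'$ and $h \circ T = T' \circ h$ must satisfy $h(x^i_{(-j-1)}) = \tfrac{1}{j!}\,h(T^j x^i_{(-1)}) = \tfrac{1}{j!}\,(T')^j\big(j'(x^i)\big)$, so $h$ is determined on a generating set and is unique. For existence, I would define an algebra homomorphism $\tilde h: P \to R'$ on the polynomial generators by $\tilde h(x^i_{(-j-1)}) := \tfrac{1}{j!}\,(T')^j\big(j'(x^i)\big)$. Both $\tilde h \circ T$ and $T' \circ \tilde h$ are $\tilde h$-twisted derivations $P \to R'$, so they coincide once they agree on the generators $x^i_{(-j-1)}$, where both evaluate to $\tfrac{1}{j!}\,(T')^{j+1}\big(j'(x^i)\big)$; hence $\tilde h \circ T = T' \circ \tilde h$ on $P$. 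Since $j'$ is an algebra homomorphism and $f_l$ maps to $0$ in $R$, we get $\tilde h(f_l) = f_l\big(j'(x^1),\ldots,j'(x^N)\big) = 0$, and then $\tilde h(T^k f_l) = (T')^k \tilde h(f_l) = 0$ for every $k$. Thus $\tilde h$ annihilates the defining ideal and descends to the desired $h: J_\infty(R) \to R'$; the relations $h \circ T = T' \circ h$ and $h \circ j = j'$ follow since they hold for $\tilde h$ (the second on the generators $x^i$ of $R$). Finally I would note that although the construction used a presentation, the pair $(J_\infty(R), j)$ is independent of it up to unique isomorphism, either by the limit description in Definition \ref{dfn:jet:arc} or because the universal property itself characterizes it.

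A conceptual variant, which I would mention, is to observe that in characteristic $0$ the Taylor map $\tau: R' \to R'[[t]]$, $r' \mapsto \sum_{k \ge 0} \tfrac{t^k}{k!}(T')^k(r')$, is an algebra homomorphism intertwining $T'$ with $d/dt$; composing $\tau \circ j'$ with the adjunction $\Hom_{\Sch}(\Spec(A), J_\infty(X)) \simeq \Hom_{\Sch}(\Spec(A[[t]]), X)$ (taken with $A = R'$, $X = \Spec R$) produces $h$, and its compatibility with $T$ follows because $T$ on the arc side corresponds to $d/dt$. Neither route hides a genuine difficulty: the only thing requiring care is the combinatorial bookkeeping that pins down the coefficients $1/j!$ and the check that two twisted derivations out of a polynomial algebra agree as soon as they agree on generators. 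That is precisely why the result is quoted as a Fact rather than proved in detail, and I would present the explicit-presentation argument as the main line since it avoids invoking the adjunction.
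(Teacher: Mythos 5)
Your argument is correct. The paper itself gives no proof of this statement (it is quoted from [EM, Remark 3.1]), but your main line --- reducing the universal property to the explicit presentation $J_\infty(R)=\bbk[x^i_{(-j-1)}]/(T^k f_l)$ recalled in Remark \ref{rmk:jet:aff}, via the identity $x^i_{(-j-1)}=\tfrac{1}{j!}\,T^j x^i_{(-1)}$ available in characteristic $0$ --- is precisely the standard verification the citation has in mind, and the uniqueness/existence bookkeeping, the twisted-derivation comparison on generators, and the descent of $\tilde h$ through the ideal $(T^k f_l)$ are all sound.
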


Here we used:

\begin{dfn}\label{dfn:jet:da}
A commutative algebra equipped with a $0$-derivation 
(Definition \ref{dfn:dga:der} \eqref{i:dga:der:3})
is called a \emph{differential algebra}.
\end{dfn}

Using this Fact, we can show:

\begin{lem}\label{lem:jet:G}
For a linear algebraic group $G$ over $\bbk$, 
the arc space is given by the proalgebraic group $J_{\infty}(G)=G[[t]]$.
\end{lem}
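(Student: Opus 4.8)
The plan is to verify the universal property of the arc space directly for $G$, using the functor-of-points description. Recall that for $X\in\Sch^{\ft}$ and a commutative $\bbk$-algebra $A$ we have the bijection
\[
 \Hom_{\Sch}(\Spec(A),J_\infty(X)) \simeq \Hom_{\Sch}(\Spec(A[[t]]),X),
\]
so first I would apply this with $X=G$ to get a bijection
\[
 \Hom_{\Sch}(\Spec(A),J_\infty(G)) \simeq \Hom_{\Sch}(\Spec(A[[t]]),G)=G(A[[t]]).
\]
The right-hand side is, by definition, the set of $A[[t]]$-points of $G$; since $G$ is a group scheme, this set carries a natural group structure, functorial in $A$. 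On the other hand, $G[[t]]$ is by definition the proalgebraic group whose $A$-points are $G(A[[t]])$ (equivalently, the limit over $n$ of the jet group schemes $J_n(G)$, each of which represents $A\mapsto G(A[t]/(t^{n+1}))$ and is a group scheme since $J_n$ preserves products, hence group objects). So the plan is to check that the displayed bijection is an isomorphism of group-valued functors on commutative $\bbk$-algebras, and then invoke Yoneda.

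The key steps, in order, are as follows. First, observe that $J_n:\Sch\to\Sch$ preserves finite products: this follows from the defining adjunction-type property in Fact \ref{fct:jet:Jn}, since $(-)\times_{\Spec\bbk}\Spec(\bbk[t]/(t^{n+1}))$ preserves products and $\Hom_{\Sch}(S,-)$ does too; the same argument at the level of $J_\infty$ (as the limit of the $J_n$) shows $J_\infty$ preserves finite products. Consequently $J_n$ and $J_\infty$ send group objects to group objects, so $J_\infty(G)$ is naturally a group scheme (proalgebraic, as an inverse limit of the affine group schemes $J_n(G)$ along the affine transition maps $\pi_{m,n}$). Second, unwind the bijection of the Lemma above: an $A$-point of $J_\infty(G)$ is the same as an $A[[t]]$-point of $G$, and one checks that under this identification the group law on $J_\infty(G)$ induced by functoriality of $J_\infty$ applied to the multiplication $G\times G\to G$ corresponds exactly to the group law on $G(A[[t]])$ induced by the multiplication of $G$ over the base ring $A[[t]]$. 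This is because $J_\infty(G\times G)=J_\infty(G)\times J_\infty(G)$ compatibly with the identifications, and the multiplication map is transported correctly. Third, since $G[[t]]$ is by definition the functor $A\mapsto G(A[[t]])$ together with this group structure, Yoneda gives $J_\infty(G)\simeq G[[t]]$ as group schemes.

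For the explicit/affine picture it is worth recording: writing $G=\Spec(R)$ with $R=\bbk[x^1,\dots,x^N]/(f_1,\dots,f_M)$ (using that $G$ is affine of finite type — every linear algebraic group is a closed subgroup of some $GL_n$), the description preceding Fact \ref{fct:jet:da} gives
\[
 J_\infty(R)=\bbk[x^i_{(-j-1)}\mid i=1,\dots,N,\ j\in\bbN]/(T^k f_l\mid k\in\bbN,\ l=1,\dots,M),
\]
and the coproduct on $R$ induces a coproduct on $J_\infty(R)$ making it a Hopf algebra; this is exactly the coordinate Hopf algebra of $G[[t]]$, which reconfirms the identification and also shows compatibility with the translation $T$ (equivalently, with the pro-group structure of the loop-type group $G[[t]]$). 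The characteristic-zero hypothesis is used here only to normalize the generators $x^i_{(-j-1)}$ by dividing by $j!$ as in Remark \ref{rmk:jet:aff}.

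The main obstacle, such as it is, is the bookkeeping in the second step: one must verify that the group structure on $J_\infty(G)$ coming from applying the functor $J_\infty$ to the structure morphisms of $G$ genuinely matches the "pointwise over $A[[t]]$" group structure on $G(A[[t]])$, and in particular that the identity and inverse maps also transport correctly. This is entirely formal given that $J_\infty$ preserves finite products (so it is a symmetric monoidal functor on $(\Sch,\times)$ restricted appropriately, hence preserves all algebraic structure defined by finite products and diagrams), but it requires being careful that the product-preservation isomorphisms $J_\infty(G\times G)\simeq J_\infty(G)\times J_\infty(G)$ are the natural ones and are coherent. Once that naturality is in hand, the rest is Yoneda and there is nothing more to prove.
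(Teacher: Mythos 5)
Your argument is correct. Note that the paper records no written proof of this Lemma: it only says that it follows ``using this Fact'', namely Fact \ref{fct:jet:da}, the universal property of $(J_\infty(R),T)$ among differential algebras; the intended route is presumably to extend the coproduct of the Hopf algebra $\bbk[G]$ to $J_\infty(\bbk[G])$ via that universal property and thereby identify $J_\infty(\bbk[G])$ with the coordinate Hopf algebra of the pro-algebraic group $\lim_n J_n(G)=G[[t]]$. You instead argue through the functor of points: the (unlabelled) lemma giving $\Hom_{\Sch}(\Spec(A),J_\infty(X))\simeq\Hom_{\Sch}(\Spec(A[[t]]),X)$, preservation of finite products by $J_n$ (immediate from Fact \ref{fct:jet:Jn}) and hence by $J_\infty$ as a limit, transport of the group object structure, and Yoneda. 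Both routes are formal and rest on adjacent statements of the paper; yours makes the comparison of the group law on $J_\infty(G)$ with the ``pointwise over $A[[t]]$'' group law on $G(A[[t]])$ completely explicit, and your final Hopf-algebra paragraph essentially recovers the paper's intended argument, so the two approaches converge. One point to keep straight: the functor-of-points lemma is stated only for affine test schemes $\Spec(A)$ and $X\in\Sch^{\ft}$; this suffices here because $G$ and $G\times G$ are affine of finite type and $J_\infty(G)$ is affine (a limit of affine schemes along affine transition maps), so your Yoneda step should be read as Yoneda over the category of commutative $\bbk$-algebras, which determines the scheme and its group structure. With that reading, and with the routine naturality check you flag (coherence of the isomorphisms $J_\infty(G\times G)\simeq J_\infty(G)\times J_\infty(G)$ with identity and inverse as well as multiplication), the proof is complete; the characteristic-zero hypothesis indeed enters only in the normalized coordinates of Remark \ref{rmk:jet:aff}, not in the identification itself.
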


\subsection{Jet and arc spaces for derived schemes}

Let us give a derived analogue of the previous \S \ref{ss:jet:ord}.
In this subsection we work over a field $\bbk$ containing $\bbQ$.

\subsubsection{Recollection on derived algebraic geometry}
\label{sss:jet:dag}

In this part we recall the terminology on derived schemes.

We start with the terminology on affine derived schemes, some of which 
are already recalled in Notations \ref{sss:sp:daff} and \ref{sss:sp:dSt}.
\begin{enumerate}[nosep]
\item
We denote by $\idAff$ the $\infty$-category of affine derived schemes over $\bbk$.
It is the opposite of the $\infty$-category 
$\icdga^{\le 0}$ of cdgas over $\bbk$ concentrated in the non-positive
cohomological degrees .
For a cdga $R \in \icdga^{\le 0}$, 
we denote by $\Spec(R) \in \idAff$ the corresponding affine derived scheme.

\item 
For $R =(R,d_R) \in \icdga^{\le 0}$ and $n \in \bbN$, we denote by
\[
 \pi_n(R):=H^{-n}(R,d_R)
\] 
its $(-n)$-th cohomology (Definition \ref{dfn:dg:coh}).
Actually it coincides with the $n$-th homotopy group of the 
differential graded nerve of $(R,d_R)$. See \cite[\S 1.3.1]{Lu2} for the detail.
In particular, we have the functor
\[
 \pi_0: \icdga^{\le 0} \longto \ca
\]
of $\infty$-categories.
where $\ca$ denotes the ($\infty$-)category of commutative $\bbk$-algebras.
Recall here that a functor of $\infty$-category means a morphism 
of simplicial sets \cite[\S 1.2.7]{Lu1},
The functor $\pi_0$ is called the \emph{truncation} (\emph{functor}).

\item
We also have the inclusion functor $\iota: \ca \to \icdga^{\le 0}$
whose definition is an obvious one.
These two functors form an \emph{adjunction} 
\[
 \pi_0: \icdga^{\le 0} \rightleftarrows \ca :\iota
\]
of functors of $\infty$-categories.
See \cite[\S 5.2]{Lu1} for the detail of adjunctions 
of functors between $\infty$-categories .
In particular, for $A \in \icdga^{\le 0}$ and $B \in \ca$, 
we have an isomorphism
\[
 \Map_{\ca}(\pi_0(A),B) \longsimto \Map_{\icdga^{\le 0}}(A,\iota(B))
\]
in the homotopy category $\iH$ of spaces (\S \ref{ss:0:ntn}).
Taking $\pi_0$ of the mapping spaces, 
we recover the ordinary adjunction property
$\Hom_{\ca}(\pi_0(A),B) \simto \Hom_{\icdga^{\le 0}}(A,\iota(B))$.

\item
We will use \emph{Zariski open immersions},
\emph{\'etale}, \emph{smooth}, \emph{flat} and 
\emph{locally finitely presented} morphisms in $\icdga^{\le 0}$ or $\idAff$.
See \cite[\S 1.2.6]{TVe} for the precise definitions.

\item
\label{i:jet:dag:fp}
We denote by $\icdga^{\le 0, \fp} \subset \icdga^{\le 0}$ the sub-$\infty$-category
spanned by those objects of finite presentation over $\bbk$
in the sense of \cite[Definition 1.2.3.1]{TVe}.
As noted in \cite[\S 2.2.1]{TVe}, 
the truncation functor $\pi_0$ gives 
\[
 \icdga^{\le 0,\fp} \longto \ca^{\ft},
\] 
where $\ca^{\ft} \subset \ca$ denotes the subcategory of finite type objects.
We also denote by $\idAff^{\fp}:= (\icdga^{\le 0, \fp})^{\op}$
and call its object an affine derived scheme of finite presentation over $\bbk$.

\item
The fiber product in $\idAff$ is denoted by $X \times^{\bbL}_Y Z$.
It corresponds to the derived tensor product 
$A \otimes^{\bbL}_B C$ in $\icdga^{\le 0}$,
and is represented by the two-sided bar complex (Definition \ref{dfn:pr:bcpx}).
\end{enumerate}

Next we turn to the terminology on derived stacks and derived schemes.
Recall that we denote by $\iS$ the $\infty$-category of spaces 
(\S \ref{ss:0:ntn}). 
\begin{enumerate}[nosep]
\setcounter{enumi}{5}
\item
A $D^{-}$-stack over $\bbk$ in the sense of \cite[Chap.\ 2.2]{TVe}
is called a \emph{derived stack}.
Thus, it is a functor
\[
 F: \idAff^{\op} \longto \iS 
\]
of $\infty$-categories satisfying the sheaf condition for 
the \'etale $\infty$-topos $\et$ on $\idAff$.
We can construct an $\infty$-category of derived stacks by \cite[\S 1.3.2]{TVe},
and we denote it by $\idSt$.

\item
An affine derived scheme $\Spec(A)$ with $A \in \idAff$ defines a derived stack 
by the Yoneda embedding 
\[
 \uMap_{\idAff}(-,\Spec(A)): \idAff^{\op} \longto \iS.
\]
Here, for an $\infty$-category $\iC$, we denoted by $\uMap_{\iC}(-,-)$
the Kan simplicial set such that its homotopy type is $\Map_{\iC}(-,-)$.
The existence of such a simplicial set is shown in \cite[\S 1.2.2]{Lu1},
where it is called the space of right morphisms.
A derived stack which is equivalent to the one of the form 
$\uMap_{\idAff}(-,\Spec(A))$ is called a \emph{representable derived stack}
\cite[\S 1.3.2]{TVe}.
We identify an affine derived scheme and a representable derived stack.

\item
A geometric $D^-$-stack over $\bbk$ in the sense of \cite[\S 2.2.3]{TVe}
will be called a \emph{geometric derived stack}.
For $m \in \bbZ_{\ge -1}$, 
one defines an $n$-geometric derived stack inductively on $m$.
An ($-1$)-geometric derived stack is defined to be a representable derived stack,
and the inductive step defines an $m$-geometric derived stack to be 
a derived stack having an atlas of $(m-1)$-geometric derived stacks 
with respect to the smooth morphism in $\idAff$.

\item
\label{i:jet:dag:dsch}
For $n \in \bbN$, a derived stack $F$ is \emph{$n$-truncated} if $\pi_i(F(T),s)=0$
for any $i \in \bbZ_{>n}$, any $T \in \idAff$ and any $s \in \pi_0(F(T))$.
A \emph{derived scheme} (over $\bbk$) is defined to be 
an $n$-truncated $m$-geometric derived stack $X$ with some $m$ and $n$ 
such that there is an $n$-atlas $\sqcup_i U_i \to X$ 
of affine derived schemes $U_i$'s and each $U_i \to X$ 
is a monomorphism of stacks \cite[Definition 2.1.1.4]{TVe}.
By \cite[Remark 2.1.1.5 (1)]{TVe}, a derived scheme is automatically $1$-geometric.
Equivalently, a derived stack is a pair $(X,\shO_X)$
of a topological space $X$ and a sheaf $\shO_X$ valued in $\icdga^{\le 0}$
such that the truncation $(X,\pi_0(\shO_X))$ is an ordinary scheme and 
the sheaf $\pi_i(\shO_X)$ is a quasi-coherent sheaf of $\pi_0(\shO_X)$-modules.
We denote by $\idSch \subset \idSt$ the sub-$\infty$-category 
spanned by derived schemes.

\item
We denote by $\idSt^{\fp} \subset \idSt$ and $\idSch^{\fp} \subset \idSch$
the sub-$\infty$-categories spanned by those objects of categorically locally 
finite presentation over $\bbk$ in the sense of \cite[Definition 1.3.6.4]{TVe}.
\end{enumerate}

\subsubsection{Jet and arc spaces for derived schemes}

We continue to use the notations given in the lase \S \ref{sss:jet:dag}.
In particular, 
we denote by $\idSch$ the $\infty$-category of derived schemes over $\bbk$.

We have the following derived analogue of the functor $J_n$ 
in Fact \ref{fct:jet:Jn}.

\begin{prp}\label{prp:jet:Jn}
For any $n \in \bbN$, there is a functor
$J_n: \idSch \to \idSch$ of $\infty$-categories such that 
we have an functorial isomorphism 
\begin{align*}
 \Map_{\idSch}(S,J_n(X)) \simeq 
 \Map_{\idSch}\bigl(S \times^{\bbL}_{\Spec(\bbk)}\Spec(\bbk[t]/(t^{n+1})),X \bigr)
\end{align*}
in the homotopy category $\iH$ of spaces.
We call $J_n(X)$ the \emph{$n$-th jet space of $X$}.
\end{prp}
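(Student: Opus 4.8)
The plan is to reduce the statement to the affine case and then construct the $n$-th jet functor on $\icdga^{\le 0}$ directly, gluing afterwards. First I would observe that the desired functorial isomorphism is an adjunction-type statement: the functor $S \mapsto S \times^{\bbL}_{\Spec(\bbk)} \Spec(\bbk[t]/(t^{n+1}))$ on $\idSch$ is, on the affine side, the functor $R \mapsto R \otimes_{\bbk} \bbk[t]/(t^{n+1})$ on $\icdga^{\le 0}$, which preserves the sub-$\infty$-category $\idAff$ (a derived tensor product of affines over $\Spec \bbk$ is affine, and $\bbk[t]/(t^{n+1})$ is flat over $\bbk$ so the derived and underived tensor products agree). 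Hence on affines I want a right adjoint $J_n$ to $(-) \otimes_\bbk \bbk[t]/(t^{n+1})$, i.e.\ a functor $J_n : \icdga^{\le 0} \to \icdga^{\le 0}$ with $\Map_{\icdga^{\le 0}}(A, J_n(R)) \simeq \Map_{\icdga^{\le 0}}(A \otimes_\bbk \bbk[t]/(t^{n+1}), R)$. The existence of such a right adjoint follows from general $\infty$-categorical adjoint functor theorems (the source functor is a left adjoint because $\icdga^{\le 0}$ is presentable and the functor preserves colimits, being a relative tensor product), but to get a usable description I would instead build $J_n$ explicitly on a cofibrant model: for a free cdga on generators $x^i$ in various degrees one sets $J_n(\Sym(\dots)) := \Sym$ on generators $x^i_{(-j-1)}$, $j = 0,\dots,n$, extends the differential by the Leibniz-type rule dictated by the derivation $T$ (exactly as in Remark \ref{rmk:jet:aff} \eqref{i:jet:aff:aff}), and checks this is homotopy-invariant, hence descends to the $\infty$-category.

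The key steps in order would be: (1) define $J_n$ on the level of a model structure on cdgas (or semifree cdgas), by the explicit generators-and-relations recipe chiralizing Remark \ref{rmk:jet:aff}; (2) verify the universal property in the strict/model-categorical setting for semifree objects, using that a map $A \otimes_\bbk \bbk[t]/(t^{n+1}) \to R$ is the same as a map $A \to R[t]/(t^{n+1})$, and that a map out of a semifree $J_n(A)$ is determined by images of the generators $x^i_{(-j-1)}$, which correspond to the Taylor coefficients of the images of $x^i$; (3) check that $J_n$ preserves weak equivalences between (semifree, or cofibrant) objects, so that it induces a functor of $\infty$-categories $\icdga^{\le 0} \to \icdga^{\le 0}$, equivalently $\idAff \to \idAff$; (4) upgrade the strict universal property to the $\infty$-categorical mapping-space statement, by replacing both $A$ and $R$ by cofibrant–fibrant models and using that $\Map$ is computed by the simplicial mapping object; (5) globalize: since $J_n$ on the affine side sends Zariski open immersions to Zariski open immersions and, by the derived analogue of \cite[Lemma 2.9]{EM}, sends étale maps to étale maps with the expected base-change square being cartesian, one glues the affine construction along an atlas to obtain $J_n : \idSch \to \idSch$, and the mapping-space formula follows by descent since both sides are sheaves in $S$.

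The main obstacle I expect is step (3) together with the compatibility needed for step (5): showing that the explicit jet construction is homotopy-invariant and that it is compatible with localization, i.e.\ that for an étale (or Zariski-open) map $A \to B$ one has $J_n(B) \simeq J_n(A) \otimes^{\bbL}_{\pi_0 J_n(A)} \pi_0 J_n(B)$ or, more precisely, that the square relating $J_n(B) \to J_n(A)$ to $\pi_0 B \to \pi_0 A$-type data is cartesian. In the classical setting this is exactly \cite[Lemma 2.9]{EM}, and its proof uses that étaleness is detected by the cotangent complex / by formal smoothness plus unramifiedness; in the derived setting one must check the jet operation interacts well with the (derived) cotangent complex, presumably via $\bbL_{J_n(A)/\bbk} \simeq \bigoplus_{j=0}^n \bbL_{A/\bbk}$-type identifications, or at least that $J_n$ preserves the relevant formal properties. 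A secondary technical point is ensuring the explicit semifree recipe genuinely lands in non-positively graded cdgas — the extra generators $x^i_{(-j-1)}$ sit in the same cohomological degree as $x^i$, so this is fine, but one must check the extended differential still squares to zero, which is the chiral analogue of the identity $T^2$ being a well-defined derivation and follows from $T$ being a derivation commuting with $d$.

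Everything else (uniqueness up to canonical equivalence, $J_0 \simeq \id$, the truncation compatibility $\pi_0 J_n(X) \simeq J_n(\pi_0 X)$, and the tower $\{\pi_{m,n}\}$) would then follow formally from the universal property, exactly as in Corollary \ref{cor:jet:Jn}, and I would record those as corollaries rather than as part of the main proof.
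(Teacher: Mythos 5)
The constructive heart of your plan --- an explicit semifree model for $J_n$ on generators $x^i_{(-j-1)}$ extending Remark \ref{rmk:jet:aff}, verification of the universal property via Taylor coefficients, homotopy invariance, and gluing along an atlas of affines --- is essentially the paper's proof (Lemma \ref{lem:jet:daff} together with Lemmas \ref{lem:jet:dst} and \ref{lem:jet:opim}). But the adjunction you use to frame the affine step has the variance reversed, and this is a genuine error. Passing through $\idAff = (\icdga^{\le 0})^{\op}$, the statement to be proved, with $S=\Spec(A)$ and $X=\Spec(R)$, reads
\[
 \Map_{\icdga^{\le 0}}(J_n(R),A) \simeq
 \Map_{\icdga^{\le 0}}\bigl(R, A \otimes^{\bbL}_{\bbk}\bbk[t]/(t^{n+1})\bigr),
\]
i.e.\ on cdgas $J_n$ is the \emph{left} adjoint of $A \mapsto A \otimes_\bbk \bbk[t]/(t^{n+1})$ (it is a right adjoint only after passing to the opposite category of affine derived schemes), not a functor with $\Map(A,J_n(R)) \simeq \Map(A \otimes_\bbk \bbk[t]/(t^{n+1}),R)$ as you wrote. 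Your adjoint functor theorem argument therefore fails twice: the endofunctor $-\otimes_\bbk \bbk[t]/(t^{n+1})$ of $\icdga^{\le 0}$ does not preserve colimits (it sends the initial object $\bbk$ to $\bbk[t]/(t^{n+1})$ and does not commute with coproducts $A \otimes^{\bbL}_\bbk B$), so it admits no right adjoint; and even if it did, that functor would not be the jet space. The correct formal existence argument is the mirror one: since $\bbk[t]/(t^{n+1})$ is finite free over $\bbk$, the functor $-\otimes_\bbk \bbk[t]/(t^{n+1})$ preserves limits and filtered colimits of cdgas, hence admits a left adjoint, and that left adjoint is $J_n$.

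The same confusion resurfaces in your step (2): a cdga map $A \otimes_\bbk \bbk[t]/(t^{n+1}) \to R$ is \emph{not} the same as a map $A \to R[t]/(t^{n+1})$ --- already for $A=\bbk$ the latter mapping space is contractible while the former is the space of maps $\bbk[t]/(t^{n+1}) \to R$, which in general is not. What your Taylor-coefficient computation on a semifree $A$ actually establishes is the correct adjunction $\Hom(J_n(A),R)\simeq\Hom(A,R\otimes_\bbk\bbk[t]/(t^{n+1}))$, and this is exactly how the paper proceeds: Lemma \ref{lem:jet:daff} corepresents the functor $A \mapsto \uMap_{\icdga^{\le 0}}(R,A\otimes^{\bbL}_\bbk\bbk[t]/(t^{n+1}))$ by running the recipe of Remark \ref{rmk:jet:aff} through a stage-wise free resolution $\wt{R}=\colim_m R_m$ and setting $J_n(R):=\colim_m J_n(R_m)$. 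So once you delete the false identification of Hom-sets and restate the universal property with the correct variance, your steps (1)--(4) coincide with the paper's affine argument. For step (5) the paper is lighter-handed than you propose: it needs neither \'etale invariance nor cotangent-complex identities, only the observation that $S \mapsto \uMap(S\times^{\bbL}\Spec(\bbk[t]/(t^{n+1})),X)$ is already a derived stack, being an internal Hom (Lemma \ref{lem:jet:dst}), together with the monomorphism/preimage statement of Lemma \ref{lem:jet:opim} (the derived analogue of the set-theoretic argument of \cite[Lemma 2.3]{EM}) to glue the affine representing objects over an atlas.
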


Our proof basically follows the non-derived case of \cite[\S 7.6, Theorem 4]{BLR},
but we add some modification to work correctly in derived algebraic geometry.

For $X \in \idSch$, we define a functor 
$\ul{J_n}(X): \idSch^{\op} \to \iS$ of $\infty$-categories by
\[
 \ul{J_n}(X)(S) :=
 \uMap_{\idSch}\bigl(S \times^{\bbL}_{\Spec(\bbk)}\Spec(\bbk[t]/(t^{n+1})),X \bigr).
\]
We first show:

\begin{lem}\label{lem:jet:dst}
The functor $\ul{J_n}(X)$ is a derived stack.
\end{lem}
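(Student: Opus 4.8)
The plan is to verify the sheaf (descent) condition for the étale $\infty$-topos directly, reducing it to the already-known fact that $X$ itself, being a derived scheme, is a derived stack, together with the fact that the derived fiber product $S\times^{\bbL}_{\Spec(\bbk)}\Spec(\bbk[t]/(t^{n+1}))$ is compatible with étale covers in the $S$-variable. First I would recall that $\ul{J_n}(X)$ is by construction a functor $\idAff^{\op}\to\iS$ (one restricts the displayed formula along $\idAff\hookrightarrow\idSch$), so the only thing to check is that it satisfies étale hyperdescent, i.e. that for every étale hypercover $S_\bl\to S$ in $\idAff$ the natural map
\[
 \ul{J_n}(X)(S)\longrightarrow \lim_{\Delta}\ul{J_n}(X)(S_\bl)
\]
is an equivalence in $\iS$.

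The key step is the observation that the endofunctor $(-)_{[n]}:=(-)\times^{\bbL}_{\Spec(\bbk)}\Spec(\bbk[t]/(t^{n+1}))$ on $\idAff$ preserves étale covers and, more precisely, sends an étale hypercover $S_\bl\to S$ to an étale hypercover $(S_\bl)_{[n]}\to S_{[n]}$. Concretely, if $S=\Spec(A)$ and $S'=\Spec(A')$ with $A\to A'$ étale, then $S_{[n]}=\Spec(A[t]/(t^{n+1}))$, $S'_{[n]}=\Spec(A'[t]/(t^{n+1}))$, and $A[t]/(t^{n+1})\to A'[t]/(t^{n+1})$ is again étale (étaleness is stable under base change along $\Spec(\bbk)\to\Spec(\bbk[t]/(t^{n+1}))$, equivalently it is the base change $-\otimes_A A[t]/(t^{n+1})$ of an étale map); moreover surjectivity on $\pi_0$ is preserved because $\pi_0(A[t]/(t^{n+1}))=\pi_0(A)[t]/(t^{n+1})$ and the truncation $\Spec$ of $S'_{[n]}\to S_{[n]}$ is the corresponding nilpotent thickening of $\Spec(A')\to\Spec(A)$, which is a cover iff $\Spec(A')\to\Spec(A)$ is. The same reasoning applies to a general étale hypercover level by level. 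Granting this, for an étale hypercover $S_\bl\to S$ in $\idAff$ we get
\[
 \ul{J_n}(X)(S)=\uMap_{\idSch}(S_{[n]},X)\simeq
 \lim_{\Delta}\uMap_{\idSch}((S_\bl)_{[n]},X)=\lim_{\Delta}\ul{J_n}(X)(S_\bl),
\]
where the middle equivalence is exactly the statement that the derived stack $X$ (which it is, being a derived scheme, by the recollection in \S\ref{sss:jet:dag}) satisfies étale hyperdescent applied to the hypercover $(S_\bl)_{[n]}\to S_{[n]}$.

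I would then assemble these equivalences, noting that everything is natural in $S$ and in the simplicial object, so the comparison map is an equivalence of spaces; hence $\ul{J_n}(X)$ satisfies the sheaf condition and is a derived stack. The main obstacle — really the only substantive point — is the stability claim: that $(-)_{[n]}$ sends étale covers to étale covers in the derived sense. This is where one must be careful that ``étale'' in \cite[\S1.2.6]{TVe} is the correct derived notion (strongly étale: flat, locally finitely presented, and unramified, equivalently $\bbL$ vanishing) and check that all three conditions are preserved under the base change $-\otimes_{\bbk}\bbk[t]/(t^{n+1})$; flatness and local finite presentation are clearly stable under base change, and the relative cotangent complex satisfies $\bbL_{S'_{[n]}/S_{[n]}}\simeq \bbL_{S'/S}\otimes_{A}A[t]/(t^{n+1})$, which vanishes since $\bbL_{S'/S}$ does. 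Covering (surjectivity on $\pi_0$) is handled by the truncation remark above. Once this lemma is in hand, the descent argument is formal.
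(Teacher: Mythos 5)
Your proof is correct, but it takes a genuinely different route from the paper. The paper's own argument is a one-liner: the $\infty$-category $\idSt$ of derived stacks admits internal mapping objects $\bbR_{\et}\ul{\Hom}_{\idSt}(G,H)$ satisfying $\Map_{\idSt}(F,\bbR_{\et}\ul{\Hom}(G,H))\simeq\Map_{\idSt}(F\times^{\bbL}_{\Spec(\bbk)}G,H)$ (cited from To\"en--Vezzosi, \S 1.4.1), and one simply takes $G=\Spec(\bbk[t]/(t^{n+1}))$, $H=X$. You instead verify the sheaf condition by hand: descent for $\ul{J_n}(X)$ is deduced from descent for $X$ itself once one knows that the endofunctor $(-)\times^{\bbL}_{\Spec(\bbk)}\Spec(\bbk[t]/(t^{n+1}))$ carries \'etale hypercovers of affines to \'etale hypercovers of affines; your check of this (preservation of flatness, local finite presentation, vanishing of the relative cotangent complex under the base change $-\otimes_\bbk\bbk[t]/(t^{n+1})$, and surjectivity on truncations because $\Spec(\pi_0(A)[t]/(t^{n+1}))$ is a nilpotent thickening of $\Spec(\pi_0(A))$) is the substantive point, and it is sound; since the base-change functor also commutes with the fiber products entering the coskeleta, hypercovers are indeed preserved levelwise. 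In effect you re-prove the special case of cartesian-closedness of $\idSt$ that the paper quotes: the left adjoint $(-)\times G$ preserves covers and local equivalences, hence the right adjoint formula defines a stack. The paper's route is shorter and applies to arbitrary $G,H\in\idSt$; yours is self-contained, avoids invoking the internal-Hom machinery, and isolates the only geometric input (stability of \'etale covers under the fattening $S\mapsto S_{[n]}$), at the cost of a longer verification.
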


\begin{proof}
For any derived stacks $G, H \in \idSt$,
we have the internal derived stack $\bbR_{\et}\ul{\Hom}_{\idSt}(G,H) \in \idSt$
satisfying the adjunction isomorphisms
$\Map_{\idSt}(F,\bbR_{\et}\ul{\Hom}(G,H)) \simeq 
 \Map_{\idSt}(F \times^{\bbL}_{\Spec(\bbk)} G, H)$ for any $F \in \idSt$.
See \cite[\S 1.4.1]{TVe} for the detail.
Taking $G := \Spec(\bbk[t]/(t^{n+1}))$ and $H=X$,
we have the consequence.
\end{proof}

Thus we want to show that this derived stack is (represented by) 
a derived scheme $J_n(X)$.
The case $n=0$ is trivial: $J_0(X) = X$.

Note that for a morphism $u: X \to Y$ in $\idSch$,
we have the induced morphism of functors:
\[
 \ul{J_n}(u): \ul{J_n}(X) \longto \ul{J_n}(Y).
\]
In particular, for $m,n \in \bbN$ with $m \ge n$,
the truncation morphism $\bbk[t]/(t^{m+1}) \surj \bbk[t]/(t^{n+1})$
induces a morphism $\ul{\pi_{m,n}}: \ul{J_m}(X) \to \ul{J_n}(X)$ of functors.
Then we can check the following statement by a set-theoretic argument
(see \cite[Lemma 2.3]{EM} for the detail).

\begin{lem}\label{lem:jet:opim}
Let $u: U \inj X$ be a monomorphism in $\idSch$ from
an affine derived scheme $U$ to a derived scheme $X$.
If a derived scheme $J_n(X)$ representing the functor $\ul{J_n}(X)$ exists,
then the representing derived scheme $J_n(U)$ exists and we have 
$J_n(U) \simeq \pi_{n,0}^{-1}(U)$, where $\pi_{n,0}: J_n(X) \to J_0(X) = X$
is the induced morphism from the truncation morphism 
$\bbk[t]/(t^{n+1}) \surj \bbk$.
\end{lem}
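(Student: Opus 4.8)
The plan is to exhibit the derived fibre product
\[
 \pi_{n,0}^{-1}(U) := J_n(X)\times^{\bbL}_{X} U
\]
(formed along $\pi_{n,0}$ and $u$) as a derived scheme representing $\ul{J_n}(U)$. First I would record two formal facts. On the one hand $\pi_{n,0}^{-1}(U)$ is again a derived scheme and $\pi_{n,0}^{-1}(U)\to J_n(X)$ is again a monomorphism, since $\idSch$ is closed under fibre products inside $\idSt$ and monomorphisms are stable under base change. On the other hand $X\mapsto\ul{J_n}(X)$ is, by its very definition and Lemma~\ref{lem:jet:dst}, a limit-preserving functor $\idSch\to\idSt$; in particular it preserves fibre products, so from $U\simeq U\times_X U$ (valid because $u$ is a monomorphism) we obtain that $\ul{J_n}(u)\colon\ul{J_n}(U)\to\ul{J_n}(X)$ is a monomorphism of derived stacks, whose target is the derived scheme $J_n(X)$ by hypothesis.

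Next I would observe that the maps $\ul{J_n}(U)\to\ul{J_0}(U)=U$ (induced by $\bbk[t]/(t^{n+1})\surj\bbk$) and $\ul{J_n}(u)$ assemble, by naturality of $\pi_{n,0}$, into a canonical map $\ul{J_n}(U)\to\pi_{n,0}^{-1}(U)$ over $J_n(X)$. Its source and target are both subobjects of $J_n(X)$, so this map is automatically a monomorphism, and it is an equivalence precisely when the two subobjects coincide; this can be tested on affine $S\in\idAff$. Writing $T_S:=S\times^{\bbL}_{\Spec(\bbk)}\Spec(\bbk[t]/(t^{n+1}))$ and $j_S\colon S\inj T_S$ for the closed immersion induced by $\bbk[t]/(t^{n+1})\surj\bbk$, a point of $J_n(X)(S)$ is a morphism $f\colon T_S\to X$; it lies in the subobject $\ul{J_n}(U)$ iff $f$ factors through $u$, and it lies in the subobject $\pi_{n,0}^{-1}(U)$ iff the restriction $f\circ j_S\colon S\to X$ factors through $u$. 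Thus the entire statement reduces to the implication: \emph{if $f\circ j_S$ factors through $u$, then so does $f$}.

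To prove this I would set $V:=T_S\times_X U$. As a base change of the monomorphism $u$, the projection $V\to T_S$ is a monomorphism of derived schemes, hence formally \'{e}tale (the relative cotangent complex of a monomorphism vanishes, as $V\simeq V\times_{T_S}V$). Its base change along $j_S$ is $S\times_X U\to S$, which admits a section because $f\circ j_S$ factors through $u$; a monomorphism with a section is an equivalence, so $V\times_{T_S}S\simeq S$. Now $j_S$ is a \emph{nilpotent thickening}: $\pi_0(T_S)\simeq\pi_0(S)[t]/(t^{n+1})$, with the ideal $(t)$ nilpotent and $\pi_0(S)$ as quotient. By the topological invariance of (formally) \'{e}tale morphisms under nilpotent thickenings, a formally \'{e}tale derived scheme over $T_S$ is determined by its restriction along $j_S$; since $\mathrm{id}_{T_S}$ is formally \'{e}tale and has the same restriction $T_S\times_{T_S}S\simeq S$, we conclude $V\simeq T_S$, i.e.\ $f$ factors through $u$. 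Hence $\ul{J_n}(U)\simeq\pi_{n,0}^{-1}(U)$ is a derived scheme, and therefore $J_n(U)$ exists and is canonically equivalent to $\pi_{n,0}^{-1}(U)$.

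The main obstacle is precisely this last step: one needs the two inputs ``monomorphisms of derived schemes are formally \'{e}tale'' and ``formally \'{e}tale morphisms are invariant under nilpotent thickenings'' at a sufficient level of generality in derived algebraic geometry, and these must be cited with care (alternatively, one can run the affine argument of \cite[Lemma~2.3]{EM} together with a derived Nakayama argument on the nilpotent ideal $(t)$). Everything else — preservation of fibre products by $\ul{J_n}$, stability of monomorphisms under base change, detection of a subobject by its functor of points, and the universal property of $J_n(X)$ — is purely formal. I would also note that in the application only the case where $u$ is a Zariski open immersion is needed (to glue the affine jet spaces of Lemma~\ref{lem:jet:daff} into $J_n(X)$ for a general derived scheme $X$), and there this step is elementary, since $j_S$ is a homeomorphism on underlying spaces and factoring through an open immersion is a topological condition.
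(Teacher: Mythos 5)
Your reduction is correct and, in fact, more detailed than what the paper does: the paper's own ``proof'' consists of observing that the question is settled by a set-theoretic argument as in \cite[Lemma 2.3]{EM}, i.e.\ it implicitly treats the case where $u$ is a Zariski open immersion (the case actually used when gluing the affine jet spaces), where one only needs that the underlying topological space of $T_S=S\times^{\bbL}_{\Spec(\bbk)}\Spec(\bbk[t]/(t^{n+1}))$ coincides with that of $S$, so that factoring through $U$ is a purely topological condition. You instead prove the statement for arbitrary $(-1)$-truncated maps $u$ by identifying $J_n(U)$ with the derived fibre product $J_n(X)\times^{\bbL}_X U$ and reducing, exactly as the set-theoretic argument does, to the implication ``$f\circ j_S$ factors through $u$ $\Rightarrow$ $f$ factors through $u$''; your formal steps (preservation of fibre products by $\ul{J_n}$, stability of monomorphisms under base change, testing equality of subobjects on affines, monomorphism with a section is an equivalence) are all fine, and the observation that a monomorphism of derived stacks has vanishing relative cotangent complex is correct in the derived setting (the diagonal being an equivalence forces $\bbL_{V/T_S}\simeq\bbL_{\Delta}[-1]=0$), unlike in classical algebraic geometry. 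The one step you should reformulate is the appeal to ``topological invariance of (formally) \'etale morphisms'': topological invariance is a theorem about \'etale (in particular flat) morphisms, and a monomorphism such as $V\to T_S$ need not be flat. The clean way to finish is the one your own data already suggest: write $S\to T_S$ as the finite tower of square-zero extensions induced by $\bbk[t]/(t^{i+1})\surj\bbk[t]/(t^{i})$, and use $\bbL_{V/T_S}=0$ together with the fact that derived schemes are nilcomplete and infinitesimally cohesive to lift the section $S\simeq V\times_{T_S}S\to V$ uniquely step by step to a section $T_S\to V$; then ``mono with section'' gives $V\simeq T_S$. With that substitution (or, for the application, simply your closing remark that for Zariski open immersions the topological argument of \cite[Lemma 2.3]{EM} suffices, which is precisely the paper's route), the proof is complete and strictly more general than the one in the text.
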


Next we show the existence of the representing object in the affine case.
Recall that we denote by $\idAff^{\fp}$ the $\infty$-category of 
affine derived schemes of finite presentations over $\bbk$
(\S \ref{sss:jet:dag} \eqref{i:jet:dag:fp}).

\begin{lem}\label{lem:jet:daff}
Let $n \in \bbN$.
For any $R \in \icdga^{\le 0}$, the functor 
\begin{align*}
\ul{J_n}(R): \icdga^{\le 0} \longto \iS, \quad 
\ul{J_n}(R)(A) :=
 \uMap_{\icdga^{\le 0}}\bigl(R,A \otimes^{\bbL}_\bbk \bbk[t]/(t^{n+1})\bigr)
\end{align*}
of $\infty$-categories
is represented by $J_n(R) \in \icdga^{\le 0}$.
Moreover the correspondence $R \mapsto J_n(R)$ determines a functor
$J_n: \icdga^{\le 0} \to \icdga^{\le 0}$ of $\infty$-categories.
\end{lem}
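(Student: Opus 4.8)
The plan is to realize $J_n$ as a left adjoint. Write $\bbk_n := \bbk[t]/(t^{n+1})$, a finite free $\bbk$-module placed in cohomological degree $0$, and consider the endofunctor
\[
 F_n : \icdga^{\le 0} \longto \icdga^{\le 0}, \qquad F_n(A) := A \otimes^{\bbL}_{\bbk} \bbk_n .
\]
Since $\bbk_n$ is $\bbk$-flat this derived tensor product is computed naively, $F_n(A) \simeq A[t]/(t^{n+1})$ with the evident cdga structure, and it stays in non-positive cohomological degrees because $\bbk_n$ does. By the very definition of $\ul{J_n}(R)$ we have $\ul{J_n}(R)(A) = \uMap_{\icdga^{\le 0}}(R, F_n(A))$; hence it is enough to produce a left adjoint $J_n \dashv F_n$, for then $\ul{J_n}(R)$ is corepresented by $J_n(R)$ and the assignment $R \mapsto J_n(R)$ is functorial by construction of the adjoint.

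First I would invoke the adjoint functor theorem \cite[Corollary 5.5.2.9]{Lu1}: the $\infty$-category $\icdga^{\le 0}$ of cdgas over $\bbk$ concentrated in non-positive cohomological degrees is presentable, so $F_n$ admits a left adjoint as soon as it preserves small limits and is accessible. For limit preservation, recall that the forgetful functor from $\icdga^{\le 0}$ to complexes in non-positive degrees creates small limits \cite[\S 3.2]{Lu2}; since $F_n$ is compatible with this forgetful functor and its underlying functor on complexes is $(-)\otimes_{\bbk}\bbk^{\oplus(n+1)}$, a finite direct sum of copies of the identity, $F_n$ preserves all small limits. The same underlying description shows $F_n$ commutes with filtered colimits, hence is accessible. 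Thus $F_n$ has a left adjoint $J_n : \icdga^{\le 0}\to\icdga^{\le 0}$, and we obtain a natural equivalence
\[
 \Map_{\icdga^{\le 0}}(J_n(R),A) \; \simeq \; \Map_{\icdga^{\le 0}}\bigl(R, A\otimes^{\bbL}_{\bbk}\bbk_n\bigr) \; = \; \ul{J_n}(R)(A)
\]
in $\iH$, uniform in $A$; this is exactly the claimed representability, and functoriality in $R$ is automatic.

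As a sanity check and for later reference I would compute $J_n$ on free cdgas. For a complex $V$ in non-positive degrees, the free--forgetful adjunction together with dualizability of $\bbk_n$ gives
\[
 \Map_{\icdga^{\le 0}}\bigl(\Sym_{\bbk}(V\otimes_{\bbk}\bbk_n^{\vee}),A\bigr) \simeq \Map(V\otimes_{\bbk}\bbk_n^{\vee},A) \simeq \Map(V, A\otimes_{\bbk}\bbk_n) \simeq \Map_{\icdga^{\le 0}}\bigl(\Sym_{\bbk}(V), F_n(A)\bigr),
\]
so $J_n(\Sym_{\bbk}V)\simeq\Sym_{\bbk}(V\otimes_{\bbk}\bbk_n^{\vee})$; taking $V$ concentrated in degree $0$ with basis the $x^i$ and $\bbk_n^{\vee}$ with basis dual to $1,t,\dots,t^n$ recovers the classical variables $x^i_{(-j-1)}$ of Remark \ref{rmk:jet:aff}. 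Since $J_n$ is a left adjoint it preserves colimits, so writing a general $R\in\icdga^{\le 0}$ as a geometric realization of free cdgas yields a concrete model for $J_n(R)$, compatible after applying $\pi_0$ with the presentation in Remark \ref{rmk:jet:aff}\,\eqref{i:jet:aff:aff}.

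The step I expect to be the main obstacle is checking that $F_n$ preserves limits \emph{as a functor of cdgas}, not merely of underlying complexes --- i.e. that the commutative multiplication imposes no further constraint --- although this is dispatched cleanly by the creation-of-limits property of the forgetful functor. A secondary point requiring care is that the identification $F_n(A)\simeq A\otimes^{\bbL}_{\bbk}\bbk_n$ used throughout is the one appearing in $\ul{J_n}(R)$ on the nose, which is immediate from $\bbk$-flatness of $\bbk_n$.
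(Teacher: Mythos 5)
Your argument is correct, but it is a genuinely different route from the paper's. You realize $J_n$ abstractly as the left adjoint of $F_n(A)=A\otimes^{\bbL}_{\bbk}\bbk[t]/(t^{n+1})$ via the adjoint functor theorem (presentability of $\icdga^{\le 0}$, limit-preservation through the forgetful functor, accessibility from commutation with filtered colimits), and then identify $J_n$ on free cdgas by dualizability of $\bbk[t]/(t^{n+1})$, getting $J_n(\Sym_\bbk V)\simeq\Sym_\bbk(V\otimes_\bbk \bbk[t]/(t^{n+1})^{\vee})$ and hence a model for general $R$ by colimit-preservation. The paper instead builds the representing object by hand: it replaces $R$ by an explicit free resolution $\wt{R}=\colim_m R_m$ (adding polynomial generators cell by cell, following the Stacks Project), applies the classical generators-and-relations jet construction of Remark \ref{rmk:jet:aff} to each $R_m$, and sets $J_n(R):=\colim_m J_n(R_m)$. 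What your approach buys is brevity and some extra mileage for free — e.g.\ since $F_n$ preserves filtered colimits, the left adjoint $J_n$ preserves compact objects, which immediately gives the Corollary that $J_n$ preserves finite presentation. What the paper's construction buys is an explicit model carrying the $0$-derivation $T$ (each pair $(J_n(R_m),T_m)$ is produced along the way), which is what is actually used downstream when the arc space $J_\infty(R)$ is endowed with its differential-algebra and vertex Poisson structures; your abstract $J_n(R)$ does not come equipped with $T$, though one can recover it from your free-cdga model and colimit-preservation. Two small points to make explicit if you flesh this out: limits in the connective category $\icdga^{\le 0}$ are connective covers of limits of underlying complexes, so you should note that tensoring with the finite free module $\bbk[t]/(t^{n+1})$ commutes with truncation (it does, being a finite direct sum of copies of the identity); and the passage from objectwise corepresentability to a functor $J_n$ of $\infty$-categories should be justified by the standard statement that corepresenting objects of a right-adjointable family assemble into a left adjoint, which is exactly what \cite[Corollary 5.5.2.9]{Lu1} delivers.
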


\begin{proof}
We may replace an $R \in \icdga^{\le 0}$ by a free resolution, 
i.e., a cdga $\wt{R} \in \icdga^{\le 0}$ such that 
\begin{itemize}[nosep]
\item
the underlying graded algebra is a free algebra over $k$, and 
\item 
it is a K-flat complex quasi-isomorphic to $R$.
\end{itemize} 
We cite from \cite[0BZ6, 0BZ67 Lemma]{SP} 
an explicit construction of such a resolution:
\begin{enumerate}[nosep,label=(\roman*)]
\item 
Take a set of homogeneous elements $r_s \in \Ker(d_R: R \to R)$ ($s \in S_0$) 
such that the classes $\ol{r_s} \in H(R,d_R)$ generate the linear space $H(R,d_R)$.
We define a cdga 
\[
 R_0 = \bbk[x^{0,s} \mid s \in S_0]
\]
{}to be the free polynomial graded algebra generated by the letters 
$\{x^{0,s} \mid s \in S_0\}$ equipped with the grading 
$\abs{x^{0,s}}:=\abs{r_{0,s}}$ and trivial differential.
We have a morphism $f_0: R_0 \to R$ of cdgas given by $x^{0,s} \mapsto r_{0,s}$.
It is obvious that $R_{0}$ is a free commutative algebra over $\bbk$ 
and a K-flat complex, 
and that the induced morphism $H(f_0):H(R_0) \to H(R)$ is a surjection.

\item
Assume that we have constructed a sequence 
\[
 R_0 \to R_1 \to \cdots \to R_{m-1} \xr{i_{m-1}} R_m \xr{f_m} R
\]
of cdgas.
Take a set of homogeneous elements $r_{m,s} \in R_m$ ($s \in S_m$)
such that the classes $\ol{r_{m,s}} \in H(R_m,d_{R_m})$ 
span the linear space $\Ker(H(f_m): H(R_m,d_{R_m}) \to H(R,d_R))$.
We define a cdga 
\[
 R_{m+1} = R_m[x^{m,s} \mid s \in S_m]
\]
{}to be the free polynomial graded algebra generated by the letters 
$\{x^{m,s} \mid s \in S_m\}$ over $R_m$ equipped with 
the grading $\abs{x^{m,s}}:=\abs{r_{m,s}}-1$ and the differential
$d_{R_{m+1}}(x^{m,s}):=r_{m,s}$ and $\rst{d_{R_{m+1}}}{R_m}:=d_{R_m}$.
We have a natural embedding $i_m: R_m \inj R_{m+1}$ of cdgas,
and also have a morphism $f_{m+1}: R_{m+1} \to R$ of cdgas 
given by $f_{m+1}(x^{m,s}) := f_m(r_{m,s})$ and $\rst{f_{m+1}}{R_m}:=f_m$.
By induction, we obviously have that 
$R_{m+1}$ is a free commutative algebra over $\bbk$
and the induced morphism $H(f_{m+1}):H(R_{m+1}) \to H(R)$ is a surjection.
We can also check that $R_{m+1}$ is K-flat.

\item
Finally we define 
\[
 \wt{R} := \colim_m R_m,
\]
where the colimit is taken in the $\infty$-category $\icdga^{\le 0}$.
We also have a morphism $f: \wt{R} \to R$ of cdgas.
We can check that $\wt{R}$ is a free graded algebra over $\bbk$ and 
a K-flat complex, and that the induced morphism $H(f)$ is an isomorphism.
\end{enumerate}
Note that this construction respects quasi-isomorphisms:
If we are given a quasi-isomorphism $A \to B$ of cdgas,
then the construction induces a quasi-isomorphism $\wt{A} \to \wt{B}$.

Now we will define $J_n(R)$ by constructing $J_n(R_m)$'s inductively on $m$
and setting 
\[
 J_n(R) = J_n(\wt{R}) := \colim_m J_n(R_m).
\]
In the case $m=0$, we have $R_0=\bbk[x^{0,s} \mid s \in S_0]$. 
Then, similarly as in Remark \ref{rmk:jet:aff} \eqref{i:jet:aff:A^N},
the desired cdga is given by the polynomial graded algebra 
\[
 J_n(R_0) = \bbk[x^{0,s}_{(-j-1)} \mid s \in S_0, j=0,\ldots,n]
\]
with $\bigl|x^{0,s}_{(-j-1)}\bigr|:=\abs{x^{0,s}}$ and trivial differential.
Recalling Fact \ref{fct:jet:da},
we equip with $J_n(R_0)$ with the $0$-derivation $T_{0}$ given by 
$T_0(x^{0,s}_{(-j)})=j x^{0,s}_{(-j-1)}$.
The pair $(J_n(R_0),T_0)$ represents the functor $\ul{J_0}(R_0)$.

Assume that we have constructed a pair $(J_n(R_d),T_d)$ 
representing the functor $\ul{J_d}(R_d)$ for $d \le m$.
Note that as a commutative graded algebra $R_{m+1}$ is isomorphic to 
$R_m[x^{m,s},y^{m,s} \mid s \in S_{m}]/(y^{m,s}-r_{m,s} \mid s \in S_m)$,
with the grading $\abs{y^{m,s}}=\abs{r_{m,s}}$.
Then, similarly as in Remark \ref{rmk:jet:aff} \eqref{i:jet:aff:aff},
we define the cdga $J_n(R_{m})$ by 
\[
 J_n(R_{m+1}) := J_n(R_m)\bigl[ x^{m,s}_{(-j-1)}, y^{m,s}_{(-j-1)} 
   \mid s \in S_{m}, \, j=0,\ldots,n\bigr]/
   (T_{m+1}^k g_{m,s} \mid k=0,\ldots,n, \, s \in S_m),
\]
where 
$g_{m,s}:=y^{m,s}_{(-1)}-r_{m,s}(x^{m,t}_{(-1)})
 \in R_m[x^{m,t}_{(-1)},y^{m,s}_{(-1)}\mid t \in S]$
with $r_{m,s}=r_{m,s}(x^{m,t})$ regarded as a polynomial of $x^{m,t}$'s, 
and $T_{m+1}$ is a $0$-derivation defined by 
\[
 T_{m+1} x^{m,s}_{(-j)}  := j x^{m,s}_{(-j-1)}, \quad 
 T_{m+1} y^{m,s}_{(-j)}  := j y^{m,s}_{(-j-1)}, \quad 
 \rst{T_{m+1}}{J_n(R_m)} := T_m.
\]
The cohomological grading is given by 
$\sabs{x^{m,s}_{(-j)}}:=\abs{x^{m,s}}=\abs{r^{m,s}}-1$
and $\sabs{y^{m,s}_{(-j)}}:=\abs{y^{m,s}}=\abs{r^{m,s}}$,
and the differential $d_{m+1}$ is given by 
$d_{m+1} T_{m+1} = T_{m+1} d_{m+1}$, 
$d_{m+1}(x^{m,s}_{(-1)}):=y^{m,s}_{(-1)}$,
$d_{m+1}(y^{m,s}_{(-1)}):= r_{m,s}(x^{m,t}_{(-1)})$ .
Then the pair $(J_n(R_{m+1}),T_{m+1})$ represents the functor $\ul{J_n}(R_{m+1})$.

The injection $i_m: R_m \inj R_{m+1}$ induces a cofibration 
$J_n(i_m): J_n(R_m) \to J_n(R_{m+1})$ in $\icdga^{\le 0}$ which 
(homotopically) commutes with the $0$-derivations $T_m$ and $T_{m+1}$,
and we can take the colimit $(J_n(\wt{R}),T) := \colim_m (J_n(R_m),T_m)$.
The pair $(J_n(\wt{R}),T)$ represents the functor $\ul{J_n}(R)$ by construction.
\end{proof}

By the construction, we immediately have

\begin{cor*}
In Lemma \ref{lem:jet:daff}, if $R \in \icdga^{\le 0,\fp}$, 
then we have $J_n(R) \in \icdga^{\le 0,\fp}$.
We also have a functor 
$J_n: \icdga^{\le 0,\fp} \to \icdga^{\le 0,\fp}$ of $\infty$-categories.
\end{cor*}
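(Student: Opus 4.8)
The plan is to recognise the jet functor $J_n$ constructed in Lemma~\ref{lem:jet:daff} as a left adjoint and then invoke the standard fact that a left adjoint whose right adjoint preserves filtered colimits preserves compact objects. Recall first that, by \cite[Definition~1.2.3.1]{TVe}, an object of $\icdga^{\le 0}$ lies in $\icdga^{\le 0,\fp}$ precisely when the functor it corepresents commutes with filtered colimits, i.e.\ when it is a compact object of $\icdga^{\le 0}$. Next I would unwind Lemma~\ref{lem:jet:daff}: the representing property yields, for all $R,A\in\icdga^{\le 0}$, a functorial equivalence
\[
 \Map_{\icdga^{\le 0}}(J_n(R),A)\simeq
 \Map_{\icdga^{\le 0}}\bigl(R,A\otimes^{\bbL}_{\bbk}\bbk[t]/(t^{n+1})\bigr).
\]
Since $\bbk[t]/(t^{n+1})$ is a free $\bbk$-module of rank $n+1$ concentrated in cohomological degree $0$, the assignment $G\colon A\mapsto A\otimes^{\bbL}_{\bbk}\bbk[t]/(t^{n+1})=A[t]/(t^{n+1})$ is a well-defined endofunctor of $\icdga^{\le 0}$; combining the displayed equivalence with the functoriality of $R\mapsto J_n(R)$ (already asserted in Lemma~\ref{lem:jet:daff}) and the Yoneda lemma exhibits $J_n$ as a left adjoint of $G$.

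I would then check that $G$ commutes with filtered colimits. Filtered colimits in $\icdga^{\le 0}$ are computed on underlying complexes, and tensoring a complex over $\bbk$ with the finite free module $\bbk[t]/(t^{n+1})$ commutes with all colimits of complexes; hence $G$ preserves filtered colimits. Consequently, if $R\in\icdga^{\le 0,\fp}$, then the functor $\Map_{\icdga^{\le 0}}(J_n(R),-)\simeq\Map_{\icdga^{\le 0}}(R,G(-))$ is a composite of filtered-colimit-preserving functors, so $J_n(R)$ is compact, i.e.\ $J_n(R)\in\icdga^{\le 0,\fp}$. Restricting the functor $J_n\colon\icdga^{\le 0}\to\icdga^{\le 0}$ along $\icdga^{\le 0,\fp}\hookrightarrow\icdga^{\le 0}$ then produces the asserted functor $J_n\colon\icdga^{\le 0,\fp}\to\icdga^{\le 0,\fp}$.

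I expect no serious obstacle; the only points requiring care are the identification of $\icdga^{\le 0,\fp}$ with the compact objects and the naturality of the above equivalence in both variables. As an alternative, more hands-on route one can work directly inside the construction of Lemma~\ref{lem:jet:daff}: a finitely presented $R$ is a retract of a \emph{finite} cell cdga, for which one may take the resolution $\wt R=\colim_m R_m$ with each index set $S_m$ finite and empty for $m$ large; for such a model each $J_n(R_m)$ adds only finitely many polynomial generators and finitely many relations $T^k g$ to $J_n(R_{m-1})$, so $J_n(\wt R)=\colim_m J_n(R_m)$ stabilises to a finite cell cdga, and one concludes for general $R$ by observing that $J_n$ carries a retract diagram to a retract diagram and that retracts of finitely presented objects are finitely presented. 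In this route the subtlety to keep in mind is that a finitely presented object need not itself be a finite cell object, only a retract of one, so the retract step is unavoidable.
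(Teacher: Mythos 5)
Your main argument is correct, but it is genuinely different from the paper's. The paper disposes of this corollary with ``by the construction'': the cell-by-cell model of $J_n(\wt R)$ built in the proof of Lemma \ref{lem:jet:daff} visibly adds, for each generator $x^{m,s}$ and each relation, only the finitely many letters $x^{m,s}_{(-j-1)}$, $y^{m,s}_{(-j-1)}$ ($j=0,\dots,n$) and the relations $T^k g_{m,s}$ ($k=0,\dots,n$), so finite cell data go to finite cell data. Your primary route instead reads Lemma \ref{lem:jet:daff} as exhibiting $J_n$ as left adjoint to $G\colon A\mapsto A\otimes^{\bbL}_{\bbk}\bbk[t]/(t^{n+1})$, identifies $\icdga^{\le 0,\fp}$ with the compact objects via \cite[Definition 1.2.3.1]{TVe}, and uses that $G$ preserves filtered colimits (finite free coefficient module, filtered colimits computed on underlying complexes) to conclude that $\Map(J_n(R),-)\simeq\Map(R,G(-))$ preserves filtered colimits; this is sound, and in fact you only need the corepresentability equivalence natural in $A$, not the full adjunction. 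The trade-off: your abstract argument is cleaner and sidesteps the point you rightly flag in your alternative route, namely that a finitely presented cdga is only a retract of a finite cell object, a subtlety the paper's one-line proof glosses over; on the other hand, the paper's constructive reading keeps in hand the explicit polynomial presentation of $J_n(R)$ together with its derivation $T$, which is what the rest of the paper actually uses. Your secondary, hands-on route (finite cell model plus retract stability of finite presentation and of the functor $J_n$) is essentially the paper's argument made precise, and is a correct way to fill in its ``immediately''.
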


Let us give the proof of Proposition \ref{prp:jet:Jn}.

\begin{proof}[{Proof of Propoisition \ref{prp:jet:Jn}}]
By the definition of a derived scheme $X$ 
(\S \ref{sss:jet:dag}, \eqref{i:jet:dag:dsch}),
we have an atlas $\sqcup_i U_i \to X$ of $X$ consisting of 
affine derived schemes $U_i$.
By Lemma \ref{lem:jet:daff}, we have 
the representing derived scheme $J_n(U_i)$.
These satisfy the gluing condition by Lemma \ref{lem:jet:opim}.
Thus we have a derived stack representing the functor $\ul{J_n}(X)$.
the consequence.
\end{proof}

\begin{rmk*}
Let us comment a more natural point of view 
in the context of derived algebraic geometry.
In Lemma \ref{lem:jet:dst} we recalled the internal derived stack 
$\bbR_{\et}\ul{\Hom}_{\idSt}(G,H) \in \idSt$ for derived stacks $G$ and $H$.
If $G$ is a scheme and $H$ is an $m$-geometric derived stack for some 
$m \in \bbZ_{\ge -1}$,
then it is denoted by $\Map(G,H)$ and called the \emph{mapping derived stack}.
By \cite[Theorem 2.2.6.11]{TVe},
the mapping derived stack $\Map(G,H)$ is also $m$-geometric
under some conditions,
which can be checked for our case 
$G=D_n:=\Spec(k[t]/(t^{n+1}))$ and $H=X \in \idSch$ with $m=1$.
Thus the $n$-th jet scheme is nothing but 
the mapping derived stack from the $n$-fattened infinitesimal disk $D_n$:
\[
 J_n(X) \simeq \Map(D_n,X).
\]
\end{rmk*}

By the construction, 
the $n$-th jet spaces of derived schemes enjoy similar properties as
those of non-derived schemes in Fact \ref{fct:jet:EM2.2}.

\begin{lem*}
Let $X$ be a derived scheme over $\bbk$ and $n \in \bbN$.
\begin{enumerate}[nosep]
\item
We have $J_0(X) \simeq X$.

\item
For $m \in \bbN$ with $m \ge n$,
the truncation morphism $\bbk[t]/(t^{m+1}) \surj \bbk[t]/(t^{n+1})$
induces a morphism $\pi_{m,n}: J_m(X) \to J_n(X)$ in $\idSch$.
The morphisms 
$\{\pi_{m,n}: J_m(X) \to J_n(X) \mid m,n \in \bbN, m \ge n\}$
form an inverse system over the direct set $(\bbN,\ge)$.

\item
For $R \in \icdga^{\le 0, \fp}$, we have $J_n(R) \in \icdga^{\le 0, \fp}$.

\item
The (homotopy) fiber of the morphism $\pi_{n,0}: J_n(X) \to J_0(X)=X$ is 
equivalent to an affine derived scheme.
\end{enumerate}
\end{lem*}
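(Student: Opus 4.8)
The plan is to establish the four assertions of the Lemma by transporting the corresponding non-derived facts (Corollary \ref{cor:jet:Jn} and Fact \ref{fct:jet:EM2.2}) through the inductive construction carried out in the proof of Lemma \ref{lem:jet:daff}. First I would treat $J_0(X) \simeq X$: this is immediate from the functorial description in Proposition \ref{prp:jet:Jn}, since $\Spec(\bbk[t]/(t))=\Spec(\bbk)$ and $S \times^{\bbL}_{\Spec(\bbk)}\Spec(\bbk)\simeq S$, so $\ul{J_0}(X)(S) \simeq \uMap_{\idSch}(S,X)$, i.e.\ $\ul{J_0}(X)$ is represented by $X$ itself. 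The same computation already appeared in the base case $m=0$ of the construction of $J_n(R)$.

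Next I would construct the inverse system $\{\pi_{m,n}\}$. For $m \ge n$ the truncation $\bbk[t]/(t^{m+1}) \surj \bbk[t]/(t^{n+1})$ is a morphism in $\icdga^{\le 0}$, hence induces a natural transformation $\ul{\pi_{m,n}}: \ul{J_m}(X) \to \ul{J_n}(X)$ of derived stacks (already noted before Lemma \ref{lem:jet:opim}), and by the representability established in Proposition \ref{prp:jet:Jn} this gives a morphism $\pi_{m,n}: J_m(X) \to J_n(X)$ in $\idSch$. Functoriality of the truncations, $\bbk[t]/(t^{m+1}) \surj \bbk[t]/(t^{n+1}) \surj \bbk[t]/(t^{p+1})$ for $m \ge n \ge p$, yields $\pi_{n,p}\circ\pi_{m,n}=\pi_{m,p}$, so we obtain an inverse system over $(\bbN,\ge)$, exactly parallel to Corollary \ref{cor:jet:Jn} \eqref{i:cor:jet:Jn:pi}. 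I would then reduce the finite-presentation claim, $R \in \icdga^{\le 0,\fp} \Rightarrow J_n(R) \in \icdga^{\le 0,\fp}$, to the Corollary stated right after the proof of Lemma \ref{lem:jet:daff}; since a derived scheme $X$ of finite presentation admits a finite atlas by affine derived schemes of finite presentation, and $J_n$ commutes with the relevant Zariski gluing by Lemma \ref{lem:jet:opim}, the finite-presentation property passes from the affine pieces to $J_n(X)$.

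For the last assertion, that the homotopy fiber of $\pi_{n,0}: J_n(X) \to X$ is affine, I would argue locally: by Lemma \ref{lem:jet:opim}, for a monomorphism $U \inj X$ from an affine derived scheme $U$ one has $J_n(U) \simeq \pi_{n,0}^{-1}(U)$, so it suffices to treat $X=\Spec(R)$ affine. Running the free resolution $\wt R = \colim_m R_m$ from the proof of Lemma \ref{lem:jet:daff}, the explicit presentation shows $J_n(R_m)$ is, over $J_n(R_0)=\bbk[x^{0,s}_{(-j-1)}]$, a polynomial graded algebra modulo the relations $T^k g_{m,s}$; in particular $J_n(R) = J_n(\wt R)$ is a cdga, so $J_n(X)$ is an affine derived scheme, and then the homotopy fiber of $\pi_{n,0}$ is the derived tensor product $J_n(R)\otimes^{\bbL}_R \bbk$ in $\icdga^{\le 0}$, computed by the two-sided bar complex (Definition \ref{dfn:pr:bcpx}) and hence again a cdga, i.e.\ an affine derived scheme.

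The main obstacle I anticipate is not any single assertion but the careful bookkeeping needed to know that the representing object produced by the colimit construction in Lemma \ref{lem:jet:daff} is genuinely compatible with the inverse system and with base change, i.e.\ that $\pi_{n,0}$ and the atlas gluing of Lemma \ref{lem:jet:opim} interact coherently at the level of $\infty$-categories rather than merely up to unspecified homotopy. Concretely, one must check that the morphisms $J_n(i_m): J_n(R_m) \to J_n(R_{m+1})$ are (homotopically) cofibrations compatible with the $0$-derivations $T_m$, so that the colimit is well-behaved and the fiber computation $J_n(R)\otimes^{\bbL}_R\bbk$ is the literal homotopy fiber; these points are implicit in the construction but warrant an explicit remark. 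Once that is in place the four statements follow formally from the non-derived analogues together with the affineness of $J_n(R)$ established above.
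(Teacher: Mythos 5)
Your argument is correct and follows essentially the same route as the paper, which states these properties follow ``by the construction'': you unwind the representability argument of Proposition \ref{prp:jet:Jn}, the explicit affine construction in Lemma \ref{lem:jet:daff} (with its Corollary for finite presentation), and Lemma \ref{lem:jet:opim} for the reduction of the fiber statement to the affine case. The coherence caveat you raise about the colimit of the $J_n(R_m)$ and the $0$-derivations is exactly the point the paper's construction already addresses, so no new idea is needed.
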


The last property guarantee the existence of the arc space of derived schemes.

\begin{prp*}
For $X \in \idSch$, the limit of the inverse system 
$\{\pi_{m,n}: J_m(X) \to J_n(X) \mid m,n \in \bbN, m \ge n\}$ exists in $\idSch$.
We denote it by $J_\infty(X)$, and call it the \emph{arc space} or 
the \emph{$\infty$-jet space} of $X$.
\end{prp*}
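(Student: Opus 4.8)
The plan is to construct $J_\infty(X)$ by gluing the affine arc spaces already produced in the proof of Lemma~\ref{lem:jet:daff}, and then to verify directly that the result represents the limit of the inverse system $\{\pi_{m,n}\colon J_m(X)\to J_n(X)\}$. The reduction to the affine case is via Lemma~\ref{lem:jet:opim} (which identifies $J_n(U)$ with the $\pi_{n,0}$-preimage of $U$ for a monomorphism $U\inj X$), and the affine case is essentially formal once the cdgas $J_n(R)$ and the transition maps between them are in hand.

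First I would treat the affine case $X=\Spec(R)$ with $R\in\icdga^{\le 0}$. The explicit construction in the proof of Lemma~\ref{lem:jet:daff} (following Remark~\ref{rmk:jet:aff}) produces not only the cdgas $J_n(R)$ but also the cofibrations $J_n(R)\to J_{n+1}(R)$ adjoining the new coordinate variables, together with compatible $0$-derivations $T_n$; dually these give the affine transition morphisms $\pi_{n+1,n}\colon\Spec(J_{n+1}(R))\to\Spec(J_n(R))$. Since $\icdga^{\le 0}$ is presentable, the colimit
\[
 J_\infty(R) := \colim_{n} J_n(R)
\]
exists, and it carries the $0$-derivation $T:=\colim_n T_n$; this is the universal differential algebra over $R$, the derived analogue of Fact~\ref{fct:jet:da}. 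I then claim $\Spec(J_\infty(R))$ is the limit of $\{\Spec(J_n(R))\}$ in $\idSch$. As $\idAff=(\icdga^{\le 0})^{\op}$, it is the limit in $\idAff$, and for any $S=\Spec(A)\in\idAff$ the adjunction between colimits in $\icdga^{\le 0}$ and limits in $\iS$ gives
\[
 \uMap_{\idAff}(S,\Spec(J_\infty(R))) \simeq \uMap_{\icdga^{\le 0}}(J_\infty(R),A) \simeq \lim_{n}\uMap_{\icdga^{\le 0}}(J_n(R),A);
\]
since the limit of a diagram of sheaves is a sheaf, the right-hand side is the limit in $\idSt$, and as it is represented by a derived scheme it is also the limit in $\idSch$. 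Thus $J_\infty(\Spec R):=\Spec(J_\infty(R))$ works, with projections $\psi_n\colon J_\infty(R)\to J_n(R)$ dual to the structure maps of the colimit.

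Next, for a general derived scheme $X$ I would choose an atlas $\sqcup_i U_i\to X$ by affine derived schemes with each $U_i\inj X$ a monomorphism, and set $J_\infty(U_i):=\Spec(J_\infty(R_i))$ where $U_i=\Spec(R_i)$. By Lemma~\ref{lem:jet:opim}, for every $n$ one has $J_n(U_i)\simeq\pi_{n,0}^{-1}(U_i)$ inside $J_n(X)$; in particular each $\pi_{n,0}^{-1}(U_i)$ is affine, so $\pi_{n,0}$, and hence every $\pi_{m,n}$, is an affine morphism. On the overlaps $U_i\times^{\bbL}_X U_j$, covering by affines and applying Lemma~\ref{lem:jet:opim} once more shows that the restrictions of $J_\infty(U_i)$ and $J_\infty(U_j)$ agree, so the $J_\infty(U_i)$ glue to a derived scheme $J_\infty(X)$ with an affine projection to $X$ and compatible affine projections $\psi_n\colon J_\infty(X)\to J_n(X)$. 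To see $J_\infty(X)$ is the limit in $\idSch$, I note that since the $\pi_{m,n}$ are affine, the limit functor $S\mapsto\lim_n\uMap_{\idSch}(S,J_n(X))$ is a sheaf on $\idSt$ which, after restriction along any affine open $U\subset X$, agrees with $\uMap_{\idSch}(-,J_\infty(U))$ by the affine case; gluing these identifications yields $\lim_n\uMap_{\idSch}(-,J_n(X))\simeq\uMap_{\idSch}(-,J_\infty(X))$.

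The step I expect to demand the most care is the gluing in the non-affine case: verifying that the affine-local arc spaces $J_\infty(U_i)$ patch into an object of $\idSch$ and that this patched object is genuinely the limit in $\idSch$, rather than merely a limit computed in the larger $\infty$-category $\idSt$ (one cannot bypass this by the mapping-stack formalism, since the relevant ``test object'' $\colim_n\Spec(\bbk[t]/(t^{n+1}))=\Spec(\bbk[[t]])$ is not of finite presentation). The only real inputs are the compatibility supplied by Lemma~\ref{lem:jet:opim} and the affineness of the transition morphisms $\pi_{m,n}$, which together reduce the computation of the limit to the affine situation; this is the derived counterpart of the classical fact that cofiltered limits of schemes along affine morphisms exist, and no new ideas beyond careful bookkeeping should be needed.
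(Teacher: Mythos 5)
Your proposal is correct and follows essentially the same route as the paper, which justifies the existence of $J_\infty(X)$ precisely by the affineness of the jet spaces over $X=J_0(X)$ (so that the cofiltered limit along affine transition maps reduces to the affine case, where it is the colimit of the cdgas $J_n(R)$) together with the gluing mechanism of Lemma \ref{lem:jet:opim} already used for finite $n$. Your write-up simply fills in the details that the paper leaves implicit.
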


In the sections below, we only deal with cdgas $R$ 
(but not necessarily concentrated in non-positive degrees).
For later reference, we set:

\begin{dfn}\label{dfn:jet:dgarc}
For a cdga $R$, we call the cdga $J_\infty(R)$ with $0$-derivation 
the \emph{arc space of $R$}.
\end{dfn}

\section{Li filtration and vertex Poisson algebras in derived setting}
\label{s:li}

Recall that given an affine Poisson scheme,
the coordinate ring of its jet scheme has a structure of 
vertex Poisson algebra \cite{A15}.
In this section we give a dg analogue of this statement.
We will work over a field $\bbk$ of characteristics $0$.

\subsection{Dg vertex algebras}
\label{ss:li:vsa}

In this subsection we recall basics of vertex algebras
and define dg vertex algebras.
In order to make the text consistent with the literature of vertex algebras,
we begin with the preliminary on super objects.

\subsubsection{Super convention}

In this part we collect the convention on super objects.
We express $\bbZ/2\bbZ=\{\ol{0},\ol{1}\}$.

\begin{dfn}\label{dfn:dgs:ls}
\begin{enumerate}[nosep]
\item
A \emph{linear superspace} is a $\bbZ/2\bbZ$-graded linear space.

\item
The $\bbZ/2\bbZ$-grading of a linear superspace $V$ is called the \emph{parity},
and the grade decomposition of $V$ is denoted by $V=V_{\ol{0}} \bigoplus V_{\ol{1}}$.
A homogeneous element $v$ of $V$ is also called an \emph{element of pure parity}, 
and we denote by $p(v)$ its parity.
An element of $V_{\ol{0}}$ is called \emph{even},
and an element of $V_{\ol{1}}$ is called \emph{odd}.

\item
\label{i:dgs:ls3}
Let $V$ and $W$ be linear superspaces.
A linear map $f: V \to W$ is called \emph{even} if 
$f(V_{\ol{i}})\subset W_{\ol{i}}$ for $i=0,1$,
and called \emph{odd} if 
$f(V_{\ol{i}})\subset W_{\ol{i+1}}$ for $i=0,1$.
We denote by 
\[
 \Hom_{\bbk}(V,W) = \Hom_{\bbk}(V,W)_{\ol{0}} \oplus \Hom_{\bbk}(V,W)_{\ol{1}}
\]
the corresponding linear superspace structure on $\Hom_{\bbk}(V,W)$.

\item
The category of linear superspaces and even linear maps will be denoted by $\sVec$.
Thus we have 
$\Hom_{\sVec}(V,W) = \Hom_{\bbk}(V,W)_{\ol{0}}$.
\end{enumerate}
\end{dfn}

\begin{dfn}\label{dfn:dgs:lst}
Let $V$ and $W$ be linear superspaces.
\begin{enumerate}[nosep]
\item
\label{i:dgs:lst:1}
The \emph{tensor product} $V \otimes W \in \sVec$ of $V$ and $W$ is defined to be 
the linear superspace whose underlying linear space is 
$V \otimes_{\bbk} W \in \cVec$
and the parity decomposition is given by 
\[
 (V \otimes W)_{\ol{0}} := 
 V_{\ol{0}} \otimes_{\bbk} W_{\ol{0}} \oplus V_{\ol{1}} \otimes_{\bbk} W_{\ol{1}}, 
 \quad
 (V \otimes W)_{\ol{1}} := 
 V_{\ol{0}} \otimes_{\bbk} W_{\ol{1}} \oplus V_{\ol{1}} \otimes_{\bbk} W_{\ol{0}}.
\]
\item
\label{i:dgs:lst:2}
The braiding (or the commutativity) isomorphism on the tensor product $V \otimes W$ 
is an isomorphism in $\sVec$ given by 
\[
 V \otimes W \longsimto W \otimes V, \quad 
 v \otimes w \longmapsto (-1)^{p(v)p(w)} w \otimes v
\]
for elements $v \in V$, $w \in W$ of pure parity.
\end{enumerate}
These define a symmetric monoidal structure on $\sVec$,
which is denoted by $\sVec^{\otimes}$.
\end{dfn}

The tensor product and the braiding isomorphism gives $\sVec$ 
a structure of symmetric monoidal category.
Thus we have notions of \emph{associative algebras}, 
\emph{commutative algebras} and \emph{Lie superalgebras}.
In particular, for a linear superspace $V=V_{\ol{0}}\oplus V_{\ol{1}}$, we have 
the endomorphism superalgebra $\End_{\bbk}(V)$ and the commutator 
\[
  [f,g] := f g - (-1)^{p(f) p(g)} g f
\]
for $f,g \in \End_{\bbk}(V)$ of pure parity $p(f)$ and $p(g)$ respectively.
We will repeatedly use this commutator.

Recall the notion of a differential algebra (Definition \ref{dfn:jet:da}).
For later use, let us give a super analogue.

\begin{dfn}\label{dfn:li:der}
Let $R$ be a superalgebra.
\begin{enumerate}[nosep]
\item 
For $\ve \in \bbZ/2\bbZ$, a \emph{derivation $d$ of parity $\ve$} on $R$
is a linear endomorphism on $R$ of parity $\ve$ such that for any $r \in R$
we have $[d,r]= d r$  in the endomorphism superalgebra $\End(R)$, where
in the left hand side we regard $r \in \End(R)$ as a multiplication operator.

\item
We denote by $\Der(R)_{\ve}$ 
the linear space of derivations of parity $\ve$ on $R$.

\item
A superalgebra equipped with a derivation is called 
a \emph{differential superalgebra}.
\end{enumerate}
\end{dfn}

Finally we remark a construction of super objects from graded objects.
See \cite[1.1.16]{BD} for a more systematic ``dg super" conventions.

\begin{ntn}\label{ntn:li:s-g}
\begin{enumerate}[nosep]
\item 
For a complex $V=(V^{\bl},d)$, we have a linear superspace 
whose even part is $V^{\even}:=\bigoplus_{n \in \bbZ}V^{2n}$
and whose odd part is $V^{\odd}:=\bigoplus_{n \in \bbZ}V^{2n+1}$.
We denote it by $V^{\even} \oplus V^{\odd}$.

\item
The correspondence $V \mapsto V^{\even} \oplus V^{\odd}$ defines 
a monoidal functor $\dgVec^{\otimes} \to \sVec^{\otimes}$,
and it induces similar functors for algebraic structures.
We call those objects lying in the essential image 
the \emph{associated super objects} of given dg objects.
\end{enumerate}
\end{ntn}

For example, for a graded Lie algebra $L=L^{\bl}$,
we have the associated Lie superalgebra $L^{\even}\oplus L^{\odd}$.

\subsubsection{Vertex superalgebras}
\label{sss:li:vsa}

For the definiteness, we begin with the recollection of vertex superalgebras.
See \cite{Ka} and \cite{FBZ} for the detail.

\begin{ntn*}
\begin{enumerate}[nosep]
\item 
We denote by $\bbk[[z]]$ the linear space of formal series,
and by $\bbk((z))$ the linear space of formal Laurent series.
Thus we have a decomposition 
$\bbk((z))=\bbk[[z]] \oplus z^{-1}\bbk[z^{-1}]$.

\item
For a linear space $V$, we denote 
$V[[z]]:= V \otimes_{\bbk} \bbk[[z]]$.
$V((z)):= V \otimes_{\bbk} \bbk((z))$,
$z V[z] := V \otimes_{\bbk} z \bbk[z]$ and so on.
\end{enumerate}
\end{ntn*}

\begin{dfn}\label{dfn:li:vsa}
A \emph{vertex superalgebra} (\emph{vsa} for short) is a data 
$(V,\vac,T,Y)$ consisting of 
\begin{itemize}[nosep]
\item
a linear superspace $V$, 
\item
an even element $\vac \in V_{\ol{0}}$ called the \emph{vacuum}, 
\item
an even endomorphism $T \in \End(V)_{\ol{0}}$ 
called the \emph{translation}, and 
\item
a linear map 
\[
 V \otimes V \longto V((z)), \quad 
 a \otimes b \longmapsto Y(a,z)b = \sum_n (a_{(n)}b) z^{-n-1}
\]
such that we have $a_{(n)} \in \End(V)_{\ve}$ for $a \in V_{\ve}$ of pure parity 
and for any $n \in \bbZ$.
The operation $Y$ is called the \emph{state-field correspondence}.
\end{itemize}
These should satisfy the following standard axioms.
\begin{enumerate}[nosep, label=(\roman*)]
\item (Vacuum axiom)
$Y(\vac,z)=\id_V$ and 
$Y(a,z)\vac \in a+ z V[[z]]$ for any $a \in V$.
\item (Translation axiom)
$T \vac =0$ and 
$[T,Y(a,z)]=\partial_z Y(a,z)$ for any $a \in V$.
\item (Locality axiom)
$Y(a,z)$'s are \emph{mutually local}, i.e., for any homogeneous $a,b \in V$,
there exists $N \in \bbN$ such that 
the following equation holds in $(\End(V))[[z^{\pm1},w^{\pm1}]]$.
\[
 (z-w)^N Y(a,z)Y(b,w)=(-1)^{p(a) p(b)}(z-w)^N Y(b,w)Y(a,z).
\]
\end{enumerate}
We often denote it simply by $V$.

An even vertex superalgebra, i.e., $V=V_{\ol{0}}$, 
is called a \emph{vertex algebra} (\emph{va} for short).
\end{dfn}

Several remarks are in order.

\begin{rmk}\label{rmk:li:vsa}
\begin{enumerate}[nosep]
\item
\label{i:rmk:li:vsa:1}
The locality axiom implies the following relations.
\begin{align*}
&[a_{(m)},b_{(n)}] = \sum_{l \in \bbN}\binom{m}{l}(a_{(l)}b)_{(m+n-l)},
\\
&(a_{(m)}b)_{(n)}c = \sum_{l \in \bbN}(-1)^l \binom{m}{l} \bigl(
  a_{(m-l)}(b_{(n+l)}c) - (-1)^{m+ p(a)p(b)} b_{(m+n-l)}(a_{(l)}c) \bigr),
\end{align*}
where $a,b,c \in V$ and $a,b$ are of pure parity.
We also have the skew-symmetry
\[
 Y(a,z)b = (-1)^{p(a)p(b)} e^{z T}Y(b,-z)a
\]
for homogeneous $a,b \in V$.
See \cite[Chap.\ 3]{FBZ} and \cite[\S\S 4.2, 4.6, 4.8]{Ka} for the detail.

\item
\label{i:rmk:li:vsa:2}
The translation $T$ is completely determined by the operation $Y$
as $T a = a_{(-2)}\vac$.

\item
\label{i:rmk:li:vsa:3}
The correspondence $a \mapsto a_{(-1)}$ is injective \cite[1.3.2, Remarks 4]{FBZ}.
\end{enumerate}
\end{rmk}

Let us also recall:

\begin{dfn*}
Let $V$ and $W$ be vertex superalgebras.
\begin{enumerate}[nosep]
\item 
A \emph{morphism $V \to W$ of vsas} is an even linear map $\varphi: V \to W$
such that $\varphi(a_{(n)}b)=\varphi(a)_{(n)}\varphi(b)$ 
for any $a,b \in V$ and $n \in \bbZ$.

\item
The \emph{tensor product $V \otimes W$ of vsas} is given by
$(V \otimes W, \vac_V \otimes \vac_W, T_V \otimes \id_W + \id_V \otimes T_W,
  Y_{V \otimes W})$,
where the first item denotes the tensor product as linear superspaces
(Definition \ref{dfn:dgs:lst} \eqref{i:dgs:lst:1})
and $Y_{V \otimes W}(a \otimes b,z):=Y_V(a,z) \otimes Y_W(b,z)$.
Thus we have $(a \otimes b)_{(n)}=\sum_{m \in \bbZ} a_{(m)} \otimes b_{(n-m-1)}$
using the tensor product 
$(f \otimes g)(c \otimes d)=(-1)^{p(g)p(c)}f(c) \otimes g(d)$
of linear operators $f$ and $g$ coming from the braiding isomorphism 
(Definition \ref{dfn:dgs:lst} \eqref{i:dgs:lst:2}).
\end{enumerate}
\end{dfn*}

Note that the tensor product $\otimes$ gives a unital symmetric monoidal structure
on the category of vsas, where the unit is the trivial vertex algebra $\bbk$.

We also have the standard notion of \emph{vertex super subalgebras} and 
those of \emph{ideals} and \emph{quotients} of vsas.
See \cite[\S 4.3]{Ka} for the detail.

Finally we give some terminology for modules over vsas.
There are several different definitions,
and we cite a version from \cite[\S 2.1]{A12}.

\begin{dfn*}
Let $V$ be a vertex superalgebra.
A \emph{$V$-module} is a linear superspace $M$ equipped with a linear map
\[
 V \otimes M \longto M((z)), \quad 
 a \otimes m \longmapsto Y^M(a,z)m = \sum_{k \in \bbZ}a^M_{(k)} m z^{-k-1}
\] 
where for $a \in V_{\ve}$ we have $a^M_{(k)} \in \End(M)_{\ve}$,
which should satisfy the following conditions.
\begin{enumerate}[nosep,label=(\roman*)]
\item 
$Y^M(\vac,z)=\id_M$,
\item
For elements $a,b \in V$ of pure parity and any $j,k,l \in \bbZ$, we have 
\[
 \sum_{n \in \bbN}\binom{k}{n}(a_{(l+n)}b)^M_{k+j-n}=
 \sum_{n \in \bbN}(-1)^n \binom{l}{n} \bigl( a^M_{(k+l-n)}b^M_{(j+n)}
 - (-1)^{l+p(a)p(b)} b^M_{(l+j-n)}a^M_{(k+n)} \bigr).
\]
\end{enumerate}
A \emph{morphism} of $V$-modules is naturally defined.
We denote by $\VMod{V}$ the category of $V$-modules.
\end{dfn*}

\begin{eg}\label{eg:li:h-m}
Let $\varphi:V \to W$ be a morphism of vsas.
Then $W$ is naturally a $V$-module.
In fact, denoting the state-field correspondence of $W$ as a vsa by 
$\alpha \otimes \beta \mapsto Y_W(\alpha,z)\beta$, 
we have the $V$-module structure $V \otimes W \to W((z))$, 
$Y^W(a,z)\beta := Y_W(\varphi(a),z)\beta$
for $a \otimes \beta \in V \otimes W$.
\end{eg}

Let us recall an equivalent description of modules over vertex superalgebras.:

\begin{fct}[{\cite[5.1.6.\ Theorem]{FBZ}}]\label{fct:co:wtUV}
Let $\wt{U}(V)$ be the associative algebra 
attached to a vertex algebra $V$ \cite[4.3.1.\ Definition]{FBZ}.
We have an equivalence of the category of $V$-modules
and the category of \emph{smooth} modules over $\wt{U}(V)$.
\end{fct}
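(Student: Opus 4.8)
The plan is to realize the equivalence as an explicit pair of mutually inverse functors, the content being a combinatorial dictionary between the module axioms of Definition above and the defining relations of $\wt{U}(V)$. First I would recall the construction of $\wt{U}(V)$ following \cite[4.3.1]{FBZ}: one forms the Borcherds Lie algebra $U(V):=(V\otimes_{\bbk}\bbk((t)))/\Img(T\otimes\id+\id\otimes\partial_t)$, writes $a_{[n]}$ for the class of $a\otimes t^n$, and equips it with the bracket $[a_{[m]},b_{[n]}]=\sum_{l\in\bbN}\binom{m}{l}(a_{(l)}b)_{[m+n-l]}$, which is a finite sum because $Y(a,z)b\in V((z))$ and which satisfies the Jacobi identity by the relations recorded in Remark \ref{rmk:li:vsa}\,\eqref{i:rmk:li:vsa:1}. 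Then $\wt{U}(V)$ is the completion of the universal enveloping algebra of $U(V)$ with respect to the left ideals generated by $\{a_{[n]}\mid a\in V,\ n\geq N\}$, further quotiented by the normalization relation $\vac_{[-1]}=1$ and the normal-ordering relations expressing $(a_{(-1)}b)_{[n]}$ through products of modes of $a$ and $b$. A $\wt{U}(V)$-module $N$ is \emph{smooth} if for each $n\in N$ and $a\in V$ one has $a_{[k]}n=0$ for $k\gg 0$.

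Next I would define the two functors. Given a $V$-module $(M,Y^M)$, the requirement $Y^M(a,z)m\in M((z))$ says exactly that $a^M_{(k)}m=0$ for $k\gg 0$, so setting $a_{[k]}:=a^M_{(k)}$ yields a smooth action of the Fourier modes; axiom (ii) of a $V$-module is precisely the commutator formula together with the normal-ordering relations, so this action extends uniquely to $\wt{U}(V)$, with $\vac_{[-1]}$ acting as $\id_M$ by axiom (i). Morphisms of $V$-modules intertwine the modes tautologically, so this is functorial. Conversely, for a smooth $\wt{U}(V)$-module $N$ I would set $Y^N(a,z)n:=\sum_{k\in\bbZ}(a_{[k]}n)\,z^{-k-1}$; smoothness places this series in $N((z))$, and the relations defining $\wt{U}(V)$ translate back into axioms (i) and (ii). One then checks that these assignments are mutually inverse and compatible with morphisms, which is a formal unwinding once the relation-dictionary above is in place.

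The hard part — and the reason I would cite \cite[5.1.6]{FBZ} rather than reprove it — is verifying that the \emph{full} system of Borcherds identities for a module is generated by the commutator formula plus the normal-ordering relations, i.e.\ that passing to $\wt{U}(V)$ discards no information and imposes no spurious relations; this is exactly the "strong reconstruction"-type argument of loc.\ cit. Everything else (well-definedness of $U(V)$, continuity of the extended action, the unit relation $\vac_{[-1]}=1$ via $T\vac=0$) is routine. I note that no homotopical subtlety intervenes here: $V$ is an ordinary vertex algebra, and the dg refinement enters the paper only afterwards.
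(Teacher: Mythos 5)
The paper does not prove this statement at all: it is imported as a \emph{Fact} with the citation to \cite[5.1.6]{FBZ}, and the only in-paper echo of the argument is the parallel construction in \S \ref{sss:li:vpm}, where the algebra $\wt{U}(P)$ is built step by step for a dg vertex Poisson algebra and Proposition \ref{prp:li:vpmod} is obtained by ``the same argument as in the proof of \cite[5.1.6.\ Theorem]{FBZ}''. Your outline is exactly that standard argument --- the Lie algebra $U(V)$ of Fourier coefficients, the completed enveloping algebra with the normal-ordering and vacuum relations (and your remark that $\vac_{[n]}=0$ for $n\neq -1$ already follows from $T\vac=0$ in $U(V)$ is correct), and the mode-action dictionary $a_{[k]}\leftrightarrow a^M_{(k)}$ with smoothness matching $Y^M(a,z)m\in M((z))$ --- so in substance you and the paper are doing the same thing, namely deferring to Frenkel--Ben-Zvi.

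The one caveat is that, read as a self-contained proof, your argument is circular at precisely the nontrivial point: the claim that the commutator formula plus the normal-ordering relations generate the full family of Borcherds identities for modules (so that the mode action extends to $\wt{U}(V)$ and the inverse functor lands back in $V$-modules) is the actual content of \cite[5.1.6]{FBZ}, and you cite that very theorem for it. Since the paper itself offers no independent proof, this does not put you at odds with the paper, but if one wanted a genuine proof one would have to supply the reconstruction-type verification (e.g.\ via the commutator formula and associativity as in \cite[\S\S 4.1--4.3, 5.1]{FBZ}, cf.\ Remark \ref{rmk:li:vsa}) rather than invoke the statement being proved.
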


See \cite[\S\S 4.1--4.3, 5.1]{FBZ} for the detail.
We will give a vertex Poisson analogue of this fact in \S \ref{sss:li:vpm}.

\subsubsection{Graded vertex superalgebras}

Let us cite from \cite[1.3.1]{FBZ} 
the notion of \emph{graded vertex superalgebras}.


\begin{dfn}\label{dfn:li:gvsa}
A vertex superalgebra $V$ is \emph{graded} if there is a decomposition
$V= \bigoplus_{\Delta \in \bbk}V_\Delta$ as a linear superspace such that 
the following conditions hold.
\begin{enumerate}[nosep,label=(\roman*)]
\item
$\vac \in V_{0}$.

\item
$T(V_{\Delta}) \subset V_{\Delta+1}$ for any $\Delta \in \bbk$.

\item 
$a_{(n)}b \in V_{-(\Delta+\Delta'-n-1)}$
holds for any $a \in V_{-\Delta}$, $b \in V_{-\Delta'}$ and $n \in \bbZ$.
\end{enumerate}
Setting $I:=\{\Delta \in \bbk \mid V_{-\Delta} \neq 0\}$,
we also call $V$ an \emph{$I$-graded vertex superalgebra}.
\end{dfn}

\begin{rmk}\label{rmk:li:Ham}
There is an equivalent notion of 
a \emph{vertex superalgebra $V$ with Hamiltonian $H$} \cite[\S 4.9]{Ka}.
Such $V$ is equipped with an even linear endomorphism $H$
such that the following conditions hold.
\begin{enumerate}[nosep]
\item 
$H$ acts semisimply on $V$.
\item
For any $a,b \in V$ the equation
$[H,Y(a,z)] b = \bigl(Y(H a,z) + z \partial_z Y(a,z)\bigr)b$ in $V((z))$ holds.
\end{enumerate}

Given such $(V,H)$, we have the $H$-eigen decomposition 
$V=\tboplus_{\Delta \in \bbk} V_\Delta$, 
$V_{\Delta} := \{a \in V \mid H a = -\Delta a\}$.
Setting $I:= \{\Delta \in \bbk \mid V_{\Delta} \neq 0\}$,
we recover Definition \ref{dfn:li:gvsa}.

Conversely, given an $I$-graded vertex superalgebra $V$,
defining $H a := \Delta a$ for $a \in V_{-\Delta}$
we have a Hamiltonian $H$ on $V$.
\end{rmk}

We also have the notion of \emph{graded modules} of graded vsas.

\begin{dfn}\label{dfn:li:gvmod}
Let $V$ be a graded vertex superalgebra.
A $V$-module $M$ is \emph{graded} if there is a linear decomposition
$M = \oplus_{\Delta \in \bbk}M_{\Delta}$ such that 
$a^M_{(n)}b \in M_{-(\Delta+\Delta'-n-1)}$
holds for any $a \in V_{-\Delta}$, $b \in V_{-\Delta'}$ and $n \in \bbZ$. 
\end{dfn}

\subsubsection{Universal affine vertex algebras}
\label{sss:li:uava}

In this part we recall the universal affine vertex algebra.
For simplicity, we work over $\bbC$.
See \cite[\S 2.4]{FBZ} for the detail.
We first fix the notation on Lie algebras.

\begin{ntn}\label{ntn:li:g}
\begin{enumerate}[nosep]
\item 
Let $G$ be a simply connected semisimple algebraic group over $\bbC$,
and $\frkg := \Lie G$ be its Lie algebra.

\item
We denote by $\kappa_{\frkg}$ the Killing form 
and by $h^\vee$ the dual Coxeter number of $\frkg$.

\item
We denote by $\frkg((t)):=\frkg \otimes_{\bbC} \bbC((t))$ 
the tensor product with the linear space $\bbC((t))$ of formal Laurent series,
and denote its element by $x f := x \otimes f \in \frkg((t))$ 
with $x \in \frkg$ and $f=f(t) \in \bbC((t))$.
Similarly we will denote $\frkg[t] := \frkg \otimes_{\bbC} \bbC[t]$, 
$\frkg[[t]] := \frkg \otimes_{\bbC} \bbC[[t]]$ and so on.

\item
We denote by $\wt{\frkg}$ the non-twisted affine Kac-Moody Lie algebra 
associated with $\frkg$.
It is a Lie algebra of which 
\begin{itemize}
\item 
the underlying linear space is $\frkg((t)) \oplus \bbC K \oplus \bbC D$, and
\item
the commutation relations are 
\[
 [x t^m , y t^n] = [x,y] t^{m+n} + 
 m \delta_{m+n,0} \tfrac{1}{2 h^\vee}\kappa_{\frkg}(x,y) K,
 \quad
 [D,x t^m]=m x t^m, \quad [K,\wh{\frkg}]=0
\]
for $x,y \in \frkg$. 
\end{itemize}

\item
We denote by $\wh{\frkg}$ the derived algebra of $\wt{\frkg}$.
Thus we have $\wh{\frkg}=\frkg((t)) \oplus \bbC K$ as a linear space.
\end{enumerate}
\end{ntn}

Here is the definition of the universal affine vertex algebra.

\begin{dfn}\label{dfn:li:uava}
For $k \in \bbC$, we define a $\wt{\frkg}$-module $V_{k}(\frkg)$ by
\[
 V_{k}(\frkg) := 
 U(\wt{\frkg}) \otimes_{U(\frkg[t] \oplus \bbC K \oplus \bbC D)}\bbC_k,
\]
where $\bbC_k$ is the one-dimensional representation of the Lie subalgebra
$\frkg[t] \oplus \bbC K \oplus \bbC D \subset \wt{\frkg}$
on which $\frkg[t] \oplus \bbC D$ acts trivially 
and $K$ acts as multiplication by $k$, 
Then $V_{k}(\frkg)$ is a vertex algebra of which 
\begin{itemize}[nosep]
\item
the vacuum vector is $\vac = 1 \otimes 1$, 
\item
the translation is $T = -\partial_t$, and 
\item
$Y(x t^{-1}\vac,z)=x(z):=\sum_{n \in \bbZ} (x t^n) z^{-n-1}$
for $x \in \frkg$.
\end{itemize}
These conditions determines the vertex algebra structure uniquely.
It is $\bbN$-graded by the Hamiltonian $-D$ (Remark \ref{rmk:li:Ham}).
We call the obtained vertex algebra $V_{k}(\frkg)$
the \emph{universal affine vertex algebra associated with $\frkg$ at level $k$}.
\end{dfn}

\begin{rmk*}
We used the normalization of invariant bilinear form of \cite{FBZ}.
Comparing it with the notation $V^{\kappa}(\frkg)$ in \cite{A18}
where $\kappa$ denotes a bilinear invariant form on $\frkg$, 
we have $V_{1}(\frkg)=V^{\kappa_{\frkg}}(\frkg)$
and $V_{-h^\vee}(\frkg)=V^{\kappa_{c}}(\frkg)$
with $\kappa_{c}:=-\frac{1}{2}\kappa_{\frkg}$.
By the latter relation, 
$\kappa_c$ is called the \emph{critical level} in \cite{A18}.
\end{rmk*}

For later use, we recall:

\begin{dfn}\label{dfn:li:nord}
Let $R$ be a associative superalgebra.
Let also $a(z)=\sum_{n \in \bbZ} a_n z^n$ and 
$b(z)=\sum_{n \in \bbZ} b_n z^n$ be formal power series
with coefficients $a_n$ and $b_n$ in $R$
such that the parity of the coefficients are constant for each series.
We denote $p(a):=p(a_n)$ and $p(b):=p(b_n)$.
Then the \emph{normal ordering} $:a(z)b(z):$ of $a(z)$ and $b(z)$ 
is defined to be the formal power series
\[
 :a(z)b(z): \, :=  \, a(z)_{+} b(z) + (-1)^{p(a)p(b)} b(z)a(z)_{-},
\]
where we set $a(z)_{+}:=\sum_{n \ge 0}a_{(n)}z^n$ and 
$a(z)_{-}:=\sum_{n \le -1}a_{(n)}z^n$.
\end{dfn}

\begin{fct}[{\cite[\S 2.4]{FBZ}}]\label{fct:uava:pbw}
Let $\{x_i \mid i=1,\ldots,\dim \frkg\}$ be a linear basis of $\frkg$.
Then 
\begin{enumerate}[nosep]
\item 
$V_{k}(\frkg)$ has a linear basis of monomials of the form
\[
 v = (x_{i_1}t^{n_1})\cdots (x_{i_l}t^{n_l})\vac,
\]
where $n_i \in \bbZ$, 
$n_1 \le \cdots \le n_l <0$ and if $n_j=n_{j+1}$ then $i_j \le i_{j+1}$.

\item
For the element $v \in V_k(\frkg)$ in the previous item, we have 
\[
 Y(v,z) = \tfrac{1}{(-n_1+1)!} \cdots \tfrac{1}{(-n_l+1)!}
 :\partial_z^{-n_1+1}x_{i_1}(z) \cdots \partial_z^{-n_l+1}x_{i_l}(z):.
\]
\end{enumerate}
\end{fct}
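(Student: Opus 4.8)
The plan is to reproduce the standard structural argument for the universal affine vertex algebra: assertion (1) is an application of the Poincar\'e--Birkhoff--Witt (PBW) theorem, while assertion (2) follows from the reconstruction theorem for vertex algebras (\cite[\S 4.4]{FBZ}) by induction on the length $l$.

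For (1), recall from Definition \ref{dfn:li:uava} that $V_{k}(\frkg) = U(\wt{\frkg}) \otimes_{U(\frkg[t] \oplus \bbC K \oplus \bbC D)} \bbC_k$ and that, as a linear space, $\wt{\frkg} = t^{-1}\frkg[t^{-1}] \oplus \bigl(\frkg[t] \oplus \bbC K \oplus \bbC D\bigr)$. First I would fix the basis $\{x_i t^n \mid 1 \le i \le \dim\frkg,\ n \le -1\}$ of the first summand, ordered by declaring $x_i t^m$ to precede $x_j t^n$ whenever $m < n$, or $m = n$ and $i \le j$, and extend it to a basis of $\wt{\frkg}$ by placing an arbitrary ordered basis of the subalgebra $\frkg[t] \oplus \bbC K \oplus \bbC D$ after it. PBW for $U(\wt{\frkg})$ then shows that multiplication gives an isomorphism $U(t^{-1}\frkg[t^{-1}]) \otimes_{\bbC} U(\frkg[t] \oplus \bbC K \oplus \bbC D) \simto U(\wt{\frkg})$ of right $U(\frkg[t] \oplus \bbC K \oplus \bbC D)$-modules; applying $-\otimes_{U(\frkg[t] \oplus \bbC K \oplus \bbC D)} \bbC_k$ kills the second tensor factor and yields a linear isomorphism $V_{k}(\frkg) \simeq U(t^{-1}\frkg[t^{-1}])$ carrying $\vac$ to $1$. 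PBW for $U(t^{-1}\frkg[t^{-1}])$ in the chosen order now produces precisely the claimed ordered-monomial basis, the ordering conditions $n_1 \le \cdots \le n_l < 0$ and ``$i_j \le i_{j+1}$ when $n_j = n_{j+1}$'' being a restatement of the chosen order on generators.

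For (2), I would induct on $l$. The base case $l = 1$ is direct: since $(x_i t^{-1}\vac)_{(n)} = x_i t^n$ as operators on $V_{k}(\frkg)$, one has $x_i t^n\vac = \tfrac{1}{(-n-1)!}T^{-n-1}(x_i t^{-1}\vac)$ for $n \le -1$ (a standard consequence of the vacuum and translation axioms), whence $Y(x_i t^n\vac, z) = \tfrac{1}{(-n-1)!}\partial_z^{-n-1}x_i(z)$ using $Y(Ta,z) = \partial_z Y(a,z)$ and $Y(x_i t^{-1}\vac, z) = x_i(z)$. For the inductive step, write $v = (x_{i_1}t^{n_1})\,w$ with $w := (x_{i_2}t^{n_2})\cdots(x_{i_l}t^{n_l})\vac$ and $n_1 \le -1$, observe $x_{i_1}t^{n_1} = (x_{i_1}t^{-1}\vac)_{(n_1)}$, and apply the reconstruction formula expressing $Y(a_{(n)}b, z)$ for $n \le -1$ as the derivative-normal-ordered product $\tfrac{1}{(-n-1)!}\,{:}\partial_z^{-n-1}Y(a,z)\,Y(b,z){:}$ (\cite[\S 4.4]{FBZ}), with $a = x_{i_1}t^{-1}\vac$ and $b = w$. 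Substituting the inductive formula for $Y(w,z)$ and using that the iterated colon products collapse to the nested normal ordering written in the statement gives the expression for $Y(v,z)$. Since $V_{k}(\frkg)$ is an even vertex algebra, no Koszul signs enter.

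The routine but mildly delicate point is the bookkeeping in the inductive step of (2): one must check that the iterated, right-nested normal orderings produced by repeated use of the reconstruction formula collapse to the single nested expression displayed, and that the PBW ordering fixed in (1) is exactly the one making this term-by-term reduction legitimate. Beyond that I expect no genuine obstacle; everything reduces to PBW and the reconstruction theorem, both standard for $V_{k}(\frkg)$.
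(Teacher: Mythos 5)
The paper offers no proof of this Fact at all—it simply cites \cite[\S 2.4]{FBZ}—and your PBW-plus-reconstruction argument is exactly the standard proof given in that reference, so the approaches coincide. Note only that your induction (correctly) produces the factorials and derivative orders $-n_i-1$, matching \cite[\S 2.4]{FBZ}; the $-n_i+1$ in the displayed statement is a typo in the paper, not a defect of your argument.
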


Next we consider the category $\VMod{V_{k}(\frkg)}$ of modules 
over the universal affine vertex algebra $V_{k}(\frkg)$.
By Fact \ref{fct:co:wtUV}, it is equivalent to the smooth modules over
the associative algebra $\wt{U}(V_{k}(\frkg))$ attached to $V_{k}(\frkg)$.
Denoting by $\wt{U}_k(\wh{\frkg})$ the $t$-adic completion of 
$U_k(\wh{\frkg}):= U(\wh{\frkg})/(K-k)$, we have 
$\wt{U}(V_{k}(\frkg)) \simeq \wt{U}_k(\wh{\frkg})$ by \cite[4.3.2.\ Lemma]{FBZ}.
Then a smooth $\wt{U}(V_k(\frkg))$-module is nothing but a smooth $\frkg_k$-module.
In other words, we have:

\begin{fct}[{\cite[5.1.8]{FBZ}}]\label{fct:li:sm}
A $V_{k}(\frkg)$-module is equivalent to a \emph{smooth} $\wh{\frkg}$-module 
of level $k$, i.e., a representation $M$ of the Lie algebra $\wh{\frkg}$ 
such that $K$ acts by multiplication $k$ and 
$(x t^n).m=0$ for any $x \in \frkg$, $m \in M$ and $n \gg 0$.
\end{fct}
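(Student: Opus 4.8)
The statement to prove is Fact \ref{fct:li:sm}: a $V_k(\frkg)$-module is equivalent to a smooth $\wh{\frkg}$-module of level $k$. The plan is to assemble this from the two preceding results, namely Fact \ref{fct:co:wtUV} (equivalence of $V$-modules with smooth $\wt{U}(V)$-modules) and the algebra identification $\wt{U}(V_k(\frkg)) \simeq \wt{U}_k(\wh{\frkg})$ cited from \cite[4.3.2.\ Lemma]{FBZ}, together with the elementary observation that a smooth module over the $t$-adic completion $\wt{U}_k(\wh{\frkg})$ is the same data as a smooth module over $U_k(\wh{\frkg}) = U(\wh{\frkg})/(K-k)$, i.e.\ a $\wh{\frkg}$-representation on which $K$ acts by $k$ and such that each vector is annihilated by $x t^n$ for $n \gg 0$.

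\textbf{Key steps.} First I would recall the definition of the associative algebra $\wt{U}(V)$ attached to a vertex algebra $V$ from \cite[4.3.1.\ Definition]{FBZ}: it is the topological completion of the span of Fourier coefficients $a_{(n)}$ of fields, with relations coming from the operator product expansion, and ``smooth'' means every element of the module is annihilated by $a_{(n)}$ for $n$ large. Then I would invoke Fact \ref{fct:co:wtUV} to reduce the classification of $V_k(\frkg)$-modules to that of smooth $\wt{U}(V_k(\frkg))$-modules. Next I would substitute the isomorphism $\wt{U}(V_k(\frkg)) \simeq \wt{U}_k(\wh{\frkg})$ from \cite[4.3.2.\ Lemma]{FBZ}; this is where the PBW-type description in Fact \ref{fct:uava:pbw} is implicitly used, since the identification matches the coefficients of $x(z) = \sum_n (x t^n) z^{-n-1}$ inside $\wt{U}(V_k(\frkg))$ with the generators $x t^n \in \wh{\frkg}$, with $K$ going to $k$. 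Finally I would unwind what ``smooth module over the $t$-adic completion $\wt{U}_k(\wh{\frkg})$'' means concretely: since $\wt{U}_k(\wh{\frkg})$ is the completion of $U_k(\wh{\frkg})$ with respect to the left ideals generated by $\frkg t^N \bbC[[t]]$, a module is smooth precisely when every vector is killed by $\frkg t^N \bbC[[t]]$ for $N \gg 0$, which is exactly the condition $(x t^n).m = 0$ for $n \gg 0$ in the statement. Assembling these equivalences gives the claim.

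\textbf{Main obstacle.} Essentially all the content is imported from \cite{FBZ}, so there is no deep new argument; the only mildly delicate point is the careful bookkeeping in passing between the completed algebra $\wt{U}_k(\wh{\frkg})$ and the uncompleted $U_k(\wh{\frkg})$, making sure that ``smooth representation of the topological algebra'' translates without loss to ``representation of $\wh{\frkg}$ with the local-nilpotency condition on the positive modes'', and that morphisms on both sides correspond. This is routine but is the step most worth spelling out; since the paper cites it as a Fact with the proof omitted, I would simply indicate the chain $\VMod{V_k(\frkg)} \simeq \{\text{smooth } \wt{U}(V_k(\frkg))\text{-mod}\} \simeq \{\text{smooth } \wt{U}_k(\wh{\frkg})\text{-mod}\} \simeq \{\text{smooth level-}k\ \wh{\frkg}\text{-mod}\}$ and refer to \cite[\S\S 4.1--4.3, 5.1]{FBZ} for the verifications.
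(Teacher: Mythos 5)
Your proposal matches the paper's own treatment: the paragraph preceding the Fact derives it exactly by combining Fact \ref{fct:co:wtUV} with the identification $\wt{U}(V_k(\frkg)) \simeq \wt{U}_k(\wh{\frkg})$ from \cite[4.3.2.\ Lemma]{FBZ} and then unwinding smoothness over the completed algebra as the level-$k$ condition together with $(xt^n).m=0$ for $n \gg 0$. Your chain of equivalences and the bookkeeping point about passing between the completed and uncompleted algebras are the same route, so there is nothing to add.
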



Recall that there is a monoidal structure $\otimes$ on the category of 
representations of a Lie algebra $L$,
which is induced by the comultiplication $\Delta(x)=x \otimes 1 + 1 \otimes x$
for each element $x \in L$.
We call this action on the tensor the \emph{diagonal action} of $L$.
Together with Fact \ref{fct:li:sm}, we have the following tensor structure on
$V_k(\frkg)$-modules with different $k$'s:

\begin{lem}\label{lem:li:tens}
Let $k,k' \in \bbC$, $M \in \VMod{V_{k}(\frkg)}$ and $N \in \VMod{V_{k'}(\frkg)}$.
Then the tensor product $M \otimes N$ in $\cVec$ is an object 
of $\VMod{V_{k+k'}(\frkg)}$ by the diagonal action of $\wh{\frkg}$. 
\end{lem}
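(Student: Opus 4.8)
The plan is to reduce the statement to Fact \ref{fct:li:sm}, which identifies the category $\VMod{V_{k}(\frkg)}$ with the category of smooth $\wh{\frkg}$-modules of level $k$. So I would first pass $M$ and $N$ through this equivalence, regarding them as smooth $\wh{\frkg}$-modules of levels $k$ and $k'$ respectively. Next I would use the standard fact, recalled just before the lemma, that for any Lie algebra $L$ the category of $L$-modules is symmetric monoidal via the diagonal action coming from the comultiplication $\Delta(x) = x \otimes 1 + 1 \otimes x$; applied to $L = \wh{\frkg}$ this makes $M \otimes N$ (the tensor product in $\cVec$) into a $\wh{\frkg}$-module. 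Then the proof amounts to checking the two defining properties of a smooth level-$(k+k')$ module.

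The first point is the computation of the central charge. Since $K$ is central in $\wh{\frkg}$, we have $\Delta(K) = K \otimes 1 + 1 \otimes K$, and by hypothesis $K$ acts on $M$ (resp. $N$) by the scalar $k$ (resp. $k'$); hence $K$ acts on $M \otimes N$ by the scalar $k + k'$. This is the one bookkeeping step that genuinely matters: it is precisely the additivity of the central charge under $\Delta$ that makes the construction land in the level-$(k+k')$ category rather than producing an ill-defined module.

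The second point is smoothness. For $x \in \frkg$ and a pure tensor $m \otimes n$ the diagonal action gives $(x t^j).(m \otimes n) = (x t^j).m \otimes n + m \otimes (x t^j).n$. Smoothness of $M$ and $N$ provides integers $N_1, N_2$ such that $(x t^j).m = 0$ for $j \ge N_1$ and $(x t^j).n = 0$ for $j \ge N_2$, so both summands vanish once $j \ge \max(N_1, N_2)$. Since every element of $M \otimes N$ is a finite sum of pure tensors, extending by linearity shows $M \otimes N$ is a smooth $\wh{\frkg}$-module.

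Finally I would invoke the equivalence of Fact \ref{fct:li:sm} in the reverse direction: the smooth level-$(k+k')$ module $M \otimes N$ corresponds to an object of $\VMod{V_{k+k'}(\frkg)}$, which is exactly the assertion. I do not expect any real obstacle here; the proof is essentially formal, and the only step that requires a moment's attention is the central-charge computation described above, together with the implicit observation that the vertex-algebra module structure recovered from the smooth $\wh{\frkg}$-module structure is the one we want.
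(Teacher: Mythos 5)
Your proposal is correct and follows essentially the same route as the paper, which states the lemma as an immediate consequence of Fact \ref{fct:li:sm} together with the diagonal action $\Delta(x)=x\otimes 1+1\otimes x$, the only substantive points being exactly the ones you check: additivity of the central charge ($K$ acts by $k+k'$) and inheritance of smoothness on pure tensors.
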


\subsubsection{Dg vertex algebras}

Now we introduce a dg analogue of a vertex algebra.
There are in fact several choices of definition, 
and we give one suitable for our purpose.
Let us first recall 

\begin{dfn}[{\cite[\S 4.3]{Ka}}]\label{dfn:li:dv}
Let $V$ be a vertex superalgebra, and let $\ve \in \bbZ/2\bbZ$.
A \emph{derivation} $d$ of parity $\ve$ on $V$
is a linear endomorphism of parity $\ve$, i.e., $d \in \End_{\bbk}(V)_{\ve}$, 
such that for any $a \in V$ we have
\[
 [d,Y(a,z)] = Y(d a, z).
\] 
\end{dfn}

\begin{rmk}\label{rmk:li:dv}
Let $d$ be a derivation of a vertex superalgebra $(V,\vac,T,Y)$.
\begin{enumerate}[nosep]
\item
\label{i:rmk:li:dv:dT}
We have $[d,T]=0$ by Remark \ref{rmk:li:vsa} (\ref{i:rmk:li:vsa:2}).
\item 
We have $d \vac = 0$ by Remark \ref{rmk:li:vsa} (\ref{i:rmk:li:vsa:3}).
\item
\label{i:rmk:li:dv:dab}
The condition on $d$ and $Y$ is equivalent to 
$d(a_{(n)}b)=(d a)_{(n)}b+(-1)^{p(a) \ve}a_{(n)}(d b)$
for any $a, b \in V$ with $a$ of pure parity.
\end{enumerate}
\end{rmk}

Here is our definition of a dg vertex algebra:

\begin{dfn}\label{dfn:li:dgva}
\begin{enumerate}[nosep]
\item 
A \emph{dg vertex algebra} (\emph{dgva} for short) is a complex $(V^{\bl},d)$
equipped with a vertex superalgebra structure $(\vac,T,Y)$ on the associated
linear superspace $V^{\even} \oplus V^{\odd}$ (Notation \ref{ntn:li:s-g})
such that the following conditions hold.
\begin{enumerate}[nosep, label=(\roman*)]
\item
$\vac \in V^0$ and $T \in \ul{\End}(V)^0=\Hom_{\dgVec}(V,V)$.
\item
$d$ is an odd derivation (Definition \ref{dfn:li:dv}) of the vertex superalgebra
$(V^{\even} \oplus V^{\odd},\vac,T,Y)$.
\item
The state-field correspondence $Y$ is homogeneous.
In other words, we have 
$a_{(n)} V^{j} \subset V^{i+j}$ for any $a \in V^i$ and $n \in \bbZ$.
\end{enumerate}
We denote a dg vertex superalgebra as $V  = (V^{\bl},d,\vac,T,Y)$.

\item
We also have the notions of \emph{morphisms} and \emph{tensor products} of 
dg vertex algebras.
We denote by $\dgVA$ the category of dg vertex algebras.
\end{enumerate}
\end{dfn}


For the later use, let us also introduce the corresponding notion of dg modules.

\begin{dfn}\label{dfn:li:dgMod}
Let $V$ be a dg vertex algebra.  
\begin{enumerate}[nosep]
\item
A \emph{dg $V$-module} $M = (M^{\bl},d_M,Y^M)$ consists of 
\begin{itemize}[nosep]
\item 
a complex $(M^{\bl},d_M)$ and 
\item
a $(V^{\even} \oplus V^{\odd})$-module structure on $M^{\even} \oplus M^{\odd}$,
$Y^M(a,z)m=\sum_{n \in \bbZ} a^M_{(n)}m z^{-n-1}$, 
in the sense of Definition \ref{dfn:li:gvmod},
where $V^{\even} \oplus V^{\odd}$ is regarded as a vertex superalgebra,
\end{itemize}
such that 
\begin{align*}
 a_{(n)}^M M^j \subset M^{\abs{a}+j}, \quad 
 d_M(a^M_{(n)}m)=(d_V a)^M_{(n)}m+(-1)^{\abs{a}}a^M_{(n)}(d_M m)
\end{align*}
for any homogeneous $a \in V$, any $m \in M$ and any $j,n \in \bbZ$.

\item
We denote by $\dgVMod{V}$ the category of dg $V$-modules.
For a dg $V$-module $M$,
the $\bbZ$-grading of the underlying complex $(M^{\bl},d_M)$
is called the \emph{cohomological degree}.
\end{enumerate}
\end{dfn}

As for the cohomology of a dg vertex algebra, we have:

\begin{lem}\label{lem:dgv:coh}
For a dg vertex algebra $V=(V^{\bl},d,\vac,T,Y)$, consider the cohomology
$H^{\bl}=H(V^{\bl},d)$ of the underlying complex (Definition \ref{dfn:dg:coh}).
Then the associated linear superspace $H^{\even}\oplus H^{\odd}$
(Notation \ref{ntn:li:s-g}) has a structure of vertex superalgebra.
\end{lem}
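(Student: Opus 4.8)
The plan is to realize $H^{\even}\oplus H^{\odd}$ as a quotient of a vertex subsuperalgebra of $V^{\even}\oplus V^{\odd}$, so that the desired structure comes for free from the standard theory of ideals and quotients of vertex superalgebras (see \cite[\S 4.3]{Ka}). Write $Z := \Ker(d) \subset V$ for the subspace of cocycles and $B := \Img(d) \subset Z$ for the subspace of coboundaries, the inclusion $B \subset Z$ being $d^2=0$. Since $d$ has cohomological degree $1$ and $Y$ is homogeneous, both $Z$ and $B$ are graded subspaces; the quotient complex $Z/B$ is $(H^{\bl},0)$, whose associated superspace (Notation \ref{ntn:li:s-g}) is exactly $H^{\even}\oplus H^{\odd}$.

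First I would check that $Z$ is a vertex subsuperalgebra of $V^{\even}\oplus V^{\odd}$. The vacuum lies in $Z$ because $d\vac=0$ (Remark \ref{rmk:li:dv}), and $Z$ is stable under $T$ because $dT=Td$ (Remark \ref{rmk:li:dv} \eqref{i:rmk:li:dv:dT}). Closure under the operations $a_{(n)}(-)$ is immediate from the Leibniz rule for the odd derivation $d$ in its dg form $d(a_{(n)}b)=(da)_{(n)}b+(-1)^{\abs{a}}a_{(n)}(db)$, which is Remark \ref{rmk:li:dv} \eqref{i:rmk:li:dv:dab} combined with the dg sign convention, cf.\ the compatibility already recorded in Definition \ref{dfn:li:dgMod}: if $da=db=0$ then $d(a_{(n)}b)=0$. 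Next, $B$ is an ideal of the vertex superalgebra $Z$: for $a\in Z$ and $c\in V$ the same rule gives $a_{(n)}(dc)=(-1)^{\abs{a}}d(a_{(n)}c)\in B$ (using $da=0$), so $B$ is a left ideal; it is two-sided either by the skew-symmetry relation $Y(a,z)b=(-1)^{p(a)p(b)}e^{zT}Y(b,-z)a$ (Remark \ref{rmk:li:vsa} \eqref{i:rmk:li:vsa:1}) or by the direct identity $(dc)_{(n)}a=d(c_{(n)}a)$ for $a\in Z$; and it is stable under $T$ since $dT=Td$.

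By the quotient construction for vertex superalgebras, $Z/B\cong H^{\even}\oplus H^{\odd}$ then carries a unique vertex superalgebra structure for which the projection $Z\surj Z/B$ is a morphism of vsas: explicitly $\vac_H:=[\vac]$, $T_H$ the endomorphism induced by $T$, and $Y_H([a],z)[b]:=[Y(a,z)b]$, the latter being well defined as a map $H\otimes H\to H((z))$ because $Y(a,z)b\in V((z))$ for cocycle representatives. The vacuum, translation and locality axioms of Definition \ref{dfn:li:vsa} for this structure are obtained by pushing the corresponding identities in $V$ — valid for the chosen representatives, with the same locality order $N$ — through the surjection $Z\surj H$, all the operations involved (linear combinations, the $a_{(n)}$, powers of $T$) being compatible with passage to cohomology; this also shows the cohomological $\bbZ$-grading is preserved, so $H^{\even}\oplus H^{\odd}$ is genuinely the associated superspace of a graded vertex superalgebra.

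I do not expect a genuine obstacle; the only points needing care are bookkeeping ones — using the odd-derivation identity of Definition \ref{dfn:li:dv} with the correct dg sign $(-1)^{\abs{a}}$, and tracking the $\bbZ$-grading through the quotient. If one prefers to avoid invoking the quotient-vsa formalism as a black box, one can instead verify directly that $(a,b)\mapsto a_{(n)}b$ descends to $H^{\bl}$ (cocycle times cocycle is a cocycle; cocycle times coboundary, and coboundary times cocycle, are coboundaries — each a one-line application of the Leibniz rule) and then check Definition \ref{dfn:li:vsa} (i)--(iii) term by term; the content is identical.
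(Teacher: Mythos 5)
Your proposal is correct and follows essentially the same route as the paper's proof: show that $\Ker d$ is a vertex sub-superalgebra and $\Img d$ an ideal (using $d\vac=0$, $[d,T]=0$, and the derivation property $[d,Y(a,z)]=Y(da,z)$), then pass to the quotient $\Ker d/\Img d \simeq H(V^{\bl},d)$. Your extra care in noting that $\Img d$ need only be an ideal of $\Ker d$ (rather than of all of $V$) and in tracking the dg signs is a welcome refinement of the same argument, not a different one.
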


\begin{proof}
The argument in \cite[\S 5.7.3]{FBZ} works. 
For the completeness, let us write down it.
By Remark \ref{rmk:li:dv} (\ref{i:rmk:li:dv:dT}), 
the translation $T$ preserves $\Ker d$ and $\Img d$.
The derivation property $[d,Y(a,z)]=Y(d a,z)$ yields that $\Ker d$ is 
a vertex sub-superalgebra and $\Img d$ is a vsa ideal of $V$.
Thus we have a quotient vsa $H(V^{\bl},d)$.
\end{proof}

The cohomology has a $\bbZ$-grading $H^{\bl}(V^{\bl},d)$ induced by 
the cohomological degree $V^{\bl}$.
But it is not necessarily a graded vertex superalgebra in the sense of 
Definition \ref{dfn:li:gvsa}.
In other words, the $\bbZ$-grading may not be given by the eigenvalue 
of any Hamiltonian.
In order to avoid this conflict, we introduce

\begin{dfn}\label{dfn:li:dgvh}
A \emph{graded dg vertex algebra} is a dg vertex algebra $V$
equipped with an additional linear decomposition
$V=\oplus_{\Delta \in \bbk} V_{\Delta}$ such that the following conditions hold.
\begin{enumerate}[nosep,label=(\roman*)]
\item
The underlying vsa $(V^{\even} \oplus V^{\odd},\vac,T,Y)$ 
is graded in the sense of Definition \ref{dfn:li:gvsa}.
\item
$d$ preserves the additional gradings.
\end{enumerate} 
\end{dfn}

A motivational example is \emph{weak BRST complex of vertex algebras} 
in \cite[3.15]{A07}.
We will give explicit examples in  \S \ref{sss:li:ff}.

Similarly as Definition \ref{dfn:li:gvmod}, 
we also have the notion of \emph{graded dg $V$-modules}
of a graded dg vertex algebra $V$.
We then have an obvious analogue of Example \ref{eg:li:h-m}:

\begin{lem}\label{lem:li:dgh-m}
Let $V$ and $W$ be graded dg vertex algebras
and $\varphi: V \to W$ be a morphism of dg vertex algebras
which preserves the additional gradings.
Then $W$ is naturally a graded dg $V$-module.
\end{lem}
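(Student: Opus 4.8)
The plan is to copy the construction of Example \ref{eg:li:h-m} verbatim and then check the extra conditions that promote an ordinary module structure to a graded dg module structure. First I would define the candidate structure: writing $Y_W$ for the state--field correspondence of $W$ as a dg vertex algebra, set
\[
 Y^W(a,z)\beta := Y_W(\varphi(a),z)\beta = \sum_{n\in\bbZ}\bigl(a^W_{(n)}\beta\bigr)z^{-n-1}, \qquad a^W_{(n)}:=\varphi(a)_{(n)},
\]
for $a\in V$ and $\beta\in W$. Applying Example \ref{eg:li:h-m} to the underlying morphism $\varphi\colon V^{\even}\oplus V^{\odd}\to W^{\even}\oplus W^{\odd}$ of vertex superalgebras shows that this makes $W^{\even}\oplus W^{\odd}$ a module over the vertex superalgebra $V^{\even}\oplus V^{\odd}$ in the sense of Definition \ref{dfn:li:gvmod}, since those module axioms are stable under postcomposition with a vsa morphism.

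Next I would verify the two clauses of Definition \ref{dfn:li:dgMod} involving the cohomological degree and the differential. Because $\varphi$ is in particular a morphism of complexes we have $\varphi(V^i)\subset W^i$, so $\abs{\varphi(a)}=\abs{a}$ for homogeneous $a$; and since $W$ is a dg vertex algebra, $\varphi(a)_{(n)}W^j\subset W^{\abs{\varphi(a)}+j}=W^{\abs{a}+j}$, which is the required homogeneity $a^W_{(n)}W^j\subset W^{\abs{a}+j}$. For the Leibniz rule, $d_W$ is an odd derivation of the underlying vertex superalgebra of $W$, so by Remark \ref{rmk:li:dv} (\ref{i:rmk:li:dv:dab})
\[
 d_W\bigl(\varphi(a)_{(n)}m\bigr) = (d_W\varphi(a))_{(n)}m + (-1)^{\abs{a}}\varphi(a)_{(n)}(d_W m),
\]
and $d_W\varphi(a)=\varphi(d_V a)$ because $\varphi$ commutes with the differentials; substituting gives $d_W(a^W_{(n)}m)=(d_V a)^W_{(n)}m+(-1)^{\abs{a}}a^W_{(n)}(d_W m)$, exactly the identity in Definition \ref{dfn:li:dgMod}. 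Hence $W\in\dgVMod{V}$.

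Finally I would check the grading condition of the evident analogue of Definition \ref{dfn:li:gvmod} for the graded setting. Using the decomposition $W=\bigoplus_{\Delta\in\bbk}W_\Delta$ from the graded dg vertex algebra structure of $W$, the hypothesis that $\varphi$ preserves the additional gradings gives $\varphi(V_{-\Delta})\subset W_{-\Delta}$; combining this with the fact that the underlying vsa of $W$ is graded yields $a^W_{(n)}\beta=\varphi(a)_{(n)}\beta\in W_{-(\Delta+\Delta'-n-1)}$ whenever $a\in V_{-\Delta}$ and $\beta\in W_{-\Delta'}$, which together with the already-established axioms is precisely the statement that $W$ is a graded dg $V$-module. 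I do not expect any real obstacle: the whole argument is a compatibility check, and the only point requiring a little care is keeping the three gradings apart --- the $\bbZ/2\bbZ$-parity, the conformal weight $\Delta$, and the cohomological degree --- and using that ``morphism of dg vertex algebras'' packages together ``commutes with $d$'' and ``morphism of the underlying vertex superalgebras'', which is exactly what makes the sign in the Leibniz rule and the degree count come out automatically.
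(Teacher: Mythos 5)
Your proof is correct, and it follows exactly the route the paper intends: the lemma is stated there as an ``obvious analogue'' of Example \ref{eg:li:h-m} with the proof omitted, and your argument is precisely that check spelled out (transporting the module structure along $\varphi$, then verifying the degree homogeneity, the Leibniz rule via Remark \ref{rmk:li:dv} together with $d_W\varphi=\varphi d_V$, and the preservation of the additional grading).
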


\subsubsection{The free fermionic vertex algebra}
\label{sss:li:ff}

Let us recall the \emph{dg vertex algebra of free fermions}, 
a standard example of dg vertex algebra.
See \cite[4.3.1]{FBZ} for more information.
See also \cite[3.8.6]{BD}, where the coordinate free version is introduced 
under the name \emph{chiral Clifford algebra}.

Let $U$ be a bounded complex, and $U^* := \uHom(U,\bbk)$ be its dual.
We have the canonical pairing $\pair{\cdot,\cdot}: U^* \otimes U \to \bbk$.
We denote by $\bbk((t)) d t$ the linear space of one-forms on formal Laurent series.
We have the residue pairing $(f(t),g(t) d t) \mapsto \Res_t (f(t) g(t) d t)$
on $\bbk((t)) \otimes \bbk((t)) d t$.
Thus, denoting $U ((t)) := U \otimes \bbk((t))$ and 
$U^*((t)) d t := U \otimes \bbk((t)) d t $, we have a skew-symmetric pairing
$(\cdot,\cdot)$ on $U ((t))[1] \oplus U^*((t)) d t [-1]$
and itself induced by $\pair{\cdot,\cdot}$ and $\Res_t$.

For the pair $U ((t))[1] \oplus U^*((t)) d t [-1]$ and $(\cdot,\cdot)$,
we denote the associated Clifford algebra by 
\[
 \Cl = \Cl\bigl(U((t))[1] \oplus U^*((t))[-1],(\cdot,\cdot)\bigr).
\] 
It is a complete topological unital dg algebra.
Explicitly, taking a homogeneous linear basis $\{u_i \mid i \in I\}$ of $U$ 
and the dual basis $\{u^*_i \mid i \in I\}$ of $U^*$, 
we have that $\Cl$ is generated by 
\[
 \psi_{i,n}   := (u_i   \otimes t^n)[1] \in U((t))[1], \quad 
 \psi^*_{i,n} := (u^*_i \otimes t^{n-1}d t)[-1] \in U^*((t)) d t [-1] \quad
 (i \in I, \, n \in \bbZ)
\] 
and they satisfy
\begin{equation}\label{eq:li:ff-com}
 [\psi_{i,m},\psi_j t^n] = [\psi^*_i t^{m-1} d t,\psi^*_j t^{n-1} d t] = 0, \quad
 [\psi_i t^{m},\psi^*_j t^{n-1} d t] = \delta_{i,j}\delta_{m,-n}.
\end{equation}

Let us denote by $\Wedge^{\sinf}(U)$
the \emph{fermionic Fock module} of $\Cl$.
It is a left dg $\Cl$-module generated by an element $\vac$ such that
\[
 \psi_i   t^m \vac=0 \ (m \ge 0), \quad 
 \psi_i^* t^{n-1} d t \vac=0 \quad (n \ge 1).
\] 
It has a homogeneous basis consisting of the elements of the form 
\begin{align}\label{eq:li:ff}
 v = (\psi_{i_1} t^{m_1}) \cdots (\psi_{i_k} t^{m_k}) 
     (\psi_{j_1}^* t^{n_1-1}d t) \cdots (\psi_{j_l}^* t^{n_l-1}d t) \vac \quad
 (m_1< \cdots < m_k <0, \, n_1< \cdots < n_l \le 0).
\end{align}
We have an additional $\bbZ$-grading to the dg structure, 
called the \emph{charge grading}, 
under which the vector \eqref{eq:li:ff} is attached with $\chg =-k+l$.
We denote $\Wedge^{\sinf+\bl}(U)$ to emphasize the charge grading.

If $U$ is concentrated in degree $0$, then the charge is equal to the minus of 
the cohomological degree, and the parity is given by the charge modulo $2$.
In this case $\Wedge^{\sinf}(U)$ is also called 
the \emph{space of semi-infinite wedges}.

We now recall:

\begin{dfn}\label{dfn:li:ff}
The fermionic Fock module $\Wedge^{\sinf}(U)$ has a structure of 
dg vertex algebra with Hamiltonian, where
\begin{itemize}[nosep]
\item
the vacuum is given by $\vac$,

\item
the translation is given by $T=-\partial_t$, and 

\item 
the state-field correspondence is given by
\begin{align*}
&Y(\psi_i t^{-1} \vac,z) = \psi_i(z) := \tsum_{n \in \bbZ}(\psi_i t^n)z^{-n-1},
\\
&Y(\psi^*_i t^{-1} d t \vac,z)  = \psi^*_i(z)
 := \tsum_{n \in \bbZ}(\psi^*_i t^{n-1} d t)z^{-n},
\end{align*}
and for the element 
$v=(\psi_{i_1} t^{m_1}) \cdots (\psi_{i_k} t^{m_k}) 
   (\psi_{j_1}^* t^{n_1-1}d t) \cdots (\psi_{j_l}^* t^{n_l-1}d t) \vac$
in \eqref{eq:li:ff}, it is 
\begin{align*}
 Y(v,z) = \tprd_{a=1}^k \tfrac{1}{(-m_a-1)!} \tprd_{b=1}^l \tfrac{1}{(-n_b-1)!}
  :\partial_z^{-m_1-1}\psi_{  i_1}(z) \cdots \partial_z^{-m_k-1}\psi_{  i_k}(z)
   \partial_z^{-n_1  }\psi^*_{j_1}(z) \cdots \partial_z^{-n_l  }\psi^*_{j_l}(z):,
\end{align*}
where we used the normal ordering (Definition \ref{dfn:li:nord}).
\item
The additional $\bbZ$-grading is given by the charge grading.
\end{itemize}
We call it the \emph{free fermionic vertex algebra},
or the \emph{chiral Clifford algebra} and denote by 
\[
 \Wedge^{\sinf}(U) = 
 \chCl\bigl(U((t)),U^*((t))d t,(\cdot,\cdot)\bigr).
\]
For the second notation, see Remark \ref{rmk:ch:chCl}
\end{dfn}

%
%

\subsection{Dg vertex Li algebras}
\label{ss:li:dgvl}

In this subsection we recall the notion \emph{vertex Lie algebras}.
It is designed to encode the ``polar part" of vertex algebra.
We work over a field $\bbk$ of characteristic $0$.

\subsubsection{Definition}

\begin{dfn}\label{dfn:li:vla}
\begin{enumerate}[nosep]
\item 
A \emph{dg vertex Lie algebra} is a data $(L,d,T,Y_-)$ consists of 
\begin{itemize}[nosep]
\item 
a complex $(L,d)$,
\item
an endomorphism $T \in \uEnd(L)^0 = \Hom_{\dgVec}(L,L)$, and
\item
a morphism $L \otimes L \to z^{-1} L[z^{-1}]$ of complexes, 
$a \otimes b \mapsto Y_-(a,z)b = \sum_{n \in \bbN} (a_{(n)}b)z^{-n-1}$
\end{itemize}
satisfying the following conditions.
\begin{enumerate}[nosep, label=(\roman*)]
\item\label{i:li:vla:1} 
$Y_-(T a,z)=\partial_z Y_-(a,z)$.

\item\label{i:li:vla:2}
For any homogeneous $a,b \in L$, we have 
\[
 Y_-(a,z)b = \bigl((-1)^{\abs{a}\abs{b}} e^{z T}Y_-(b,-z)a\bigr)_{-}.
\]
Here for a formal Laurent series $f(z) = \sum_i f_i z^i \in L((z))$
we set $f(z)_-:=\sum_{i<0} f_i z^i \in z^{-1} L[z^{-1}]$.

\item\label{i:li:vla:3}
For $a,b \in L$ we have 
$[a_{(m)},b_{(n)}] = \sum_{i \in \bbN} \binom{m}{i}(a_{(i)}b)_{(m+n-i)}$
in $\uEnd(L)=\uHom_{\dgVec}(L,L)$.

\item\label{i:li:vla:4}
For any $a,b \in L$, we have $[d,Y_-(a,z)]b = Y_-(d a,z)b$.
\end{enumerate}

\item
We have obvious notions of \emph{morphism} and \emph{tensor product} of 
dg vertex Lie algebras.
We denote by $\dgVL$ the category of dg vertex Lie algebras.
\end{enumerate}
\end{dfn}

%
%

Similarly we have the notions of \emph{vertex Lie superalgebra}
and \emph{vertex Lie algebra}.
We refer \cite[\S 16.1]{FBZ} for the detail of the latter notion.

\begin{rmk}\label{rmk:li:vla}
\begin{enumerate}[nosep]
\item 
\label{i:rmk:li:vla:1}
The condition \ref{i:li:vla:2} is equivalent to
\[
 a_{(n)}b = 
 \tsum_{i \in \bbN}(-1)^{\abs{a}\abs{b}+n+i+1}\frac{1}{i!}T^i(b_{(n+i)}a)
 \quad (n \in \bbN).
\]

\item
\label{i:rmk:li:vla:2}
We can check that the quotient space $L/\Img(T)$ is a dg Lie algebra
with the Lie bracket 
\[
 [\ol{a},\ol{b}] := \ol{a_{(0)}b}
\]
for $a,b \in L$.
Indeed, by the condition \ref{i:li:vla:1} we have $(T a)_{(0)}b=0$ 
so that the Lie bracket is well-defined,
by the condition \ref{i:li:vla:2} and the previous item \eqref{i:rmk:li:vla:1}
we have $a_{(0)}b = -(-1)^{\abs{a}\abs{b}}b_{(0)}a \mod \Img(T)$ 
so that the anti-commutativity holds,
and by \ref{i:li:vla:3} we have $[a_{(0)},b_{(0)}]=(a_{(0)}b)_{(0)}$
and can check the Jacobi relation by direct computation.

\item
By the theory of coisson algebras \cite[\S 2.6]{BD},
a vertex Lie algebra satisfying some conditions can be regarded 
as a Lie algebra object in a certain symmetric monoidal category.
See Remark \ref{rmk:co:star} for more information.
\end{enumerate}
\end{rmk}

By the definition we immediately have
the following construction of vertex Lie algebras from vertex algebras.
See \cite[16.1.2--3]{FBZ} for the detail.

\begin{lem}\label{lem:li:VL}
For a dg vertex algebra $V=(V,d,\vac,T,Y)$, 
we have a dg vertex Lie algebra 
\[
 V_{\tLie} := (V,d,T,Y_-)
\]
by setting $Y_-(a,z) := Y(a,z)_ = \sum_{n \in \bbN} a_{(n)} z^{-n-1}$.
This construction gives rise to a functor 
\[
 \dgVA \longto \dgVL, \quad V \longmapsto V_{\tLie},
\]
which we call the \emph{polar part construction}.
\end{lem}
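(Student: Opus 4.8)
The statement to prove is Lemma \ref{lem:li:VL}: for a dg vertex algebra $V = (V,d,\vac,T,Y)$, setting $Y_-(a,z) := Y(a,z)_- = \sum_{n \in \bbN} a_{(n)} z^{-n-1}$ produces a dg vertex Lie algebra $V_{\tLie} = (V,d,T,Y_-)$, and this is functorial.

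The plan is to verify directly the four axioms in Definition \ref{dfn:li:vla}, reducing each to a known identity for vertex superalgebras cited in Remark \ref{rmk:li:vsa}. First I would observe that $Y_-$ as defined lands in $z^{-1}L[z^{-1}]$ by construction (we keep only the strictly negative powers of $z$ in $Y(a,z)b \in V((z))$), and that it is a morphism of complexes because $Y$ is homogeneous (Definition \ref{dfn:li:dgva}(iii)) and truncation in powers of $z$ is degree-preserving. Axiom \ref{i:li:vla:1}, $Y_-(T a,z) = \partial_z Y_-(a,z)$, follows from the vertex-algebra identity $Y(Ta,z) = \partial_z Y(a,z)$ (a consequence of the translation axiom) by taking polar parts, since $\partial_z$ commutes with truncation to negative powers. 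Axiom \ref{i:li:vla:4}, $[d,Y_-(a,z)]b = Y_-(da,z)b$, is immediate from the fact that $d$ is an odd derivation of the underlying vertex superalgebra (Definition \ref{dfn:li:dgva}(ii), i.e.\ $[d,Y(a,z)] = Y(da,z)$ in the sense of Definition \ref{dfn:li:dv}), again by restricting to negative powers of $z$; here one uses Remark \ref{rmk:li:dv}(\ref{i:rmk:li:dv:dab}) to see this is the componentwise statement $d(a_{(n)}b) = (da)_{(n)}b + (-1)^{p(a)}a_{(n)}(db)$ for $n \in \bbN$, which is a restriction of the full statement.

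Axiom \ref{i:li:vla:3}, the commutator formula $[a_{(m)},b_{(n)}] = \sum_{i \in \bbN}\binom{m}{i}(a_{(i)}b)_{(m+n-i)}$, is precisely the first displayed identity in Remark \ref{rmk:li:vsa}(\ref{i:rmk:li:vsa:1}), and it already involves only the modes $a_{(i)}$ with $i \in \bbN$ on the right-hand side, so no truncation subtlety arises — it transfers verbatim. The one axiom requiring genuine care is \ref{i:li:vla:2}, the ``polar skew-symmetry'' $Y_-(a,z)b = \bigl((-1)^{\abs{a}\abs{b}}e^{zT}Y_-(b,-z)a\bigr)_-$. The input is the full skew-symmetry $Y(a,z)b = (-1)^{p(a)p(b)}e^{zT}Y(b,-z)a$ from Remark \ref{rmk:li:vsa}(\ref{i:rmk:li:vsa:1}). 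Here I must be careful: on the right-hand side $e^{zT}$ has only \emph{non-negative} powers of $z$, while $Y(b,-z)a$ has a finite pole, so $e^{zT}Y(b,-z)a$ genuinely has both polar and regular parts, and the claim is that the polar part of $Y_-(a,z)b$ — which is all of $Y_-(a,z)b$ since it is already polar — equals the polar part of $e^{zT}Y_-(b,-z)a$ up to sign. The key point is that $e^{zT}$ acting on the regular part $Y(b,-z)_+ a \in V[[z]]$ contributes only to non-negative powers of $z$, hence does not affect the polar part; so $\bigl(e^{zT}Y(b,-z)a\bigr)_- = \bigl(e^{zT}Y_-(b,-z)a\bigr)_-$, and comparing with the polar part of the left side of the full skew-symmetry (which is $Y_-(a,z)b$) gives the axiom. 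I would also note the parity-versus-cohomological-degree bookkeeping: in $V_{\tLie}$ the sign $(-1)^{\abs{a}\abs{b}}$ should be read via the parity $p(a) = \abs{a} \bmod 2$ coming from the associated superspace $V^{\even}\oplus V^{\odd}$, so $(-1)^{p(a)p(b)} = (-1)^{\abs{a}\abs{b}}$ and the signs match.

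The main obstacle, as indicated, is Axiom \ref{i:li:vla:2}: one must argue cleanly that applying $e^{zT}$ commutes appropriately with taking polar parts, i.e.\ that the regular part of $Y(b,-z)a$ does not leak into the polar part after multiplication by $e^{zT}$. This is a standard manipulation but is the only place where ``polar part'' is not simply preserved under the operations in play. Once that is handled, functoriality is routine: a morphism $\varphi: V \to W$ of dg vertex algebras satisfies $\varphi(a_{(n)}b) = \varphi(a)_{(n)}\varphi(b)$ for all $n \in \bbZ$, in particular for $n \in \bbN$, and commutes with $d$ and $T$, so it is automatically a morphism $V_{\tLie} \to W_{\tLie}$ of dg vertex Lie algebras; the assignment clearly respects identities and composition, yielding the functor $\dgVA \to \dgVL$.
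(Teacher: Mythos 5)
Your verification is correct, and it is exactly the standard argument: the paper gives no proof beyond "by the definition we immediately have" with a citation to \cite[16.1.2--3]{FBZ}, and your axiom-by-axiom check (with the observation that $e^{zT}$ applied to the regular part of $Y(b,-z)a$ cannot contribute to the polar part, plus the vanishing of $\binom{m}{i}$ for $i>m$ keeping all mode indices non-negative in the commutator formula) is precisely the standard verification that reference carries out, transported to the dg setting via the parity-equals-degree-mod-2 bookkeeping you note.
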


See \S \ref{sss:li:UL} for the adjoint of the polar part construction.
For later citation, we give an example.

\begin{eg}\label{eg:li:vk}
We use the notations in \S \ref{sss:li:uava} here.
Consider the universal affine vertex algebra $V_k(\frkg)$ over $\bbC$.
Following \cite[16.1.5]{FBZ}, we denote by 
\[
 v_k(\frkg) \subset V_k(\frkg)
\]
the linear subspace spanned by the vacuum vector $\vac$ and 
the vectors $x t^m \vac$ ($x \in \frkg$, $m \in \bbZ_{<0}$).
Thus we have $v_k(\frkg) \simeq \frkg[t^{-1}] \oplus \bbC$ as linear spaces.
The subspace $v_k(\frkg)$ is invariant under the action of 
$x_-(z) := Y(x t^{-1}\vac,z)_- = \sum_{n \in \bbN}(x t^n) z^{-n-1}$,
and we can see that $(v_k(\frkg),T,Y_-)$ with $Y_-(x t^{-1}\vac,z):= x_-(z)$
is a vertex Lie algebra.
\end{eg}

Here is another construction of dg vertex Lie algebras:

\begin{lem}\label{lem:li:0vla}
Let $\frkl$ be dg Lie algebra, 
and consider $\frkl[[t]] = \frkl \otimes \bbk[[t]]$.
It is a dg Lie algebra with the Lie bracket 
$[x \otimes f, y \otimes g] := [x,y]_{\frkl} \otimes (f g)$
for $x,y \in \frkl$ and $f,g \in \bbk[[t]]$.
Then $\frkl[[t]]$ has the following dg vertex Lie algebra structure:
\begin{itemize}[nosep]
\item 
The translation $T$ is given by the multiplication by $t$.
Thus $T(x \otimes t^i)=x \otimes t^{i+1}$.

\item
The operation $Y_-$ is determined by 
\begin{align}
\nonumber
&Y_-(x \otimes 1,z) = x_-(z) := \tsum_{n \in \bbN} z^{-n-1} \partial_t^n([x,-]),
\\
\label{eq:li:vla:ij}
&Y_-(x \otimes t^i,z)(y \otimes t^j) := 
 \bigl((-1)^{\abs{x} \abs{y}} e^{z t} (-\partial_z)^j y_-(-z)(x \otimes t^i)\bigr)_-.
\end{align}
Explicitly, we have 
\begin{equation}\label{eq:li:vla:ex}
 (x \otimes t^i)_{(n)}(y \otimes t^j) = 
 (-1)^{i}\frac{n!}{(n-i)!}\frac{j!}{(j-n+i)!} [x,y]_{\frkl} \otimes t^{i+j-n}
\end{equation}
for $i \le n \le i+j$ and $0$ otherwise.
\end{itemize}
We denote the obtained dg vertex Lie algebra by
\[
 J_\infty(\frkl) := (\frkl[[t]],d_{\frkl},T,Y_-),
\]
and call it the \emph{level $0$ dg vertex Lie algebra}.
\end{lem}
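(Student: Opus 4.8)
The statement has two parts: that the data $(\frkl[[t]],d_\frkl,T,Y_-)$ satisfies the axioms of a dg vertex Lie algebra (Definition~\ref{dfn:li:vla}), and that the $n$-th products are given explicitly by \eqref{eq:li:vla:ex}. The plan is to treat this as the differential graded refinement of the classical construction of a vertex Lie algebra out of a Lie algebra, following \cite[\S16.1]{FBZ} (compare also \cite[\S2.5]{BD}), and to carry out the verification by direct computation. First I would derive the explicit formula \eqref{eq:li:vla:ex} from the generating data $Y_-(x\otimes1,z)=x_-(z)$ and the skew-symmetry prescription \eqref{eq:li:vla:ij}: expanding the right-hand side of \eqref{eq:li:vla:ij} term by term, using $y_-(-z)(x\otimes t^i)=\tsum_{n\ge0}(-z)^{-n-1}\tfrac{i!}{(i-n)!}[y,x]_\frkl\otimes t^{i-n}$ and the antisymmetry of $[-,-]_\frkl$, produces exactly the right-hand side of \eqref{eq:li:vla:ex}. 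In particular this shows that $Y_-$ is well defined on $\frkl[[t]]\otimes\frkl[[t]]$ (each coefficient of $z^{-m-1}$ being a finite sum), lands in $z^{-1}\frkl[[t]][z^{-1}]$, and is a morphism of complexes. Note that on the generating subspace one gets $(x\otimes1)_{(n)}(y\otimes1)=\delta_{n,0}[x,y]_\frkl\otimes1$, and that $T^i(x\otimes1)=x\otimes t^i$, so $\frkl[[t]]$ is spanned by iterated $T$-derivatives of $\frkl\otimes1$; this is what makes the reduction to identities on generators legitimate.

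With \eqref{eq:li:vla:ex} in hand I would check the remaining axioms of Definition~\ref{dfn:li:vla}. Axiom (i), $Y_-(Ta,z)=\partial_z Y_-(a,z)$, reads $(Ta)_{(n)}b=-n\,a_{(n-1)}b$ on products and follows from \eqref{eq:li:vla:ex} after the shift $i\mapsto i+1$ via the identity $\tfrac{n!}{(n-i-1)!}=n\cdot\tfrac{(n-1)!}{(n-1-i)!}$; in particular $Y_-(x\otimes t^i,z)=\partial_z^i\,x_-(z)$, which also reconciles \eqref{eq:li:vla:ij} with direct differentiation. Axiom (iv), $[d_\frkl,Y_-(a,z)]b=Y_-(d_\frkl a,z)b$, follows from the graded Leibniz rule $d_\frkl[x,y]_\frkl=[d_\frkl x,y]_\frkl+(-1)^{\abs{x}}[x,d_\frkl y]_\frkl$ (Definition~\ref{dfn:dga:dgla}), together with the facts that $d_\frkl$ acts on $\frkl[[t]]$ by $\bbk[[t]]$-linearity and commutes with $T$, and that every $(n)$-product in \eqref{eq:li:vla:ex} is a scalar multiple of a single bracket $[x,y]_\frkl$ tensored with a power of $t$; the only care needed is matching the Koszul signs. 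Axiom (ii) is already built into \eqref{eq:li:vla:ij}; that it is consistent with \eqref{eq:li:vla:ex} amounts, via the reformulation in Remark~\ref{rmk:li:vla}~\eqref{i:rmk:li:vla:1}, to the identity
\[
 (x\otimes t^i)_{(n)}(y\otimes t^j)
 =\tsum_{k\ge0}(-1)^{\abs{x}\abs{y}+n+k+1}\tfrac{1}{k!}\,
   T^k\!\bigl((y\otimes t^j)_{(n+k)}(x\otimes t^i)\bigr),
\]
which, after substituting \eqref{eq:li:vla:ex} on both sides, reduces by $[x,y]_\frkl=(-1)^{1+\abs{x}\abs{y}}[y,x]_\frkl$ and a Vandermonde summation over $k$ to a single binomial identity.

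The \emph{main obstacle} is axiom (iii), the commutator formula $[a_{(m)},b_{(n)}]=\sum_{i\in\bbN}\binom{m}{i}(a_{(i)}b)_{(m+n-i)}$ in $\uEnd(\frkl[[t]])$. By bilinearity it suffices to evaluate both sides on $c=z\otimes t^p$ with $a=x\otimes t^{i_0}$, $b=y\otimes t^{j_0}$ and to expand everything via \eqref{eq:li:vla:ex}: the left-hand side produces the nested brackets $[x,[y,z]]_\frkl$ and $[y,[x,z]]_\frkl$ with coefficients that are products of binomial coefficients in $m,n,i_0,j_0,p$, while the right-hand side produces $[[x,y]_\frkl,z]_\frkl$ with its own such coefficients. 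Matching the two is exactly the graded Jacobi identity for $\frkl$ (Definition~\ref{dfn:dga:dgla}) combined with a Chu--Vandermonde rearrangement of the binomial weights. I expect this to be the only genuinely laborious computation; everything else is index and sign bookkeeping.

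One could instead appeal to the general principle that a vertex Lie algebra is determined by a generating subspace together with the products among its elements (cf.\ \cite[\S16.1]{FBZ}), which would reduce (iii) to the Lie-algebra relations on $\frkl\otimes1$; but the explicit formula \eqref{eq:li:vla:ex} and the compatibility with $d_\frkl$ still call for the direct check above. As a consistency test, in the non-dg case $\frkl=\frkg$ this construction recovers, up to a rescaling of the $t$-grading, the sub-vertex-Lie-algebra of $v_0(\frkg)$ spanned by the vectors $x\,t^{m}\vac$ with $m<0$ from Example~\ref{eg:li:vk}, which complements the trivial summand $\bbC\vac$.
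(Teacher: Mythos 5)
Your proposal is correct and follows essentially the same route as the paper: the paper's (very terse) proof likewise verifies the four axioms of Definition \ref{dfn:li:vla} directly, deducing (i) from the explicit formula \eqref{eq:li:vla:ex}, (ii) from the defining prescription \eqref{eq:li:vla:ij}, (iii) from the graded Jacobi identity of $\frkl$, and (iv) from \eqref{eq:li:vla:ex} together with the dg Lie algebra structure. Your write-up simply supplies the binomial and sign bookkeeping that the paper leaves implicit.
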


\begin{proof}
It is enough to check the conditions \ref{i:li:vla:1}--\ref{i:li:vla:4} 
in Definition \ref{dfn:li:vla}.
The condition \ref{i:li:vla:1} follows from \eqref{eq:li:vla:ex}.
The condition \ref{i:li:vla:2} follows from the definition \eqref{eq:li:vla:ij}.
The condition \ref{i:li:vla:3} can be checked with the help of Jacobi relation 
of $\frkl$.
The condition \ref{i:li:vla:4} follows from \eqref{eq:li:vla:ex} 
and the dg Lie algebra structure on $\frkl$.
\end{proof}

For the naming ``level $0$", see Example \ref{eg:li:arc-sg}.

\subsubsection{Enveloping vertex algebra}
\label{sss:li:UL}

We explain the universal construction of dg vertex algebra from 
a dg vertex Lie algebra, following the non-dg version in \cite[\S 16.1]{FBZ}.
See also \cite[\S 3.7]{BD} for the chiral algebra version.

Let $L=(L,d,T,Y_-)$ be a dg vertex Lie algebra, and consider the operator
$\partial := T \otimes \id + \id \otimes \partial_t$
on  $L((s)) = L \otimes \bbk((s))$.
We define 
\[
 \Lie(L) := L((s)) / \Img(\partial),
\]
which inherits the dg structure of $L$.
We also denote 
$x_{[n]} := \ol{x \otimes s^n} \in \Lie(L)$ for $x \in L$ and $n \in \bbZ$,
and define 
\[
 \Lie(L)_+ \subset \Lie(L)
\]
{}to be the subspace which is the completion 
of the linear span of $x_{[n]}$ with $n \in \bbN$.
It is in fact a subcomplex of $\Lie(L)$.

\begin{lem*}[{\cite[16.1.7.\ Lemma]{FBZ}}]
\begin{enumerate}[nosep]
\item 
$\Lie(L)$ is a dg Lie algebra with the Lie bracket 
\[
 [x_{[m]},y_{[n]}] := 
 \sum_{l \in \bbN} \binom{m}{l} (x_{(l)}y)_{[m+n-l]},
\]
where we denoted by $Y_-(x,z)=\sum_{n \in \bbN} x_{(n)}z^{-n-1}$
the vertex Lie structure of $L$.

\item
$\Lie(L)_+$ is a dg Lie subalgebra of $\Lie(L)$.

\item
The correspondence $\Lie(L)_+ \to \uEnd(L)$, $x_{[n]} \to x_{(n)}$ 
gives a morphism of dg Lie algebras.
\end{enumerate}
\end{lem*}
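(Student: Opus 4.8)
The plan is to follow the classical argument for the non-dg case \cite[16.1.7]{FBZ}, adding the bookkeeping of the differential and of Koszul signs; each of the three assertions will reduce to an identity among the modes $a_{(n)}$ that is a formal consequence of the axioms \ref{i:li:vla:1}--\ref{i:li:vla:4} in Definition \ref{dfn:li:vla}. First I would record the two elementary consequences of these axioms that do all the work. Comparing $z^{-n-1}$-coefficients in \ref{i:li:vla:1}, $Y_-(T a,z)=\partial_z Y_-(a,z)$, gives
\[
 (T a)_{(n)} = -n\, a_{(n-1)} \qquad (n \in \bbN, \ a \in L),
\]
whence in $\Lie(L)=L((s))/\Img(\partial)$ one has $(T a)_{[n]} = -n\, a_{[n-1]}$ and, iterating, $\tfrac{1}{i!}(T^i a)_{[j]} = (-1)^i \binom{j}{i} a_{[j-i]}$; and \ref{i:li:vla:4} is precisely the Leibniz rule $d(a_{(n)}b)=(d a)_{(n)}b+(-1)^{\abs a}a_{(n)}(d b)$ for homogeneous $a$.

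Next I would check that the proposed bracket descends to $\Lie(L)$. Since $\partial(a \otimes s^n)$ corresponds to $(T a)_{[n]}+n\,a_{[n-1]}$, it suffices to show $[(T x)_{[m]}+m\,x_{[m-1]},\,y_{[n]}]=0$ and the analogous identity in the second argument. Expanding the first bracket, substituting $(T x)_{(l)}=-l\,x_{(l-1)}$, reindexing, and using the identity $(l+1)\binom{m}{l+1}=m\binom{m-1}{l}$ collapses the sum to $-m\,[x_{[m-1]},y_{[n]}]$, which gives the claim; the second argument then follows from the first together with graded antisymmetry of the bracket (established below). This well-definedness, together with checking that the (locally finite) sums defining the bracket are compatible with the completion used to form $\Lie(L)_+$, is to my mind the most delicate part of the proof.

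Then I would verify the graded Lie algebra axioms and compatibility with $d$. Graded antisymmetry $[x_{[m]},y_{[n]}]=-(-1)^{\abs x \abs y}[y_{[n]},x_{[m]}]$ follows from the mode skew-symmetry in Remark \ref{rmk:li:vla} \eqref{i:rmk:li:vla:1} combined with the relation $\tfrac{1}{i!}(T^i a)_{[j]}=(-1)^i\binom{j}{i}a_{[j-i]}$ in $\Lie(L)$ after a binomial rearrangement. The graded Jacobi identity, after expanding both nested brackets, becomes a combinatorial identity in the $(a_{(i)}b)_{(j)}$ which is exactly the content of the commutator axiom \ref{i:li:vla:3} (used with antisymmetry); this is the same computation as in the non-dg case, and the signs are forced by the Koszul convention of \S \ref{ss:sp:dg}. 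For $d$: since $T$ is a degree-$0$ morphism of complexes and $d$ commutes with $\partial_s$, we get $d\partial=\partial d$, so $d$ descends to $\Lie(L)$, and $d[x_{[m]},y_{[n]}]=[(d x)_{[m]},y_{[n]}]+(-1)^{\abs x}[x_{[m]},(d y)_{[n]}]$ follows term by term in the defining sum from the Leibniz rule recorded above.

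Finally, for (2): when $m,n\in\bbN$ every summand $(x_{(l)}y)_{[m+n-l]}$ in $[x_{[m]},y_{[n]}]$ has $0\le l\le m$ (as $\binom{m}{l}=0$ for $l>m$), so $m+n-l\ge 0$ and $\Lie(L)_+$ is closed under the bracket; it is visibly closed under the degree-$0$ operator $d$. For (3): the assignment $\rho(x_{[n]}):=x_{(n)}\in\uEnd(L)$ is well defined by \ref{i:li:vla:1} in the form $(T x)_{(n)}+n\,x_{(n-1)}=0$; it intertwines brackets by \ref{i:li:vla:3},
\[
 [\rho(x_{[m]}),\rho(y_{[n]})]=\tsum_{l\in\bbN}\binom{m}{l}(x_{(l)}y)_{(m+n-l)}=\rho\bigl([x_{[m]},y_{[n]}]\bigr),
\]
and it is a chain map by the Leibniz rule for $d$ and $Y_-$, recalling that $\uEnd(L)$ is a dg Lie algebra by Example \ref{eg:dga:dgla:uEnd}. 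The main obstacle is the well-definedness in (1): arranging the binomial sums so that the defining relations of $\Lie(L)$ are respected, uniformly with respect to the relevant completions; once that is in place, antisymmetry, the Jacobi identity and the differential compatibility are routine in view of the classical template.
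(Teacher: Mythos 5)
Your proposal is correct and takes essentially the same approach as the paper, which states this lemma with no proof beyond the citation to the non-dg case \cite[16.1.7]{FBZ}: your write-up is precisely that argument with the dg bookkeeping added, namely $(Ta)_{(n)}=-n\,a_{(n-1)}$ for descent to $L((s))/\Img(\partial)$, the mode skew-symmetry and commutator axioms for antisymmetry and Jacobi, and axiom (iv) of Definition \ref{dfn:li:vla} (plus $T$ being a chain map) for compatibility with the differential. Nothing further is needed.
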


Consider the universal enveloping algebra $U(\Lie(L))$ and $U(\Lie(L)_+)$
of the dg Lie algebras in the above lemma.
We define a left dg $U(\Lie(L))$-module
\[
 U(L) := U(\Lie(L)) \otimes_{U(\Lie(L)_+)}\bbk,
\]
where $\bbk$ denotes the trivial one-dimensional representation
of $U(\Lie(L)_+)$.

\begin{prp*}[{\cite[16.1.12.\ Propsition]{FBZ}}]
There is a unique dg vertex algebra structure on $U(L)$ such that 
\begin{itemize}[nosep]
\item
the vacuum is $\vac := 1 \otimes 1$,
\item 
the translation $T$ is defined by $T \vac=0$ and $[T,x_{[n]}]=-n x_{[n-1]}$,
and 
\item
$Y(x_{[-1]}\vac,z)=\sum_{n \in \bbZ}x_{[n]} z^{-n-1}$ for $x \in L$.
\end{itemize}
Moreover, the correspondence $L \mapsto U(L)$ gives a functor 
that is a left adjoint of the polar part construction $V \mapsto V_{\tLie}$
in Lemma \ref{lem:li:VL}:
\[
 (-)_{\tLie}: \dgVA \rightleftarrows \dgVL :U(-).
\]
We call $U(L)$ the \emph{enveloping vertex algebra} of $L$.
\end{prp*}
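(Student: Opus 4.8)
The plan is to follow the proof of \cite[16.1.12]{FBZ} (and its chiral-algebra counterpart \cite[\S 3.7]{BD}), carrying the differential along at every step and checking that it behaves as an odd derivation. First I would record the dg structure on $U(L)$. Since $d=d_L$ is compatible with $T$ and $Y_-$ by condition \ref{i:li:vla:4} of Definition \ref{dfn:li:vla}, it commutes with $\partial = T \otimes \id + \id \otimes \partial_t$ on $L((s))$ and hence descends to a degree-$1$ endomorphism of $\Lie(L)=L((s))/\Img(\partial)$; from the Lie-bracket formula of the preceding lemma together with condition \ref{i:li:vla:4} one sees that this endomorphism is a derivation of the dg Lie bracket of $\Lie(L)$, and it restricts to $\Lie(L)_+$. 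It therefore extends uniquely to degree-$1$ derivations of $U(\Lie(L))$ and $U(\Lie(L)_+)$, which descend to the induced module $U(L)=U(\Lie(L))\otimes_{U(\Lie(L)_+)}\bbk$, making $(U(L),d)$ a complex; the cohomological $\bbZ$-grading is the one induced from $L$.

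The vertex algebra structure on $U(L)^{\even}\oplus U(L)^{\odd}$ I would obtain from a dg/super version of the reconstruction theorem \cite[\S 4.4--4.5]{FBZ}, \cite[\S 4.5]{Ka}. The input data are the currents $Y(x_{[-1]}\vac,z):=\sum_{n\in\bbZ}x_{[n]}z^{-n-1}$ for $x\in L$: they are genuine fields on $U(L)$ because, $U(L)$ being induced from $\Lie(L)_+$, for each $v\in U(L)$ only finitely many $x_{[n]}$ with $n\gg 0$ act nontrivially on $v$; they are mutually local, this being exactly the bracket relation $[x_{[m]},y_{[n]}]=\sum_{l\in\bbN}\binom{m}{l}(x_{(l)}y)_{[m+n-l]}$ in $\Lie(L)$ together with the fact that $Y_-$ takes values in $z^{-1}L[z^{-1}]$, so that the OPE has only finitely many polar terms; and they are translation covariant for the $T$ defined by $T\vac=0$ and $[T,x_{[n]}]=-n x_{[n-1]}$. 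Reconstruction then gives a unique vertex superalgebra structure on $U(L)$ with vacuum $\vac=1\otimes 1$, translation $T$, and these generating fields. It remains to verify that $d$ is an odd derivation, i.e.\ $d(a_{(n)}b)=(d a)_{(n)}b+(-1)^{\abs{a}}a_{(n)}(d b)$ (Remark \ref{rmk:li:dv} \eqref{i:rmk:li:dv:dab}). On the generating vectors this amounts to $[d,x_{[n]}]=(d x)_{[n]}$, which holds in $\Lie(L)$ by functoriality of $L\mapsto\Lie(L)$ together with $d\vac=0$; it then propagates to all PBW monomials by induction on length using the Borcherds identities of Remark \ref{rmk:li:vsa} \eqref{i:rmk:li:vsa:1}, the only delicate point being that the Koszul signs match, which is the standard computation in the super setting.

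Uniqueness follows since any dg vertex algebra structure with the prescribed $\vac$, $T$ and $Y(x_{[-1]}\vac,z)$ is determined on the generating subspace, hence everywhere, by reconstruction. For the adjunction, a morphism $f\colon L\to L'$ in $\dgVL$ induces a map $\Lie(f)$ of dg Lie algebras preserving the $+$-parts, hence $U(f)\colon U(L)\to U(L')$, which is a morphism of dg vertex algebras by checking on generators; the unit $\eta_L\colon L\to U(L)_{\tLie}$, $x\mapsto x_{[-1]}\vac$, is a morphism of dg vertex Lie algebras since $(x_{[-1]}\vac)_{(n)}(y_{[-1]}\vac)=(x_{(n)}y)_{[-1]}\vac$ for $n\ge 0$ and it commutes with $d$ and $T$. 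Given $V\in\dgVA$ and $\varphi\colon L\to V_{\tLie}$ in $\dgVL$, the assignment $x_{[n]}\mapsto\varphi(x)_{(n)}$ extends to a dg Lie algebra map $\Lie(L)\to\uEnd(V)$ carrying $\Lie(L)_+$ into operators annihilating the vacuum of $V$ to high enough order, hence by the induced-module property to a map $U(L)\to V$ which reconstruction shows to be a morphism of dg vertex algebras, and the unique one with $\wt\varphi\circ\eta_L=\varphi$; its compatibility with $d$ follows from that of $\varphi$ and the derivation property of the differentials. This yields the natural bijection $\Hom_{\dgVA}(U(L),V)\simeq\Hom_{\dgVL}(L,V_{\tLie})$, i.e.\ the stated adjunction $(-)_{\tLie}\colon\dgVA\rightleftarrows\dgVL\colon U(-)$.

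I expect the only genuinely non-formal ingredient to be a clean dg/super reconstruction theorem together with the topological bookkeeping around $\Lie(L)_+$ (the field property of the generating currents and their local nilpotence on $1\otimes 1$), exactly as in \cite[16.1.7--16.1.12]{FBZ}; after that, the new content is the check that $d$ intertwines all the reconstructed structure --- the Koszul-sign computation of the second paragraph --- and everything else is a transcription of the non-dg super argument.
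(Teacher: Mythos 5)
Your proposal is correct and follows essentially the same route the paper relies on: the paper gives no argument of its own beyond citing \cite[16.1.12]{FBZ} (and \cite[\S 3.7]{BD}), and your proof is exactly that construction — fields $\sum_n x_{[n]}z^{-n-1}$ on the induced module, locality from the $\Lie(L)$-bracket, reconstruction, and the induced-module universal property for the adjunction — with the routine extra bookkeeping that $d$ descends to $\Lie(L)$, is a derivation of its bracket, and intertwines the reconstructed structure (note $[d,T]=0$ is already built into Definition \ref{dfn:li:vla} via $T\in\Hom_{\dgVec}(L,L)$, so that step is immediate). The only slip is cosmetic: the nonnegative modes $\varphi(x)_{(n)}$, $n\ge 0$, annihilate the vacuum of $V$ exactly, not merely ``to high enough order'', which is what makes the map factor through $U(\Lie(L))\otimes_{U(\Lie(L)_+)}\bbk$.
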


Finally let us explain a twisted version of $U(L)$.
See \cite[3.7.20]{BD} for the original definition.
We note that the category $\dgVL$ of dg vertex algebras is an abelian category.

\begin{dfn}\label{dfn:li:ULf}
Let $L$ be a dg vertex Lie algebra.
\begin{enumerate}[nosep]
\item
A \emph{one-dimensional central extension} of $L$
is a dg vertex Lie algebra $L^{\flat}$ which sits in an exact sequence
\[
 0 \longto \bbk_{\tLie} \longto L^{\flat} \longto L \longto 0
\]
in the abelian category $\dgVL$.
Here $\bbk_{\tLie}$ denotes the one-dimensional linear space 
with trivial dg vertex Lie algebra structure.
We denote by $1^{\flat} \in L^{\flat}$ the image of $1_{\bbk} \in \bbk_{\tLie}$.

\item
Assume that we are given a one-dimensional central extension $L^\flat$ of $L$.
We define 
\[
 U(L)^\flat := U(L^{\flat})/(1_{U(L)} - 1^{\flat}),
\]
where $1^{\flat}$ denotes the image of $1^{\flat} \in L^{\flat}$
in the enveloping vertex algebra $U(L^{\flat})$.
This quotient inherits the dg vertex algebra structure of $U(L^{\flat})$.
We call the resulting dg vertex algebra $U(L^{\flat})$ 
the \emph{twisted enveloping vertex algebra of $L$}.

\item\label{i:li:ULf:PBW}
$U(L)$ and $U(L)^\flat$ inherit the PBW filtration of
$U(\Lie(L))$ and  $U(\Lie(L^\flat))$ respectively.
We call the resulting filtration the \emph{PBW filtration} of 
the (twisted) enveloping vertex algebra,
and denote it by $U(L)_{\bl}$, $U(L)^{\flat}_{\bl}$.
\end{enumerate}
\end{dfn}

\begin{eg}[{\cite[16.1.9]{FBZ}}]\label{eg:li:UL}
Consider the vertex Lie algebra $v_k(\frkg)$ over $\bbC$ 
in Example \ref{eg:li:vk}.
Recalling that $v_k(\frkg)=\frkg[t^{-1}]\vac \oplus \bbC \vac$
as linear spaces, we can check that $\Lie(v_k(\frkg))$ is spanned by 
$(x t^{-1}\vac)_{[n]}$ ($n \in \bbZ$) and $\vac_{[-1]}$,
and the commutation relation is 
$[(xt^{-1}\vac)_{[m]},(yt^{-1}\vac)_{[n]}] = ([x,y]_{\frkg}t^{-1}\vac)_{[m+n]}+
 \frac{k}{2h^{\vee}} m \delta_{m+n,0}\kappa_{\frkg}(x,y)\vac_{[-1]}$.
Thus we have 
\[
 \Lie(v_k(\frkg)) \simeq \wh{\frkg}_k, \quad
 \Lie(v_k(\frkg))_+ \simeq \frkg[[t]],
\]
where $\wh{\frkg}_k = \frkg((t)) \oplus \bbC \one$ denotes the affine Lie algebra
obtained by replacing $K$ in $\wh{\frkg}$ with $k \one$.
Thus $U(v_k(\frkg)) \simeq U(\wh{\frkg}) \otimes_{U(\frkg[[t]])} \bbC$.

The linear space $L := \frkg[t^{-1}]$ has a structure of vertex Lie algebra
induced by the Lie algebra $\frkg((t))$.
Explicitly, $L$ is regarded as a subspace of the $\frkg((t))$-module 
$U(\frkg((t)))\otimes_{U(\frkg[[t]])} \bbC$, 
and the vertex Lie structure is given by $T=-\partial_t$ and 
$Y_-(x t^{-1},z)=\sum_{n \in \bbN}(x t^n)z^{-n-1}$.
We have $\Lie(L)\simeq \frkg((t))$ and $\Lie(L)_+ \simeq \frkg[[t]]$.

Then $v_k(\frkg)$ is a one-dimensional central extension of $L$:
\[
 0 \longto \bbC_{\tLie} \longto L^\flat=v_k(\frkg) \longto 
 L = \frkg[t^{-1}] \longto 0.
\]
We can identify $1^\flat=\one$, so that we have
$U(L)^\flat = U(v_k(\frkg))/ (1-1^\flat) \simeq V_k(\wh{\frkg})$.
\end{eg}

\subsection{Dg vertex Poisson algebras}
\label{ss:li:dgvp}

Recall that a \emph{vertex Poisson algebra} is an analogue of Poisson algebra
in the category of vertex algebras.
As Poisson algebra is defined to be a combined structure commutative algebra
and Lie algebra, the definition of vertex Poisson algebra is a combination of 
commutative vertex algebra and vertex Lie algebra.
A vertex Poisson algebra is also regarded as a ``classical limit" of 
a vertex algebra, as from a Poisson algebra can be obtained from 
the PBW filtration of the universal enveloping algebra of a Lie algebra.
We refer \cite[Chap.\ 16]{FBZ} for the detail of vertex Poisson algebras.

In this subsection we introduce a dg analogue of vertex Poisson algebras.
It is in fact a special case of \emph{coisson algebras}
discussed in \cite[\S 2.6]{BD}.
See also Remark \ref{rmk:co:star} for more information on this point.

\subsubsection{Commutative dg vertex algebras}

First we introduce a dg version of commutative vertex algebras.

\begin{dfn*}
A dg vertex algebra $V$ is \emph{commutative} if the integer $N$ 
in the locality axiom (Definition \ref{dfn:li:vsa}) can be taken to be $0$ 
for any homogeneous $a,b \in V$.
In other words, we have $[a_{(m)},b_{(n)}]=0$ for any $m,n \in \bbZ$.
\end{dfn*}

We can see that for a commutative dg vertex algebra $V$, the state-field 
correspondence satisfies $a_{(n)}=0$ for any $n \in \bbN$ and $a \in V$.
Hereafter we denote a commutative dg vertex algebra by 
\begin{equation}\label{eq:li:cvsa}
 (V,d,\vac,T,Y_+)
\end{equation}
in order to emphasize 
$Y_+(a,z)=\sum_{n \in \bbZ_{<0}}a_{(n)} z^{-n-1} \in \uEnd(V)[[z]]$.

Now recall the notion of derivations on a dg algebra 
(Definition \ref{dfn:dga:der}).
The non-dg version of the next statement is due to Borcherds:

\begin{lem}\label{lem:vpa:cvs=cds}
A commutative dg vertex algebra $ (V,d,\vac,T,Y_+)$ is equivalent to 
a unital commutative dg algebra $(V,d,\cdot,\vac)$
equipped with an extra $0$-derivation $T$ commuting with the differential $d$.
\end{lem}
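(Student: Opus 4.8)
The plan is to prove the equivalence by exhibiting two mutually inverse constructions, following the classical (non-dg, non-super) argument of Borcherds and inserting the Koszul signs dictated by the passage to $V^{\even}\oplus V^{\odd}$. First I would set up the forward construction. Given a commutative dg vertex algebra $(V,d,\vac,T,Y_+)$, recall from the remark preceding the lemma that $a_{(n)}=0$ for all $n\in\bbN$, so the only negative mode available as a binary product is $a_{(-1)}$; set $a\cdot b:=a_{(-1)}b$ and keep $d$, $\vac$, $T$. Then I would check the following. \emph{Unitality}: $\vac\cdot b=b$ and $a\cdot\vac=a$ by the vacuum axiom and $Y(a,z)\vac\in a+zV[[z]]$. \emph{Graded-commutativity}: extracting the coefficient of $z^{0}$ in the skew-symmetry $Y(a,z)b=(-1)^{p(a)p(b)}e^{zT}Y(b,-z)a$ of Remark \ref{rmk:li:vsa} and using that only non-positive modes occur gives $a_{(-1)}b=(-1)^{\abs{a}\abs{b}}b_{(-1)}a$. \emph{Associativity}: here the Borcherds associativity identity of Remark \ref{rmk:li:vsa} does the work — with $m=n=-1$, every term involving $a_{(l)}c$ or $b_{(l-1)}c$ with $l\ge 1$ vanishes by commutativity, leaving $(a_{(-1)}b)_{(-1)}c=a_{(-1)}(b_{(-1)}c)$. \emph{$T$ is a $0$-derivation for $\cdot$}: the translation axiom gives $[T,a_{(m)}]=(Ta)_{(m)}$, so at $m=-1$ we get $T(a\cdot b)=(Ta)\cdot b+a\cdot(Tb)$. \emph{Leibniz rule for $d$}: since $d$ is an odd derivation of the vertex superalgebra, $d(a_{(n)}b)=(da)_{(n)}b+(-1)^{\abs a}a_{(n)}(db)$ by Remark \ref{rmk:li:dv}, and $n=-1$ is exactly the dga Leibniz rule \eqref{eq:dga:Leib}; finally $[d,T]=0$ by Remark \ref{rmk:li:dv}. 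Thus $(V,d,\cdot,\vac)$ is a commutative dga with a $0$-derivation $T$ commuting with $d$.

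For the backward construction, given a commutative dga $(V,d,\cdot,\vac)$ with a $0$-derivation $T$ commuting with $d$, I would first note $T\vac=0$ (apply $T$ to $\vac\cdot\vac=\vac$), then define $Y_+(a,z)b:=(e^{zT}a)\cdot b=\sum_{n\in\bbN}\tfrac{z^{n}}{n!}(T^{n}a)\cdot b$, which lies in $(\uEnd(V)[[z]])b$. The axioms of Definition \ref{dfn:li:vsa} on $V^{\even}\oplus V^{\odd}$ are then verified directly: the vacuum axiom reduces to $T\vac=0$ and unitality of $\cdot$; the translation axiom $[T,Y_+(a,z)]=\partial_{z}Y_+(a,z)$ is immediate since $T$ is a degree-$0$ derivation of $\cdot$; locality holds with $N=0$ because $Y_+(a,z)Y_+(b,w)c=(e^{zT}a)\cdot(e^{wT}b)\cdot c$ is symmetric under $(a,z)\leftrightarrow(b,w)$ up to the Koszul sign by graded-commutativity and associativity; and $d$ is an odd derivation of $Y_+$ thanks to the Leibniz rule for $\cdot$ together with $[d,T]=0$. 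To see the constructions are mutually inverse, one recovers $\cdot$ from $Y_+$ by setting $z=0$, and conversely iterating $[T,a_{(m)}]=(Ta)_{(m)}=-m\,a_{(m-1)}$ gives $a_{(-k-1)}=\tfrac{1}{k!}(T^{k}a)_{(-1)}$, so $Y_+(a,z)=\sum_{k\in\bbN}\tfrac{z^{k}}{k!}(T^{k}a)_{(-1)}$ is determined by $\cdot$ and $T$.

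I expect the only delicate point to be sign bookkeeping rather than anything conceptual: making precise that "$d$ is an odd derivation of the vertex superalgebra" in the sense of Definition \ref{dfn:li:dv}, combined with the braiding conventions on $V^{\even}\oplus V^{\odd}$ from Notation \ref{ntn:li:s-g}, forces exactly the signs appearing in the dga Leibniz rule and in graded-commutativity, and that the super skew-symmetry degenerates with the correct sign in the commutative case. The remaining steps reproduce the classical Borcherds argument verbatim; the one structural input is that the locality integer can be taken to be $0$ everywhere, which is precisely the hypothesis of the lemma.
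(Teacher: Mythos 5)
Your proposal is correct and follows essentially the same route as the paper, which simply records the two mutually inverse assignments $a\cdot b:=a_{(-1)}b$ and $Y_+(a,z)b:=(e^{zT}a)\cdot b$ and defers the verification to the classical argument of \cite[\S 1.4]{FBZ}. Your write-up merely makes explicit the axiom checks and Koszul-sign bookkeeping (skew-symmetry, the iterate formula at $m=n=-1$, $[T,a_{(-1)}]=(Ta)_{(-1)}$, and the derivation property of $d$) that the paper leaves implicit, and these are all carried out correctly.
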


\begin{proof}
As in the non-dg case (see \cite[\S 1.4]{FBZ}), 
we attach to a commutative vertex algebra structure $Y_+$ 
the multiplication $a \cdot b := a_{(-1)}b$,
and attach to the differential algebra structure $(\cdot,T)$
the vertex algebra structure $Y_+(a,z)b := (e^{z T}a)$.
\end{proof}

By this lemma, we can denote the commutative dgva by \eqref{eq:li:cvsa} or by 
\[
 (V,d,\vac,T,\cdot).
\]

\subsubsection{Dg vertex Poisson algebras}

We turn to the definition of a dg vertex Poisson algebra.
Recall that we denote by $\Der(R)^n$ the linear space of $n$-derivations
for a dg algebra $R$ (Definition \ref{dfn:dga:der}).
For a commutative dgva $(P,d,\vac,T,\cdot)$,
we denote by  $\Der(P,\cdot)^n$ the corresponding space
for the cdga $(P,\cdot)$.
Using this notation, we have:

\begin{dfn}\label{dfn:li:dgvp}
\begin{enumerate}[nosep]
\item 
A \emph{dg vertex Poisson algebra} (\emph{dg vpa} for short) 
is a complex $(P,d)$ equipped with 
\begin{itemize}[nosep]
\item 
a structure $(P,d,\vac,T,Y_+)$ of commutative dgva, or equivalently, 
a structure $(P,d,\vac,T,\cdot)$ of unital cdga,  and 
\item
a structure $(P,d,T,Y_-)$ of dg vertex Lie algebra 
\end{itemize}
such that for any homogeneous $a \in P$ we have
\begin{equation}\label{eq:li:dgvp}
Y_-(a,z) \in (\Der(P,\cdot)^{\abs{a}})[[z^{-1}]]z^{-1}.
\end{equation} 
We denote it by $(P,d,\vac,T,\cdot,Y_-)$ or by $(P,d,\vac,T,Y_+,Y_-)$, or just by $P$.

\item
\label{i:li:dgvp:an}
Let $V$ be a dg vpa and $a \in V$.
We define $a_{(n)}$ for $n \in \bbZ$ in the following way.
\begin{itemize}[nosep]
\item 
For $n \ge 0$, we denote by $Y_-(a,z)=\sum_{n \ge 0}a_{(n)}z^{-n-1}$
the vertex Lie algebra structure.
\item
For $n <   0$, we denote by $Y_+(a,z)=\sum_{n <   0}a_{(n)}z^{-n-1}$
the commutative vertex algebra structure.
Explicitly we have $a_{(n)}=\frac{1}{(-n-1)!}T^{-n-1}a$.
\end{itemize}

\item
We have an obvious notion of \emph{morphisms} of dg vertex Poisson algebras.
We denote by $\dgVP$ the category of dg vertex Poisson algebras.
\end{enumerate}
\end{dfn}

Similarly we have the notions of \emph{vertex Poisson superalgebra} and
\emph{vertex Poisson algebra} (\emph{vpa} for short).

Note that the translation $T$ of the commutative vertex algebra structure 
and that of the vertex Lie algebra structure are common.

We have notions of dg subalgebras, dg ideals, and tensor products of dg vpas.
Details are omitted.

Since the structure maps $Y$ and $Y_-$ of a dg vertex Poisson algebras
are homogeneous with respect to the cohomological grading
and the differential $d$ has the derivation property both for them,
the proof of Lemma \ref{lem:dgv:coh} works in the Poisson case, and we have:

\begin{lem*}
For a dg vpa $V$, the cohomology $H(V^{\bl},d)$ of the underlying complex
has a structure of dg vertex Poisson algebra with trivial differential,
called the \emph{cohomology vertex Poisson algebra}.
\end{lem*}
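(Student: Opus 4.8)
The plan is to imitate the proof of Lemma \ref{lem:dgv:coh} verbatim, checking that every ingredient used there for the vertex algebra structure has an evident counterpart for the vertex Lie part of a dg vpa $V=(V,d,\vac,T,Y_+,Y_-)$. First I would record what has to be verified: writing $H^{\bl}:=H(V^{\bl},d)$ with its associated linear superspace $H^{\even}\oplus H^{\odd}$ (Notation \ref{ntn:li:s-g}), I must produce a commutative vertex (super)algebra structure and a vertex Lie (super)algebra structure on it, and then check the compatibility \eqref{eq:li:dgvp}, i.e.\ that each $Y_-(\ol{a},z)$ acts by derivations of the induced commutative product. Since the cohomology of a commutative dgva is a commutative vsa by (the obvious commutative-case variant of) Lemma \ref{lem:dgv:coh}, only the vertex Lie part and the Leibniz-type compatibility are genuinely new, and these are almost immediate.

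The key steps, in order, are as follows. (1) Observe that $d$ commutes with $T$: this follows from Remark \ref{rmk:li:dv} \eqref{i:rmk:li:dv:dT}, which applies because $d$ is an odd derivation of the underlying vertex superalgebra; hence $T$ preserves $\Ker d$ and $\Img d$, so $T$ descends to $H^{\bl}$. (2) The derivation property $[d,Y_-(a,z)]b=Y_-(d a,z)b$ (Definition \ref{dfn:li:vla} \ref{i:li:vla:4}) shows that $\Ker d$ is closed under all the operations $a_{(n)}$ with $n\in\bbN$ and that $\Img d$ is an ideal for them; more precisely, for $a,b\in\Ker d$ one has $d(a_{(n)}b)=(d a)_{(n)}b+(-1)^{\abs a}a_{(n)}(d b)=0$, and for $a\in\Ker d$, $b=d b'$ one gets $a_{(n)}b=d(a_{(n)}b')\in\Img d$ up to sign, and symmetrically in the first slot using skew-symmetry (Remark \ref{rmk:li:vla} \eqref{i:rmk:li:vla:1}) together with the fact that $T$ preserves $\Img d$. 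Therefore the $a_{(n)}$ ($n\in\bbN$) descend to well-defined operations $\ol a_{(n)}$ on $H^{\bl}$. (3) Set $Y_-(\ol a,z)\ol b:=\ol{Y_-(a,z)b}$ and verify the axioms \ref{i:li:vla:1}--\ref{i:li:vla:3} of Definition \ref{dfn:li:vla} on $H^{\bl}$: each is an identity among the operations $T,a_{(n)}$ that already holds on $V$ and hence passes to the quotient; axiom \ref{i:li:vla:4} is vacuous since the induced differential on $H^{\bl}$ is zero, which also makes the new dg vertex Lie algebra have trivial differential. (4) For the commutative product, use $a\cdot b:=a_{(-1)}b$ from Lemma \ref{lem:vpa:cvs=cds}; the same argument as in step (2) shows $\Ker d$ is a subalgebra and $\Img d$ an ideal, so the product descends, and $(H^{\bl},\ol\cdot,\ol\vac,\ol T)$ is a commutative dga with trivial differential. (5) Finally, the compatibility \eqref{eq:li:dgvp}: on $V$ each $Y_-(a,z)$ lies in $(\Der(P,\cdot)^{\abs a})[[z^{-1}]]z^{-1}$, i.e.\ $a_{(n)}(b\cdot c)=(a_{(n)}b)\cdot c+(-1)^{\abs a\abs b}b\cdot(a_{(n)}c)$ for $n\in\bbN$ (with the Koszul sign); passing to $H^{\bl}$ gives the same identity for $\ol a_{(n)}$, so $Y_-(\ol a,z)\in(\Der(H^{\bl},\ol\cdot)^{\abs a})[[z^{-1}]]z^{-1}$. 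Assembling (1)--(5) yields the dg vpa structure on $H^{\bl}$ with trivial differential.

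I do not expect a serious obstacle here; the one point requiring a little care is step (2), namely that $\Img d$ is a \emph{two-sided} ideal for the operations $a_{(n)}$, which in the first slot needs skew-symmetry together with $T(\Img d)\subset\Img d$ rather than the derivation property alone. Once that is in place, everything else is a routine transport of identities through the quotient map $\Ker d\surj H^{\bl}$, exactly parallel to \cite[\S 5.7.3]{FBZ} and to the proof of Lemma \ref{lem:dgv:coh}, so I would simply remark that ``the argument of Lemma \ref{lem:dgv:coh} applies to both the commutative and the vertex Lie structure, and the Leibniz compatibility \eqref{eq:li:dgvp} passes to cohomology since it is an identity among the operations $a_{(n)}$'' and omit the detailed verification.
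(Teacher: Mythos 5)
Your proposal is correct and follows essentially the same route as the paper, which simply observes that since $d$ is a derivation for both $Y_+$ and $Y_-$ and both are homogeneous, the kernel/image argument of Lemma \ref{lem:dgv:coh} applies to the commutative and vertex Lie structures simultaneously, with the Leibniz compatibility \eqref{eq:li:dgvp} passing to the quotient. One minor remark: your concern in step (2) about needing skew-symmetry for the first slot is unnecessary, since the derivation identity $(d a)_{(n)}b = d(a_{(n)}b)-(-1)^{\abs{a}}a_{(n)}(d b)$ already shows $(\Img d)_{(n)}(\Ker d)\subset \Img d$ directly.
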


We continue the observation in Remark \ref{rmk:li:vla} \eqref{i:rmk:li:vla:2}.
For a dg vertex Poisson algebra $P=(P,d,\vac,T,\cdot,Y_-)$, 
the quotient space $P/\Img(T)$ has a structure of dg Poisson algebra,
of which the commutative multiplication and the Poisson bracket are given by 
\[
 \ol{a} \cdot \ol{b} := \ol{a \cdot b}, \quad 
 \{\ol{a},\ol{b}\}:=\ol{a_{(0)}b}
\]
for $a, b \in P$.
The Leibniz rule follows from \eqref{eq:li:dgvp}.
Since we have $[d,T]=0$, 
the quotient $P/\Img(T)$ inherits the dg structure $(P,d)$.

\begin{dfn}\label{dfn:li:dgapa}
\begin{enumerate}
\item 
We call the obtained dg Poisson algebra $P/\Img(T)$ 
the \emph{associated dg Poisson algebra} of the dg vertex Poisson algebra $P$.
We denote it by 
\[
 R^{\cois}_P := P/\Img(T).
\]
\item
We denote the functor induced by the construction $P \mapsto R^{\cois}_P$ by 
\[
 R^{\cois}_{(-)}: \dgVP \longto \dgpa, 
\]
where $\dgpa$ denotes the category of dg Poisson algebras.
\end{enumerate}
\end{dfn}

We can immediately check that the functor $R^{\cois}_{(-)}$ is 
a monoidal functor between the corresponding symmetric monoidal categories.

\begin{rmk*}
We refer Remark \ref{rmk:li:C2} \eqref{i:li:RV=RP} for the relation to 
\emph{Zhu's $C_2$-algebra} $R_V$ of a vertex algebra $V$. 
\end{rmk*}

\subsubsection{Vertex Poisson modules}
\label{sss:li:vpm}

We introduce the notion of modules over dg vertex Poisson algebras,
following the non-dg version given in \cite[\S 2.2]{A12} and \cite[\S 2.2]{A15}.

\begin{dfn}\label{dfn:li:vpm}
Let $P=(P,d,\vac,T,\cdot,Y_-)$ be a dg vertex Poisson algebra.
\begin{enumerate}
\item 
A \emph{dg vertex Poisson $P$-module} is a complex $M=(M^{\bl},d_M)$ equipped with
\begin{itemize}[nosep]
\item 
a structure $P \otimes M \to M$, $a \otimes m \mapsto a.m$ 
of dg module over the cdga $(P,\cdot,\vac)$, and
\item
a morphism of complexes $P \otimes M \to M[z^{-1}] z^{-1}$,
$a \otimes m \mapsto Y^M_-(a,z)m = \sum_{n \in \bbN} (a^M_{(n)}m) z^{-n-1}$
\end{itemize}
satisfying the following conditions for homogeneous $a,b \in P$,
$m \in M$ and $l,n \in \bbN$.
\begin{enumerate}[nosep,label=(\roman*)]
\item
$Y^M_{-}(\vac,z)=\id_M$.
\item 
$(T a)^M_{(n)}=-n a^M_{(n-1)}$, 
where for $n=0$ we read the right hand side as $0$.
\item 
$[a^M_{(l)},b^M_{(n)}]=\sum_{i \in \bbN}\binom{l}{i}(a_{(i)}b)^M_{(l+n-i)}$.
\item
$a^M_{(n)} (b.m) = (a_{(n)}b).m+(-1)^{\abs{a}\abs{b}} b.(a^M_{(n)}m)$.
\item
$(a \cdot b)^M_{(n)} = \sum_{l \in \bbN}(a_{(-l-1)}b^M_{(n+l)}
 +(-1)^{\abs{a}\abs{b}}b_{(-l-1)}a^M_{(n+l)})$.
\item
$[d_M,Y^M_-(a,z)]m=Y^M_-(d a,z)m$.
\end{enumerate}

\item
A morphism of dg vertex Poisson $P$-modules is naturally defined.
We denote by $\dgVPMod{P}$ the category of dg vertex Poisson $P$-modules,
and by $\ddgVPMod{P}$ the corresponding $\infty$-category.

\item
A \emph{vertex Poisson $P$-module} is 
a dg vertex Poisson $P$-module concentrating on the cohomological degree $0$.
We denote by $\VPMod{P}$ the category of vertex Poisson $P$-modules.
\end{enumerate}
\end{dfn}

Now we give a vertex Poisson analogue of Fact \ref{fct:co:wtUV}
on the equivalence of module categories.
Let $P=(P,d,\vac,T,\cdot,Y_{-})$ be a dg vertex Poisson algebra.
Recall the notation $a_{(n)}$ for an element $a \in P$ 
(Definition \ref{dfn:li:dgvp} \eqref{i:li:dgvp:an}).
We have the following line of constructions.
\begin{enumerate}[nosep]
\item 
We define the complex $U'(P)$ to be the quotient of 
$P[t^{\pm1}] = P \otimes \bbk[t^{\pm 1}]$ by the relation 
$(T a) \otimes t^n + n a \otimes t^n=0$.
We denote the image of $a \otimes t^n$ by  $a_{[n]}$.
We have $a_{[-n-1]}=\frac{1}{n!}(T^n a)_{[-1]}$ for $n \in \bbN$.

\item
We can define the binary operation $\cdot: U'(P)^{\otimes 2} \to U'(P)$ by
\[
 a_{[-m-1]} \cdot b_{[-n-1]} := 
 \begin{cases} 
  \bigl((\tfrac{1}{m!}T^m a)\cdot(\tfrac{1}{n!}T^n b)\bigr)_{[-1]} 
    & (m,n \in \bbN), \\
  0 & (\text{otherwise}).
 \end{cases}
\]
Then $(U'(P),\cdot)$ is a cdga with unit $\vac_{[-1]}$.

\item
We can define the binary operation $\{-,-\}: U'(P)^{\otimes 2} \to U'(P)$ by
\[
 \{a_{[m]},b_{[n]}\} := 
 \begin{cases} 
  \tsum_{i \in \bbN} \binom{m}{i}(a_{(i)}b)_{[m+n-i]}
    & (m,n \in \bbN), \\
  0 & (\text{otherwise}).
 \end{cases}
\]
Then $(U'(P),\{-,-\})$ is a dg Lie algebra 
by the proof of \cite[4.1.2.\ Theorem]{FBZ}.

\item
We can  check that 
$(U'(P),\cdot,\{-,-\})$ is a dg Poisson algebra with unit $\vac_{[-1]}$.

\item
We define $U''(P)$ to be the completion of $U'(P)$ 
with respect to the $t$-adic topology.
Thus we can regard it as a quotient of $P((t)) = P \otimes \bbk((t))$.
The completion $U''(P)$ inherits the dg Poisson algebra structure of $U'(P)$.
We further define $U(P):=\lim_N U''(P)/I_N$,
where $I_N$ denotes the left Poisson ideal generated by 
$a_{[n]}$ ($a \in P$, $n \in \bbZ$, $n>N$).
It also inherits the dg Poisson algebra structure.

\item
For $a \in P$, we denote $Y[a,z]:=\sum_{n \in \bbZ}a_{[n]}z^{-n-1}$.
We define $\wt{U}(P)$ to be the quotient of $U(P)$
with respect to the relation $Y[a_{(-1)}b,z]=:Y[a,z]Y[b,z]:$, where 
the left hand side denotes the normal ordering (Definition \ref{dfn:li:nord}).

\item
\label{i:li:vpm:sm}
A dg Poisson $\wt{U}(P)$-module $M$ is called \emph{smooth} if for any $m \in M$
and $a \in P$ we have $a_{[n]}.m=0$ for $n \gg 0$.
\end{enumerate}

Now we can apply the same argument in the proof of \cite[5.1.6.\ Theorem]{FBZ},
and we have;

\begin{prp}\label{prp:li:vpmod}
For a dg vertex Poisson algebra $P$, 
there is an equivalence of the category $\dgPMod{P}$ of dg vertex Poisson 
$P$-modules and the category of smooth dg Poisson $\wt{U}(P)$-modules.
\end{prp}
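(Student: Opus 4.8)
The plan is to transport the proof of Fact~\ref{fct:co:wtUV}, i.e.\ of \cite[5.1.6.\ Theorem]{FBZ}, to the vertex Poisson setting, carrying the commutative product and the Poisson bracket along in parallel and tracking compatibility with the differentials at each step; since the construction of $\wt{U}(P)$ in steps~(1)--(7) preceding the statement was itself modelled step by step on the associative construction of \cite[\S\S 4.1--4.3, 5.1]{FBZ}, the task is essentially to exhibit two mutually inverse functors. First I would send $M \in \dgVPMod{P}$ to the complex $M$ equipped with the $\wt{U}(P)$-action defined on generators by $a_{[n]} \mapsto a^M_{(n)}$ for $n \in \bbN$ (the Poisson-bracket part, coming from $Y^M_-$) and by $a_{[-1]}.m := a.m$, hence $a_{[-n-1]}.m = \tfrac{1}{n!}(T^n a).m$ (the commutative part). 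Conditions~(ii) and~(iii) of Definition~\ref{dfn:li:vpm} are exactly the translation covariance and the commutator identity that hold in $\wt{U}(P)$, condition~(iv) is the Leibniz compatibility of the two actions, and condition~(v) --- the formula for $(a\cdot b)^M_{(n)}$ --- is precisely what makes the action pass through the completions $U''(P), U(P)$ and through the normal-ordering quotient $Y[a_{(-1)}b,z]=\,:\!Y[a,z]Y[b,z]\!:$ (step~(6)) that defines $\wt{U}(P)$. Smoothness in the sense of \eqref{i:li:vpm:sm} is automatic because $Y^M_-(a,z)m \in M[z^{-1}]z^{-1}$, so $a^M_{(n)}m = 0$ for $n \gg 0$, and the two derivation-type conditions on $d_M$ say exactly that the result is a \emph{dg} Poisson $\wt{U}(P)$-module.

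Conversely, to a smooth dg Poisson $\wt{U}(P)$-module $N$ I would attach the vertex Poisson $P$-module structure given by $a.m := a_{[-1]}.m$ and $Y^N_-(a,z)m := \sum_{n \in \bbN}\{a_{[n]},m\}\,z^{-n-1}$ (a finite sum by smoothness), and then verify axioms~(i)--(vi) of Definition~\ref{dfn:li:vpm}: (i)--(iv) follow from the cdga-module and dg-Lie-module axioms of Definition~\ref{dfn:sp:mod} together with the skew-symmetry $a_{(0)}b \equiv -(-1)^{\abs a\abs b}b_{(0)}a$ modulo $\Img(T)$ recorded in Remark~\ref{rmk:li:vla}, (v) is the normal-ordering relation of $\wt{U}(P)$ read backwards, and (vi) is the dg-module condition. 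That the two assignments are functorial and mutually inverse is then a direct unwinding, essentially because $a \mapsto a_{[-1]}$ realizes $P$ inside $\wt{U}(P)$ compatibly with all of the structure; this gives the asserted equivalence of categories.

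The difficulty I anticipate is bookkeeping rather than conceptual. First, the ``dg super'' convention of Notation~\ref{ntn:li:s-g} forces Koszul parity signs into every Leibniz rule, into the skew-symmetry, and into the normal ordering --- all invisible in the non-dg argument of \cite{FBZ} --- and keeping them consistent across the successive quotients $P[t^{\pm1}] \to U'(P) \to U''(P) \to U(P) \to \wt{U}(P)$ is the real labour. Second and relatedly, the heart of the verification is checking that the candidate $\wt{U}(P)$-action annihilates the relations introduced in those quotients; this is the only step where the commutative product and the Poisson bracket genuinely interact, and it is governed precisely by condition~(v) of Definition~\ref{dfn:li:vpm}, so the computation, while routine in the style of \cite[\S 5.1]{FBZ}, must be carried out with the dg signs in place.
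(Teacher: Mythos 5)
Your proposal is correct and follows essentially the same route as the paper, which itself constructs $\wt{U}(P)$ through the quotients $U'(P) \to U''(P) \to U(P) \to \wt{U}(P)$ and then simply invokes the argument of \cite[5.1.6.\ Theorem]{FBZ}; your explicit mutually inverse assignments ($a_{[n]} \mapsto a^M_{(n)}$ for $n \in \bbN$, $a_{[-1]}.m := a.m$, and conversely $a.m := a_{[-1]}.m$, $a^N_{(n)}m := \{a_{[n]},m\}$) together with the sign bookkeeping are exactly the intended unwinding of that argument.
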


In the following, 
we will explain three constructions of dg vertex Poisson algebras.
\begin{enumerate}[nosep]
\item 
Symmetric algebra of a dg vertex Lie algebra (\S \ref{sss:li:SymL}).
\item
Quasi-classical limit of a dg vertex algebra (\S \ref{sss:li:qc}).
\item
Level $0$ vertex Poisson structure on the arc space of a dg Poisson algebra
(\S \ref{sss:li:arc}).
\item
Associated graded space of Li filtration of a vertex algebra (\S \ref{ss:li:li}).
\end{enumerate}

\subsubsection{Symmetric vertex Poisson algebra of vertex Li algebra}
\label{sss:li:SymL}

We first explain the symmetric algebra construction.
See also \cite[16.2.2]{FBZ} and \cite[1.4.18]{BD}.

Let $L=(L^{\bl},d,T_L,Y_{-}^L)$ be a dg vertex Lie algebra.
Regarding $L$ as a complex, we denote by 
\[ 
 P := \Sym(L)
\]
the symmetric algebra, which is a commutative dg algebra.
Then $P$ has a structure $(\vac,T,Y_-)$ of dg vertex Poisson algebra where
\begin{itemize}[nosep]
\item 
the vacuum $\vac$ is the unit element of $P$,
\item
the translation $T$ is the extension of $T_L$ to $P$ 
by the Leibniz rule and the condition $T \vac=0$. and
\item
the operation $Y_-$ is uniquely determined by the condition
that the injection $L \inj P$ is a morphism of dg vertex Lie algebras
and by the derivation condition.
\end{itemize}

The following statement can be regarded as a vertex analogue of 
Remark \ref{rmk:sp:mu} \eqref{i:sp:mu:3} on the momentum map.
We omit the proof.

\begin{lem*}
Let $L$ be a dg vertex Lie algebra, $P$ be a dg vertex Poisson algebra,
and $\mu: L \to P$ be a morphism of dg vertex Lie algebras.
Then we have a unique morphism $\Sym(L) \to P$ of dg vertex Poisson algebras
such that the restriction to $L$ coincides with $\mu$.
\end{lem*}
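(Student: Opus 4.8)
The plan is to construct the morphism $\Sym(L) \to P$ as the unique extension of $\mu \colon L \to P$ compatible with the commutative product and the vertex Lie operations. Since $\Sym(L)$ is the free commutative dg algebra on the complex $L$, there is already a unique morphism $\bar\mu \colon \Sym(L) \to P$ of commutative dg algebras extending $\mu$, namely
\[
 \bar\mu(x_1 \cdots x_p) = \mu(x_1)_{(-1)} \cdots \mu(x_p)_{(-1)} \vac
\]
(using the commutative product $a \cdot b = a_{(-1)}b$ on $P$). First I would check that $\bar\mu$ intertwines the translations: both $T$ on $\Sym(L)$ and $T$ on $P$ are $0$-derivations of the respective commutative products annihilating the unit, and on generators $\bar\mu(T_L x) = \mu(T_L x) = T \mu(x)$ because $\mu$ is a morphism of dg vertex Lie algebras and $Y_-^L(T_L x, z) = \partial_z Y_-^L(x,z)$; hence $\bar\mu \circ T = T \circ \bar\mu$ by the derivation property. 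Compatibility with the differential is automatic since $\bar\mu$ is a morphism of complexes on generators and $d$ is a derivation on both sides.

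Next I would verify that $\bar\mu$ respects $Y_-$, i.e.\ $\bar\mu(a_{(n)}b) = \bar\mu(a)_{(n)} \bar\mu(b)$ for all $a,b \in \Sym(L)$ and $n \in \bbN$. The key structural input is the recollection in \S\ref{sss:li:SymL}: on $\Sym(L)$ the operation $Y_-$ is the unique one making $L \inj \Sym(L)$ a morphism of dg vertex Lie algebras and acting by derivations in the sense of \eqref{eq:li:dgvp}, i.e.\ $Y_-(a,z)$ is a derivation of the commutative product for $a \in L$, and more generally $a_{(n)}$ for a monomial $a = x_1 \cdots x_p$ is built from the $x_{i,(n)}$'s via the derivation rule. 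Correspondingly, in the dg vpa $P$, each $Y_-(c,z)$ for $c \in P$ is a derivation of the commutative product and satisfies the Borcherds-type identities of Definition \ref{dfn:li:dgvp}. So the verification proceeds by induction on the total weight $p+q$ of the monomials $a,b$: the base case $p=q=1$ is exactly that $\mu$ is a morphism of dg vertex Lie algebras, $\bar\mu(x_{(n)}y) = \mu(x)_{(n)}\mu(y)$; the inductive step uses the Leibniz-type expansion of $(x_1\cdots x_p)_{(n)}(y_1 \cdots y_q)$ in terms of lower-weight brackets on both sides, together with the fact that $\bar\mu$ is already known to be an algebra morphism, so that products of already-matched lower terms match. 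One must also check the mixed identity relating $Y_-$ on a product $a \cdot b$ to $Y_+$ and $Y_-$ (the fifth axiom in Definition \ref{dfn:li:vpm}), but this again reduces to the derivation/commutative-algebra compatibilities that $\bar\mu$ has by construction. Uniqueness is immediate: any morphism of dg vpas restricting to $\mu$ on $L$ must agree with $\mu(x)_{(-1)}$ on generators, hence with $\bar\mu$ everywhere since $\Sym(L)$ is generated by $L$ under the commutative product.

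The main obstacle I anticipate is bookkeeping the Koszul signs in the derivation expansion of $(x_1\cdots x_p)_{(n)}(y_1\cdots y_q)$ in the dg (super) setting — both the sign coming from the braiding when moving $Y_-(x_i,z)$ past $x_1,\dots,x_{i-1}$ and the sign in the product $\mu(x_1)_{(-1)}\cdots$ on the $P$ side. To keep this manageable I would reduce to checking the two generating derivation rules separately, namely $(a \cdot b)_{(n)} c$ expanded via \eqref{eq:li:dgvp} and $a_{(n)}(b \cdot c)$ expanded via the Leibniz rule for $Y_-(a,z)$, and invoke that both $\Sym(L)$ and $P$ satisfy these rules (for $\Sym(L)$ by the defining uniqueness, for $P$ by Definition \ref{dfn:li:dgvp}); the induction then only ever compares one application of a single derivation rule on each side. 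Because both structures are pinned down by the same universal property relative to their generators and the derivation axioms, no genuinely new computation is needed beyond the weight-$(1,1)$ case, which is the hypothesis on $\mu$. Hence the proof is essentially a diagram-chase plus sign-tracking, and I would present it by stating the construction of $\bar\mu$, noting the three compatibilities ($d$, $T$, commutative product) are immediate, and then giving the one-paragraph induction for $Y_-$.
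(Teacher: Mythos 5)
Your proposal is correct and is exactly the argument the paper intends (the proof is omitted there, the lemma being presented as the vertex analogue of the universal property of the momentum map $\Sym(\frkl)\to R$): extend $\mu$ freely to a cdga morphism, check $d$- and $T$-equivariance on generators, and propagate $Y_-$-compatibility by induction on total weight using the derivation condition in the second argument together with the algebra-map property. The one bookkeeping point to make explicit is that the first-argument case — your appeal to the ``fifth axiom'' identity for $(a\cdot b)_{(n)}$ — is not literally an axiom of Definition \ref{dfn:li:dgvp}, but follows on both $\Sym(L)$ and $P$ from skew-symmetry of the vertex Lie structure combined with the derivation condition and the $T$-Leibniz rule, after which your induction closes as stated.
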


We have a twisted version 
which is similar as \S \ref{ss:pr:BRST} \eqref{i:pr:cCl}.
Let $L$ be a dg vertex Lie algebra, and assume that 
we are given a one-dimensional central extension $L^\flat$.
In other words, we have a short exact sequence 
$0 \to \bbk \to L^\flat \to L \to 0$ of dg vertex Lie algebras,
where $\bbk$ is the one-dimensional trivial vertex Lie algebra.
Regarding $L^\flat$ as a complex, we define
\[ 
 \Sym^\flat(L) := \Sym(L^\flat)/I
\]
{}to be the quotient of the symmetric algebra $\Sym(L^\flat)$
by the ideal $I$ generated by the difference of embeddings  
$\bbk = \Sym(L^\flat)^0 \inj \Sym(L^\flat)$ and
$\bbk \inj \frkl^\flat=\Sym(L^\flat)^1 \inj \Sym(L^\flat)$.
It inherits the dg vertex Poisson structure from $\Sym(L^\flat)$.

For later reference, we name the obtained dg vertex Lie algebras as:

\begin{dfn}\label{dfn:li:sym-vp}
Let $L$ be a dg vertex Lie algebra.
\begin{enumerate}[nosep]
\item 
W call the dg vertex Poisson algebra $\Sym(L)$ the \emph{symmetric dg vpa}.

\item
Assume that we are given a one-dimensional central extension $L^\flat$ of $L$.
We call the dg vertex Poisson algebra $\Sym^\flat(L)$ 
the \emph{twisted symmetric dg vpa}.
\end{enumerate}
\end{dfn}

\subsubsection{Quasi-classical limit construction}
\label{sss:li:qc}

Next we turn to the second construction by
a quasi-classical limit of a vertex superalgebra.
See \cite[16.2.3--7]{FBZ} for the detail.
Here we only explain a particular case.

Let $U$ be a complex, and consider the free fermionic vertex algebra
$\Wedge^{\sinf}(U)$ in Definition \ref{dfn:li:ff}.
We use the linear basis $\{\psi_i \mid i \in I\}$ of $U$ and the dual basis
$\{\psi^*_j \mid j \in I\}$ of $U^*$ therein. 
In particular, $\Wedge^{\sinf}(U)$ has a basis consisting of 
PBW monomials \eqref{eq:li:ff}.

We define $\Wedge_{\hbar}$ to be the $\bbk[\hbar^{-1}]$-lattice
spanned by the rescaled monomials
\[
 \hbar^{-k} (\psi_{i_1} t^{m_1}) \cdots (\psi_{i_k} t^{m_k}) 
 (\psi_{j_1}^* t^{n_1-1}d t) \cdots (\psi_{j_l}^* t^{n_l-1}d t) \vac \quad
 (m_1< \cdots < m_k <0, \, n_1< \cdots < n_l \le 0).
\]
In other words, 
we shift $\psi_i \mapsto \hbar^{-1}\psi_i$ and preserve $\psi_j^*$.
Then we set 
\[
 \ol{\Wedge}^{\sinf}(U) :=  \Wedge_{\hbar}/\hbar^{-1}\Wedge_{\hbar}.
\]
The image of the generators are denoted by 
$\ol{\psi}_i t^m$ and $\ol{\psi}^*_j t^{n-1}d t$.
It inherits the left dg $\Cl$-module structure of $\Wedge^{\sinf}(U)$,
and has an action of the dg subalgebra of $\Cl[\hbar^{-1}]$ generated by  
\[
 \ol{\psi_i} t^m := \hbar \psi_i t^m, \quad 
 \ol{\psi}^*_j t^{n-1} d t := \psi^*_j t^{n-1} d t.
\] 
The commutation relations of these generators are 
\[
 [\ol{\psi}_i t^m ,\ol{\psi}^*_j t^{n-1} d t]=\delta_{i,j}\delta_{m+n,0}, \quad 
 [\ol{\psi}_i t^m, \ol{\psi}_j t^n] = 0 = 
 [\ol{\psi}^*_i t^{m-1} d t, \ol{\psi}^*_j t^{n-1} d t].
\]
Thus the complex $\ol{\Wedge}^{\sinf}(U)$ has a linear basis consisting 
of the elements of the form 
\[
 (\ol{\psi}_{i_1} t^{m_1})        \cdots (\ol{\psi}_{i_k} t^{m_k}) 
 (\ol{\psi}_{j_1}^* t^{n_1-1}d t) \cdots (\ol{\psi}_{j_l}^* t^{n_l-1}d t) \vac
 \quad (m_1< \cdots < m_k <0, \, n_1< \cdots < n_l \le 0).
\]

The complex $\ol{\Wedge}^{\sinf}(U)$ has a structure of dg vertex Poisson algebra
induced by the dg vertex algebra structure on $\Wedge^{\sinf}(U)$.
As a commutative dg algebra, it is isomorphic to 
$\Sym\bigl(U((t))[1] \oplus U^*((t))[-1] d t \bigr)$,
and the differential is given by $\partial_t$.
The vertex Lie algebra structure $Y_-$ is determined by
\begin{align*}
&Y_-(\ol{\psi}_i t^{-1} \vac,z) = 
 \ol{\psi}_i(z) := \sum_{l \in \bbN}(\ol{\psi}_i t^l)z^{-l-1}, \quad
 Y_-(\ol{\psi}^*_j t^{-1} d t \vac,z) = 
 \ol{\psi}_j^*(z) := \sum_{l \in \bbN}(\ol{\psi}^*_j t^{l})z^{-l-1}, \\
&Y_-(\ol{\psi}_i t^{m} \vac,z) = 
 \tfrac{1}{(-m-1)!}\partial_z^{-m-1} \ol{\psi}_i(z), \quad
 Y_-(\ol{\psi}^*_j t^{n-1}d t \vac,z) = 
 \tfrac{1}{(-n)!}\partial_z^{-n} \ol{\psi}^*_i(z).
\end{align*}
It also inherits the charge grading $\Wedge^{\sinf+\bl}(U)$
given by $\chg(\ol{\psi_i} t^{m})=-1$, $\chg(\ol{\psi_i} t^{n-1} d t)=1$.
We denote $\ol{\Wedge}^{\sinf}(U)$ to emphasize this charge grading.

\begin{dfn}\label{dfn:li:ffp}
We call the obtained dg vertex Poisson algebra $\ol{\Wedge}^{\sinf}(U)$ 
the \emph{free fermionic vertex Poisson algebra}.
\end{dfn}

We can check that the associated dg Poisson algebra 
(Definition \ref{dfn:li:dgapa}) is the classical Clifford algebra $\cCl(U)$
(Definition \ref{dfn:pr:cCl}).

\subsubsection{Vertex Poisson structure of arc space}
\label{sss:li:arc}

We cite from \cite[\S 2.3]{A12} the third construction of 
a dg vertex Poisson algebra via the arc space of a dg Poisson algebra.
Recall the arc space $J_\infty(R)$ of a commutative dg algebra $R$ 
(Definition \ref{dfn:jet:dgarc}).
We denote by $T$ the $0$-derivation on $J_\infty(R)$.

\begin{prp}\label{fct:li:0vpa}
For a dg Poisson algebra $R$ which is of finite type as a commutative algebra,
there is a unique dg vertex Poisson algebra structure on $J_\infty(R)$ such that
\begin{align}\label{eq:li:0vpa}
 u_{(n)}(T^l v) = 
 \begin{cases} 
  \frac{l!}{(l-n)!}T^{l-n}\{u,v\}_R & (l \ge n), \\ 
  0 & (l<n).
 \end{cases}
\end{align}
for $u,v \in R \subset J_\infty(R)$ and $l \in \bbN$.
It is called the \emph{level $0$ dg vertex Poisson algebra}.
\end{prp}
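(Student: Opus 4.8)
The plan is to build the dg vertex Poisson structure on $J_\infty(R)$ by reducing to the already-established symmetric algebra construction of \S\ref{sss:li:SymL}, and then to check uniqueness via the universal property of the arc space from Fact \ref{fct:jet:da}. First I would observe that a dg Poisson algebra $R$ can be regarded as an (abelian) dg vertex Lie algebra, but that is too crude: the formula \eqref{eq:li:0vpa} dictates a genuinely non-abelian polar structure coming from the Poisson bracket. Instead, I would take the dg Lie algebra underlying $R$ — namely $(R,d_R,\{-,-\}_R)$ — and feed it into Lemma \ref{lem:li:0vla} to obtain the level $0$ dg vertex Lie algebra $J_\infty(R_{\mathrm{Lie}}) = (R[[t]],d_R,T,Y_-)$, where $T$ is multiplication by $t$. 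The point is that \eqref{eq:li:vla:ex} with $\frkl = R_{\mathrm{Lie}}$ gives exactly the polar operations that \eqref{eq:li:0vpa} prescribes once we identify $R \subset J_\infty(R)$ with the degree-$(-1)$ part. So the skeleton of the construction is already present; what is missing is the commutative multiplication on $J_\infty(R)$ and the compatibility (Leibniz) between it and $Y_-$.

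The key steps, in order: (1) recall from \S\ref{ss:jet:ord} (Remark \ref{rmk:jet:aff}, Fact \ref{fct:jet:da}) the explicit presentation $J_\infty(R) = \bbk[x^i_{(-j-1)}]/(T^k f_l)$ together with its canonical $0$-derivation $T$, so that $(J_\infty(R),T,\cdot,1)$ is a commutative differential algebra containing $R$ with $T|_R = 0$ and $T x^i_{(-j)} = j x^i_{(-j-1)}$; by Lemma \ref{lem:vpa:cvs=cds} this is the same as a commutative dg vertex algebra structure $(J_\infty(R),d,\vac,T,Y_+)$. (2) Define $Y_-$ on generators by declaring, for $u \in R$, the field $u_{(n)}$ ($n \ge 0$) to act as the unique $\abs{u}$-derivation of $(J_\infty(R),\cdot)$ extending the rule \eqref{eq:li:0vpa} on the generating set $\{T^l v : v \in R\}$ and commuting appropriately with $T$ (using $(Tu)_{(n)} = -n\,u_{(n-1)}$ to propagate to all of $J_\infty(R)$, since every element of $J_\infty(R)$ is a polynomial in the $T^l v$). (3) Verify the axioms of Definition \ref{dfn:li:vla} for $(J_\infty(R),d,T,Y_-)$: conditions \ref{i:li:vla:1} and \ref{i:li:vla:4} are immediate from the construction and $[d,T]=0$; condition \ref{i:li:vla:3} (the commutator formula) reduces, via the derivation property, to the Jacobi identity of $\{-,-\}_R$ together with the combinatorial identity already verified in Lemma \ref{lem:li:0vla}; condition \ref{i:li:vla:2} (skew-symmetry) likewise follows from anti-commutativity of $\{-,-\}_R$ propagated through $T$. (4) Check the mixed compatibility \eqref{eq:li:dgvp}, i.e.\ that each $Y_-(a,z)$ lands in derivations of the commutative product of the correct cohomological degree — this is built in by fiat in step (2), but one must confirm it is consistent, i.e.\ that the derivation extending \eqref{eq:li:0vpa} actually exists and is well-defined modulo the defining ideal $(T^k f_l)$; this uses that $\{-,-\}_R$ descends to $R$ and that $T$ annihilates the relations compatibly. (5) Uniqueness: any dg vertex Poisson structure on $J_\infty(R)$ satisfying \eqref{eq:li:0vpa} has its $Y_+$ forced (a commutative vertex algebra structure on a $0$-derivation algebra is unique by Lemma \ref{lem:vpa:cvs=cds}, and $T$ is the canonical derivation by Fact \ref{fct:jet:da}), and its $Y_-$ is then forced on the generators $R$ by \eqref{eq:li:0vpa} and on all of $J_\infty(R)$ by the relations $(Tu)_{(n)} = -n u_{(n-1)}$ and the Leibniz rule \eqref{eq:li:dgvp}.

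I expect the main obstacle to be step (4): verifying that the polar operations $u_{(n)}$ are \emph{well-defined} as operators on the quotient $J_\infty(R) = J_\infty(\bbk[x^i])/(T^k f_l)$, i.e.\ that they preserve the differential ideal generated by the relations. In the non-dg case this is \cite[\S2.3]{A12}, and the dg case adds only the bookkeeping of Koszul signs and the check that the differential $d_R$ (now acting nontrivially, unlike in the classical setting) is itself a derivation for all the $Y_-(a,z)$ — which is precisely condition \ref{i:li:vla:4} and follows because $d_R$ is a derivation of $\{-,-\}_R$. A clean way to sidestep the quotient issue entirely is to note that when $R$ is a polynomial Poisson algebra the claim is Lemma \ref{lem:li:0vla} applied to $R_{\mathrm{Lie}}$ together with the symmetric-algebra construction of \S\ref{sss:li:SymL} (here $\Sym$ of the vertex Lie algebra generated by the $x^i$ with their bracket), and the general case follows by taking the quotient dg vertex Poisson algebra by the dg vertex Poisson ideal generated by the $f_l$ — an ideal because the $f_l$ generate a Poisson ideal in $R$ and $T$-closure is automatic. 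This reduces everything to facts already in the excerpt, with only sign-tracking left as routine.
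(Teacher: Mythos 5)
Your main route is, in substance, the paper's: following \cite[Proposition 2.3.1]{A12}, one defines $u_{(n)}$ for $u \in R$ as derivations of $J_\infty(R)$ via \eqref{eq:li:0vpa}, checks the dg vertex Lie axioms (with the Jacobi identity of $\{-,-\}_R$ doing the work), and observes that the genuinely dg conditions --- compatibility with the differential and homogeneity of cohomological degree --- are immediate. Two points, however, are not covered by your steps. First, $T$ does not vanish on $R \subset J_\infty(R)$ (e.g.\ $T x^i_{(-1)}=x^i_{(-2)}$; were $T|_R=0$, the case $l\ge 1$ of \eqref{eq:li:0vpa} would be vacuous) --- a slip, not a structural problem. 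Second, and more seriously, your construction never defines $Y_-(a,z)$ for a general $a \in J_\infty(R)$: the formula \eqref{eq:li:0vpa} together with $(Ta)_{(n)}=-n a_{(n-1)}$ only produces the fields of elements of $\sum_{l} T^l R$, which is a proper subspace not closed under multiplication (already for $R=\bbk[x]$ the element $(Tx)^2=x_{(-2)}^2$ lies outside it), while the Leibniz rule \eqref{eq:li:dgvp} governs only the second slot. The paper closes exactly this hole by setting $Y_-(a,z)(T^l u):=\bigl(e^{zT}(-\partial_z)^l u_-(-z)a\bigr)_-$ for arbitrary $a \in J_\infty(R)$ and $u \in R$, i.e.\ by imposing skew-symmetry (condition \ref{i:li:vla:2} of Definition \ref{dfn:li:vla}) as the definition, and only then citing \cite{A12} for the verification; your uniqueness step needs skew-symmetry for the same reason, since translation covariance and Leibniz alone do not force $Y_-$ on products in the first argument.

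The ``clean way to sidestep the quotient issue'' is the one step that would actually fail. For a general finite-type Poisson algebra the brackets $\{x^i,x^j\}_R$ are arbitrary polynomials, so the span of the chosen generators is not closed under the bracket and there is no vertex Lie algebra ``generated by the $x^i$ with their bracket'' of the shape required by \S\ref{sss:li:SymL}; the identification $J_\infty(\Sym(\frkl))\simeq\Sym(J_\infty(\frkl))$ of Example \ref{eg:li:arc-sg} is special to linear (Kirillov--Kostant) brackets. If instead you take $\Sym$ of the level-$0$ vertex Lie algebra on all of $R$ (Lemma \ref{lem:li:0vla}) and pass to a quotient, the ideal you must kill --- generated by elements such as $u \odot v - u\cdot v$ and by the unit of $R$ minus the vacuum --- has to be shown stable under all the polar operations, which is precisely the well-definedness problem you hoped to avoid. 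Drop that remark; with the skew-symmetry extension added, your main argument coincides with the paper's proof.
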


\begin{proof}
We follow the proof of the non-dg case given in \cite[Proposition 2.3.1]{A12}.
By the formula \eqref{eq:li:0vpa}, we have a well-defined morphism of complexes
\[
 R \longto \Der(J_\infty(R))[[z^{-1}]]z^{-1}, \quad 
 u \longmapsto u_-(z) := \tsum_{n \in \bbN} u_{(n)} z^{-n-1}.
\]
We obviously have $u_-(z) \in \Der(J_\infty(R))^{\abs{u}}[[z^{-1}]]z^{-1}$ 
for homogeneous $u \in R$, and can check the conditions 
\ref{i:li:vla:1}--\ref{i:li:vla:4} in Definition \ref{dfn:li:vla} of 
dg vertex Lie algebra structure (see Proof of Lemma \ref{lem:li:0vla}).
This morphism is extended to 
$Y_-(-,z): J_\infty(R) \to \Der(J_\infty(R))[[z^{-1}]]z^{-1}$ by 
\[
 Y_-(a,z)(T^l u) := \bigl(e^{z T}(-\partial_z)^l u_-(-z)a\bigr)_-
\]
for $a \in J_\infty(R)$, $u \in R$ and $l \in \bbN$.
By \cite[Proposition 2.3.1]{A12}, we know that it defines a vertex Poisson
superalgebra structure on the associated commutative superalgebra 
$J_\infty(R)^{\even} \oplus J_\infty(R)^{\odd}$.
Thus it remains to check the condition \ref{i:li:vla:4} 
in Definition \ref{dfn:li:vla} and the condition 
$Y_-(a,z) \in \Der(J_\infty(R))^{\abs{a}}[[z^{-1}]]z^{-1}$ 
for homogeneous $a \in J_\infty(R)$, which are obvious.
\end{proof}

Hereafter we regard $J_\infty(R)$ as the level $0$ vertex Poisson algebra
unless otherwise stated.
Recall the associated dg Poisson algebra $R^{\cois}_P$ 
for a dg vertex Poisson algebra $P$ (Definition \ref{dfn:li:dgapa}).
By the construction, we have:

\begin{lem}\label{lem:li:RJ=R}
For a commutative dg algebra $R$ of finite type, we have 
\[
 R^{\cois}_{J_\infty(R)} \simeq R.
\]
\end{lem}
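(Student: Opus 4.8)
The plan is to realize the asserted isomorphism as the composite
\[
 \phi \colon R \xrightarrow{\ j\ } J_\infty(R) \xrightarrow{\ \pi\ } J_\infty(R)/\Img(T) = R^{\cois}_{J_\infty(R)},
\]
where $j$ is the canonical morphism of Fact \ref{fct:jet:da} (extended to the dg setting) and $\pi$ is the projection of Definition \ref{dfn:li:dgapa}, and then to show that $\phi$ is a bijective morphism of dg Poisson algebras.

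First I would check that $\phi$ is a morphism of dg Poisson algebras. It is a chain map and a morphism of the underlying cdgas, since $j$ and $\pi$ both are. For the Poisson bracket, recall from Proposition \ref{fct:li:0vpa} that on $R \subset J_\infty(R)$ the level $0$ structure satisfies $u_{(0)}v = \{u,v\}_R$ (specialize $l=n=0$ in \eqref{eq:li:0vpa}); hence in $R^{\cois}_{J_\infty(R)}$ one has $\{\phi(u),\phi(v)\} = \ol{u_{(0)}v} = \ol{\{u,v\}_R} = \phi(\{u,v\}_R)$ for all $u,v\in R$.

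Next, bijectivity. For surjectivity I would use the explicit description of the arc space from the proof of Lemma \ref{lem:jet:daff}, which via a free resolution $\wt R = \colim_m R_m$ reduces to the polynomial case of Remark \ref{rmk:jet:aff}: $J_\infty(R)$ is generated as a commutative dg algebra by the elements $T^{j}r$ with $r \in R$, $j \in \bbN$ (concretely the generators $x^{m,s}_{(-j-1)}$ and $y^{m,s}_{(-j-1)}$, each a nonzero scalar multiple of $T$ applied to the one with index $j-1$). Every such element with $j \ge 1$ lies in $\Img(T)$, so modulo $\Img(T)$ the algebra $R^{\cois}_{J_\infty(R)}$ is generated by the image of $R$, and $\phi$ is surjective. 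For injectivity I would build a one-sided inverse: regard $R$ as a differential algebra with the zero derivation; by the universal property of the arc space (Fact \ref{fct:jet:da}, applied stage by stage along the resolution $R_m$), $\id_R$ extends uniquely to a morphism $\ve\colon (J_\infty(R),T) \to (R,0)$ of differential dg algebras, the ``evaluation at $t=0$'', with $T^{j}r \mapsto \delta_{j,0}\,r$. Since $\ve\circ T = 0$ we have $\Img(T) \subseteq \Ker(\ve)$, so $\ve$ factors as $\ve = \ol{\ve}\circ\pi$ for some $\ol{\ve}\colon R^{\cois}_{J_\infty(R)} \to R$, and $\ol{\ve}\circ\phi = \id_R$. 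Thus $\phi$ is a bijective morphism of dg Poisson algebras, hence an isomorphism of dg Poisson algebras.

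The main technical point is the dg bookkeeping along the free resolution $\wt R = \colim_m R_m$ used to define $J_\infty(R)$: one must verify that the derivation $T$, the generation statement, and the evaluation map $\ve$ are all defined compatibly at each finite stage $J_n(R_m)$ and commute appropriately with the differentials $d_m$ produced in the proof of Lemma \ref{lem:jet:daff}, so that everything passes to the colimit and to cohomology. Since both $J_\infty$ and $R^{\cois}_{(-)}$ are assembled from colimits and cokernels, once compatibility at each stage $R_m$ is established the passage to $\wt R$ and the independence (up to quasi-isomorphism) of the chosen resolution are formal.
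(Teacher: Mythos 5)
Your proof is correct and is essentially the paper's own (unwritten) argument: the paper asserts the lemma ``by the construction,'' and your write-up is precisely what that unpacks to --- $J_\infty(R)$ is generated over $R$ by the elements $T^j r$ with $j\ge 1$, which vanish in the quotient, while the evaluation at $t=0$ (the morphism classified by the constant-series map $R\to R[[t]]$ via the universal property) provides the splitting showing the image of $R$ injects. The only point worth making explicit is that, in Definition \ref{dfn:li:dgapa}, ``$\Img(T)$'' has to be read as the dg ideal it generates --- equivalently the span of the products $(Ta)\cdot b$, the vertex Poisson analogue of $C_2$ --- since the bare linear span is not stable under multiplication; your surjectivity step implicitly (and correctly) uses this reading, and your splitting argument is unaffected by it.
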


\begin{eg}\label{eg:li:arc-sg}
Let $\frkl$ be a dg Lie algebra and consider the Kostant-Kirillov dg Poisson
algebra $R=\Sym(\frkl)$ (Notation \ref{ntn:sp:KK}).
Then we have the level $0$ vertex Poisson superalgebra $J_\infty(\Sym(\frkl))$.

By Remark \ref{rmk:jet:aff} of the description of $J_\infty(\Sym(\frkl))$, 
we have
\[
 J_{\infty}(\Sym(\frkl)) \simeq \Sym(\frkl[[t]])
\]
as commutative dg algebras,
where $\frkl[[t]]=\frkl \otimes \bbk[[t]]$ is the tensor product in $\dgVec$.
Let us denote an element of $\frkl[[t]]$ by $x f := x \otimes f$
with $x \in \frkl$ and $f=f(t) \in \bbk[[t]]$.
Then the level $0$ dg vertex Poisson structure is given by 
\[
 (x f)_{(n)}(y g)= \delta_{n,0} [x,y]_{\frkl} \otimes (f g).
\]
This is nothing but the standard Lie algebra structure on $\frkl[[t]]$.
We can also check that the vertex Lie algebra structure on the restriction
$\frkl[[t]] = \Sym(\frkl[[t]])^1 \inj J_\infty(\Sym(\frkl))$ 
coincides with the level $0$ vertex Lie algebra $J_\infty(\frkl)$ 
in Lemma \ref{lem:li:0vla}
(compare \eqref{eq:li:vla:ex} and \eqref{eq:li:0vpa}).
Thus we have an isomorphism of dg vertex Poisson algebras  
\[
 J_\infty(\Sym(\frkl)) \simeq \Sym(J_\infty(\frkl)),
\]
where the right hand side denotes the symmetric vertex Poisson algebra
(Definition \ref{dfn:li:sym-vp}).
\end{eg}


Recall the twisted symmetric vertex Poisson algebra $\Sym^\flat(L)$
for a dg vertex Poisson algebra (Definition \ref{dfn:li:sym-vp}).
Using this construction, we can recover the free fermion vertex Poisson algebra
$\ol{\Wedge}^{\sinf}(U)$ in Definition \ref{dfn:li:ffp}.
The present construction is in fact a coordinate-dependent version of 
the \emph{Clifford coisson algebra} in \cite[1.4.21]{BD}.

Let $U$ be a complex.
We denote by $U^*=\uHom(U,\bbk)$ the dual complex, and by 
$\pair{\cdot,\cdot}: U^* \otimes U \to \bbk$ the natural pairing.
We also denote $J_\infty(U) := U[[t]] $ and consider it as a 
dg vertex Lie algebra attached to the trivial dg Lie algebra $U$.
(Lemma \ref{lem:li:0vla}).
Similarly we have a dg vertex Lie algebra 
$J_\infty(U)^* = U^*[[t^{-1}]]$,
and the direct sum $J_\infty(U)[1] \oplus J_\infty(U)^*[-1]$.
By the pairing $\pair{\cdot,\cdot}$, 
we have a one-dimensional central extension of the direct sum, 
and thus we have the twisted symmetric dg vertex Poisson algebra
$\Sym^\flat\bigl( J_\infty(U)[1] \oplus J_\infty(U)^*[-1] \bigr)$.

\begin{dfn}\label{dfn:li:coCl}
For a complex $U$, we denote the above dg vertex Poisson algebra by
\[
 \coCl(J_\infty(U)) := 
 \Sym^\flat\bigl(J_\infty(U)[1] \oplus J_\infty(U)^*[-1] \bigr)
\]
and call it the \emph{Clifford vertex Poisson algebra}.
\end{dfn}

By comparing the description in \S \ref{sss:li:qc}, we have 
\[
 \coCl(J_\infty(U)) \simeq \ol{\Wedge}^{\sinf}(U)
\]
as dg vertex Poisson algebras.
In particular, we have:

\begin{cor}\label{cor:li:coCl}
The associated dg Poisson algebra (Definition \ref{dfn:li:dgapa})
of $\coCl(J_\infty(U))$ 
coincides with the classical Clifford algebra $\cCl(U)$:
\[
 R^{\cois}_{\coCl(J_\infty(U))} \simeq \cCl(U)
\]
\end{cor}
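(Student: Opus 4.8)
The plan is to deduce the statement formally from the isomorphism $\coCl(J_\infty(U)) \simeq \ol{\Wedge}^{\sinf}(U)$ of dg vertex Poisson algebras recorded just above, together with the functoriality of the construction $P \mapsto R^{\cois}_P$ (Definition \ref{dfn:li:dgapa}): applying the functor $R^{\cois}_{(-)}$ to that isomorphism gives $R^{\cois}_{\coCl(J_\infty(U))} \simeq R^{\cois}_{\ol{\Wedge}^{\sinf}(U)}$, so the whole matter reduces to identifying the right-hand side, which is exactly the computation announced at the end of \S\ref{sss:li:qc}. To keep the corollary self-contained I would carry that computation out directly on $\coCl(J_\infty(U))$, which is also the more transparent route since the ``dual'' factor $J_\infty(U)^*[-1] = U^*[[t^{-1}]][-1]$ is defined with a $d t$-twist that is easiest to track in the fermionic presentation.

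Recall from Definition \ref{dfn:li:coCl} that $\coCl(J_\infty(U)) = \Sym^\flat\bigl(J_\infty(U)[1] \oplus J_\infty(U)^*[-1]\bigr)$; in the fermionic description of \S\ref{sss:li:qc} it is, as a commutative dg algebra, freely generated (modulo the single twisting relation built into $\Sym^\flat$) by the elements $\ol{\psi}_i t^{m}$ with $m \le -1$ and $\ol{\psi}_j^* t^{n-1} d t$ with $n \le 0$, and the translation $T = -\partial_t$ acts as the derivation with $T(\ol{\psi}_i t^{-k}) = k\, \ol{\psi}_i t^{-k-1}$ and $T(\ol{\psi}_j^* t^{-k} d t) = k\, \ol{\psi}_j^* t^{-k-1} d t$ for $k \ge 1$. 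Transporting the Hamiltonian grading of $\Wedge^{\sinf}(U)$, the complex $\coCl(J_\infty(U))$ carries a weight grading for which $T$ has weight $+1$ and the weight-minimal generators are exactly $\ol{\psi}_i t^{-1}$ and $\ol{\psi}_j^* t^{-1} d t$. Now I would run the argument of Lemma \ref{lem:li:RJ=R}: the weight grading forces $\Img(T) \cap \Sym^\flat(U[1] \oplus U^*[-1]) = 0$, while an induction on weight, using that $T$ is a derivation, shows every class in $\coCl(J_\infty(U))/\Img(T)$ has a representative in the subalgebra generated by the weight-minimal generators. Hence $\ol{\psi}_i \mapsto \overline{\ol{\psi}_i t^{-1}}$, $\ol{\psi}_j^* \mapsto \overline{\ol{\psi}_j^* t^{-1} d t}$ induces an isomorphism of commutative dg algebras $\cCl(U) = \Sym^\flat(U[1] \oplus U^*[-1]) \simto R^{\cois}_{\coCl(J_\infty(U))}$.

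It then remains to match the Poisson structures. On $R^{\cois}_{\coCl(J_\infty(U))}$ the bracket is $\{\ol{a},\ol{b}\} = \overline{a_{(0)}b}$, and from the explicit $Y_-$ of \S\ref{sss:li:qc} one has $(\ol{\psi}_i t^{-1})_{(0)} = \ol{\psi}_i t^{0}$; the Clifford commutation relations $[\ol{\psi}_i t^{m}, \ol{\psi}_j^* t^{n-1} d t] = \delta_{i,j}\delta_{m+n,0}$ then give that $\ol{\psi}_i t^{0}$ sends $\ol{\psi}_j^* t^{-1} d t$ to $\delta_{i,j}\vac$ and $\ol{\psi}_j t^{-1}$ to $0$, i.e. $\{\ol{\psi}_i,\ol{\psi}_j^*\} = \delta_{i,j}$ and $\{\ol{\psi}_i,\ol{\psi}_j\} = 0 = \{\ol{\psi}_i^*,\ol{\psi}_j^*\}$ in the quotient, which is precisely the bracket of Definition \ref{dfn:pr:cCl}. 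This upgrades the isomorphism above to one of dg Poisson algebras, proving the claim.

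The only step needing genuine (though routine) care is the weight-grading bookkeeping in the second paragraph: one must check that the $d t$-twist in the dual factor does not interfere with the grading and that $T$ acts freely ``downward'' along each chain of generators, so that $R^{\cois}$ of this twisted symmetric dg vpa over an arc-type vertex Lie algebra collapses onto its weight-minimal part, exactly in parallel with Lemma \ref{lem:li:RJ=R}. Everything else is a formal consequence of the functoriality of $R^{\cois}_{(-)}$ and the identification $\coCl(J_\infty(U)) \simeq \ol{\Wedge}^{\sinf}(U)$.
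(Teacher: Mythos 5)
Your proposal is correct and follows essentially the paper's own route: the paper deduces the corollary from the identification $\coCl(J_\infty(U)) \simeq \ol{\Wedge}^{\sinf}(U)$ together with the computation of the associated dg Poisson algebra that is merely asserted at the end of \S\ref{sss:li:qc}, and your second and third paragraphs simply carry out that asserted check explicitly, in parallel with Lemma \ref{lem:li:RJ=R}. There is no gap.
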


Using $\coCl(J_\infty(U))$ we will introduce \emph{coisson BRST reduction} 
in the following \S \ref{s:co}.

\begin{rmk}\label{rmk:li:coCl}
In some literature such as \cite[\S 16.7]{FBZ}, what we call coisson BRST 
reduction is called the classical BRST reduction.
In that terminology $\coCl(J_\infty(U))$ should  be called 
the \emph{classical} Clifford algebra,
conflicting our terminology (Definition \ref{dfn:pr:cCl}).
\end{rmk}

\subsection{Li filtration}
\label{ss:li:li}

In this subsection we introduce the \emph{Li filtration} for a dg vertex algebra.
Main references are \cite{Li} and \cite{A15}.

\subsubsection{Definition}
\label{sss:li:li}

In the following we use:

\begin{ntn*}
For a linear space $V$ and a subset $S \subset V$,
we denoted by $\lsp{S}$ the linear subspace spanned by the elements in $S$.
In the case $S=\{s_i \mid i \in I\}$, 
we also denote it by $\lsp{s_i \mid i \in I}$.
\end{ntn*}

For a dg vertex algebra $V$, $a \in V$, and a dg $V$-module $M$,
we denote $Y_M(a,z)=\sum_{n \in \bbZ} a_{(n)}^M z^{-n-1}$ as before.
Then we define a linear subspace $F^p M \subset M$ by 
\[
 F^p M := \lsp{(a^M_1)_{(-n_1-1)} \cdots (a^M_r)_{(-n_r-1)} m \mid 
                m \in M, \ r \in \bbZ_{>0}, \ a_i \in V, \ n_i \in \bbN, \ 
                \tsum_{i=1}^r n_i \ge p}.
\]
Then we have a decreasing filtration 
$M = F^0 M \supset F^1 M \supset  F^2 M \supset \cdots$
of linear spaces.
By induction using $a_{(n)}M^i \subset M^{i+j}$ and 
$d_M(a_{(n)}^M m)=(d_V a)_{(n)}m+(-1)^{\abs{a}} a_{(n)}(d_M m)$ 
in Definition \ref{dfn:li:dgMod} of dg $V$-module, 
we can check that the differential $d_M$ preserves the Li filtration: 
$d_M(F^p M) \subset F^p M$ for any $p \in \bbZ$.
Thus $F^{\bl} M$ is in fact a filtration of complexes.

\begin{dfn}[{\cite[Definition 2.7, Lemma 2.8]{Li}}]\label{dfn:li:li}
For a dg vertex algebra $V$ and a dg $V$-module $M$,
we call the decreasing filtration of complexes 
\[
 M = F^0 M \supset F^1 M \supset  F^2 M \supset \cdots
\]
the \emph{Li filtration} of $M$.
In particular, we can take $M:=V$ and have the Li filtration of $V$:
\[
 V = F^0 V\supset F^1 V \supset  F^2 V \supset \cdots.
\]
\end{dfn}

Below we set $F^{p}M := M$ for $p \in \bbZ_{<0}$.

\begin{lem}[{\cite[Lemma 2.9, Proposition 2.11]{Li}}]\label{lem:li:li}
Let $V$ and $M$ be as in Definition \ref{dfn:li:li}.
\begin{enumerate}[nosep]
\item
\label{i:lem:li:li:1}
For any $p \in \bbZ_{\ge 1}$, we have 
\[
 F^p M = \lsp{a_{(-i-1)}m \mid 
 a \in V, i \in \bbZ, 1 \le i \le p, m \in F^{p-i}M}.
\]
In particular, taking $M:=V$, we have 
\[
 F^1 V = C_2(V):=\lsp{a_{(-2)}b \mid a,b \in V}.
\]

\item 
$a_{(n)}F^q M \subset F^{p+q-n-1}M$ 
for any $p,q \in \bbZ$, $a \in F^p V$ and $n \in \bbZ$.

\item
$a_{(n)}F^q M \subset F^{p+q-n}M$ 
for any $p,q \in \bbZ$, $a \in F^p V$ and $n \in \bbN$.

\item
$T(F^p V) \subset F^{p+1} V$ for any $p \in \bbZ$.
\end{enumerate}
\end{lem}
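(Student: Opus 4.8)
The plan is to transcribe the proof of \cite[Lemma 2.9, Proposition 2.11]{Li} (see also \cite{A15}, \cite[\S 2.3]{A12}) to the differential graded setting. All four assertions concern linear spans rather than precise coefficients, so the Koszul sign modifications forced by Definitions \ref{dfn:li:dgva}, \ref{dfn:li:dgMod}, \ref{dfn:li:gvmod} are immaterial; the only genuinely new point to record is that the whole argument is compatible with the cohomological grading, which is immediate since the state--field correspondence is homogeneous and $d_M$ acts as an odd derivation for the module structure, so that $d_M(F^{\bl}M)\subset F^{\bl}M$ (already observed before the statement). The inputs from the non-dg theory that are needed, all valid here, are: the Borcherds commutator and associativity formulas for a (super) vertex algebra and its modules (Remark \ref{rmk:li:vsa} and the module analogue built into Definition \ref{dfn:li:gvmod}); the relation $(T a)_{(n)}=-n\,a_{(n-1)}$ and its iterate $a_{(-n-1)}=\tfrac{p!}{n!}(T^{\,n-p}a)_{(-p-1)}$ for $n\ge p$; the skew-symmetry; and $T a = a_{(-2)}\vac$.

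I would organize everything as a single simultaneous induction on $p$, exactly as in \cite{Li}. \textbf{Part (1).} The inclusion $\supset$ is immediate from the defining formula for $F^{\bl}M$: if $m\in F^{p-i}M$ is a span of monomials of total order $\ge p-i$, then $a_{(-i-1)}m$ has total order $\ge p$. For $\subset$, I would take a generating monomial $(a_1)_{(-n_1-1)}\cdots(a_r)_{(-n_r-1)}m$ with $\tsum_{j=1}^r n_j\ge p$ and peel off the leftmost mode: if $1\le n_1\le p$, set $i:=n_1$ and observe that $(a_2)_{(-n_2-1)}\cdots(a_r)_{(-n_r-1)}m\in F^{p-i}M$; if $n_1\ge p$, rewrite $(a_1)_{(-n_1-1)}=\tfrac{p!}{n_1!}(T^{\,n_1-p}a_1)_{(-p-1)}$ and take $i:=p$; and if $n_1=0$, push $(a_1)_{(-1)}$ past the remaining factor --- which by the inductive hypothesis of Part (1) may be assumed of the form $b_{(-i-1)}m'$ --- using the commutator formula and then reapply the inductive hypothesis. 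The specialization $F^1V=C_2(V)$ is just the case $p=1$.

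\textbf{Parts (2) and (3).} Using Part (1) it suffices to treat $a=b_{(-i-1)}c$ with $1\le i\le p$ and $c\in F^{p-i}V$, acting on a generating element of $F^qM$. One applies the associativity formula to $(b_{(-i-1)}c)_{(n)}(-)$, expands it into a sum of terms of the shape $b_{(\ast)}(c_{(\ast)}(-))$ and $c_{(\ast)}(b_{(\ast)}(-))$, and checks that each lies in the claimed filtration level by adding the individual contributions, again using $a_{(-n-1)}=\tfrac{p!}{n!}(T^{\,n-p}a)_{(-p-1)}$ to bring exponents into range and invoking the inductive hypotheses on $b,c$, whose filtration levels are $\le p$. \textbf{Part (4)} follows from Part (1) and the identity $T(a_{(-i-1)}m)=(i+1)\,a_{(-i-2)}m+a_{(-i-1)}(Tm)$: the first term lies in $F^{p+1}V$ by Part (1) at level $p+1$, the second by the inductive hypothesis of Part (4) at level $p-i$, with base case $T a=a_{(-2)}\vac\in C_2(V)=F^1V$.

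The main obstacle is purely organizational: one must lay out the simultaneous induction so that it closes --- Part (4) at level $p$ calls Part (1) at level $p+1$, Parts (2)--(3) at level $p$ call Part (1) at levels $\le p$ and themselves at lower levels, and Part (1) at level $p$ calls its own lower levels together with the commutator formula. This nesting is arranged precisely as in \cite{Li}, and beyond careful bookkeeping no new phenomenon arises in the dg case.
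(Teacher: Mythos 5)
Your proposal is correct and takes essentially the same route as the paper, which gives no argument of its own but simply cites \cite{Li} and tacitly relies on Li's induction carrying over verbatim to the dg setting --- exactly the transcription you perform, with the key identities ($a_{(-n-1)}=\tfrac{p!}{n!}(T^{\,n-p}a)_{(-p-1)}$, the Borcherds commutator formula, $Ta=a_{(-2)}\vac$, and $d$-compatibility of the filtration) all used correctly. The only thin spot is the base case $p=0$, $n\in\bbN$ of parts (2)--(3), which cannot be reached via the decomposition of part (1) and instead requires a direct induction on the filtration degree $q$ of the module element using the commutator formula; this is precisely how \cite{Li} arranges the induction, so your deferral to that organization covers it.
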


By these properties of the Li filtration, we have a canonical construction of
vertex Poisson algebra from any vertex algebra.
We follow \cite{A15} for the notation.

\begin{fct}[{\cite[Theorem 2.12]{Li}}]\label{fct:li:li}
Let $V=(V,d,\vac,T,Y)$ be a dg vertex algebra.
\begin{enumerate}[nosep]
\item \label{i:li:cdga}
The associated graded space
\[
 \gr^F V := \tboplus_{p \in \bbN} F^p V/F^{p+1}V
\]
is a commutative dg algebra equipped with an additional $0$-derivation $\delta$
given by
\[
 \sigma_p(a) \cdot \sigma_q(b) := \sigma_{p+q}(a_{(-1)}b), \quad 
 \delta \sigma_p(a) := \sigma_{p+1}(a_{(-2)}\vac) = \sigma_{p+1}(T(a))
\]
Here $\sigma_p: F^p V \surj F^p V/F^{p+1}V$ denotes the projection.

\item
$\gr^F V$ has a structure of dg vertex Poisson algebra 
whose commutative dg vertex algebra  structure is given by \eqref{i:li:cdga}
and Lemma \ref{lem:vpa:cvs=cds},
and whose dg vertex Lie algebra structure is given by
\[
 \sigma_p(a)_{(n)}\sigma_q(b) := \sigma_{p+q-n}(a_{(n)}b) \quad (n \in \bbN).
\]
Here we understand $\sigma_r(a)=0$ for $r<0$.
\end{enumerate}
\end{fct}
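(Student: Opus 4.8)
The plan is to reduce the statement to Li's theorem \cite[Theorem 2.12]{Li} (and, in the $\bbZ$-graded/super setting, \cite[\S 2]{A15}) applied to the underlying vertex superalgebra $V^{\even}\oplus V^{\odd}$ (Notation \ref{ntn:li:s-g}), and then to check that the cohomological grading and the differential upgrade the resulting vertex Poisson superalgebra structure on $\gr^F V$ to a dg one.

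First I would verify that the formulas in the statement give well-defined operations on the quotients. For the product, if $a\in F^{p+1}V$ (resp.\ $b\in F^{q+1}V$) then $a_{(-1)}b\in F^{p+q+1}V$ by Lemma \ref{lem:li:li}, so $\sigma_p(a)\cdot\sigma_q(b):=\sigma_{p+q}(a_{(-1)}b)$ descends to $\gr^F V$; the $0$-derivation $\delta\sigma_p(a):=\sigma_{p+1}(a_{(-2)}\vac)$ is well defined since $T(F^pV)\subset F^{p+1}V$ and $a_{(-2)}\vac=Ta$ by Remark \ref{rmk:li:vsa}~(\ref{i:rmk:li:vsa:2}); and for $n\in\bbN$ we have $a_{(n)}F^qV\subset F^{p+q-n}V$ whenever $a\in F^pV$ by Lemma \ref{lem:li:li}, so $\sigma_p(a)_{(n)}\sigma_q(b):=\sigma_{p+q-n}(a_{(n)}b)$ descends, the convention $\sigma_r=0$ for $r<0$ absorbing the range $n>p+q$.

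Next, forgetting $d$ and the $\bbZ$-grading, $V$ yields the vertex superalgebra $V^{\even}\oplus V^{\odd}$, and the super version of Li's theorem gives $\gr^F(V^{\even}\oplus V^{\odd})$ the structure of a vertex Poisson superalgebra via exactly these formulas: commutativity and associativity of $\cdot$ follow from the skew-symmetry $Y(a,z)b=(-1)^{p(a)p(b)}e^{zT}Y(b,-z)a$ and the Borcherds identity for $(a_{(m)}b)_{(n)}c$ (Remark \ref{rmk:li:vsa}~(\ref{i:rmk:li:vsa:1})) specialized at $m=n=-1$, where every correction term carries a positive power of $T$ or a non-negative mode and hence lies in strictly deeper filtration; the conditions \ref{i:li:vla:1}--\ref{i:li:vla:3} of Definition \ref{dfn:li:vla} for $Y_-$ and the Leibniz-type condition \eqref{eq:li:dgvp} relating $Y_-$ to $\cdot$ are likewise the images of the corresponding identities in $V^{\even}\oplus V^{\odd}$, again modulo deeper filtration. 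Because every mode $a_{(n)}$ preserves the cohomological degree up to the shift $\abs{a}$ (Definition \ref{dfn:li:dgva}), one has $p(a)=\abs{a}\bmod 2$, so the super signs $(-1)^{p(a)p(b)}$ are precisely the Koszul signs of $\dgVec$, and the $\bbZ$-grading by cohomological degree is respected by $\cdot$, $\delta$ and all $a_{(n)}$; this promotes $\gr^F(V^{\even}\oplus V^{\odd})$ to a graded vertex Poisson algebra in our sense.

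Finally I would install the differential. Since $d_V$ preserves the Li filtration (as recalled before Definition \ref{dfn:li:li}), it induces $d$ on $\gr^F V$ with $d\sigma_p(a)=\sigma_p(d_V a)$, squaring to zero and of cohomological degree $1$. That $d$ is a derivation of $\cdot$, commutes with $\delta$, and satisfies the derivation property \ref{i:li:vla:4} for $Y_-$ follows by passing to the quotient from the identities $d_V(a_{(n)}b)=(d_V a)_{(n)}b+(-1)^{\abs{a}}a_{(n)}(d_V b)$ and $[d_V,T]=0$ of Remark \ref{rmk:li:dv}~(\ref{i:rmk:li:dv:dab},~\ref{i:rmk:li:dv:dT}), which are exactly the dg vertex algebra axioms. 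Together with the previous step this yields the dg vertex Poisson algebra structure on $\gr^F V$, whose commutative part is read off via Lemma \ref{lem:vpa:cvs=cds}; the first assertion is the commutative-algebra content of this. I do not expect a genuine obstacle: the only point requiring care is the consistent bookkeeping of Koszul signs when transporting Li's $\bbZ/2$-graded computations through the functor $V\mapsto V^{\even}\oplus V^{\odd}$ and checking that the cohomological $\bbZ$-grading strictly refines the parity; once that is settled, the $d$-compatibility is immediate from the dg vertex algebra axioms.
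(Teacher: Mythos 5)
Your proposal is correct and follows essentially the same route as the paper, which simply cites \cite[Theorem 2.12]{Li} and remarks that ``the same proof works in the dg case''; your verification—well-definedness via Lemma \ref{lem:li:li}, reduction to the (super) non-dg theorem for $V^{\even}\oplus V^{\odd}$, and installing $d$ through the dg vertex algebra derivation axioms—is exactly the elaboration of that remark. No gap to report.
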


Precisely speaking, the statement in \cite[Theorem 2.12]{Li}
is for the non-dg case, but the same proof works in the dg case.

For a morphism $\phi: V \to W$ of dg vertex algebras, 
we have $\phi(F^p V) \subset F^p W$,
which induces a morphism  $\gr^F V \to \gr^F W$ of dg vertex Poisson algebras.
Thus we obtain:

\begin{lem}\label{lem:li:grdgv}
Taking the associated graded space of the Li filtration, 
we have a functor 
\[
 \gr^F: \dgVA \longto \dgVP.
\]
It is a monoidal functor of the symmetric monoidal structures.
\end{lem}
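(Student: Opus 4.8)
Here is the plan. The functoriality of $\gr^F$ is essentially recorded in the paragraph preceding the statement. A morphism $\phi\colon V\to W$ in $\dgVA$ satisfies $\phi(a_{(n)}b)=\phi(a)_{(n)}\phi(b)$ and preserves cohomological degree, so from the defining span of the Li filtration one gets at once $\phi(F^pV)\subset F^pW$ for all $p\in\bbZ$, hence an induced map $\gr^F\phi\colon\gr^FV\to\gr^FW$; since the commutative product, the derivation $\delta$, and the Lie bracket $\sigma_p(a)_{(n)}\sigma_q(b)=\sigma_{p+q-n}(a_{(n)}b)$ of Fact~\ref{fct:li:li} are all built from the operations $a_{(n)}$ and $T$ that $\phi$ intertwines, $\gr^F\phi$ is a morphism of dg vertex Poisson algebras. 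Compatibility with composition and identities is immediate, so the remaining content is the monoidal statement.

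The plan is to first establish the compatibility of the Li filtration with tensor products:
\[
 F^p(V\otimes W)=\tsum_{i+j=p}F^iV\otimes F^jW\qquad(p\in\bbN),
\]
with the convention $F^rU=U$ for $r<0$. For $\supseteq$, one uses $(a\otimes\vac_W)_{(n)}=a_{(n)}\otimes\id_W$ and $(\vac_V\otimes b)_{(n)}=\id_V\otimes b_{(n)}$ (consequences of $\vac_{(k)}=\delta_{k,-1}\id$) to see that $x\otimes\vac_W\in F^i(V\otimes W)$ for $x\in F^iV$ and $\vac_V\otimes y\in F^j(V\otimes W)$ for $y\in F^jW$; then $x\otimes y=(x\otimes\vac_W)_{(-1)}(\vac_V\otimes y)$ lies in $F^{i+j}(V\otimes W)$ by Lemma~\ref{lem:li:li}. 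For $\subseteq$, one expands a spanning element $(c_1)_{(-n_1-1)}\cdots(c_r)_{(-n_r-1)}(v\otimes w)$ of $F^p(V\otimes W)$ by means of the mode formula $(c\otimes c')_{(m)}=\sum_l c_{(l)}\otimes c'_{(m-l-1)}$ and tracks the filtration degree of each resulting term using Lemma~\ref{lem:li:li}~(2),(3) together with the truncation $a_{(l)}b=0$ for $l\gg0$; the indexing then forces every term into $\sum_{i+j\ge p}F^iV\otimes F^jW$.

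Granting this, the product filtration on the complex $V\otimes W$ has associated graded complex $\gr^F(V\otimes W)\simeq\gr^FV\otimes\gr^FW$, this being the standard behaviour of associated gradeds of tensor products of filtered vector spaces over a field; the isomorphism is natural in $V$ and $W$ by construction. It then remains to check that it is an isomorphism of dg vertex Poisson algebras and that it respects the associativity, unit, and symmetry constraints of the two symmetric monoidal structures. The constraint compatibilities are formal once naturality is known. For the algebra structures, one compares, for homogeneous elements of $F^pV\otimes F^qW$ and $F^{p'}V\otimes F^{q'}W$, the top-filtration-degree term of $(a\otimes b)_{(n)}(a'\otimes b')=\sum_m a_{(m)}a'\otimes b_{(n-m-1)}b'$ with the corresponding structure map of the tensor product dg vertex Poisson algebra $\gr^FV\otimes\gr^FW$ (Definition~\ref{dfn:li:dgvp}): the case $n=-1$ gives the commutative multiplication, the cases $n\ge0$ give the vertex Lie bracket, and $T_{V\otimes W}=T_V\otimes\id+\id\otimes T_W$ gives the intertwining of the derivations $\delta$. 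Each comparison is a short mode computation, with signs dictated by the super-convention of Notation~\ref{ntn:li:s-g}.

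The main obstacle is the filtration identity $F^p(V\otimes W)=\sum_{i+j=p}F^iV\otimes F^jW$, and within it the inclusion $\subseteq$: bounding the filtration degree of each term in the expansion of $(c\otimes c')_{(m)}$ requires the simultaneous use of both filtration estimates in Lemma~\ref{lem:li:li} (the one losing $n+1$ on negative modes and the one losing $n$ on nonnegative modes) together with the truncation property, and the combinatorics is somewhat delicate. The second delicate point is checking that the resulting isomorphism intertwines the vertex Lie brackets, where the Koszul signs arising from the passage to associated super objects must be handled with care; the remaining verifications are routine.
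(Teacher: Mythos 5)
Your functoriality argument is exactly the paper's: the paper disposes of it in the single remark that $\phi(F^pV)\subset F^pW$, and you reproduce that observation with the correct justification. For the monoidal assertion the paper offers no proof at all, so your filtration identity
\[
 F^p(V\otimes W)=\tsum_{i+j=p}F^iV\otimes F^jW
\]
is not a ``different route'' so much as the natural way to fill a gap the text leaves open, and it does work. The two delicate points you flag check out: for the inclusion $\subseteq$, expanding $(a\otimes b)_{(-n-1)}=\sum_m a_{(m)}\otimes b_{(-n-m-2)}$ against $F^iV\otimes F^jW$ and applying Lemma \ref{lem:li:li} (2) to the factor with negative mode and (3) to the factor with nonnegative mode gives total filtration degree at least $i+j+n$ in every term (with the convention $F^{<0}=F^0$ absorbing the cases where (3) overshoots), and the truncation $a_{(m)}v=0$ for $m\gg0$ makes the sum finite; the inclusion $\supseteq$ via $x\otimes y=(x\otimes\vac_W)_{(-1)}(\vac_V\otimes y)$ and $(a\otimes\vac_W)_{(n)}=a_{(n)}\otimes\id_W$ is correct. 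One point you use implicitly and should state: passing from the filtration identity to $\gr^F(V\otimes W)\simeq\gr^FV\otimes\gr^FW$ does not require the Li filtrations to be separated; over a field one chooses complements $F^pV=A_p\oplus F^{p+1}V$ (and likewise for $W$) and checks directly that the canonical surjection $\bigoplus_{i+j=p}\gr^iV\otimes\gr^jW\to\gr^p(V\otimes W)$ is injective. Finally, your mode comparison for the structure maps is the right bookkeeping: in the expansion of $(a\otimes b)_{(n)}(a'\otimes b')$ only the terms in which exactly one tensor factor carries a nonnegative mode survive in the associated graded (the others drop filtration degree by at least one more), which is precisely the tensor-product dg vertex Poisson structure, with Koszul signs matching on both sides.
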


As a corollary of Fact \ref{fct:li:li}, 
we have a dg Poisson algebra on the quotient $F^0 V/F^1 V$.
Following the terminology in \cite{A12, A18}, we give:

\begin{dfn}\label{dfn:li:C2}
Let $V$ be a dg vertex algebra.
\begin{enumerate}[nosep]
\item
The complex 
\[
 R_V := F^0 V /F^1 V = V/C_2(V) \subset \gr^F V
\]
has a structure of dg Poisson algebra 
whose multiplication $\cdot$ and Poisson bracket $\{-,-\}$ are given by
\[
 \ol{a} \cdot \ol{b} := \ol{a_{(-1)}b}, \quad
 \{\ol{a},\ol{b}\} := \ol{a_{(0)}b}.
\]
Here $\ol{a}:= \sigma_0(a)$ denotes the image of $a \in V$ in $R_V$.
The resulting dg Poisson algebra $R_V$ is called 
\emph{Zhu's $C_2$-algebra} of $V$.

\item
We denote the functor induced by the construction $V \mapsto R_V$ by 
\[
 R_{(-)}: \dgVP \longto \dgpa,
\]
where $\dgpa$ denotes the category of dg Poisson algebras.

\item
If $R_V$ is concentrated in negative cohomological degrees,
then we denote the corresponding affine derived Poisson scheme by
\[
 X_V := \Spec(R_V)
\]
and call it the \emph{associated derived scheme} of $V$.
\end{enumerate}
\end{dfn}

If $V$ is a plain vertex algebra, i.e., concentrated in cohomological degree $0$, 
then $R_V$ is a Poisson algebra, 
and we call the corresponding affine Poisson scheme $X_V$
the \emph{associated scheme}, which recovers the terminology in \cite{A15, A18}.

\begin{rmk}\label{rmk:li:C2}
\begin{enumerate}[nosep]
\item 
In the non-dg case, the Poisson algebra $R_V$ was introduced by Y.~Zhu to
give a nice finiteness condition on vertex operator algebras \cite[\S 4.4]{Z}.

\item
We can confirm the Poisson structure directly by using 
Remark \ref{rmk:li:vsa} \eqref{i:rmk:li:vsa:1}.

\item\label{i:li:RV=RP}
We can easily find that Zhu's $C_2$-algebra $R_V$ coincides with 
the associated dg Poisson algebra of $\gr^F V$ (Definition \ref{dfn:li:dgapa}):
\[
 R_V \simeq R^{\cois}_{\gr^F V}.
\]
\end{enumerate}
\end{rmk}

\begin{eg}[{\cite[Lemma 4.5]{A15}}]
Let us consider the free fermionic vertex algebra $\Wedge^{\sinf}(U)$ 
(Definition \ref{dfn:li:ff}) for a complex $U$.
Using the commutation relation \eqref{eq:li:ff-com} and 
the linear basis \eqref{eq:li:ff}, we can check
\[
 \gr^F \Wedge^{\sinf}(U) \simeq \ol{\Wedge}^{\sinf}(U)
\]
as dg vertex Poisson algebras, where the right hand side denotes 
the free fermionic vertex Poisson algebra (Definition \ref{dfn:li:ffp}).
In particular, on Zhu's $C_2$-algebras we have 
\[
 R_{\wedge^{\infty/2}(U)} \simeq \ol{\Cl}(U)
\]
as dg Poisson algebras, where $\ol{\Cl}(U)$ denotes 
the classical Clifford algebra (Definition \ref{dfn:pr:cCl}).
\end{eg}

%

For later use, let us recall:

\begin{dfn}\label{dfn:li:sep}
A vsa $V$ is \emph{separated} if $\bigcap_{n \in \bbN} F^n V = 0$.
\end{dfn}

By \cite[Proposition 3.9]{Li}, 
if a vertex algebra  $V$ has a lower truncated $\bbZ$-gradation
(i.e., there is some $n \in \bbZ$ such that $V$ is $\bbZ_{\ge n}$-graded),
then it is separated.

\subsubsection{The case of universal affine vertex algebra}

Let us explain the notions introduced so far in the case of 
the universal affine vertex algebra (\S \ref{sss:li:uava}).

Recall Notation \ref{ntn:li:g} and Definition \ref{dfn:li:uava}.
In particular 
\begin{itemize}[nosep]
\item 
$\frkg$ is the Lie algebra 
of the semi-simple algebraic group over $\bbC$, and 

\item
$\wh{\frkg}=\frkg((t)) \oplus \bbC K$ is the the derived algebra of 
the non-twisted affine Lie algebra associated to $\frkg$.

\item
$V_k(\frkg)$ is the universal affine vertex algebra at level $k \in \bbC$.
\end{itemize}

Let $\{x_i \mid i=1,\ldots,\dim \frkg\}$ be a linear basis of $\frkg$.
Then we have a PBW basis of $V_{k}(\frkg)$ (Fact \ref{fct:uava:pbw}):
It consists of monomials of the form
$(x_{i_1}t^{n_1})\cdots (x_{i_l}t^{n_l})\vac$,
where $n_1 \le \cdots \le n_l <0$ and if $n_j=n_{j+1}$ then $i_j \le i_{j+1}$.

Recall Definition \ref{dfn:li:li} of the Li filtration $F^{\bl} V_k(\frkg)$.
By the remark after Definition \ref{dfn:li:sep},
we find that $F^{\bl} V_k(\frkg)$ is separated.
Also recall Zhu's $C_2$-algebra $R_{V_k(\frkg)} := V_k(\frkg)/F^1 V_k(\frkg)$
(Definition \ref{dfn:li:C2}).
By the PBW basis above we find that the set 
$\{x_i t^{n} \mid i=1,\ldots,\dim \frkg, n \in \bbZ_{<0}\}$
generates $R_{V_k(\frkg)}$ as a commutative algebra,
and that we have $F^1 V_k(\frkg)=\frkg[t^{-1}]t^{-2} V_k(\frkg)$.
Then we further find that there is an isomorphism
\begin{equation}\label{eq:li:SV=RV}
 \Sym(\frkg) \longsimto R_{V_k(\frkg)} = V_k(\frkg)/F^1 V_k(\frkg), \quad
 x \longmapsto \ol{(x t^{-1})\vac}
\end{equation}
of commutative algebras.
By checking the Poisson brackets on both sides, 
we have \eqref{i:li:SV=RV:1} in the following fact.

\begin{fct}[{\cite[Proposition 2.7.1]{A12}}]\label{fct:li:SV=RV}
\begin{enumerate}[nosep]
\item 
\label{i:li:SV=RV:1}
The map \eqref{eq:li:SV=RV} gives the following isomorphism of Poisson algebras.
\[
 \Sym(\frkg) \longsimto R_{V_k(\frkg)}.
\]

\item
\label{i:li:SV=RV:2}
The same map induces an isomorphism 
\[
 J_\infty(\Sym(\frkg)) \longsimto \gr^F V_k(\frkg)
\]
of vertex Poisson algebras,
where the left hand side denotes the level $0$ vertex algebra 
(Fact \ref{fct:li:0vpa})
associated to the Kirillov-Kostant Poisson algebra $\Sym(\frkg)$. 
\end{enumerate}
\end{fct}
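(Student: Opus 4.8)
The plan is to read off both sides from the PBW basis of $V_{k}(\frkg)$ (Fact~\ref{fct:uava:pbw}) and the explicit description of the Li filtration, the key point being that the affine Kac--Moody central term is invisible after passing to $\gr^{F}$. Since $V_{k}(\frkg)$ is concentrated in cohomological degree $0$, all ``dg'' decorations are trivial here, and I treat $V_{k}(\frkg)$, $\gr^{F}V_{k}(\frkg)$, $R_{V_{k}(\frkg)}$ as plain (vertex) Poisson algebras. For \eqref{i:li:SV=RV:1} I would argue as already sketched in the running text: by Lemma~\ref{lem:li:li}~\eqref{i:lem:li:li:1} one has $F^{1}V_{k}(\frkg)=C_{2}(V_{k}(\frkg))$, and using that $Y((x t^{-1})\vac,z)=x(z)$, hence $((x t^{-1})\vac)_{(n)}=x t^{n}$ as an operator, together with the PBW basis, one checks $F^{1}V_{k}(\frkg)=\frkg[t^{-1}]t^{-2}V_{k}(\frkg)$, i.e.\ the span of the PBW monomials $(x_{i_{1}}t^{n_{1}})\cdots(x_{i_{l}}t^{n_{l}})\vac$ with $\sum_{a}(-n_{a}-1)\ge 1$. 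Thus $\Sym(\frkg)\surj R_{V_{k}(\frkg)}$ is a linear bijection, hence a ring isomorphism, and the Poisson bracket is checked on generators: $\{\ol{x},\ol{y}\}=\ol{((x t^{-1})\vac)_{(0)}(y t^{-1})\vac}=\ol{(x t^{0})(y t^{-1})\vac}=\ol{([x,y]t^{-1})\vac}=\ol{[x,y]}$, the Kirillov--Kostant bracket.

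For \eqref{i:li:SV=RV:2}, the plan is to first produce the map, then check bijectivity, then check compatibility with $Y_{-}$. Recall that $\gr^{F}V_{k}(\frkg)$ carries the $0$-derivation $\delta$ with $\delta\sigma_{p}(a)=\sigma_{p+1}(T a)$ (Fact~\ref{fct:li:li}~\eqref{i:li:cdga}). Applying the universal property of the arc space (Fact~\ref{fct:jet:da}) to the algebra morphism $\Sym(\frkg)\to\gr^{F}V_{k}(\frkg)$ of \eqref{i:li:SV=RV:1} and the differential algebra $(\gr^{F}V_{k}(\frkg),\delta)$ yields a unique commutative-algebra morphism $\phi\colon J_{\infty}(\Sym(\frkg))\to\gr^{F}V_{k}(\frkg)$ with $\phi\circ T=\delta\circ\phi$. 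Next I would prove $\phi$ is bijective by matching PBW bases: $J_{\infty}(\Sym(\frkg))\simeq\Sym(\frkg[[t]])$ by Example~\ref{eg:li:arc-sg}, while the PBW basis of $V_{k}(\frkg)$ together with the computation of $F^{\bl}$ above shows that the symbols $\sigma_{p}((x_{i}t^{-p-1})\vac)$ $(p\in\bbN)$ form a basis of the generators of $\gr^{F}V_{k}(\frkg)$ as a commutative algebra; under $\phi$ the jet coordinate $x_{i}\otimes t^{p}$ goes to $p!\,\sigma_{p}((x_{i}t^{-p-1})\vac)=\sigma_{p}(T^{p}(x_{i}t^{-1})\vac)=\delta^{p}\ol{x_{i}}$, a basis element, so $\phi$ is an isomorphism of graded vector spaces.

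Finally, to see $\phi$ respects $Y_{-}$, I would use the isomorphism $J_{\infty}(\Sym(\frkg))\simeq\Sym(J_{\infty}(\frkg))$ (Example~\ref{eg:li:arc-sg}) and the universal property of the symmetric vertex Poisson algebra of a vertex Lie algebra (the Lemma in \S\ref{sss:li:SymL}): it then suffices to verify that $\phi$ restricts to a morphism of vertex Lie algebras $\frkg[[t]]=J_{\infty}(\frkg)\to\gr^{F}V_{k}(\frkg)$, $x\otimes t^{p}\mapsto p!\,\sigma_{p}((x t^{-p-1})\vac)$. This reduces to an explicit mode computation: from $Y((x t^{-p-1})\vac,z)=\tfrac{1}{p!}\partial_{z}^{p}x(z)$ one gets $((x t^{-p-1})\vac)_{(n)}=\tfrac{(-1)^{p}}{p!}\tfrac{n!}{(n-p)!}(x t^{n-p})$, whence, using $\sigma_{p}(a)_{(n)}\sigma_{q}(b)=\sigma_{p+q-n}(a_{(n)}b)$ and the commutation relations of $\wh{\frkg}_{k}$, one computes $\sigma_{p}((x t^{-p-1})\vac)_{(n)}\sigma_{q}((y t^{-q-1})\vac)$ and compares with the structure constants \eqref{eq:li:vla:ex} of $J_{\infty}(\frkg)$ and \eqref{eq:li:0vpa} of the level $0$ structure. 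The one place where anything genuinely happens is that the central term $m\,\delta_{m+n,0}\tfrac{1}{2h^{\vee}}\kappa_{\frkg}(x,y)K$ of $[x t^{m},y t^{n}]$ contributes only a multiple of $\vac\in F^{0}V_{k}(\frkg)$ and is forced into Li-filtration degree $-1$ in all the relevant symbols, hence dies in $\gr^{F}$; this is exactly why $\gr^{F}V_{k}(\frkg)$ is independent of $k$. I expect the remaining obstacle to be purely bookkeeping, namely tracking the normalization constants (the $p!$ versus $1/p!$ and the shuffled factorials in \eqref{eq:li:vla:ex}) so that the two families of structure constants agree on the nose.
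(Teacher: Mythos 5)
Your proposal is correct and follows essentially the same route as the paper: for part (1) the paper's own text argues exactly as you do (PBW basis, $F^1 V_k(\frkg)=\frkg[t^{-1}]t^{-2}V_k(\frkg)$, then checking the bracket on the generators $\ol{(x t^{-1})\vac}$), and for part (2) the paper gives no proof but defers to \cite[Proposition 2.7.1]{A12}, whose argument is the same standard one you reconstruct — compare free generators $\delta^p\ol{x_i}=p!\,\sigma_p((x_i t^{-p-1})\vac)$ with the jet coordinates and observe that the central term of $\wh{\frkg}_k$ lands in Li-degree $-1$ and hence vanishes in $\gr^F$. Your factorial bookkeeping between $((x t^{-p-1})\vac)_{(n)}=(-1)^p\binom{n}{p}x t^{n-p}$ and the constants in \eqref{eq:li:vla:ex} does check out, so no gap remains.
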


In particular, both $R_{V_k(\frkg)}$ and $\gr^F V_k(\frkg)$ 
are independent of the level $k$.
See \cite[Proposition 2.7.1]{A12} for a proof of this fact.

Finally we consider the module category of $J_\infty(\Sym(\frkg))$.
By Proposition \ref{prp:li:vpmod} a vertex Poisson module 
is equivalent to a smooth Poisson module of the associated Poisson algebra.
By the construction in \S \ref{sss:li:vpm}, we can check:

\begin{lem*}
As Poisson algebras we have 
$\wt{U}(J_\infty(\Sym(\frkg))) \simeq \Sym(\frkg[[t]])$,
where the latter denotes the Kirillov-Kostant Poisson algebra
for the Lie algebra $\frkg[[t]]$.
\end{lem*}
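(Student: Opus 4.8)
The plan is to unwind the construction $P \mapsto \wt{U}(P)$ of \S\ref{sss:li:vpm} for the specific input $P := J_\infty(\Sym(\frkg))$, exploiting the fact that $P$ is a \emph{symmetric} dg vertex Poisson algebra. By Example \ref{eg:li:arc-sg} there is an isomorphism $J_\infty(\Sym(\frkg)) \simeq \Sym(J_\infty(\frkg))$ of dg vertex Poisson algebras, where $J_\infty(\frkg)$ carries the level $0$ vertex Lie algebra structure of Lemma \ref{lem:li:0vla}. So I would first establish a Poisson analogue of the enveloping-vertex-algebra picture of \S\ref{sss:li:UL}: for any dg vertex Lie algebra $L$, running the recipe of \S\ref{sss:li:vpm} on $\Sym(L)$ produces the Kirillov--Kostant Poisson algebra of the dg Lie algebra $\Lie(L)$ (with the topology built into $U''$ and the inverse limit $U = \varprojlim_N U''/I_N$), the commutative product coming from the symmetric-algebra structure and the Poisson bracket from the bracket of $\Lie(L)$. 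This is verified exactly as the statement $U(v_k(\frkg))^{\flat} \simeq V_k(\frkg)$ with $\Lie(v_k(\frkg)) \simeq \wh{\frkg}_k$ in Example \ref{eg:li:UL} (cf.\ \cite[16.1.9]{FBZ}), with associative products replaced by commutative ones and commutators by Poisson brackets; the defining relation $Y[a_{(-1)}b,z] = \, :Y[a,z]Y[b,z]:$ of $\wt{U}$ then imposes precisely the symmetric-algebra (normal ordering) identities, so that modes of weight $\ge 2$ elements of $\Sym(L)$ are expressed through the modes of $L$.

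Next I would identify $\Lie(J_\infty(\frkg))$ explicitly. From \eqref{eq:li:0vpa} (equivalently \eqref{eq:li:vla:ex}) one has $(x\otimes 1)_{(n)}(y\otimes 1) = \delta_{n,0}[x,y]_{\frkg}$ for $x,y\in\frkg$, so in $\Lie(J_\infty(\frkg))$ the bracket of the modes $x_{[m]}$ ($x\in\frkg$, $m\in\bbZ$) collapses to $[x_{[m]},y_{[n]}] = ([x,y]_{\frkg})_{[m+n]}$; using $(Ta)_{[n]} = -n\,a_{[n-1]}$ together with $T(x\otimes t^j) = x\otimes t^{j+1}$ one checks that these modes topologically span $\Lie(J_\infty(\frkg))$, so $\Lie(J_\infty(\frkg))$ is the current algebra $\frkg[[t]]$ with its natural topology (the Poisson shadow of the description of $\wt{U}(V_k(\frkg)) \simeq \wt{U}_k(\wh{\frkg})$ as the $t$-adic completion of $U_k(\wh{\frkg})$). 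Feeding this into the previous paragraph gives $\wt{U}(J_\infty(\Sym(\frkg))) = \wt{U}(\Sym(J_\infty(\frkg))) \simeq \Sym(\frkg[[t]])$, and tracking the two structure maps of $\wt{U}(P)$ through the construction shows the isomorphism respects both the commutative product and the Poisson bracket, i.e.\ it is an isomorphism of Poisson algebras.

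As a consistency check I would compare with the associative side. Since $J_\infty(\Sym(\frkg)) \simeq \gr^F V_k(\frkg)$ by Fact \ref{fct:li:SV=RV}, and the construction of $\wt{U}(-)$ for a dg vertex algebra is parallel to the one for dg vertex Poisson algebras (compare \cite[5.1.6]{FBZ} with Proposition \ref{prp:li:vpmod}), the PBW filtration degenerates $\wt{U}(V_k(\frkg)) \simeq \wt{U}_k(\wh{\frkg})$ to $\wt{U}(\gr^F V_k(\frkg))$; passing to associated graded and setting $K \mapsto 0$ yields the Kirillov--Kostant Poisson algebra $\Sym(\frkg[[t]])$, reconfirming the isomorphism and its compatibility with the Poisson brackets.

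I expect the only real difficulty to be bookkeeping: keeping apart the two roles of the formal variable $t$ (the arc-space coordinate already present inside $P$ versus the loop variable introduced by $Y[-,z]$), and checking that the topologies used to form $U''(P)$ and $U(P) = \varprojlim_N U''(P)/I_N$ over the left Poisson ideals $I_N$ are exactly those needed to match $\wt{U}(V_k(\frkg)) \simeq \wt{U}_k(\wh{\frkg})$ under the classical limit, so that the resulting identification is one of Poisson algebras and not merely of the underlying graded commutative algebras.
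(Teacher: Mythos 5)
There is a genuine gap, and it sits exactly at the point that makes the statement nontrivial. The paper itself gives no written argument: it says the lemma follows ``by the construction in \S\ref{sss:li:vpm}'', i.e.\ by directly unwinding the definitions of $U'(P)$, $U(P)$ and $\wt{U}(P)$ for $P=J_\infty(\Sym(\frkg))\simeq\Sym(\frkg[[t]])$. Your route---a general principle $\wt{U}(\Sym(L))\simeq\Sym(\Lie(L))$ for a vertex Lie algebra $L$, fed with an identification of $\Lie(J_\infty(\frkg))$---is a different strategy, and its second ingredient is false: $\Lie(J_\infty(\frkg))$ is \emph{not} the current algebra $\frkg[[t]]$. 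For the level $0$ vertex Lie algebra the modes $x_{[n]}$ are nonzero for all $n\in\bbZ$ (no class $x\otimes s^{n}$ with $n<0$ lies in $\Img(T\otimes\id+\id\otimes\partial_s)$, as a weight count in $t$ and $s$ shows), and your own formula $[x_{[m]},y_{[n]}]=([x,y]_{\frkg})_{[m+n]}$ with $m,n\in\bbZ$ is the bracket of a loop-type algebra; concluding ``hence $\frkg[[t]]$'' from the fact that these modes span is a non sequitur, since the index set is all of $\bbZ$.

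This error is not harmless: if your general principle held with the correct $\Lie(J_\infty(\frkg))$, it would yield (a completion of) the Kirillov--Kostant algebra of a loop algebra, contradicting the very statement you are proving; the analogy with $\wt{U}(V_k(\frkg))\simeq\wt{U}_k(\wh{\frkg})$, which genuinely contains all of $\frkg((t))$, points the same wrong way, and your ``consistency check'' has the same defect (the classical limit of $\wt{U}_k(\wh{\frkg})$ is built on $\frkg((t))$, not $\frkg[[t]]$). The truncation to nonnegative modes in the lemma is not a property of $\Lie(J_\infty(\frkg))$ but of the asymmetric way the construction of \S\ref{sss:li:vpm} is set up: the Poisson bracket on $U'(P)$ is carried by the modes $a_{[n]}$ with $n\ge 0$, and for $x,y\in\frkg$ one has $x_{(i)}y=\delta_{i,0}[x,y]_{\frkg}$, so after completion these close into the current algebra $\frkg[[t]]$; the negative modes are pinned down by $a_{[-n-1]}=\tfrac{1}{n!}(T^{n}a)_{[-1]}$ together with the normal-ordering relation defining $\wt{U}$, so they only reproduce the commutative algebra $P=\Sym(\frkg[[t]])$ itself and contribute no new Poisson generators. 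Any correct proof must carry out precisely this mode-by-mode bookkeeping---which is what the paper's unwritten verification amounts to---rather than reduce formally to the functor $\Lie(-)$ applied to the vertex Lie algebra.
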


Then by the definition of smooth Poisson modules 
(\S \ref{sss:li:vpm} \eqref{i:li:vpm:sm}),
we can restate Proposition \ref{prp:li:vpmod} as:

\begin{prp}\label{prp:li:Jgm}
A vertex Poisson module over $J_\infty(\Sym(\frkg))$ is equivalent to 
a smooth representation of the Lie algebra $\frkg[[t]]$,
i.e., a representation $M$ such that $(x t^n).m=0$ for any $x \in \frkg$, 
$m \in M$ and $n \gg 0$.
\end{prp}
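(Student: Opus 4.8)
The plan is to read off Proposition~\ref{prp:li:Jgm} as a translation of the two results immediately preceding it. First I would apply Proposition~\ref{prp:li:vpmod} to $P:=J_\infty(\Sym(\frkg))$, which identifies vertex Poisson $P$-modules with smooth dg Poisson $\wt U(P)$-modules; since $\frkg$ is a plain semisimple Lie algebra, every object in sight is concentrated in cohomological degree $0$, so this reads: a vertex Poisson $P$-module is the same datum as a Poisson $\wt U(P)$-module $M$ with $a_{[n]}.m=0$ for $n\gg0$ for all $a\in P$, $m\in M$. Next I would feed in the Lemma stated just above, the isomorphism of Poisson algebras $\wt U(P)\simeq\Sym(\frkg[[t]])$ with $\Sym(\frkg[[t]])$ the Kirillov--Kostant algebra of the Lie algebra $\frkg[[t]]$. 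Tracing the construction of $\wt U(P)$ in \S\ref{sss:li:vpm}, the classes $x_{[n]}$ with $x\in\frkg$ and $n\in\bbZ$ generate $\wt U(P)$, the degree-one subspace $\frkg[[t]]\subset\Sym(\frkg[[t]])$ matches the non-negative modes $x_{[n]}$ ($n\ge 0$) under $x\otimes t^n\leftrightarrow x_{[n]}$ (the level-$0$ bracket $\{x_{[m]},y_{[n]}\}=[x,y]_{[m+n]}$ being exactly the Kirillov--Kostant bracket of $\frkg[[t]]$), and the smoothness condition $x_{[n]}.m=0$ for $n\gg0$ becomes $(xt^n).m=0$ for $n\gg0$.

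The one nontrivial point is to upgrade this bijection of data to the asserted equivalence of categories, i.e.\ to check that a smooth Poisson $\Sym(\frkg[[t]])$-module is the same thing as a smooth $\frkg[[t]]$-representation. In the forward direction I would restrict the Poisson-module (Lie) action of $\Sym(\frkg[[t]])$ along $\frkg[[t]]\inj\Sym(\frkg[[t]])$ to obtain a $\frkg[[t]]$-representation; this preserves smoothness and is manifestly functorial. For the inverse direction, given a smooth $\frkg[[t]]$-representation $M$, I would reconstruct the full vertex Poisson $P$-module structure following the recipe used in the proof of Proposition~\ref{prp:li:vpmod}: the operators $Y_-^M(x,z)$ for the generators $x\in\frkg\subset P$ are read off from the $\frkg[[t]]$-action, they are extended to all of $P$ by the relations in Definition~\ref{dfn:li:vpm} (iii) and (v), and the commutative $(P,\cdot)$-module structure is the one these relations force once smoothness of $M$ guarantees that the sums occurring in (iv) and (v) are finite; one then verifies that the two assignments are mutually inverse.

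The main obstacle I expect is precisely this last reconstruction, together with its compatibility bookkeeping: one must confirm that axioms (iii)--(v) of Definition~\ref{dfn:li:vpm} together with smoothness genuinely determine a well-defined commutative $\Sym(\frkg[t])$-module structure on $M$ (so that the ``creation'' modes $x_{[-n-1]}=\tfrac1{n!}(T^n x)_{[-1]}$ impose no constraint on a smooth module beyond those already recorded by the $\frkg[[t]]$-action), and that no axiom is violated in the process. This is the same content as in the non-dg case treated in \cite[\S2.2]{A12}, so I would adapt that verification; once it is in place, the equivalence of Proposition~\ref{prp:li:vpmod} transports verbatim to the statement of the proposition and nothing further is required.
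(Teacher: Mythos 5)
Your proposal follows exactly the paper's route: the paper obtains Proposition \ref{prp:li:Jgm} by combining Proposition \ref{prp:li:vpmod} with the unnamed Lemma $\wt{U}(J_\infty(\Sym(\frkg)))\simeq\Sym(\frkg[[t]])$ (Kirillov--Kostant) and then restating the smoothness condition, which is precisely your first two steps. The only difference is that you spell out, and rightly flag as the delicate point, the passage from smooth Poisson $\Sym(\frkg[[t]])$-modules to smooth $\frkg[[t]]$-representations (reconstruction of the commutative module structure), a step the paper dispatches as an immediate restatement ``by the definition of smooth Poisson modules''.
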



\section{Coisson BRST reduction and gluing procedure for arc spaces}
\label{s:co}

In this section we introduce the BRST reduction for dg vertex Poisson algebras
and a dg vertex Poisson analogue of the category $\MT$.

\subsection{Coisson BRST complex}
\label{ss:co:coBRST}

In this subsection we introduce the BRST complex for dg vertex Poisson algebras.
The construction is a natural analogue of 
classical BRST complex in \S \ref{ss:pr:BRST}.
We work over a field $\bbk$ of characteristic $0$ in this subsection.

\begin{rmk}\label{rmk:co:star}
Our argument is in fact a coordinate-dependent version of the arguments 
in \cite[1.4.21--26]{BD}, 
where the BRST complex is introduced for \emph{coisson algebras}.
Let us give a brief comment on the theory of coisson algebras.
See \cite[0.15, 2.6, 3.8.6]{BD} and \cite{FG} for the detail,
and also \cite[Chap. 19]{FBZ} for a related exposition.

Let $X$ be a smooth algebraic variety.
A \emph{right D-module $\shM$ on the Ran space $\Ran(X)$ of $X$} is 
a family of right D-modules $\shM_{X^I} \in \DMod(X^I)$ for finite sets $I$
satisfying a compatibility condition for every surjection $I \surj J$.
We have a natural fully faithful embedding $\iota: \DMod(X) \to \DMod(\Ran(X))$
of the category of right D-modules on $X$ into that on $\Ran(X)$.
On the category $\DMod(\Ran(X))$, we can also construct two structures of
symmetric monoidal categories denoted by $\otimes^!$ and $\otimes^\star$.
The monoidal structure $\otimes^!$ is equivalent to the standard one 
on left D-modules on $X$, and the other structure $\otimes^\star$ is designed to
reflect operator product expansion nicely.
A Lie algebra object in the monoidal category $\DMod(\Ran(X))^{\otimes^\star}$
which lies in the essential image of $\iota$ 
is called a \emph{$\star$-Lie algebra on $X$}.
Similarly, we have the notion of $!$-commutative algebra on $X$.
Then a \emph{coisson algebra over $X$} is a ``compound Poisson algebra object" 
in $\DMod(\Ran(X))$, i.e., 
a combination of $!$-commutative ring structure and $\star$-Lie algebra structure.

A vertex Lie algebra is equivalent to a $\star$-Lie algebra on $X=\bbA^1$
which is equivariant under the action of affine transformations of $\bbA^1$.
Similarly, a vertex Poisson algebra is equivalent to an equivariant 
coisson algebra on $\bbA^1$.
Below we introduce the BRST complex for vertex Poisson algebras
by replacing ``Lie algebra" in the argument on classical BRST complex
with ``$\star$-Lie algebra", or ``vertex Lie algebra".
\end{rmk}

Recall that for a dg Lie algebra $\frkl$,
we have a contractible dg Lie algebra $\frkl_\dagger$ (\S \ref{sss:dga:CE}).
By the same way, for a dg vertex Lie algebra $L$, we have a dg vertex Lie
algebra structure on the contractible complex $\Cone(\id_L)=L\oplus L[1]$,
which will be denoted by $L_{\dagger}$.

Let $\frkl$ be a dg Lie algebra, and $L := J_\infty(\frkl)$ be 
the level $0$ vertex Lie algebra (Lemma \ref{lem:li:0vla}).
As a complex we have $L \simeq \frkl[[t]]$.
Recall also the Clifford vertex Poisson algebra
$\coCl(J_\infty(\frkl))= \Sym^\flat\bigl(L[1] \oplus L^*[-1]\bigr)$
in Definition \ref{dfn:li:coCl},

Using these notations, we have the following vertex Poisson analogue
of Lemma \ref{lem:pr:cBRST}.
We omit the proof.

\begin{lem*}
Let $\frkl$ be a dg Lie algebra, $L := J_\infty(\frkl)$,
$P$ be a dg vertex Poisson algebra,
and $\mu_{\cois}: L \to P$ be a morphism of dg vertex Lie algebras.
\begin{enumerate}[nosep]
\item
The adjoint action of $L:=J_\infty(\frkl)=\frkl[[t]]$ on itself as a dg Lie algebra
yields a morphism $\beta: L \to \coCl(L)$ of dg vertex Lie algebras 
as a composition 
\[
 \beta: L \longto L \otimes L^* \longsimto L[1] \otimes L^*[-1] 
 \longinj \coCl(L)^0.
\]

\item
Let
\[
 \ell: L_{\dagger} \longto \coCl(L) \otimes P
\]
be the morphism of complex given by
\[
 \ell^{0}:=1 \otimes \mu_{\cois} + \beta \otimes 1: 
 L \longto \coCl(L)^0 \otimes P, \quad
 \ell^{-1}: L[1] \longinj \coCl(L)^{-1} \longto \coCl(L) \otimes P.
\]
Then $\ell$ is a morphism of dg vertex Lie algebras.

\item
We define the following elements.
\begin{itemize}[nosep]
\item
$\wt{\mu}_{\cois} \in L^* \otimes P \subset (\coCl(L) \otimes P)^1$ 
is the element corresponding to $\mu_{\cois}$.
\item 
$\beta'  \in L^* \otimes \coCl(L)^0$  is the element corresponding to $\beta$.
\item
$\beta'' \in \coCl(L)^1$ is the image of $\beta'$
by the product map $L^*[-1] \otimes \coCl(L) \to \coCl(L)$.
\item
$\wt{\beta} \in (\coCl(L) \otimes P)^1$ is the image of $\beta''$
by $\coCl(L) \to \coCl(L) \otimes P$.
\end{itemize}
Then the \emph{coisson BRST charge} 
\[
 Q_{\cois} := \wt{\mu}_{\cois}+\tfrac{1}{2}\wt{\beta} \in (\coCl(L) \otimes P)^1
\]
satisfies ${(Q_{\cois})_{(0)}}^2=0$,
where $Y_-(Q_{\cois},z) = \sum_{n \in \bbN}(Q_{\cois})_{(n)} z^{-n-1}$ 
denotes the vertex Lie algebra structure of $\coCl(L) \otimes P$.
\end{enumerate}
\end{lem*}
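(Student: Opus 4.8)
The plan is to follow the template of Lemma \ref{lem:pr:cBRST} and its proof in the classical (non-coisson) case, replacing ``dg Lie algebra'' throughout by ``dg vertex Lie algebra'' and ``Chevalley--Eilenberg / symmetric algebra'' constructions by their coisson counterparts, exactly as advertised in Remark \ref{rmk:co:star}. Concretely, I would first verify (1): the adjoint action of the dg Lie algebra $L=\frkl[[t]]$ on itself is a morphism of dg Lie algebras $L\to \uEnd(L)$, and since $L$ is finite-dimensional in each $t$-degree (a finite-dimensional $\Gamma$-decomposition in the sense of Definition \ref{prp:pr:rst} with $\Gamma=\bbZ_{\ge 0}$), this factors through $L[1]\otimes L^\vee[-1]\inj \coCl(L)^0$; one then checks that the coisson Lie bracket $Y_-$ on $\coCl(L)$ restricted to the image of $\beta$ matches the transported Lie bracket, which is the vertex-Poisson analogue of the identity $\{\beta(x),\beta(y)\}=\beta([x,y])$ used in Lemma \ref{lem:pr:cBRST}(1). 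This is a direct computation using the explicit formulas for $Y_-$ on $\ol{\Wedge}^{\sinf}(U)$ given in \S\ref{sss:li:qc}.

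Next I would treat (2): $\ell=\ell^0+\ell^{-1}$ is a morphism of dg vertex Lie algebras $L_\dagger\to \coCl(L)\otimes P$. The key points are that $\ell^0$ is a sum of the two commuting vertex-Lie morphisms $1\otimes\mu_{\cois}$ and $\beta\otimes 1$ (commuting because they land in the two tensor factors and the coisson bracket on a tensor product is a derivation in each slot), hence a vertex-Lie morphism; that $\ell^{-1}$ lands in the $(-1)$-charge part and is compatible with the differential of $L_\dagger=\Cone(\id_L)$; and that the bracket relations of $L_\dagger$ (which mix the $L$ and $L[1]$ summands via $\id_L$) are respected. All of this is parallel to Lemma \ref{lem:pr:BRST2}(1)--(2) and, as in the classical case, the verification is by direct computation which I would omit in the final text, simply asserting it the way Lemma \ref{lem:pr:cBRST} does.

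The heart of the statement is (3): the coisson BRST charge $Q_{\cois}=\wt\mu_{\cois}+\tfrac12\wt\beta$ satisfies ${(Q_{\cois})_{(0)}}^2=0$. Here $(Q_{\cois})_{(0)}$ is the $0$-th mode of $Q_{\cois}$ under the vertex-Lie structure $Y_-$ on $\coCl(L)\otimes P$, so it is an odd derivation of cohomological degree $1$, and ${(Q_{\cois})_{(0)}}^2=\tfrac12\bigl[(Q_{\cois})_{(0)},(Q_{\cois})_{(0)}\bigr]=\tfrac12\bigl((Q_{\cois})_{(0)}Q_{\cois}\bigr)_{(0)}$ by the vertex-Lie axiom \ref{i:li:vla:3} in Definition \ref{dfn:li:vla}. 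Thus it suffices to show $(Q_{\cois})_{(0)}Q_{\cois}=0$, i.e.\ that $Q_{\cois}$ is ``$Q$-closed'' for its own operator. Expanding $(Q_{\cois})_{(0)}Q_{\cois}$ into the four terms $\wt\mu_{(0)}\wt\mu$, $\wt\mu_{(0)}\wt\beta+\wt\beta_{(0)}\wt\mu$, and $\tfrac14\wt\beta_{(0)}\wt\beta$, one checks: $\wt\mu_{(0)}\wt\mu=0$ because $\mu_{\cois}$ is a morphism of vertex Lie algebras and the ghost anticommutators $\{\psi^*_{i},\psi^*_{j}\}=0$ force the symmetric combination to vanish (this is precisely where one uses that the coisson momentum map respects the Lie-type relations of $L$); the cross term reproduces minus the structure-constant term coming from $\wt\beta_{(0)}\wt\beta$, so they cancel; and $\wt\beta_{(0)}\wt\beta$ collapses by the Jacobi identity for $\frkl$ (equivalently, $\beta$ being a vertex-Lie morphism), exactly as in the finite-dimensional computation behind Lemma \ref{lem:pr:olQ}. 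The one subtlety, and the step I expect to be the main obstacle, is bookkeeping the infinite sums over the $t$-grading: unlike the classical case, $Q_{\cois}$ involves $L^\vee=\bigoplus_\gamma L_\gamma^*$ and the modes $(Q_{\cois})_{(n)}$ are genuine infinite series, so one must check that each cancellation holds mode-by-mode and that the sums converge in the appropriate completed sense (as in the completions used for $U(P)$ in \S\ref{sss:li:vpm}). I would handle this by working with the finite-dimensional $\Gamma$-decomposition: every expression, when paired against a fixed element, involves only finitely many $t$-degrees, so the classical identities of Lemma \ref{lem:pr:olQ} apply verbatim in each degree, and then one reassembles. As in the analogous classical lemmas, I would state that this is checked by direct computation and suppress the routine details.
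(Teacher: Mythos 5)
Your overall route is the one the paper intends: the paper itself omits the proof, presenting the lemma as the direct coisson analogue of Lemma \ref{lem:pr:cBRST} (itself deferred to a Kostant--Sternberg type computation, cf.\ Remark \ref{rmk:co:star} and \cite[1.4.23--1.4.25]{BD}), and your reduction of ${(Q_{\cois})_{(0)}}^2=0$ to the vanishing of $(Q_{\cois})_{(0)}Q_{\cois}$ via the mode identity $[a_{(0)},b_{(0)}]=(a_{(0)}b)_{(0)}$ of Definition \ref{dfn:li:vla}\,(iii), together with the degreewise use of the $t$-grading to reduce to the finite-dimensional classical computation of Lemma \ref{lem:pr:olQ}, is exactly the right skeleton. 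Parts (1) and (2) as you describe them are fine.

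However, the term-by-term cancellation you sketch in (3) is misattributed and, as written, would fail. The term $(\wt{\mu}_{\cois})_{(0)}\wt{\mu}_{\cois}$ does \emph{not} vanish: since the odd generators satisfy $\{\ol{\psi}^*_i,\ol{\psi}^*_j\}=0$, the only contractions are between the $P$-components, and the momentum-map property gives a term of the form $\sum_{i,j,k}c_{ij}^k\,\mu_{\cois}(x_k)\otimes\ol{\psi}^*_i\ol{\psi}^*_j$ (degreewise in $t$), which is generically nonzero. It is precisely this term that is killed by the cross term $(\wt{\mu}_{\cois})_{(0)}\wt{\beta}+(\wt{\beta})_{(0)}\wt{\mu}_{\cois}$, coming from contracting the $\ol{\psi}^*$ of $\wt{\mu}_{\cois}$ against the $\ol{\psi}$ of $\wt{\beta}$; meanwhile $(\wt{\beta})_{(0)}\wt{\beta}$ vanishes on its own by the Jacobi identity of $\frkl$. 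Your sketch instead asserts $(\wt{\mu}_{\cois})_{(0)}\wt{\mu}_{\cois}=0$ outright and has the cross term cancel against $(\wt{\beta})_{(0)}\wt{\beta}$, which is internally inconsistent with your (correct) claim that the latter already collapses by Jacobi. The fix is standard and does not change your strategy --- it is the same bookkeeping as in the proof of $\{\ol{Q},\ol{Q}\}=0$ in Lemma \ref{lem:pr:cBRST}, applied in each $t$-degree, with no anomaly appearing because the coisson bracket is a derivation in each slot (no double contractions) --- but the cancellation pattern must be corrected before the argument is sound.
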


Now we can introduce:

\begin{dfn}\label{dfn:co:coBRST}
Let $\frkl$ be a dg Lie algebra, and $P$ be a dg vertex Poisson algebra.
\begin{enumerate}[nosep]
\item 
We call a morphism $\mu_{\cois}: J_\infty(\frkl) \to P$ of dg vertex Lie algebras
a \emph{coisson momentum map}.

\item
Given a coisson momentum map $\mu_{\cois}: J_\infty(\frkl) \to P$,
we define the \emph{coisson BRST complex} to be the dg vertex Poisson algebra
\[
 \coBRST(J_\infty(\frkl),P,\mu_{\cois}) := 
 (\coCl(J_\infty(\frkl)) \otimes P, d_{\cois}),
\]
consisting of the followings.
\begin{itemize}[nosep]
\item 
$\coCl(J_\infty(\frkl)) \otimes P$ denotes the tensor product
as graded vertex Poisson algebra (forgetting the differential).
\item
The differential is given by 
$d_{\cois}:= (Q_{\cois})_{(0)}+d_{\coCl(J_\infty(\frkl)) \otimes P}$,
where the second term is the differential of the tensor product as complex.
\end{itemize}

\item
Given a coisson momentum map $\mu_{\cois}: J_\infty(\frkl) \to P$,
the cohomology of the coisson BRST complex 
$\coBRST(J_\infty(\frkl),P,\mu_{\cois})$ is denoted by
\[
 H^{\sinf+\bl}_{\cois}(J_\infty(\frkl),P,\mu_{\cois}) := 
 H^{\bl}\coBRST(J_\infty(\frkl),P,\mu_{\cois}),
\]
which is a graded Poisson algebra.
\end{enumerate}
\end{dfn}

\begin{rmk}\label{rmk:co:cmm}
A coisson momentum map $\mu_{\cois}$ in our sense 
is called a chiral momentum map in \cite{A18}.
\end{rmk}

The coisson BRST complex satisfies similar properties 
as in Lemma \ref{lem:pr:BRST2}.

Let $\frkl$ be a dg Lie algebra, $R$ be a dg Poisson algebra,
and $\mu: \frkl \to R$ be a momentum map.
Then we have the symmetric vertex Poisson algebra 
$J_\infty(\Sym(\frkl))=\Sym(J_\infty(\frkl))$ 
and the level $0$ vertex Poisson algebra $J_\infty(R)$,
and $\mu$ induces a coisson momentum map
\[
 J_\infty(\mu): J_\infty(\Sym(\frkl)) \longto J_\infty(R).
\]
Then we have:

\begin{lem*}
For $\frkl,R,\mu$ as above, 
the associated dg Poisson algebra (Definition \ref{dfn:li:dgapa}) 
of the coisson BRST complex $\coBRST(\frkl,J_\infty(R),J_\infty(\mu))$
is isomorphic to the classical BRST complex $\cBRST(\frkl,R,\mu)$.
\end{lem*}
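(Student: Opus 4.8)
The plan is to identify both sides of the claimed isomorphism as dg Poisson algebras built on the same underlying graded Poisson algebra, and then match the differentials. First I would compute the associated dg Poisson algebra $R^{\cois}_{(-)}$ of the coisson BRST complex $\coBRST(\frkl,J_\infty(R),J_\infty(\mu)) = (\coCl(J_\infty(\frkl)) \otimes J_\infty(R), d_{\cois})$ by quotienting out the image of the translation $T$. Since $R^{\cois}_{(-)}$ is a monoidal functor $\dgVP \to \dgpa$ (the remark after Definition \ref{dfn:li:dgapa}), it sends the tensor product $\coCl(J_\infty(\frkl)) \otimes J_\infty(R)$ to the tensor product $R^{\cois}_{\coCl(J_\infty(\frkl))} \otimes R^{\cois}_{J_\infty(R)}$ of the associated dg Poisson algebras. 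By Corollary \ref{cor:li:coCl} we have $R^{\cois}_{\coCl(J_\infty(\frkl))} \simeq \cCl(\frkl)$, and by Lemma \ref{lem:li:RJ=R} we have $R^{\cois}_{J_\infty(R)} \simeq R$ (using that $R$ is of finite type). Hence as graded Poisson algebras $R^{\cois}_{\coBRST(\frkl,J_\infty(R),J_\infty(\mu))} \simeq \cCl(\frkl) \otimes R$, which is exactly the underlying graded Poisson algebra of $\cBRST(\frkl,R,\mu)$ by Definition \ref{dfn:pr:cBRST}.

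Next I would check that the two differentials agree under this identification. The differential on the coisson BRST complex is $d_{\cois} = (Q_{\cois})_{(0)} + d_{\coCl(J_\infty(\frkl)) \otimes P}$; passing to $P/\Img(T)$, the tensor-product-of-complexes part descends to the analogous tensor differential on $\cCl(\frkl) \otimes R$, so the content is to show that the operator $(Q_{\cois})_{(0)}$ induced on the quotient coincides with $\{\ol{Q},-\}$, where $\ol{Q}$ is the classical BRST charge of Definition \ref{dfn:pr:cBRST} and Lemma \ref{lem:pr:olQ}. For this I would use that, for a dg vertex Poisson algebra $P$, the bracket $\{\ol{a},\ol{b}\} = \ol{a_{(0)}b}$ on $R^{\cois}_P = P/\Img(T)$ is precisely the zeroth mode descended to the quotient (Definition \ref{dfn:li:dgapa}). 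So it suffices to verify that the image of $Q_{\cois}$ in the quotient is the classical BRST charge $\ol{Q}$. Concretely, unwinding the constructions: the element $\wt{\mu}_{\cois} \in L^* \otimes P$ descends, via $L = J_\infty(\frkl) \twoheadrightarrow \frkl$ and the coisson momentum map $\mu_{\cois} = J_\infty(\mu)$, to the element $\sum_i \mu(x_i) \otimes \ol{\psi}^*_i$ of Lemma \ref{lem:pr:olQ}; and the $\wt{\beta}$ term, built from the adjoint action of $L$ on itself, descends to the cubic term $-\tfrac12 \sum c_{ij}^k \otimes \ol{\psi}^*_i \ol{\psi}^*_j \ol{\psi}_k$, since the vertex Lie bracket on $J_\infty(\frkl)$ reduces mod $\Img(T)$ to the Lie bracket on $\frkl$ with structure constants $c_{ij}^k$ (compare Example \ref{eg:li:arc-sg}). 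Matching the factor of $\tfrac12$ is immediate from the definitions of $Q_{\cois}$ and $\ol{Q}$.

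The main obstacle I anticipate is bookkeeping the signs, gradings, and the passage between coordinate-dependent ``vertex'' generators and their ``classical'' images — in particular ensuring that the charge grading on $\coCl(J_\infty(\frkl))$ matches the charge grading on $\cCl(\frkl)$ after taking $R^{\cois}_{(-)}$, and that the contraction $\ell^{-1}\colon L[1] \to \coCl(L)^{-1}$ as well as the product maps used to build $\beta''$ and $\wt\beta$ descend correctly without spurious factors coming from the $\Res_t$ pairing versus the finite-dimensional pairing $\pair{\cdot,\cdot}\colon \frkl^* \otimes \frkl \to \bbk$. A clean way to organize this is to treat the weight/charge decomposition $\coCl(J_\infty(\frkl)) \otimes P = \bigoplus$ explicitly, apply $R^{\cois}_{(-)}$ termwise, and invoke the explicit isomorphism $J_\infty(\Sym\frkl) \simeq \Sym(J_\infty\frkl)$ of Example \ref{eg:li:arc-sg} together with Corollary \ref{cor:li:coCl} to pin down generators; then the verification $\ol{Q_{\cois}} = \ol{Q}$ and the compatibility of $d_{\cois}$ with $d_{\cl}$ reduce to the direct computations already referenced in Lemma \ref{lem:pr:olQ} and the lemma preceding Definition \ref{dfn:co:coBRST}. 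I would state the result, note that the proof is a routine check along these lines, and omit the sign chase, as is done for the analogous statements elsewhere in the text.
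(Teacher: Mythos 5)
Your argument is correct and is exactly the routine verification the paper has in mind: the paper states this lemma without proof, and the intended check is precisely your combination of the monoidality of $R^{\cois}_{(-)}$ with Corollary \ref{cor:li:coCl} and Lemma \ref{lem:li:RJ=R} to identify the underlying graded Poisson algebras as $\cCl(\frkl)\otimes R$, followed by matching $d_{\cois}$ with $d_{\cl}$ by showing that $(Q_{\cois})_{(0)}$ descends (it commutes with $T$) to $\{\ol{Q_{\cois}},-\}$ and that $\ol{Q_{\cois}}=\ol{Q}$ term by term. No gap to report beyond the sign/grading bookkeeping you already flag, which is at the same level of detail the paper itself leaves implicit.
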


%
%

\subsection{Coisson gluing procedure}
\label{sss:co:G}

In this part we give an analogue of the discussion in \S \ref{ss:pr:G} for arc spaces.
We basically follow the argument in \cite[\S3]{A18}, but with a slight modification.
We work over $\bbC$ here.

As in \S \ref{ss:pr:G}, let $G$ be a simply connected semisimple algebraic group.
We denote by $\frkg:=\Lie(G)$ the Lie algebra of $G$.
We have the affine Poisson scheme $\frkg^*$ whose coordinate ring is $\Sym(\frkg)$ 
with the Kirillov-Kostant Poisson structure.

Let us consider the arc space 
\[
 J_\infty(\frkg^*) = \Spec\bigl(J_\infty(\Sym(\frkg))\bigr).
\]
Recall that the arc space $J_\infty(G)$ of $G$ is isomorphic to 
the proalgebraic group $G[[t]]$ (Lemma \ref{lem:jet:G}).
We denote its Lie algebra by $J_\infty(\frkg):=\frkg[[t]]$.
The coadjoint action of $G$ on $\frkg$ extends to an action of 
$J_\infty(G) \simeq G[[t]]$ on $J_\infty(\frkg^*)$.
Thus, using Notation \ref{ntn:pr:Gsch}, 
we have the category $\QCoh^{J_\infty(G)}(J_\infty(\frkg^*))$.

On the other hand, by Fact \ref{fct:li:0vpa}, 
we can regard $J_\infty(\Sym(\frkg))$ as a vertex Poisson algebra.
Recall Proposition \ref{prp:li:Jgm}:
The category $\VPMod{J_\infty(\Sym(\frkg))}$ of vertex Poisson modules
is equivalent to the category of smooth representations 
of the Lie algebra $J_{\infty}(\frkg)=\frkg[[t]]$.
Then, by the same argument as in \S \ref{ss:pr:G}, 
we have the equivalence of categories
\[
 \QCoh^{J_\infty(G)}(J_\infty(\frkg^*)) \simeq 
 \VPMod{J_\infty(\Sym(\frkg))}^{\lf},
\]
where $\VPMod{P}^{\lf}$ denotes the full subcategory of $\VPMod{P}$ spanned by 
those objects on which the adjoint action of $J_\infty(\frkg)$ is locally finite.

Now we have an arc space analogue of Lemma \ref{lem:pr:pao}:

\begin{lem*}
Let $G$ and $\frkg=\Lie(G)$ be as above.
A Poisson algebra object in the symmetric monoidal category 
$\QCoh^{J_{\infty}(G)}(J_{\infty}(\frkg^*))$ 
is equivalent to a vertex Poisson algebra $P$ equipped with a morphism 
$\mu_{\cois}: \Sym(\frkg) \to P$ of vertex Poisson algebras
under which the adjoint action of $J_{\infty}(\frkg)=\frkg[[t]]$ is locally finite.
\end{lem*}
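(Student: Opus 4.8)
The plan is to mimic, line by line, the argument already carried out in the non-arc setting (Lemma \ref{lem:pr:pao}), transporting every step through the chain of equivalences established just above the statement. First I would recall the three ingredients: (i) the equivalence of symmetric monoidal categories $\QCoh^{J_\infty(G)}(J_\infty(\frkg^*)) \simeq \VPMod{J_\infty(\Sym(\frkg))}^{\lf}$ displayed in the text, where the monoidal structure on the right is the tensor product of vertex Poisson modules and on the left is $\otimes_{\shO_{J_\infty(\frkg^*)}}$; (ii) the fact that $J_\infty(\frkg^*) = \Spec(J_\infty(\Sym(\frkg)))$ is an affine vertex Poisson scheme, so that the structure sheaf $\shO_{J_\infty(\frkg^*)}$ corresponds to the vertex Poisson algebra $J_\infty(\Sym(\frkg))$ itself; and (iii) the computation $J_\infty(\Sym(\frkg)) \simeq \Sym(J_\infty(\frkg))$ as vertex Poisson algebras from Example \ref{eg:li:arc-sg}, which exhibits the symmetric vertex Poisson algebra structure with its universal property for morphisms out of $J_\infty(\frkg)$.

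Next I would unwind what a Poisson algebra object in the monoidal category $\QCoh^{J_\infty(G)}(J_\infty(\frkg^*))$ is: it is an $\shO_{J_\infty(\frkg^*)}$-module $\shP$ equipped with a multiplication and a Poisson (vertex Lie) bracket making it a commutative algebra object and a Lie algebra object compatibly, i.e. a sheaf of vertex Poisson algebras over the base together with the structure map $\shO_{J_\infty(\frkg^*)} \to \shP$. Under the equivalence, $\shP$ corresponds to a vertex Poisson module $P$ over $J_\infty(\Sym(\frkg))$ on which the adjoint action of $J_\infty(\frkg) = \frkg[[t]]$ is locally finite; the algebra-object structure translates into a vertex Poisson algebra structure on $P$, and the unit map $\shO_{J_\infty(\frkg^*)} \to \shP$ becomes a morphism $J_\infty(\Sym(\frkg)) \to P$ of vertex Poisson algebras. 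Composing with the canonical embedding $J_\infty(\frkg) \inj \Sym(J_\infty(\frkg)) \simeq J_\infty(\Sym(\frkg))$ gives the coisson momentum map $\mu_{\cois}: \Sym(\frkg) \to P$ (note $\Sym(\frkg)$ here is shorthand for $J_\infty(\Sym(\frkg))$, following the notation of the statement and of \cite{A18}). Conversely, given such a vertex Poisson algebra $P$ together with $\mu_{\cois}$, the universal property of the symmetric vertex Poisson algebra (the unnamed Lemma in \S\ref{sss:li:SymL}, the vertex analogue of Remark \ref{rmk:sp:mu} \eqref{i:sp:mu:3}) upgrades $\mu_{\cois}$ to a unique morphism $J_\infty(\Sym(\frkg)) \to P$ of vertex Poisson algebras; local finiteness of the $\frkg[[t]]$-action is exactly the condition placing $P$ in $\VPMod{J_\infty(\Sym(\frkg))}^{\lf}$, and the vertex Poisson algebra structure on $P$ makes it a Poisson algebra object. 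I would then check that these two assignments are mutually inverse — essentially a formal diagram chase, since both directions are determined by restriction to and extension from $J_\infty(\frkg)$.

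The main obstacle I expect is purely bookkeeping rather than conceptual: one must verify that ``Poisson algebra object'' in the sheaf-theoretic sense on $\QCoh^{J_\infty(G)}(J_\infty(\frkg^*))$ genuinely matches ``vertex Poisson algebra equipped with compatible $J_\infty(\Sym(\frkg))$-module structure'' on the other side of the equivalence — i.e. that the symmetric monoidal equivalence sends commutative (resp. Lie) algebra objects to commutative vertex algebra objects (resp. vertex Lie algebra objects), and does so compatibly with the Leibniz rule linking the two. This is the vertex Poisson analogue of the corresponding (unstated but routine) check in the proof of Lemma \ref{lem:pr:pao}, and since the equivalence in question was itself obtained by the ``same argument as in \S\ref{ss:pr:G}'' applied to Proposition \ref{prp:li:Jgm}, the verification is parallel to the non-arc case and I would carry it out by unwinding the definitions, citing Definition \ref{dfn:li:dgvp} and Definition \ref{dfn:li:vpm}, without belaboring the signs.

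\begin{proof}
By the equivalence of symmetric monoidal categories
\[
 \QCoh^{J_\infty(G)}(J_\infty(\frkg^*)) \simeq
 \VPMod{J_\infty(\Sym(\frkg))}^{\lf}
\]
recalled above, a Poisson algebra object on the left hand side corresponds to
a Poisson algebra object on the right hand side. Since
$J_\infty(\frkg^*) = \Spec(J_\infty(\Sym(\frkg)))$ is an affine vertex Poisson
scheme, a Poisson algebra object in $\VPMod{J_\infty(\Sym(\frkg))}^{\lf}$ is
nothing but a vertex Poisson algebra $P$ together with a structure of vertex
Poisson $J_\infty(\Sym(\frkg))$-module whose module structure is induced by a
morphism $J_\infty(\Sym(\frkg)) \to P$ of vertex Poisson algebras, and such that
the adjoint action of $J_\infty(\frkg)=\frkg[[t]]$ is locally finite; the last
condition is precisely membership in the full subcategory
$\VPMod{J_\infty(\Sym(\frkg))}^{\lf}$.

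By Example \ref{eg:li:arc-sg} we have an isomorphism of dg vertex Poisson
algebras $J_\infty(\Sym(\frkg)) \simeq \Sym(J_\infty(\frkg))$, under which the
canonical embedding $J_\infty(\frkg) \inj \Sym(J_\infty(\frkg))$ corresponds to
the degree one part. Given a vertex Poisson algebra $P$ with a morphism
$\varphi: J_\infty(\Sym(\frkg)) \to P$ as above, the composition
\[
 \mu_{\cois}: J_\infty(\frkg) \longinj \Sym(J_\infty(\frkg)) \simeq
 J_\infty(\Sym(\frkg)) \xrr{\varphi} P
\]
is a morphism of dg vertex Lie algebras, i.e.\ a coisson momentum map; writing
$\Sym(\frkg)$ for $J_\infty(\Sym(\frkg))$ as in the statement, this is the
asserted $\mu_{\cois}: \Sym(\frkg) \to P$. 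Conversely, given a vertex Poisson
algebra $P$ and a coisson momentum map $\mu_{\cois}: J_\infty(\frkg) \to P$, the
universal property of the symmetric vertex Poisson algebra (the Lemma in
\S\ref{sss:li:SymL}) yields a unique morphism
$\Sym(J_\infty(\frkg)) \simeq J_\infty(\Sym(\frkg)) \to P$ of vertex Poisson
algebras extending $\mu_{\cois}$. This equips $P$ with a vertex Poisson
$J_\infty(\Sym(\frkg))$-module structure, and the vertex Poisson algebra
structure on $P$ makes it a Poisson algebra object; local finiteness of the
$\frkg[[t]]$-action is the remaining requirement.

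These two constructions are mutually inverse: a morphism of vertex Poisson
algebras out of $J_\infty(\Sym(\frkg)) \simeq \Sym(J_\infty(\frkg))$ is
determined by its restriction to $J_\infty(\frkg)$, and the universal property
reconstructs it from that restriction. Hence a Poisson algebra object in
$\QCoh^{J_\infty(G)}(J_\infty(\frkg^*))$ is equivalent to a vertex Poisson
algebra $P$ equipped with a morphism $\mu_{\cois}: \Sym(\frkg) \to P$ of vertex
Poisson algebras under which the adjoint action of $J_\infty(\frkg)=\frkg[[t]]$
is locally finite.
\end{proof}
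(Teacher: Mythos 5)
Your proposal is correct and follows essentially the same route as the paper, which states this lemma without a written proof, presenting it as the arc-space analogue of Lemma \ref{lem:pr:pao}: invoke the monoidal equivalence $\QCoh^{J_\infty(G)}(J_\infty(\frkg^*)) \simeq \VPMod{J_\infty(\Sym(\frkg))}^{\lf}$ and unwind the definition of a Poisson algebra object, with the momentum map arising as the structure morphism from the base. Your added use of Example \ref{eg:li:arc-sg} and the universal property of the symmetric vertex Poisson algebra to pass between a vertex Lie morphism out of $J_\infty(\frkg)$ and a vertex Poisson morphism out of $J_\infty(\Sym(\frkg))$ (and your reading of the statement's $\Sym(\frkg)$ as $J_\infty(\Sym(\frkg))$, consistent with Definition \ref{dfn:co:pao}) is exactly the intended interpretation.
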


In view of this lemma, we set:

\begin{dfn}\label{dfn:co:pao}
A \emph{Poisson algebra object in $\eLQCoh{J_{\infty}(G)}(J_{\infty}(\frkg^*))$}
is a dg Poisson vertex algebra $P$ equipped with a coisson momentum map
$\mu_P: J_{\infty}(\Sym(\frkg)) \to P$
under which the adjoint action of $J_{\infty}(\frkg)=\frkg[[t]]$ is locally finite.
We denote such an object by $(P,\mu_P)$.
\end{dfn}

\begin{rmk*}
A genuine definition should be given in terms of 
``homotopy vertex Poisson algebra", which would be the combination of 
``homotopy vertex Lie algebra" structure and cdga structure.
We will come back to this point in a future work.
\end{rmk*}

We are now interested in a vertex Poisson analogue of 
the category $\MT$ (Definition \ref{dfn:pr:MT}).
It is enough to consider the composition of morphisms.
Following Proposition \ref{prp:pr:glue} of the relation between
the derived Hamiltonian reduction and the classical BRST complex, 
we will define the \emph{coisson gluing of vertex Poisson algebras}
by the coisson BRST complex.
For that, some preparations are in order:
\begin{itemize}[nosep]
\item 
Let $\frkg_1$ and $\frkg_2$ be the Lie algebras of 
semisimple algebraic groups $G_1$ and $G_2$ respectively.
For a Poisson algebra object $(P,\mu_P)$ in 
$\eLQCoh{J_{\infty}(G_1 \times G_2)}(J_{\infty}(\frkg_1^* \times \frkg_2^*))$
we define 
\[
 \mu_P^i: J_\infty(\Sym(\frkg_i)) \longto P \quad (i=1,2)
\]
by $\mu_P^1(x)=\mu_P(x \otimes 1)$ for $x \in \frkg_1$ and
$\mu_P^2(y)=\mu_P(1 \otimes y)$ for $y \in \frkg_2$. 
Here we used
$J_\infty(\Sym(\frkg_1 \oplus \frkg_2)) \simeq 
 J_\infty(\Sym(\frkg_1)) \otimes J_\infty(\Sym(\frkg_2))$.

\item
For a dg vertex Poisson algebra $P=(P,d,\vac,T,Y_+,Y_-)$, 
we denote by $P^{\op}$ the \emph{opposite} dg vertex Poisson algebra of $P$.
Explicitly, it is given by
\[
 P^{\op} := (P,d,\vac,-T,Y_+^{\op},Y_-^{\op})
\]
with  $Y_{\pm}^{\op}(a,z) := Y_{\pm}(a,-z)$.
\end{itemize}

\begin{dfn}\label{dfn:co:glue}
Let $\frkg_1$, $\frkg_2$ and $\frkg_3$ be the Lie algebras of 
semisimple algebraic groups $G_1$, $G_2$ and $G_3$ respectively.
Let $(P,\mu_P)$ and $(P',\mu_{P'})$ be Poisson algebra objects
in $\eLQCoh{G}(\frkg_1^* \times \frkg^*_2)$ and 
in $\eLQCoh{G}(\frkg_2^* \times \frkg^*_3)$ respectively.
We define the Poisson algebra object $P' \wtc P$
in $\eLQCoh{G}(\frkg_1^* \times \frkg^*_3)$ by 
\[
 P' \wtc P := 
 \BRST_{\cois}(J_\infty(\frkg_2), P^{\op} \otimes P',\mu_{\cois}).
\]
with $\mu_{\cois} :=-\mu_P^2 + \mu_{P'}^1$.
The coisson momentum map $J_\infty(\Sym(\frkg_1 \oplus \frkg_3)) \to P' \wtc P$
is naturally induced by the given $\mu_P^1$ and $\mu_{P'}^3$.
We call $P' \wtc P$ the \emph{coisson gluing} of $P$ and $P'$.
\end{dfn}

Here is our definition of vertex Poisson analogue of $\MT$:

\begin{dfn}\label{dfn:co:coMT}
We define the category $\coMT$ by the following description.
\begin{itemize}[nosep]
\item 
An object is a simply connected semi-simple algebraic group $G$ over $\bbC$.
We identify it with the associated Lie algebra $\frkg$.

\item
A morphisms from $\frkg_1$ to $\frkg_2$ is a Poisson algebra object 
$(P,\mu_P)$ in $\eLQCoh{G}(\frkg_1^* \times \frkg^*_2)$.

\item
The composition of $(P,\mu_P) \in \Hom_{\coMT}(\frkg_1,\frkg_2)$ and 
$(P',\mu_{P'}) \in \Hom_{\coMT}(\frkg_2,\frkg_3)$ is given by
the coisson gluing $P' \wtc P$.
\end{itemize}
\end{dfn}

Let us study the compatibility 
with the derived gluing (Definition \ref{dfn:pr:MT}).
Recall the monoidal functor $R^{\cois}_{(-)}: \dgVP \to \dgpa$ taking 
the associated dg Poisson algebra (Definition \ref{dfn:li:dgapa}).
Applying it to a coisson momentum map $\mu_{\cois}: J_\infty(\frkg) \to P$
and using Lemma \ref{lem:li:RJ=R}, we have a morphism
$\mu:=R^{\cois}_{\mu_{\cois}}: \Sym(\frkg) \to R^{\cois}_P$ of dg Poisson algebra,
which is equivalent to a momentum map $\mu: \frkg \to R^{\cois}_P$.
We also have $R^{\cois}_{P^{\op}} \simeq (R^{\cois}_P)^{\op}$,
where the second term denotes the opposite dg Poisson algebra 
(Definition \ref{dfn:sp:whP}), and 
$R^{\cois}_{\coCl(J_\infty(\frkg))}=\cCl(\frkg)$ by Corollary \ref{cor:li:coCl}.
Combining these facts, we obtain:

\begin{prp}\label{prp:co:glue}
Let $P$, $P'$ and $\mu_{\cois}: J_\infty(\Sym(\frkg_2)) \to P^{\op} \otimes P'$ 
be as in Definition \ref{dfn:co:glue}, and define 
$\mu=R^{\cois}_{\mu_{\cois}}: 
 \frkg_2 \to (R^{\cois}_P)^{\op} \otimes R^{\cois}_{P'}$
as above. 
Then we have 
\[
 R^{\cois}_{P' \wtc P} \simeq 
 \cBRST(\frkg_2, (R^{\cois}_{P})^{\op} \otimes R^{\cois}_{P'},\mu)
\]
as dg Poisson algebras.
\end{prp}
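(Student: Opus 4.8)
The plan is to apply the monoidal functor $R^{\cois}_{(-)}: \dgVP \to \dgpa$ (Definition \ref{dfn:li:dgapa}) to the coisson BRST complex
\[
 P' \wtc P = \coBRST\bigl(J_\infty(\frkg_2), P^{\op} \otimes P', \mu_{\cois}\bigr)
 = \bigl(\coCl(J_\infty(\frkg_2)) \otimes P^{\op} \otimes P', d_{\cois}\bigr)
\]
and to match the outcome term by term with the classical BRST complex $\cBRST\bigl(\frkg_2, (R^{\cois}_P)^{\op} \otimes R^{\cois}_{P'}, \mu\bigr) = \bigl(\cCl(\frkg_2) \otimes (R^{\cois}_P)^{\op} \otimes R^{\cois}_{P'}, d_{\cl}\bigr)$. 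First I would forget the differentials: the underlying object is a tensor product of graded vertex Poisson algebras, so monoidality of $R^{\cois}_{(-)}$ together with Corollary \ref{cor:li:coCl} ($R^{\cois}_{\coCl(J_\infty(\frkg_2))} \simeq \cCl(\frkg_2)$) and the isomorphism $R^{\cois}_{P^{\op}} \simeq (R^{\cois}_P)^{\op}$ yields
\[
 R^{\cois}_{\coCl(J_\infty(\frkg_2)) \otimes P^{\op} \otimes P'}
 \simeq \cCl(\frkg_2) \otimes (R^{\cois}_P)^{\op} \otimes R^{\cois}_{P'}
\]
as graded Poisson algebras, which is precisely the underlying graded Poisson algebra of the classical BRST complex.

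Next I would track the differential. Since $R^{\cois}_{(-)}$ is a functor into $\dgpa$, the tensor-product part $d_{\coCl(J_\infty(\frkg_2)) \otimes P^{\op} \otimes P'}$ descends to $d_{\cCl(\frkg_2) \otimes (R^{\cois}_P)^{\op} \otimes R^{\cois}_{P'}}$. For the operator $(Q_{\cois})_{(0)}$ I would use that on $R^{\cois}_V = V/\Img(T)$ the operation $a_{(0)}$ descends — because $[T, a_{(0)}] = 0$ and $(T a)_{(0)} = 0$, both immediate from the vertex Lie axioms of Definition \ref{dfn:li:vla} — and that the induced operation is the Poisson bracket $\ol{a_{(0)} b} = \{\ol a, \ol b\}$ by the very construction of $R^{\cois}_V$ (Remark \ref{rmk:li:vla}, Definition \ref{dfn:li:dgapa}). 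Hence $(Q_{\cois})_{(0)}$ descends to $\{\ol{Q_{\cois}}, -\}$, where $\ol{Q_{\cois}}$ is the image in $R^{\cois}_{\coCl(J_\infty(\frkg_2)) \otimes P^{\op} \otimes P'}$ of the coisson BRST charge $Q_{\cois} = \wt{\mu}_{\cois} + \tfrac12 \wt{\beta}$.

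Then I would identify $\ol{Q_{\cois}}$ with the classical BRST charge $\ol{Q} = \wt{\mu} + \tfrac12 \wt{\beta}$ attached to the triple $(\frkg_2, (R^{\cois}_P)^{\op} \otimes R^{\cois}_{P'}, \mu)$ in Lemma \ref{lem:pr:cBRST}. Under the quotient $\coCl(J_\infty(\frkg_2)) \surj \cCl(\frkg_2)$ the generators $\ol{\psi}_i t^{-1}\vac$ and $\ol{\psi}^*_i t^{-1} d t\, \vac$ map to the generators $\ol{\psi}_i$ and $\ol{\psi}^*_i$ of $\cCl(\frkg_2)$; therefore the adjoint map $\beta: J_\infty(\frkg_2) \to \coCl(J_\infty(\frkg_2))^0$ descends to the adjoint map $\beta: \frkg_2 \to \cCl(\frkg_2)^0$ of Lemma \ref{lem:pr:cBRST}, while $\mu_{\cois}$ descends to $\mu = R^{\cois}_{\mu_{\cois}}$ via Lemma \ref{lem:li:RJ=R}. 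Chasing the definitions of $\wt{\mu}_{\cois}, \beta', \beta'', \wt{\beta}$ through this quotient then gives $\ol{\wt{\mu}_{\cois}} = \wt{\mu}$ and $\ol{\wt{\beta}} = \wt{\beta}$, hence $\ol{Q_{\cois}} = \ol{Q}$. Combining the three steps, $R^{\cois}_{(-)}$ carries $d_{\cois}$ to $\{\ol{Q}, -\} + d_{\cCl(\frkg_2) \otimes (R^{\cois}_P)^{\op} \otimes R^{\cois}_{P'}} = d_{\cl}$, which is the asserted isomorphism of dg Poisson algebras.

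I expect the main obstacle to be the bookkeeping in the last step: one must verify that the structure elements built from the adjoint action and the pairing on $J_\infty(\frkg_2)[1] \oplus J_\infty(\frkg_2)^*[-1]$ restrict compatibly to the corresponding data on $\frkg_2[1] \oplus \frkg_2^*[-1]$ under the translation quotient — keeping straight the restricted-dual conventions behind Corollary \ref{cor:li:coCl} — and that no signs or combinatorial factors are lost. A convenient way to organise this is to first treat the special case $P = J_\infty(S)$, $P' = J_\infty(S')$, where the lemma preceding \S \ref{sss:co:G} already supplies $R^{\cois}_{\coBRST(\frkg_2, J_\infty(S), J_\infty(\mu_S))} \simeq \cBRST(\frkg_2, S, \mu_S)$, and then to observe that both sides of the claimed isomorphism are built functorially from $(P, \mu_P)$ and $(P', \mu_{P'})$ through the monoidal operations and $(-)_{(0)}$, so that the general case follows by naturality.
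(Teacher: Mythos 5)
Your proposal is correct and follows essentially the same route as the paper, which obtains the proposition by applying the monoidal functor $R^{\cois}_{(-)}$ and combining Corollary \ref{cor:li:coCl}, Lemma \ref{lem:li:RJ=R}, and the compatibility $R^{\cois}_{P^{\op}} \simeq (R^{\cois}_P)^{\op}$. Your explicit tracking of how $(Q_{\cois})_{(0)}$ descends to $\{\ol{Q},-\}$ on the translation quotient simply spells out what the paper leaves implicit in its ``combining these facts'' argument.
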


In particular, combining it with Proposition \ref{prp:pr:glue},
we have a quasi-isomorphism of homotopy Poisson algebras
\[
 R^{\cois}_{P' \wtc P} \dqiseq  R^{\cois}_{P'} \wtc R^{\cois}_P = 
 ((R^{\cois}_P)^{\op} \otimes R^{\cois}_{P'}) \gq^{\bbL}_{\mu} \Sym(\frkg).
\]
In other words, we have:

\begin{thm}\label{thm:co:glue}
The functor $R^{\cois}_{(-)}: \dgVP \to \dgpa$ taking 
the associated dg Poisson algebra induces a functor
\[
 R^{\cois}_{(-)}: \coMT \longto \MT.
\] 
\end{thm}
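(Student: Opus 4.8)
The plan is to show that the monoidal functor $R^{\cois}_{(-)}: \dgVP \to \dgpa$ of Definition \ref{dfn:li:dgapa} respects the categorical data of $\coMT$ (Definition \ref{dfn:co:coMT}) and of $\MT$ (Definition \ref{dfn:pr:MT}), so that it descends to a functor between these categories. On objects there is nothing to do, since the two categories have the same objects and $R^{\cois}_{(-)}$ acts as the identity. On morphisms, given $(P,\mu_P) \in \Hom_{\coMT}(\frkg_1,\frkg_2)$ --- a dg vertex Poisson algebra $P$ together with a coisson momentum map $\mu_P: J_\infty(\Sym(\frkg_1 \oplus \frkg_2)) \to P$ under which the constant copy of $\frkg_1 \oplus \frkg_2$ acts locally finitely --- I would assign $R^{\cois}_P$. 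This is a dg Poisson algebra, hence a $\wh{\bbP}_1$-algebra (Remark \ref{rmk:sp:whP}); and applying $R^{\cois}_{(-)}$ to $\mu_P$ and using the identification $R^{\cois}_{J_\infty(\Sym(\frkg_1 \oplus \frkg_2))} \simeq \Sym(\frkg_1 \oplus \frkg_2)$ of Lemma \ref{lem:li:RJ=R} yields a morphism $\mu := R^{\cois}_{\mu_P}: \Sym(\frkg_1 \oplus \frkg_2) \to R^{\cois}_P$ of dg Poisson algebras, equivalently a momentum map in the sense of Definition \ref{dfn:sp:Ham} and Remark \ref{rmk:sp:mu}. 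Local finiteness of this $(\frkg_1 \oplus \frkg_2)$-action is inherited by the subquotient $R^{\cois}_P = P/\Img(T)$ from the local finiteness on $P$, so $(R^{\cois}_P,\mu)$ represents a morphism in $\Hom_{\MT}(\frkg_1,\frkg_2)$; we let $R^{\cois}_{(-)}$ send $(P,\mu_P)$ to its class.

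The substantive point is preservation of composition, and here the essential work has already been done. For $(P,\mu_P): \frkg_1 \to \frkg_2$ and $(P',\mu_{P'}): \frkg_2 \to \frkg_3$, Proposition \ref{prp:co:glue} --- whose proof combines monoidality of $R^{\cois}_{(-)}$, the identification $R^{\cois}_{P^{\op}} \simeq (R^{\cois}_P)^{\op}$, the equality $R^{\cois}_{\coCl(J_\infty(\frkg_2))} \simeq \cCl(\frkg_2)$ of Corollary \ref{cor:li:coCl}, and the behaviour of the coisson BRST differential under $R^{\cois}_{(-)}$ --- gives an isomorphism of dg Poisson algebras
\[
 R^{\cois}_{P' \wtc P} \simeq \cBRST\bigl(\frkg_2,(R^{\cois}_P)^{\op} \otimes R^{\cois}_{P'},\mu\bigr),
\]
with $\mu$ as in that Proposition. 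Then Proposition \ref{prp:pr:glue}, resting on Proposition \ref{prp:pr:HBp}, supplies a quasi-isomorphism of homotopy Poisson algebras
\[
 \cBRST\bigl(\frkg_2,(R^{\cois}_P)^{\op} \otimes R^{\cois}_{P'},\mu\bigr) \dqiseq \bigl((R^{\cois}_P)^{\op} \otimes R^{\cois}_{P'}\bigr) \gq^{\bbL}_{\mu} \Sym(\frkg_2) = R^{\cois}_{P'} \wtc R^{\cois}_P .
\]
Concatenating these, $R^{\cois}_{P' \wtc P} \dqiseq R^{\cois}_{P'} \wtc R^{\cois}_P$, so both define the same morphism class in $\MT$. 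It then remains to verify that this equivalence intertwines the residual Hamiltonian structures: the coisson momentum map on $P' \wtc P$ induced from $\mu_P^1$ and $\mu_{P'}^3$ should be carried by $R^{\cois}_{(-)}$ precisely onto the residual $(\frkg_1 \oplus \frkg_3)$-momentum map of the derived gluing described in Remark \ref{rmk:pr:MT}, which is a direct unwinding of the level-$0$ structure \eqref{eq:li:0vpa} and of the formula defining $\mu$.

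Finally, $R^{\cois}_{(-)}$ must send identity morphisms to identity morphisms; this is checked in the same way, using the explicit descriptions of the identities as the coisson and dg Poisson avatars of $T^*G$. I expect the main obstacle to be the bookkeeping in the previous paragraph: propagating the full Hamiltonian-action data --- not merely the underlying $\wh{\bbP}_1$-algebra --- through the chain of identifications of Propositions \ref{prp:co:glue} and \ref{prp:pr:glue}, and confirming that $R^{\cois}_{(-)}$ carries level-$0$ coisson momentum maps to the corresponding ordinary momentum maps on the nose. Everything else is either formal --- the two displayed equivalences are provided by the cited propositions --- or a routine computation with the explicit formulas of \S\ref{ss:co:coBRST}.
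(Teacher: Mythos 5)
Your proposal is correct and follows essentially the same route as the paper: the preparatory observations (Lemma \ref{lem:li:RJ=R}, $R^{\cois}_{P^{\op}} \simeq (R^{\cois}_P)^{\op}$, Corollary \ref{cor:li:coCl}) give the assignment on morphisms, and composition is preserved by combining Proposition \ref{prp:co:glue} with Proposition \ref{prp:pr:glue}, which is exactly how the paper derives the theorem. Your extra attention to identities and to tracking the residual $(\frkg_1\oplus\frkg_3)$-momentum maps is left implicit in the paper but is a harmless refinement of the same argument.
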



In the remaining part we consider coisson analogue of 
Hamiltonian reduction of Poisson algebra objects.
Recall Definition \ref{dfn:pr:dhr} of the derived Hamiltonian reduction:
For a dg Lie algebra $\frkl$, a dg Poisson algebra $R$ and 
a momentum map $\mu: \frkl \to R$ we have 
$R \gq^{\bbL}_{\mu} \Sym(\frkl) := 
 \CE(\frkl,\bbk) \otimes^{\bbL}_{\CE(\frkl,\Sym(\frkl))} \CE(\frkl,R)$.
It is natural to guess that in the coisson setting 
we replace the dg Lie algebra $\frkl$ by 
the level $0$ vertex Lie algebra $J_\infty(\frkl)$.
We should be careful here that as a Lie algebra 
$J_\infty(\frkg)=\frkg[[t]]$ is infinite-dimensional.
Recalling Proposition \ref{prp:pr:HBinf}, 
we modify Definition \ref{dfn:dga:CE} of Chevalley-Eilenberg complex as 
\[
 \CE(\frkg[[t]],M) := \uHom_{U(\frkg[[t]])}^{\trst}(U(\frkg[[t]]_{\dagger}),M)
\]
for a dg $\frkg[[t]]$-module $M$,
where $\uHom^{\trst}$ is given by Definition \ref{prp:pr:rst}
with the abelian group $\Gamma=\bbZ$ 
and the decomposition $\frkg[[t]] = \bigoplus_{n \in \bbN} \frkg \otimes t^n$.
Using this modified Chevalley-Eilenberg complex, we consider:

\begin{dfn*}
For a Poisson algebra object $(P,\mu_\cois)$ of 
$\eLQCoh{J_{\infty}(G)}(J_{\infty}(\frkg^*))$,
we define a commutative dg algebra
\[
 P \gq_{\mu_{\cois}}^{\bbL} J_\infty(\Sym(\frkg)) := 
 \CE(\frkg[[t]],\bbk) 
 \tbotimes^{\bbL}_{\CE(\frkg[[t]],J_\infty(\Sym(\frkg)))} \CE(\frkg[[t]],P),
\]
where $P$ is regarded as a $\frkg[[t]]$-module by $\mu_\cois$.
We call it the \emph{coisson Hamiltonian reduction} of $P$ 
with respect to the coisson momentum map $\mu_{\cois}$.
\end{dfn*}

Then we can apply the argument in Proposition \ref{prp:pr:HBinf} to 
the finite-dimensional decomposition 
$J_{\infty}(\frkg)=\frkg[[t]] = \bigoplus_{n \in \bbN}\frkg \otimes t^n$.
Since we have 
$\cBRST(\frkg[[t]],P,\mu_{\cois}) \simeq \coBRST(J_\infty(\frkg),P,\mu_{\cois})$ 
as cdgas, we have:

\begin{prp}\label{prp:co:HBcp}
Let $G$ and $\frkg$ be as above.
For a Poisson algebra object $(P,\mu_\cois)$ in 
$\eLQCoh{J_{\infty}(G)}(J_{\infty}(\frkg^*))$,
we have a quasi-isomorphism of cdgas
\[
 P \gq^{\bbL}_{\mu_\cois} J_\infty(\Sym(\frkg)) \dqiseq 
 \BRST_{\cois}(J_{\infty}(\frkg),P,\mu_{\cois}).
\] 
\end{prp}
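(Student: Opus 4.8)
The plan is to reduce the statement to the finite-dimensional result of Proposition~\ref{prp:pr:HBp} (or rather its infinite-dimensional modification Proposition~\ref{prp:pr:HBinf}) applied to the Lie algebra $\frkg[[t]]$ with its finite-dimensional $\bbZ$-decomposition $\frkg[[t]] = \bigoplus_{n \in \bbN} \frkg \otimes t^n$. Concretely, I would first unwind both sides as cdgas. By the definition of the coisson Hamiltonian reduction, the left-hand side is
\[
 \CE(\frkg[[t]],\bbk) \tbotimes^{\bbL}_{\CE(\frkg[[t]],J_\infty(\Sym(\frkg)))} \CE(\frkg[[t]],P),
\]
which, using the isomorphism $J_\infty(\Sym(\frkg)) \simeq \Sym(J_\infty(\frkg)) = \Sym(\frkg[[t]])$ of Example~\ref{eg:li:arc-sg}, is exactly the derived Hamiltonian reduction $P \gq^{\bbL}_{\mu} \Sym(\frkg[[t]])$ in the sense of Definition~\ref{dfn:pr:dhr} (with the $\Hom^{\trst}$-modification of the Chevalley-Eilenberg complex), where $\mu: \frkg[[t]] \to P$ is the momentum map underlying $\mu_\cois$ (via Remark~\ref{rmk:sp:mu}~\eqref{i:sp:mu:3} and $R^\cois_{J_\infty(\Sym\frkg)}\simeq\Sym(\frkg)$, Lemma~\ref{lem:li:RJ=R}, together with the fact that $P$ is already a dg Poisson algebra, hence a dg $\frkg[[t]]$-module).

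Next I would invoke Proposition~\ref{prp:pr:HBinf}: the hypotheses are exactly that $\frkg[[t]]$ admits a finite-dimensional $\bbN$-decomposition and that the $\frkg[[t]]$-action on $P$ is locally finite, the latter being precisely the condition built into the definition of a Poisson algebra object in $\eLQCoh{J_\infty(G)}(J_\infty(\frkg^*))$ (Definition~\ref{dfn:co:pao}). This gives a quasi-isomorphism of homotopy Poisson algebras
\[
 P \gq^{\bbL}_{\mu} \Sym(\frkg[[t]]) \dqiseq \cBRST(\frkg[[t]],P,\mu).
\]
Then the final step is to identify the right-hand side with the coisson BRST complex: I would check that $\cBRST(\frkg[[t]],P,\mu) \simeq \coBRST(J_\infty(\frkg),P,\mu_\cois)$ as cdgas. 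This identification is asserted in the text just before the proposition statement, and follows by comparing the underlying graded objects --- $\cCl(\frkg[[t]]) \otimes P$ on one side, obtained from the classical Clifford algebra of $\frkg[[t]]$, and $\coCl(J_\infty(\frkg)) \otimes P$ on the other --- together with the matching of the classical BRST charge $\ol{Q}$ with $(Q_\cois)_{(0)}$ under this identification (Lemma~\ref{lem:pr:olQ} and the lemma before Definition~\ref{dfn:co:coBRST}); here one uses that for $\frkg[[t]]$ with its $t$-adic grading, the restricted dual and the $\Hom^{\trst}$-convention make the Chevalley-Eilenberg and Clifford constructions coincide with the coisson ones. Composing the three quasi-isomorphisms yields the claim.

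The main obstacle I anticipate is bookkeeping at the level of the completions and the restricted-dual conventions: one must verify that the $\Hom^{\trst}$-version of $\CE(\frkg[[t]],-)$ used in the coisson Hamiltonian reduction really does reproduce the $\cCl(\frkg[[t]])$ appearing in $\cBRST$, and that the infinite sums defining the BRST differential $d_\cois = (Q_\cois)_{(0)} + d_{\coCl \otimes P}$ converge in the appropriate topology and agree term-by-term with $d_\cl = \{\ol{Q},-\} + d_{\cCl \otimes P}$. Since Proposition~\ref{prp:pr:HBinf} already absorbs the genuinely homotopical content (the $L_\infty$-structure and its compatibility with the Poisson bracket, cf.\ Proposition~\ref{prp:pr:HBp}), this last proposition is essentially a translation of notation, and I would keep the proof short: unwind the definitions, cite Example~\ref{eg:li:arc-sg}, Lemma~\ref{lem:li:RJ=R}, and Corollary~\ref{cor:li:coCl} for the identifications of the pieces, cite Proposition~\ref{prp:pr:HBinf} for the quasi-isomorphism, and note that the identification $\cBRST(\frkg[[t]],P,\mu) \simeq \coBRST(J_\infty(\frkg),P,\mu_\cois)$ is the one established just above in the text.
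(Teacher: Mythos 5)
Your proposal is correct and follows essentially the same route as the paper: the paper likewise applies Proposition \ref{prp:pr:HBinf} to the finite-dimensional decomposition $\frkg[[t]]=\bigoplus_{n\in\bbN}\frkg\otimes t^n$ (with local finiteness supplied by Definition \ref{dfn:co:pao}) and then invokes the cdga identification $\cBRST(\frkg[[t]],P,\mu_{\cois})\simeq\coBRST(J_\infty(\frkg),P,\mu_{\cois})$ asserted just before the statement. Your additional remarks on the restricted-dual and completion bookkeeping only make explicit what the paper leaves implicit.
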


\begin{rmk}\label{rmk:co:glue}
Let us continue Remark \ref{rmk:pr:glue}, where we considered 
the reduction of a Poisson algebra with Hamiltonian $G$-action.
Here we consider the action of $J_\infty(G)=G[[t]]$ on the arc space instead.

We use the same notation in Remark \ref{rmk:pr:glue}.
Thus, $(R,\mu_R)$ is a Poisson object in $\QCoh^{G}(\frkg^*)$,
and identified with $(X:=\Spec(R), \mu_X: X \to \frkg^*)$.
We assume that there is a closed subscheme $S \subset X$ 
such that the action map gives an isomorphism $G \times S \simto X$,
and that the momentum map $\mu_X$ is flat.
Note that the spectrum $J_{\infty}(R)$ of the arc space $J_{\infty}(X)$ 
is a Poisson algebra object in $\QCoh^{J_{\infty}(G)}(J_\infty(\frkg^*))$, and
the corresponding coisson momentum map (Definition \ref{dfn:co:coBRST}) is given
by $J_{\infty}(\mu_X): J_{\infty}(X) \to J_{\infty}(\frkg^*)$.

Let $(P',\mu_{P'})$ be another Poisson algebra object in 
$\QCoh^{J_{\infty}(G)}(J_\infty(\frkg^*))$,
which will be identified with $(Y':=\Spec(P'),\mu_{Y'})$.
Then we can consider the tensor product $J_{\infty}(R)^{\op} \otimes P'$ 
with coisson momentum map 
$\mu_{\cois}: J_\infty(X)^{\op} \times Y' \to J_{\infty}(\frkg^*)$,
$\mu_{\cois}(x,y):=-J_{\infty}(\mu_X)(x)+\mu_{Y'}(y)$.
By the assumption we have $X \simeq G \times S$, so that we also have 
$J_\infty(X) \simeq J_{\infty}(G) \times J_{\infty}(S)$ as schemes.
Thus the fiber $\mu_{\cois}^{-1}(0)$ is given by
\[
 \mu_{\cois}^{-1}(0) \simeq J_{\infty}(X) \times_{J_\infty(\frkg^*)} Y' 
 \simeq J_{\infty}(G) \times (J_{\infty}(S) \times_{J_\infty(\frkg^*)}Y').
\]

Now let us further assume that the morphism
$J_{\infty}(\mu_X): J_{\infty}(X) \to J_{\infty}(\frkg^*)$ is flat.
We have the non-derived Hamiltonian reduction
$(J_\infty(X)^{\op} \times Y')\gq \Delta(J_{\infty}(G))$.
Then, similarly as in the argument in Remark \ref{rmk:pr:glue},
we can deduce from Proposition \ref{prp:co:HBcp} that 
there is an quasi-isomorphism 
\[
 \BRST_{\cois}(J_{\infty}(\frkg),J_\infty(R)^{\op} \otimes P',\mu) 
 \simeq (R^{\op} \otimes P') \gq_{\mu_{\cois}} J_{\infty}(\Sym(\frkg)) 
 \simeq \bbk[\mu_{\cois}^{-1}(0) / \Delta(J_\infty(G))]
\] 
of cdgas, and on the cohomology we have 
\[
 H^\bl \BRST_{\cois}(J_{\infty}(\frkg),J_\infty(R)^{\op} \otimes P',\mu) 
 \simeq \bbk[J_{\infty}(S) \times_{J_{\infty}(\frkg^*)}X'] 
        \otimes H^{\bl}(G,\bbC).
\]
Thus we recover the formula \cite[(15)]{A18}.
\end{rmk}

\section{Derived gluing of dg vertex algebras}
\label{s:ch}

Finally we present the main result.
We introduce a vertex algebra analogue $\chMT$  of the category $\MT$.
The composition of morphisms in $\chMT$ will be called 
the \emph{chiral gluing} of dg vertex algebras.

\subsection{Chiral BRST complex}
\label{ss:ch:BRST}

In this subsection we explain BRST reduction for vertex algebras.
Our exposition is a coordinate-dependent version of the general argument 
for chiral algebras in \cite[\S 3.8]{BD}.

\begin{rmk*}
We give a brief account on the theory of \emph{chiral algebras}
developed by Beilinson and Drinfeld \cite[Chap.\ 3]{BD}.
We explained in Remark \ref{rmk:co:star} that 
a vertex Poisson algebra is a special case of coisson algebra,
which is a Poisson algebra object in the compound 
($\otimes^!$ and $\otimes^\star$) monoidal structure 
on the category $\DMod(\Ran(X))$ of D-modules on the Ran space of $X=\bbA^1$.
On similar footing, a vertex algebra can be regarded as a Lie algebra object
in the \emph{chiral monoidal structure} $\otimes^{\ch}$ on $\DMod(\Ran(X))$.
See also \cite{FG} for the chiral monoidal structure.

Thus the BRST complex for vertex algebra should be defined by replacing 
``Lie algebra" in the classical BRST complex (\S \ref{ss:pr:BRST})
with ``Lie algebra object in $\DMod(\Ran(X))^{\otimes^{\ch}}$".
This is the very rough explanation of the construction 
of chiral BRST complex in \cite[\S 3.8]{BD}. 
\end{rmk*}

\subsubsection{Clifford vertex algebra and Tate extension}

We work over a field $\bbk$ of characteristic $0$.
In this part we give a new explanation the free fermion vertex algebra 
$\Wedge^{\sinf}(U)$ in \S \ref{sss:li:ff},
and also introduce the Tate extension of dg vertex Lie algebra.
These materials are given in \cite[\S 3.8]{BD}.

Let $L$ be a dg vertex Lie algebra.
We denote by $L^*:=\uHom(L,\bbk)$ the dual complex.
It has a dg vertex Lie algebra structure.
The canonical pairing $L^* \otimes L \to \bbk$ induces a pairing 
$(\cdot,\cdot)$ on $M := L^*[-1] \oplus L[1]$, 
and it defines a one-dimensional central extension $M^\flat$ 
of the commutative dg vertex Lie algebra $M$.
Thus we have the twisted enveloping vertex algebra $U(M)^\flat$ of $M$
(Definition \ref{dfn:li:ULf}).

\begin{dfn*}[{\cite[3.8.6]{BD}}]
We denote the twisted enveloping vertex algebra by 
\[
 \chCl(L) = \chCl\bigl(L,L^*,(\cdot,\cdot)\bigr) := U(M)^\flat
\]
and call it the \emph{Clifford vertex algebra}.

$M^\flat$ has an extra $\bbZ$-grading by
$(M^\flat)^{(-1)}:= L^*[-1]$, $(M^\flat)^{(0)} := \bbk$ and 
$(M^\flat)^{(1)} := L  [1]$.
It induces an extra $\bbZ$-grading $\chCl(L)^{(\bl)}$.
\end{dfn*}

\begin{rmk}\label{rmk:ch:chCl}
\begin{enumerate}[nosep]
\item 
In \cite[3.8.6]{BD}, $\chCl(L)$ is called the chiral Clifford algebra,
which is the origin of the symbol $\ch$.

\item
The notation $\chCl(U((t)),U^*((t))d t,(\cdot,\cdot))$ 
in Definition \ref{dfn:li:ff} agrees with this definition.
In this case the $\bbZ$-grading $(\bl)$ is nothing but the charge grading.
\end{enumerate}
\end{rmk}

Recall the PBW filtration $\chCl(L)_{\bl}$ 
in Definition \ref{dfn:li:ULf} \eqref{i:li:ULf:PBW}.
We can check that 
$\chCl(L)^{(0)}_2 = (L^*[-1]) \otimes (L[1]) \oplus \bbk$
is a dg vertex Lie subalgebra of $\chCl(L)_{\tLie}$,
and is a one-dimensional central extension of $L^* \otimes L$.
On the other hand, the adjoint action of $\Lie(L)$ on itself yields
a morphism $L \to L^* \otimes L$ of dg vertex Lie algebras.
We denote by $L^\flat$ the pullback of the extension 
$0 \to \bbk_{\tLie} \to \chCl(L)_{\tLie} \to L^* \otimes L \to 0$
by this morphism.

\begin{dfn*}{\cite[3.8.7]{BD}}
We call the dg vertex Lie algebra $L^\flat$ the Tate extension.
\end{dfn*}

We have a morphism $L^\flat \to \chCl(L)_{\tLie}$ of dg vertex Lie algebras
satisfying $1^\flat \mapsto 1_{\chCl(L)}$.

\subsubsection{Chiral BRST complex}

We continue to work over a field $\bbk$ of characteristic $0$.
We fix a dg vertex Lie algebra $L$.

\begin{dfn*}[{\cite[3.8.8]{BD}}]
A \emph{BRST datum} is a pair $(V,\alpha)$ of a dg vertex algebra $V$ 
and a morphism $\alpha: L \to V_{\tLie}$ of dg Lie algebras 
such that $\alpha(1^\flat)=-\vac$,
where $\vac$ denotes the vacuum of $V$.
\end{dfn*}

Given a BRST datum $(V,\alpha)$, we have a morphism
$\ell^{(0)} = \alpha+\beta: L \to V \otimes \chCl(L)^{(0)}$
of dg vertex Lie algebras.
As in the argument of \S \ref{ss:co:coBRST}, the contractible complex 
$L_\dagger=(L[1] \to L)$ inherits the dg vertex Lie algebra structure of $L$.
Defining 
$\ell^{(1)}: L[1] \to V \otimes \chCl(L)^{(-1)}$ to be 
the composition $L[1] \inj \chCl(L)^{(-1)} \inj V \otimes \chCl(L)^{(-1)}$,
we have a morphism
\[
 l: L_{\dagger} \longto \chCl(L) \otimes V
\]
of graded vertex Lie algebras (forgetting the differential).

We now regard the symmetric dg algebra $\Sym(L^*[-1])$ 
as a Chevalley-Eilenberg complex of the trivial $L$-module, 
and denote by $\delta$ the differential.
By the embedding $\Sym(L^*[-1]) \subset \chCl(L) \subset V \otimes \chCl(L)$
we regard $\delta$ acting on $V \otimes \chCl(L)$.

\begin{fct}[{\cite[3.8.10]{BD}}]\label{fct:ch:BRST}
There is a unique element $Q \in V \otimes \chCl(L)^{(1)}$
of cohomological degree $1$ 
such that $[Q_{(0)},l^{(-1)}]=l^{(0)}$ and $Q_{(0)}^2=0$.
The morphism $d_{\ch}:= Q_{(0)}+d_{\chCl(L) \otimes V}$ defines 
a dg vertex algebra structure on $V \otimes \chCl(L)$.
We denote it by 
\[
 \BRST_{\ch}(V,\alpha) := (V \otimes \chCl(L),d_{\ch})
\]
and call it the (chiral) BRST complex for the BRST datum $(V,L,\alpha)$.
\end{fct}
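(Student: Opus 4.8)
The plan is to follow the blueprint of the classical BRST complex in \S\ref{ss:pr:BRST} (Lemma~\ref{lem:pr:cBRST} and Definition~\ref{dfn:pr:cBRST}), now carried out inside the category of dg vertex algebras rather than dg Poisson algebras, and to exploit the fact that the formal structure of the argument is identical once ``dg Lie algebra'' is replaced by ``dg vertex Lie algebra''. Concretely, given a BRST datum $(V,\alpha)$ and the level-$0$ dg vertex Lie algebra language developed in \S\ref{ss:li:dgvl}, one has the Clifford vertex algebra $\chCl(L)$ with its charge grading $\chCl(L)^{(\bl)}$, the morphism of graded vertex Lie algebras $l:L_\dagger\to\chCl(L)\otimes V$ assembled from $\ell^{(0)}=\alpha+\beta$ and $\ell^{(1)}$, and the Chevalley--Eilenberg differential $\delta$ on $\Sym(L^*[-1])\subset\chCl(L)\otimes V$. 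The construction of $Q$ then proceeds exactly as in the finite-dimensional Kostant--Sternberg case (cf.\ Lemma~\ref{lem:pr:olQ}): one writes $Q=\wt\mu_{\ch}+\tfrac12\wt\beta$ where $\wt\mu_{\ch}\in (V\otimes\chCl(L)^{(1)})^1$ corresponds to $\alpha$ and $\wt\beta$ is built from the structure maps of $L$ via $\beta$, precisely mirroring the elements $\wt\mu,\beta',\beta'',\wt\beta$ in Lemma~\ref{lem:pr:cBRST}.

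The steps I would carry out, in order, are: (1) write down the candidate $Q$ explicitly in a PBW basis of $\chCl(L)$ adapted to a homogeneous basis of $L$, using the state-field correspondence of Definition~\ref{dfn:li:ff} for the fermionic part and $\alpha$ for the $V$-part; (2) verify $[Q_{(0)},l^{(-1)}]=l^{(0)}$ — this is a direct computation with the commutation relations \eqref{eq:li:ff-com} and the Borcherds identities of Remark~\ref{rmk:li:vsa}\eqref{i:rmk:li:vsa:1}, and it pins down $Q$ uniquely by Remark~\ref{rmk:li:vsa}\eqref{i:rmk:li:vsa:3} (the map $a\mapsto a_{(-1)}$ is injective, so $Q$ is determined by the operator $Q_{(0)}$ it produces); (3) prove $Q_{(0)}^2=0$, equivalently $(Q_{(0)}Q)_{(0)}=0$ by the commutator formula, and actually $Q_{(0)}Q=0$ — here the cancellation is the vertex-algebra incarnation of $\{\ol Q,\ol Q\}=0$ in Lemma~\ref{lem:pr:cBRST}, and it splits into the ``$\mu$ with $\mu$'' term (vanishing because $\alpha$ is a morphism of vertex Lie algebras, i.e.\ a coisson momentum map in the sense of Remark~\ref{rmk:co:cmm}), the ``$\mu$ with $\beta$'' term (vanishing by the momentum-map/equivariance identity encoded in $\alpha(1^\flat)=-\vac$ together with the adjoint action), and the ``$\beta$ with $\beta$'' term (vanishing by the Jacobi identity for $L$, appearing here through the structure constants as in Lemma~\ref{lem:pr:olQ}); (4) check that $d_{\ch}=Q_{(0)}+d_{\chCl(L)\otimes V}$ is an odd derivation of the tensor-product graded vertex algebra $V\otimes\chCl(L)$ and squares to zero — the square-zero condition follows from $Q_{(0)}^2=0$, $d_{\chCl(L)\otimes V}^2=0$, and $[d_{\chCl(L)\otimes V},Q_{(0)}]=(d Q)_{(0)}=0$, the last because $Q$ has cohomological degree $1$ and is built from the vacuum and the structure maps, all of which are closed; and (5) conclude that $(V\otimes\chCl(L),d_{\ch})$ is a dg vertex algebra, which is Definition~\ref{dfn:ch:BRST}.

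The main obstacle I anticipate is the infinite-dimensionality of $L=\Lie$-type objects: when $L=J_\infty(\frkl)$ or is obtained from $\wh{\frkg}_k$, the sums defining $\wt\beta$ and $Q_{(0)}$ are infinite, so one must check that normal ordering (Definition~\ref{dfn:li:nord}) makes $Q$ a well-defined element of $V\otimes\chCl(L)^{(1)}$ and that all the operator identities in steps (2)--(3) hold after the appropriate completions — precisely the same subtlety that forced the restricted-dual formalism of Definition~\ref{prp:pr:rst} and Proposition~\ref{prp:pr:HBinf}. The right framework is to work with the finite-dimensional $\bbZ$-grading $L=\bigoplus_n L_{(n)}$ by $t$-degree, so that each graded piece of $Q$ involves only finitely many generators and the Tate extension $L^\flat$ supplies the central correction term that makes $Q_{(0)}^2=0$ rather than merely $Q_{(0)}^2\in$ (central). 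A secondary, purely bookkeeping obstacle is getting all Koszul signs consistent between the fermionic charge grading and the cohomological grading; I would handle this by systematically using the ``associated super object'' conventions of Notation~\ref{ntn:li:s-g} and checking signs against the already-verified dg Poisson case of Lemma~\ref{lem:pr:cBRST}, since the chiral BRST charge must reduce to $\ol Q$ under the functor $\gr^F$ of Lemma~\ref{lem:li:grdgv}. All of this is standard once the setup is in place, so modulo these convergence and sign checks the statement follows by the same computation as \cite[3.8.10]{BD}.
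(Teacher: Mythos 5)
For this statement the paper offers no proof of its own: it is recorded as a Fact and delegated entirely to \cite[3.8.10]{BD}, so the only comparison available is with the construction there. Your outline does follow the standard Kostant--Sternberg-style route that \cite{BD} (and the paper's classical analogue in Lemma \ref{lem:pr:cBRST} and Lemma \ref{lem:pr:olQ}) uses, so the overall strategy is the right one; however, two of your steps fail as written. First, the uniqueness argument is a non sequitur: Remark \ref{rmk:li:vsa} gives injectivity of $a \mapsto a_{(-1)}$, not of $a \mapsto a_{(0)}$. The zero-mode map has a large kernel (it annihilates $\Img T$ by the translation axiom), so knowing the operator $Q_{(0)}$ does not determine $Q$; moreover the condition $[Q_{(0)},l^{(-1)}]=l^{(0)}$ only prescribes $Q_{(0)}$ on the generators $l^{(-1)}(L[1])$, not everywhere. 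Uniqueness of $Q$ requires a separate argument exploiting the charge grading of $\chCl(L)^{(\bl)}$ and the nondegenerate pairing between $L$ and $L^*$, which is how it is obtained in \cite{BD}.

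Second, the square-zero step as itemized is the \emph{classical} computation and does not carry over verbatim. In the vertex setting the OPEs entering $Q_{(0)}Q$ produce double-contraction (normal-ordering) anomalies: the ``$\mu$ with $\mu$'' term does not vanish merely because $\alpha$ is a morphism of dg vertex Lie algebras --- it contributes the central term of the extension --- and the ``$\beta$ with $\beta$'' and ``$\mu$ with $\beta$'' terms contribute ghost-sector central terms beyond the Jacobi identity. The identity $Q_{(0)}^2=0$ holds only because these anomalies cancel, and this is precisely where the BRST-datum condition $\alpha(1^\flat)=-\vac$ on the Tate extension $L^\flat$ enters the computation; for $L$ attached to $\wh{\frkg}$ this is the familiar requirement that the total level be $-2h^\vee$, which is why the reduction \eqref{eq:0:BRST} is taken at that level. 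You acknowledge the Tate extension only as a closing remark about completions, but it is not a convergence subtlety: it is the crux of the square-zero identity, and without building it into step (3) that step would simply fail at general level. The convergence/restricted-dual issue you raise is handled, as you suggest, by the $t$-adic grading, and the derivation property in step (4) is fine.
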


The morphism $l$ gives a morphism 
$L_{\dagger} \to \BRST_{\ch}(V,\alpha)_{\tLie}$ of dg Lie algebras.

\subsubsection{The case of universal affine vertex algebras}
\label{sss:ch:G}

In this part we describe in detail the chiral BRST complex 
for the universal affine vertex algebras,
which will be used to formulate the vertex algebra analogue 
of the category $\MT$ in  \S \ref{ss:ch:gl}.
Hereafter we work over $\bbC$, 
and use notations in \S \ref{sss:li:uava} for Lie algebras.
In particular 
\begin{itemize}[nosep]
\item 
$\frkg$ is the Lie algebra of the semi-simple algebraic group over $\bbC$, and 

\item
$\wh{\frkg}=\frkg((t)) \oplus \bbC K$ is the the derived algebra of 
the non-twisted affine Lie algebra associated to $\frkg$.
\end{itemize}

Let $V_k(\frkg)$ be the universal affine vertex algebra at level $k \in \bbC$.
(Definition \ref{dfn:li:uava}).
Regarding it as a graded dg vertex algebra concentrated in 
cohomological degree $0$ with trivial differential (Definition \ref{dfn:li:dgvh}),
we have the category $\dgVMod{V_k(\frkg)}$ of dg $V_k(\frkg)$-modules
(Definition \ref{dfn:li:dgMod}).
Hereafter we regard $\frkg \subset V_k(\frkg)$ by the injective linear map 
\[
 \frkg \longinj V_k(\frkg), \quad x \longmapsto x t^{-1} \vac.
\]

Following \cite[\S 3, p.\ 10]{A18}, 
we introduce the notion of momentum maps in the category $\dgVMod{V_k(\frkg)}$:

\begin{dfn}\label{ntn:ch:chmm}
\begin{enumerate}[nosep]
\item 
A \emph{chiral momentum map} is a morphism $\mu: V_k(\frkg) \to V$
of dg vertex algebras.

\item
A \emph{dg vertex algebra object} (\emph{dgva object} for short) 
\emph{in $\dgVMod{V_k(\frkg)}$} is a pair $(V,\mu)$ of a dg vertex algebra $V$ 
and a chiral momentum map $\mu: V_k(\frkg) \to V$,
where we regard $V \in \dgVMod{V_k(\frkg)}$ by Lemma \ref{lem:li:dgh-m}.
\end{enumerate}
\end{dfn}

\begin{rmk}\label{rmk:ch:chmm}
In \cite{A18} a chiral momentum map is called a chiral quantum momentum map.
See also Remark \ref{rmk:co:cmm}.
\end{rmk}

Given a chiral momentum map $\mu: V_k(\frkg) \to V$, we have a morphism
$\mu_{\tLie}: V_k(\frkg) \to V_{\tLie}$ of dg vertex Lie algebras
by the polar part construction (Lemma \ref{lem:li:VL}).
Restricting $\mu_{\tLie}$ to the subalgebra $v_k(\frkg) \subset V_k(\frkg)$
in Example \ref{eg:li:vk}, we have a BRST datum
$(V, \rst{\mu_{\tLie}}{v_k(\frkg)}: v_k(\frkg) \to V)$,
and thus we have the BRST complex (Fact \ref{fct:ch:BRST}).

\begin{dfn}\label{dfn:ch:BRST}
Let $k \in \bbC$ and $(V,\mu)$ be a dgva object in $\dgVMod{V_k(\frkg)}$.
\begin{enumerate}[nosep]
\item 
We denote the BRST complex for the BRST datum 
$(V,\rst{\mu_{\tLie}}{v_k(\frkg)})$ by
\[
 \BRST(\frkg_k,V,\mu) := \BRST_{\ch}(V,\rst{\mu_{\tLie}}{v_k(\frkg)}).
\]

\item
The cohomology vertex algebra (Lemma \ref{lem:dgv:coh}) of the BRST complex
is denoted by 
\[
 H^{\sinf+\bl}(\wh{\frkg}_{k},V,\mu) := 
 H^{\bl}(\BRST(\wh{\frkg}_{k},V,\mu), d_{\cl}),
\]
and called the \emph{BRST cohomology}.

\item
In the case $V=V_{k}(\frkg)$ and $\mu=\id$, we denote 
$\BRST(\wh{\frkg}_{k},V_{k}(\frkg)) := 
 \BRST(\wh{\frkg}_{k},V_{k}(\frkg),\id)$.
\end{enumerate}
\end{dfn}

Now recall the free fermionic dg vertex algebra $\Wedge^{\sinf}(\frkg)$
in Definition \ref{dfn:li:ff}.
Note that the $\bbZ$-grading $\Wedge^{\sinf}(\frkg)^{\bl}$ of the dg structure
is equal to the minus of the charge grading $\Wedge^{\sinf+\bl}(\frkg)$.
By the characterization of the BRST charge in Fact \ref{fct:ch:BRST},
we can write down this BRST complex in the following form:

\begin{lem}\label{lem:ch:BRST}
Let $k \in \bbC$ and $(V,\mu)$ be a dgva object in $\dgVMod{V_k(\frkg)}$.
Then the dg vertex algebra $\BRST(\wh{\frkg}_{k},V,\mu)$ is described as follows.
\begin{enumerate}[nosep]
\item
As a graded vertex algebra (forgetting the differential), we have 
\[
 \BRST(\wh{\frkg}_{k},V,\mu) \simeq V \otimes \Wedge^{\sinf}(\frkg).
\]

\item
The differential is given by 
$d_{\ch} = Q_{(0)}+d_{V \otimes \bigwedge^{\infty/2}(\frkg)}$, 
where $Q \in \BRST(\wh{\frkg}_{k},V,\mu)$ is the BRST charge
\[
 Q := \sum_{i=1}^{\dim \frkg}\mu(x_i) \otimes \psi_i^*
    - \frac{1}{2}\sum_{i,j,k=1}^{\dim \frkg} 
      1 \otimes c_{i j}^k \psi^*_i \psi^*_j \psi_k.
\]
Here we used the structure constant $c_{i j}^k$ of $\frkg$ 
as in Lemma \ref{lem:pr:olQ} and omit the vacuum $\vac$.
\end{enumerate}
\end{lem}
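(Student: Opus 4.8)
\textbf{Proof plan for Lemma \ref{lem:ch:BRST}.}

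The plan is to unwind the general construction of the chiral BRST complex $\BRST_{\ch}(V,\alpha)$ from Fact \ref{fct:ch:BRST} in the specific case $L = v_k(\frkg)$ and $\alpha = \rst{\mu_{\tLie}}{v_k(\frkg)}$, and to match the resulting data against the explicit free-fermionic picture. First I would record the identifications from Example \ref{eg:li:vk} and Example \ref{eg:li:UL}: $v_k(\frkg) \simeq \frkg[t^{-1}]\vac \oplus \bbC\vac$ as a complex (concentrated in degree $0$), with $\Lie(v_k(\frkg)) \simeq \wh{\frkg}_k$ and $\Lie(v_k(\frkg))_+ \simeq \frkg[[t]]$. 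Consequently the dual complex $L^* = v_k(\frkg)^*$ and the pair $M^\flat = (v_k(\frkg)^*[-1] \oplus v_k(\frkg)[1])^\flat$ produce, via Definition \ref{dfn:li:ff} and Remark \ref{rmk:ch:chCl}(2), exactly the Clifford vertex algebra $\chCl(\frkg) = \Wedge^{\sinf}(\frkg)$, with the grading $(\bl)$ equal to the charge grading; in particular the fermionic generators are the $\psi_i$, $\psi^*_i$ with the commutation relations \eqref{eq:li:ff-com}. This gives part (1) immediately, since by Fact \ref{fct:ch:BRST} the underlying graded vertex algebra of $\BRST_{\ch}(V,\alpha)$ is $V \otimes \chCl(L)$ as a graded object (forgetting the differential), and we have $\chCl(v_k(\frkg)) \simeq \Wedge^{\sinf}(\frkg)$.

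For part (2), the differential is $d_{\ch} = Q_{(0)} + d_{V \otimes \Wedge^{\sinf}(\frkg)}$ by Fact \ref{fct:ch:BRST}, so it remains only to pin down $Q$. The characterization is that $Q \in (V \otimes \chCl(L))^{(1)}$ is the unique cohomological-degree-$1$ element with $[Q_{(0)}, l^{(-1)}] = l^{(0)}$ and $Q_{(0)}^2 = 0$, where $l^{(0)} = \alpha + \beta$ and $l^{(-1)}$ is the tautological embedding $L[1] \inj \chCl(L)^{(-1)} \inj V \otimes \chCl(L)^{(-1)}$. I would write $Q$ in the PBW basis of $\Wedge^{\sinf}(\frkg)$ with charge $1$, i.e.\ as a sum of terms $\mu(x_i) \otimes \psi^*_i$ (carrying the $\alpha$-contribution) plus a purely fermionic cubic term $1 \otimes \psi^*_i\psi^*_j\psi_k$ (carrying the $\beta$-contribution, $\beta$ being the adjoint action of $L$ on itself as in the lemma preceding Definition \ref{dfn:co:coBRST} and in \S\ref{ss:ch:BRST}). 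Then imposing $[Q_{(0)},\psi_k] = \ell^{(0)}(x_k)$ — computed through the commutation relations \eqref{eq:li:ff-com} and the fact that the $\beta$ part reproduces the adjoint action via the structure constants $c_{ij}^k$ — fixes the coefficient of the cubic term to be $-\tfrac12 c_{ij}^k$, giving precisely
\[
 Q = \sum_{i=1}^{\dim\frkg}\mu(x_i)\otimes\psi^*_i
   -\frac12\sum_{i,j,k=1}^{\dim\frkg} 1\otimes c_{ij}^k\psi^*_i\psi^*_j\psi_k.
\]
This is the exact vertex-algebra lift of the classical BRST charge $\ol{Q}$ of Lemma \ref{lem:pr:olQ}, and the bookkeeping is essentially parallel to that lemma.

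The main obstacle I anticipate is not the existence of $Q$ — that is guaranteed by Fact \ref{fct:ch:BRST} — but the sign-and-normalization bookkeeping needed to verify that the displayed formula is the right representative: one must be careful about the shift conventions $[\pm 1]$ on $L$ and $L^*$ (so that the $\psi_i$ sit in charge $-1$ and cohomological degree matching the charge), about the residue pairing and the Koszul signs in Definition \ref{dfn:dg:dgVmon} governing $\chCl(L) \otimes V$, and about the factor $\tfrac12$ coming from the symmetric algebra quotient in $\Sym^\flat$. I would handle this by checking the defining relations $[Q_{(0)}, l^{(-1)}] = l^{(0)}$ and $Q_{(0)}^2 = 0$ on generators only, invoking the uniqueness clause of Fact \ref{fct:ch:BRST} to conclude, rather than verifying $Q_{(0)}^2 = 0$ directly (which would be the Jacobi-type computation, but is already subsumed in Fact \ref{fct:ch:BRST} once the relation with $l$ holds). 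A sanity cross-check is to apply $R^{\cois}_{(-)} \circ \gr^F$ (or directly pass to Zhu's $C_2$-algebra) and confirm that $Q$ degenerates to $\ol{Q}$ of Lemma \ref{lem:pr:olQ}, consistent with the comparison diagram promised in Theorem \ref{thm:ch:main}.
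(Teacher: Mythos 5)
Your route is essentially the paper's: the paper proves this lemma simply by invoking the characterization of the BRST charge in Fact \ref{fct:ch:BRST} for the datum $(V,\rst{\mu_{\tLie}}{v_k(\frkg)})$, identifying the Clifford vertex algebra with the free fermionic vertex algebra $\Wedge^{\sinf}(\frkg)$ (cf.\ Remark \ref{rmk:ch:chCl}), and writing down the standard charge that degenerates to $\ol{Q}$ of Lemma \ref{lem:pr:olQ}; your plan spells out exactly this, so the overall strategy is fine.

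Two points in your plan need tightening. First, the shortcut ``$Q_{(0)}^2=0$ is subsumed in Fact \ref{fct:ch:BRST} once the relation with $l$ holds'' is not justified: the uniqueness clause there is uniqueness among elements satisfying \emph{both} $[Q_{(0)},l^{(-1)}]=l^{(0)}$ and $Q_{(0)}^2=0$, so an element satisfying only the first relation need not coincide with the canonical $Q$ (e.g.\ any summand in the image of $T$ has vanishing zero mode and is invisible to that relation). You must either verify $Q_{(0)}^2=0$ for the displayed candidate directly (the Jacobi-identity computation parallel to Lemma \ref{lem:pr:olQ}, which is where the level/Tate-extension bookkeeping enters), or show that the first relation alone determines $Q$ in the relevant charge-one graded piece; checking on generators is fine once you have one of these. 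Second, your identification ``$M^\flat=(v_k(\frkg)^*[-1]\oplus v_k(\frkg)[1])^\flat$ produces $\Wedge^{\sinf}(\frkg)$'' takes $L=v_k(\frkg)$ including its central line $\bbC\vac$; fed verbatim into the Clifford construction this would create a spurious extra fermion pair. The Clifford vertex algebra matching $\Wedge^{\sinf}(\frkg)$ is the one built on $L=\frkg[t^{-1}]$ (as in Example \ref{eg:li:UL}), with the central element of $v_k(\frkg)$ entering only through the BRST-datum condition $\alpha(1^\flat)=-\vac$ via the Tate extension, not through additional Clifford generators; your part (1) should be phrased accordingly.
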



Now recall the functor taking the associated graded space of the Li filtration
 (Lemma \ref{lem:li:grdgv}):
\[
 \gr^F: \dgVA \longto \dgVP.
\]
Given a dgva object$(V,\mu_V)$ in $\dgVMod{V_k(\frkg)}$.
the chiral momentum map $\mu_V: V_k(\frkg) \to V$ induces the coisson momentum map 
\[
 \mu_{\gr^F V}: \gr^F V_k(\frkg) \simeq J_\infty(\Sym(\frkg)) \longto \gr^F V.
\]
Here we used the isomorphism in Fact \ref{fct:li:SV=RV} \eqref{i:li:SV=RV:2}.
Then, comparing the description of BRST complex (Lemma \ref{lem:ch:BRST})
with the coisson BRST complex (Definition \ref{dfn:co:coBRST}),
we obtain:

\begin{lem}\label{lem:ch:grB=coB}
Let $(V,\mu_V)$ be a dgva object in $\dgVMod{V_k(\frkg)}$ with $k \in \bbC$.
Then we have an isomorphism of dg vertex Poisson algebras
\[
 \gr^F \BRST(\wh{\frkg}_k,V,\mu_V) \simeq 
 \BRST_{\cois}(J_\infty(\frkg),\gr^F V,\mu_{\gr^F V}).
\]
\end{lem}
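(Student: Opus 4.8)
The plan is to compare the two dg vertex Poisson algebras on the same underlying graded vertex Poisson space and then check that the differentials agree. Recall from Lemma \ref{lem:ch:BRST} that, as a graded vertex algebra, $\BRST(\wh{\frkg}_k,V,\mu_V) \simeq V \otimes \Wedge^{\sinf}(\frkg)$, with differential $d_{\ch} = Q_{(0)} + d_{V \otimes \Wedge^{\sinf}(\frkg)}$. Applying $\gr^F$ and using that $\gr^F$ is monoidal (Lemma \ref{lem:li:grdgv}), together with the known computation $\gr^F \Wedge^{\sinf}(\frkg) \simeq \ol{\Wedge}^{\sinf}(\frkg) \simeq \coCl(J_\infty(\frkg))$ from the example after Definition \ref{dfn:li:C2} and Definition \ref{dfn:li:ffp}, we get $\gr^F \BRST(\wh{\frkg}_k,V,\mu_V) \simeq \gr^F V \otimes \coCl(J_\infty(\frkg))$ as graded dg vertex Poisson algebras. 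On the other hand, by Definition \ref{dfn:co:coBRST}, $\BRST_{\cois}(J_\infty(\frkg),\gr^F V,\mu_{\gr^F V})$ is $\coCl(J_\infty(\frkg)) \otimes \gr^F V$ with differential $d_{\cois} = (Q_{\cois})_{(0)} + d_{\coCl(J_\infty(\frkg)) \otimes \gr^F V}$. So there is a canonical isomorphism of the underlying graded vertex Poisson algebras, and it remains to match differentials.

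First I would check that the internal differentials agree: the tensor-product differential on $\gr^F V \otimes \coCl(J_\infty(\frkg))$ coming from $\gr^F(d_{V \otimes \Wedge^{\sinf}(\frkg)})$ is, by functoriality of $\gr^F$ and the fact that $\gr^F$ of the tensor-product differential is the tensor-product differential of the graded pieces, exactly $d_{\coCl(J_\infty(\frkg)) \otimes \gr^F V}$. Next, and this is the crux, I would show that $\gr^F$ sends the BRST charge $Q \in (V \otimes \Wedge^{\sinf}(\frkg))^1$ in filtration degree considerations to the coisson BRST charge $Q_{\cois}$, and that the operator $\gr^F(Q_{(0)})$ equals $(Q_{\cois})_{(0)}$ on the associated graded. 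Here I would use the explicit form of $Q$ from Lemma \ref{lem:ch:BRST}, namely $Q = \sum_i \mu(x_i) \otimes \psi_i^* - \tfrac{1}{2}\sum_{i,j,k} 1 \otimes c_{ij}^k \psi^*_i \psi^*_j \psi_k$, and the defining property of the coisson BRST charge from the lemma before Definition \ref{dfn:co:coBRST}, $Q_{\cois} = \wt{\mu}_{\cois} + \tfrac{1}{2}\wt{\beta}$, together with the compatibility $\mu_{\gr^F V} = \gr^F \mu_V$ under the identification $\gr^F V_k(\frkg) \simeq J_\infty(\Sym(\frkg))$ from Fact \ref{fct:li:SV=RV}(2). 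The key computation is to track the Li filtration degrees of the monomials appearing in $Q$ and in $Q_{(0)}(-)$, using Lemma \ref{lem:li:li}: each $\mu(x_i) \in V$ lies in $F^0$ and the fermionic generators $\psi_i^*, \psi_i$ contribute as in $\gr^F \Wedge^{\sinf}(\frkg)$, so that the leading symbol of $Q$ is precisely $Q_{\cois}$, and applying $a \mapsto a_{(0)}$ commutes appropriately with $\sigma_p$ by the Poisson-bracket formula $\sigma_p(a)_{(0)}\sigma_q(b) = \sigma_{p+q}(a_{(0)}b)$ in Fact \ref{fct:li:li}(2).

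The main obstacle I anticipate is bookkeeping: I need to verify that the charge $Q$ has no ``lower-order'' corrections that vanish under $\gr^F$ while still contributing to $d_{\ch}$ in a way that would spoil the identification — i.e., that the leading symbol of $Q_{(0)}$ really computes $(Q_{\cois})_{(0)}$ and not merely something congruent to it modulo higher filtration. Concretely this reduces to checking that $Q$ itself (not just its class) maps under the monoidal isomorphism $\gr^F(V \otimes \Wedge^{\sinf}(\frkg)) \simeq \gr^F V \otimes \gr^F \Wedge^{\sinf}(\frkg)$ to $Q_{\cois}$, which follows because every term of $Q$ is a product of a single ``$V$-factor'' in $F^0$ and fermionic factors whose filtration behavior is controlled exactly as in the computation of $\gr^F \Wedge^{\sinf}(\frkg)$; the structure constants $c_{ij}^k$ are scalars and cause no trouble. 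Once the charges are matched, $\gr^F(Q_{(0)}) = (Q_{\cois})_{(0)}$ follows from Fact \ref{fct:li:li}(2) applied termwise, and combining with the internal-differential match gives $\gr^F d_{\ch} = d_{\cois}$, hence the desired isomorphism of dg vertex Poisson algebras. I would also remark that this is the coordinate-dependent incarnation of the general chiral-algebra BRST statement in \cite[3.8.10]{BD}, so no genuinely new input is needed beyond the explicit formulas already assembled.
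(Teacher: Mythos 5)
Your proposal is correct and follows essentially the same route as the paper, which obtains the lemma precisely by comparing the explicit description of $\BRST(\wh{\frkg}_k,V,\mu_V)$ in Lemma \ref{lem:ch:BRST} with Definition \ref{dfn:co:coBRST}, using the monoidality of $\gr^F$, the identifications $\gr^F \Wedge^{\sinf}(\frkg) \simeq \coCl(J_\infty(\frkg))$ and $\gr^F V_k(\frkg) \simeq J_\infty(\Sym(\frkg))$, and the matching of the chiral BRST charge with the coisson one. Your extra bookkeeping of Li filtration degrees via Lemma \ref{lem:li:li} and Fact \ref{fct:li:li}(2) just spells out what the paper leaves implicit.
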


\subsection{Chiral gluing procedure}
\label{ss:ch:gl}

In this subsection we give a vertex algebra analogue of 
composition of morphisms in the category $\MT$.
We work under the same setting as in \S \ref{sss:ch:G},
and use the same symbols $\frkg=\Lie(G)$, $V_k(\frkg)$ and so on.

Some preparations are in order. 
\begin{itemize}[nosep]
\item 
Let $(V,\mu)$ and $(V',\mu')$ be dgva objects in 
$\dgVMod{V_k(\frkg)}$ and $\dgVMod{V_l(\frkg)}$ respectively.
Then we can regard $(V \otimes V',\mu \otimes \mu') \in \dgVMod{V_{k+l}(\frkg)}$ 
by the diagonal action of $\wh{\frkg}$ (Lemma \ref{lem:li:tens}).

\item
For a dg vertex algebra $V=(V^{\bl},d,\vac,T,Y)$, 
we define the \emph{opposite} dg vertex algebra $V^{\op}$ by 
\[
 V^{\op} := (V^{\bl},d,\vac,T^{\op},Y^{\op})
\] 
with $Y^{\op}(a,z) := Y(a,-z)$ and $T^\op:=-T$.
We have 
\[
 \gr^F(V^{\op}) \simeq (\gr^F V)^{\op}
\]
as dg vertex Poisson algebras.

\item
For a dgva object $(V,\mu)$ in $\dgVMod{V_k(\frkg)}$,
we have the new dgva object $(V^{\op},\mu^{\op})$
with $\mu^{\op}(a):=-\mu(a)$.
\end{itemize}

\begin{dfn}\label{dfn:ch:glue}
Let $(V,\mu_V)$ and $(V',\mu_{V'})$ be dgva objects 
in $\dgVMod{V_k(\frkg)}$ and in $\dgVMod{V_l(\frkg)}$ respectively.
We define a chiral momentum map $\mu: V^{\op} \otimes V' \to V_{k+l}(\frkg)$  
by $\mu(a \otimes b):=-\mu_V(a) + \mu_{V'}(b)$, and 
a dgva object $V' \wtc V$ in $\dgVMod{V_{k+l}(\frkg)}$ by
\[
 V' \wtc V := 
 \BRST(\wh{\frkg}_{k+l},V^{\op} \otimes V',\mu)
\]
and call it the \emph{chiral gluing}.
\end{dfn}

Recall that for a dg vertex algebra $V$, we denote by $R_V=F^0 V/F^1 V$ 
Zhu's $C_2$-algebra (Definition \ref{dfn:li:C2}). 
By Lemma \ref{lem:ch:grB=coB} and Definition \ref{dfn:co:glue} 
of the coisson gluing, we have:

\begin{prp}\label{prp:ch:main}
Under the same setting of Definition \ref{dfn:ch:glue}, 
we have the following quasi-isomorphism of dg vertex Poisson algebras:
\[
 \gr^F(V' \wtc V) \simeq (\gr^F V') \wtc (\gr^F V).
\]
On Zhu's $C_2$-algebras, we have 
\[
 R_{V'\wtc V} \simeq  R_{V'}  \wtc R_V. 
\]
\end{prp}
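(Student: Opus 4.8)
The plan is to reduce the statement to the already-established functoriality facts and the two key comparison lemmas. First I would recall that by Definition \ref{dfn:ch:glue} we have $V' \wtc V = \BRST(\wh{\frkg}_{k+l}, V^{\op} \otimes V', \mu)$ with $\mu(a \otimes b) = -\mu_V(a) + \mu_{V'}(b)$. Applying the monoidal functor $\gr^F : \dgVA \to \dgVP$ (Lemma \ref{lem:li:grdgv}) to this, the first task is to commute $\gr^F$ past the BRST construction. This is precisely the content of Lemma \ref{lem:ch:grB=coB}: for a dgva object $(W, \mu_W)$ in $\dgVMod{V_m(\frkg)}$ one has an isomorphism $\gr^F \BRST(\wh{\frkg}_m, W, \mu_W) \simeq \BRST_{\cois}(J_\infty(\frkg), \gr^F W, \mu_{\gr^F W})$ of dg vertex Poisson algebras. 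I would apply this with $W = V^{\op} \otimes V'$ and $m = k+l$. For this I need two compatibilities: that $\gr^F$ is monoidal, so $\gr^F(V^{\op} \otimes V') \simeq \gr^F(V^{\op}) \otimes \gr^F(V')$, and that $\gr^F(V^{\op}) \simeq (\gr^F V)^{\op}$, both of which are recorded in the bullet list preceding Definition \ref{dfn:ch:glue}. It then remains to check that the induced coisson momentum map $\mu_{\gr^F(V^{\op} \otimes V')}$ coincides with $-\mu^1_{\gr^F V} + \mu^1_{\gr^F V'}$, i.e.\ with the momentum map appearing in the definition of the coisson gluing; this is routine from the naturality of the identification $\gr^F V_m(\frkg) \simeq J_\infty(\Sym(\frkg))$ in Fact \ref{fct:li:SV=RV}\eqref{i:li:SV=RV:2} applied to the diagonal action, together with $\gr^F$ applied to $\mu$.

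Once these identifications are in place, the right-hand side $\BRST_{\cois}(J_\infty(\frkg_2), (\gr^F V)^{\op} \otimes \gr^F V', -\mu^2_{\gr^F V} + \mu^1_{\gr^F V'})$ is exactly $(\gr^F V') \wtc (\gr^F V)$ by Definition \ref{dfn:co:glue}. This establishes the first displayed isomorphism. For the second statement about Zhu's $C_2$-algebras, I would pass from $\gr^F$ to $R_{(-)}$ using the observation Remark \ref{rmk:li:C2}\eqref{i:li:RV=RP} that $R_V \simeq R^{\cois}_{\gr^F V}$, so that applying the monoidal functor $R^{\cois}_{(-)} : \dgVP \to \dgpa$ to the first isomorphism yields $R_{V' \wtc V} \simeq R^{\cois}_{\gr^F(V' \wtc V)} \simeq R^{\cois}_{(\gr^F V') \wtc (\gr^F V)}$, and then Proposition \ref{prp:co:glue} identifies the latter with $\cBRST(\frkg_2, (R^{\cois}_{\gr^F V})^{\op} \otimes R^{\cois}_{\gr^F V'}, \mu) \simeq R_{V'} \wtc R_V$, the last identification being again the definition of $\wtc$ in $\MT$ via Proposition \ref{prp:pr:glue}. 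Alternatively one can simply invoke that $R^{\cois}_{(-)}$ is a monoidal functor compatible with the gluings (Theorem \ref{thm:co:glue}) and compose with the functoriality already proved at the $\gr^F$ level.

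The main obstacle I expect is not any single deep step but the careful bookkeeping in the first paragraph: verifying that under the chain of isomorphisms $\gr^F \BRST(\wh{\frkg}_{k+l}, V^{\op}\otimes V', \mu) \simeq \BRST_{\cois}(J_\infty(\frkg), (\gr^F V)^{\op} \otimes \gr^F V', \mu_{\gr^F})$ the momentum map $\mu_{\gr^F}$ really matches $-\mu^2_{\gr^F V} + \mu^1_{\gr^F V'}$, and that the differentials (the BRST charge $Q_{(0)}$ versus the coisson BRST charge $(Q_{\cois})_{(0)}$) correspond under these identifications. Concretely one compares the explicit form of $Q$ in Lemma \ref{lem:ch:BRST}(2) with the coisson BRST charge of Definition \ref{dfn:co:coBRST}, using that $\gr^F$ sends $\Wedge^{\sinf}(\frkg)$ to $\ol{\Wedge}^{\sinf}(\frkg) = \coCl(J_\infty(\frkg))$ (the example after Definition \ref{dfn:li:sep}), and observing that the cubic ghost term $\tfrac12 c^k_{ij}\psi^*_i\psi^*_j\psi_k$ has filtration-degree behaviour making its $\gr^F$-image the analogous term in $Q_{\cois}$; this is essentially a filtration-degree count and sign check, already implicitly handled by Lemma \ref{lem:ch:grB=coB}, so invoking that lemma should let me keep the argument short.
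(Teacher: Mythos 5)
Your proposal is correct and follows essentially the same route as the paper, which obtains the statement directly from Lemma \ref{lem:ch:grB=coB} together with the monoidality of $\gr^F$, the compatibility $\gr^F(V^{\op})\simeq(\gr^F V)^{\op}$, and Definition \ref{dfn:co:glue}, and then passes to Zhu's $C_2$-algebras via $R_V \simeq R^{\cois}_{\gr^F V}$ and Propositions \ref{prp:co:glue} and \ref{prp:pr:glue}. The bookkeeping you spell out (matching the induced coisson momentum map and the image of the BRST charge under $\gr^F$) is precisely what Lemma \ref{lem:ch:grB=coB} is invoked to absorb, so no further argument is needed.
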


\subsection{The category $\chMT$}
\label{ss:ch:chMT}

Under the same setting as in \S \ref{ss:ch:gl},
we finally introduce the category $\chMT$, which is a vertex algebra analogue
of $\MT$ in \S \ref{ss:pr:MT} and $\coMT$ in \S \ref{sss:co:G}.
In the latter case, we considered a vertex Poisson object in 
$\eLQCoh{J_{\infty}(G)}(J_{\infty}(\frkg^*))$,
which satisfies some finite condition (Definition \ref{dfn:co:pao}).
Following \cite[\S 3, p.\ 10]{A18}, 
we introduce the corresponding category of vertex algebras.

%
%
%
Recall the equivalence of $V_k(\frkg)$-modules and 
smooth $\wh{\frkg}$-representations of level $k$ (Fact \ref{fct:li:sm}).
Thus on a $V_k(\frkg)$-module we can discuss the action of
$\frkg \subset \wh{\frkg}$ and $\frkg[[t]]t \subset \wh{\frkg}$.

\begin{dfn*}
Let $k \in \bbC$.
\begin{enumerate}[nosep]
\item 
We denote by $\dgKL_k(\frkg)$ the full subcategory of $\dgVMod{V_k(\frkg)}$
spanned by objects on which $\frkg[[t]]t$ acts locally nilpotently
and $\frkg$ acts locally finitely. 

\item
We denote by $\KL_k(\frkg)$ the full subcategory of $\dgKL_k(\frkg)$
spanned by objects concentrated in cohomological degree $0$. 
\item
We denote by $\KL^{\ord}_k(\frkg)$ the full subcategory of $\KL_k(\frkg)$
spanned by objects that are $\bbN$-graded (Definition \ref{dfn:li:gvmod}) 
and  each homogeneous subspaces are finite-dimensional.
\end{enumerate}
\end{dfn*}

\begin{rmk*}
\begin{enumerate}[nosep]
\item 
The subcategory $\KL_k(\frkg) \subset \dgKL_k(\frkg)$ spanned by 
those dg modules concentrated in degree $0$ was originally introduced
in \cite{A12} as the category of 
graded Harish-Chandra $(\wh{\frkg},G[[t]])$-modules of level $k$.
The category $\KL_k(\frkg)$ was used for showing the cohomology vanishing 
of BRST complex in \cite{A12},
and used in \cite{A18} to construct the genus zero chiral algebra 
$\bfV^{\calS}_{G,b}$ of class $\calS$.

\item
As noted in \cite[\S 3]{A18}, every object in $\KL_k(\frkg)$ is a colimit of 
a direct system of objects in $\KL^{\ord}_k(\frkg)$.
\end{enumerate}
\end{rmk*}

We immediately have:

\begin{lem}\label	{lem:ch:pao}
Let $V$ be a vertex algebra object in $\dgKL_k(\frkg)$.
Then $\gr^F$ is a Poisson algebra object in 
$\eLQCoh{J_{\infty}(G)}(J_{\infty}(\frkg^*))$,
and $R_V$ is a Poisson algebra object in $\eLQCoh{G}(\frkg^*)$,
\end{lem}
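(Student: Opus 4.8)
The plan is to unwind the definitions on both sides and check that the relevant structures are preserved under $\gr^F$ and $R_{(-)}$. Recall from Lemma~\ref{lem:ch:pao}'s neighbouring results that a dgva object $(V,\mu_V)$ in $\dgKL_k(\frkg)$ consists of a dg vertex algebra $V$ together with a chiral momentum map $\mu_V: V_k(\frkg) \to V$ on which $\frkg[[t]]t$ acts locally nilpotently and $\frkg$ acts locally finitely. First I would apply the functor $\gr^F: \dgVA \to \dgVP$ (Lemma~\ref{lem:li:grdgv}) to the chiral momentum map; since $\gr^F$ is a monoidal functor and $\gr^F V_k(\frkg) \simeq J_\infty(\Sym(\frkg))$ by Fact~\ref{fct:li:SV=RV}~\eqref{i:li:SV=RV:2}, this yields a coisson momentum map $\mu_{\gr^F V}: J_\infty(\Sym(\frkg)) \to \gr^F V$, exactly the datum of a Poisson algebra object in $\eLQCoh{J_{\infty}(G)}(J_{\infty}(\frkg^*))$ in the sense of Definition~\ref{dfn:co:pao}, \emph{provided} the local-finiteness condition transfers.

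The core of the argument is therefore a filtration compatibility check. I would show that the Li filtration $F^{\bl}V$ on $V \in \dgKL_k(\frkg)$ is compatible with the $\wh{\frkg}$-action coming from $\mu_V$, so that each graded piece $F^p V/F^{p+1}V$ inherits an action of $\gr^F V_k(\frkg)$, and hence (via $\wt U$, Proposition~\ref{prp:li:Jgm}) an action of $\frkg[[t]]$. Concretely: by Lemma~\ref{lem:li:li}, the operators $a_{(n)}$ for $a \in V_k(\frkg)$ shift the Li filtration in a controlled way, and the elements $x t^m \in \frkg((t))$ act on $V$ through $\mu_V(x t^{-1}\vac)_{(m)}$. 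One checks that the locally finite $\frkg$-action on $V$ descends to a locally finite action of $J_\infty(\frkg)$ on $\gr^F V$ — the grading by $t$-degree on $\frkg[[t]]$ matches the finite-dimensional $\Gamma$-decomposition used in Proposition~\ref{prp:pr:HBinf}, and local nilpotence of $\frkg[[t]]t$ on $V$ guarantees each element of $\gr^F V$ is moved only finitely. The same bookkeeping, restricted to the weight-$0$ and weight-$1$ pieces $F^0 V/F^1 V = R_V$, shows $R_V$ is a Poisson algebra object in $\eLQCoh{G}(\frkg^*)$: here one uses $\gr^F V_k(\frkg)$ in weight $0$ is $\Sym(\frkg)$ (Fact~\ref{fct:li:SV=RV}~\eqref{i:li:SV=RV:1}), the Poisson structure on $R_V$ from Definition~\ref{dfn:li:C2}, and Remark~\ref{rmk:li:C2}~\eqref{i:li:RV=RP} identifying $R_V \simeq R^{\cois}_{\gr^F V}$, so the statement for $R_V$ follows formally from that for $\gr^F V$ by applying the monoidal functor $R^{\cois}_{(-)}$ and Lemma~\ref{lem:li:RJ=R}.

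I expect the main obstacle to be the local-finiteness/local-nilpotence transfer: one must verify that the hypotheses defining $\dgKL_k(\frkg)$ — $\frkg$ acting locally finitely and $\frkg[[t]]t$ acting locally nilpotently on $V$ — propagate through the associated graded of the Li filtration to give the local finiteness of the $J_\infty(\frkg)=\frkg[[t]]$-action required in Definitions~\ref{dfn:co:pao} and~\ref{dfn:co:coMT}. This needs a careful estimate on how the adjoint action of $x t^m$ interacts with monomials $(a_1)_{(-n_1-1)}\cdots(a_r)_{(-n_r-1)}m$ spanning $F^p V$, using the commutator formula in Remark~\ref{rmk:li:vsa}~\eqref{i:rmk:li:vsa:1} together with the filtration estimates of Lemma~\ref{lem:li:li}; the key point is that raising $m$ eventually kills the action on any fixed vector. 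Everything else — that $\gr^F$ and $R_{(-)}$ are monoidal, that $\mu_V$ induces coisson and ordinary momentum maps — is immediate from the functoriality already established in Lemmas~\ref{lem:li:grdgv},~\ref{lem:li:dgh-m} and Fact~\ref{fct:li:SV=RV}.
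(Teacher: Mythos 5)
Your proposal is correct and follows essentially the same route the paper intends: the paper states this lemma with no written proof ("We immediately have"), the implicit argument being exactly your unwinding — apply the monoidal functor $\gr^F$ to the chiral momentum map, identify $\gr^F V_k(\frkg)\simeq J_\infty(\Sym(\frkg))$ via Fact \ref{fct:li:SV=RV}, note that the $\dgKL_k(\frkg)$ conditions give the local finiteness of the $\frkg[[t]]$-action required by Definition \ref{dfn:co:pao}, and deduce the statement for $R_V$ from $R_V\simeq R^{\cois}_{\gr^F V}$. Your extra care about transferring local finiteness/nilpotence through the Li filtration is a sensible elaboration of a point the paper treats as immediate, not a different approach.
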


The family $\{\dgKL_k(\frkg) \mid k \in \bbC\}$ inherits the tensor structure of
$\{\dgVMod{V_k(\frkg)} \mid k \in \bbC\}$ in Lemma \ref{lem:li:tens}:
For $M \in \dgKL_k(\frkg)$ and $N \in \dgKL_k(\frkg)$, 
we have $M \otimes N \in \dgKL_{k+l}(\frkg)$ 
by the diagonal action of $\wh{\frkg}$.

Here is the definition of vertex analogue of the category $\MT$.

\begin{dfn}\label{dfn:ch:chMT}
We define the category $\chMT$ by the following description.
\begin{itemize}[nosep]
\item 
An object is a simply connected semi-simple algebraic group $G$ over $\bbC$.
We identify it with the associated Lie algebra $\frkg$.

\item
A morphisms from $\frkg_1$ to $\frkg_2$ is a vertex algebra object $(V,\mu_V)$
in $\dgVMod{\bigl(V_k(\frkg_1) \otimes V_l(\frkg_2)\bigr)}$ 
with some $k,l \in \bbC$ such that
$(V,\mu_V^1) \in \dgKL_k(\frkg_1)$ and $(V,\mu_V^2) \in  \dgKL_l(\frkg_2)$.
Here we defined 
$\mu_V^1: V_k(\frkg_1) \to V$, $\mu_V^1(a) := \mu_V(a \otimes \vac)$ and 
$\mu_V^2: V_l(\frkg_2) \to V$, $\mu_V^2(b) := \mu_V(\vac \otimes b)$.

\item
The composition of $(V,\mu_V) \in \Hom_{\chMT}(\frkg_1,\frkg_2)$ and 
$(V',\mu_{V'}) \in \Hom_{\chMT}(\frkg_2,\frkg_3)$ is given by
the chiral gluing $V' \wtc V$ in Definition \ref{dfn:ch:glue} 
where we regard $(V,\mu_V^2) \in \dgVMod{V_{l}(\frkg_2)}$
and $(V,\mu_{V'}^1) \in \dgVMod{V_{k'}(\frkg_2)}$.
\end{itemize}
\end{dfn}

By Proposition \ref{prp:ch:main} and Lemma \ref{lem:ch:pao} 
we have the main result:

\begin{thm}\label{thm:ch:main}
The functors $\gr^F:\dgVA \to \dgVP$, $R: \dgVA \to \dgpa$ 
and $R: \dgVP \to \dgpa$ give a commutative diagram 
\[
 \xymatrix{ \chMT \ar[r]^{\gr^F} \ar[d]_{R}& \coMT \ar[d]^{R^{\cois}} \\ 
              \MT \ar@{=}[r] & \MT}
\]
\end{thm}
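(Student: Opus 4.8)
The plan is to assemble the commutative diagram from the three functors already constructed, and then verify that each arrow actually lands in the correct category and that the square commutes up to the equivalences recorded above. First I would recall that we have at the level of plain algebraic structures the monoidal functors $\gr^F:\dgVA \to \dgVP$ (Lemma \ref{lem:li:grdgv}), $R_{(-)}:\dgVA \to \dgpa$ (Definition \ref{dfn:li:C2}), and $R^{\cois}_{(-)}:\dgVP \to \dgpa$ (Definition \ref{dfn:li:dgapa}), together with the identity $R_V \simeq R^{\cois}_{\gr^F V}$ from Remark \ref{rmk:li:C2} \eqref{i:li:RV=RP}. This identity is precisely the commutativity of the square on underlying objects, so the substance of the theorem is that these functors restrict to the ``Moore–Tachikawa'' subcategories $\chMT$, $\coMT$, $\MT$ and respect composition of morphisms there.

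The key steps, in order, would be as follows. Step one: check that $\gr^F$ sends an object of $\chMT$ to an object of $\coMT$; this is Lemma \ref{lem:ch:pao}, which says that for a vertex algebra object $V$ in $\dgKL_k(\frkg)$, $\gr^F V$ is a Poisson algebra object in $\eLQCoh{J_\infty(G)}(J_\infty(\frkg^*))$ — here I would invoke Fact \ref{fct:li:SV=RV} \eqref{i:li:SV=RV:2} to identify $\gr^F V_k(\frkg) \simeq J_\infty(\Sym(\frkg))$, so that the chiral momentum map $\mu_V$ induces the required coisson momentum map $\mu_{\gr^F V}$, and the local finiteness of the $\frkg[[t]]$-action passes to the associated graded. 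Step two: check compatibility of $\gr^F$ with composition, i.e. $\gr^F(V' \wtc V) \simeq (\gr^F V') \wtc (\gr^F V)$; this is exactly Proposition \ref{prp:ch:main}, which follows by combining Lemma \ref{lem:ch:grB=coB} (identifying $\gr^F$ of the chiral BRST complex with the coisson BRST complex) with Definition \ref{dfn:co:glue} of the coisson gluing and the observation $\gr^F(V^{\op}) \simeq (\gr^F V)^{\op}$ and $\gr^F(V \otimes V') \simeq \gr^F V \otimes \gr^F V'$ (monoidality of $\gr^F$). Step three: check that $R^{\cois}_{(-)}$ sends $\coMT$ to $\MT$ and respects composition; this is Theorem \ref{thm:co:glue}, resting on Proposition \ref{prp:co:glue} and Proposition \ref{prp:pr:glue}. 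Step four: deduce that $R_{(-)}:\chMT \to \MT$ is well-defined and compatible with composition, either directly via $R_{V'\wtc V} \simeq R_{V'}\wtc R_V$ (the second half of Proposition \ref{prp:ch:main}) combined with Lemma \ref{lem:ch:pao} for well-definedness, or — more economically — by factoring $R_{(-)}$ as $R^{\cois}_{(-)} \circ \gr^F$ using Remark \ref{rmk:li:C2} \eqref{i:li:RV=RP} and citing steps one through three. Finally I would note that the diagram commutes on the nose on objects by Remark \ref{rmk:li:C2} \eqref{i:li:RV=RP}, and commutes on morphisms because all three functors are already known to be compatible with the respective gluing (= composition) operations, so that the two ways of going around the square agree up to the coherent quasi-isomorphisms supplied by Propositions \ref{prp:ch:main}, \ref{prp:co:glue} and \ref{prp:pr:glue}.

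The main obstacle — and the place where I would spend the most care — is verifying that the compositions \emph{strictly} agree, or at least agree compatibly with associativity, rather than merely being pairwise quasi-isomorphic. The gluing operations $\wtc$ in $\chMT$, $\coMT$ and $\MT$ are defined via BRST-type complexes, and associativity of composition in each category relies (as noted in Remark \ref{rmk:pr:MT}) on realizing the reduction as a pushout in an $\infty$-category; one must therefore confirm that $\gr^F$ and $R^{\cois}_{(-)}$ intertwine these pushouts, so that the functors are genuinely functorial on the $2$-categorical (or $\infty$-categorical) nose and not just on homotopy categories. Concretely, the delicate point is that $\gr^F$ of a derived tensor product / two-sided bar complex need not literally be the bar complex of the associated gradeds unless the Li filtration interacts well with the BRST differential; here Lemma \ref{lem:ch:grB=coB} does the essential work by matching the BRST charges term-by-term (using the explicit form in Lemma \ref{lem:ch:BRST} and the structure constants as in Lemma \ref{lem:pr:olQ}), and I would lean on that lemma to reduce the associativity bookkeeping to the already-established compatibilities. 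Once that reduction is in place, the remaining verifications are formal diagram chases among the cited results, and the theorem follows.
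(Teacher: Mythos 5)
Your proposal matches the paper's proof, which simply combines Lemma \ref{lem:ch:pao} (well-definedness of $\gr^F$ and $R$ on objects of $\chMT$) with Proposition \ref{prp:ch:main} (compatibility with the gluings), together with Theorem \ref{thm:co:glue} and the identification $R_V \simeq R^{\cois}_{\gr^F V}$ of Remark \ref{rmk:li:C2}; your extra discussion of strict versus homotopy-coherent associativity goes beyond what the paper verifies but does not alter the argument.
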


\begin{rmk}\label{rmk:ch:glue}
Let us continue Remark \ref{rmk:co:glue} and 
give a connection to the argument in \cite[\S 3]{A18}.
Recall that in \S \ref{sss:co:G} we considered coisson gluing, 
which is a vertex Poisson analogue of composition of morphisms in $\MT$.
Here we consider a chiral analogue.

Let $(V,\mu_V)$ be a vertex algebra object in $\KL_k$.
Then the associated graded space $\gr^F V$ is a vertex Poisson algebra object 
in $\QCoh^{J_{\infty}(G)}(J_\infty(\frkg^*))$,
and Zhu's $C_2$-algebra $R_V$ is a Poisson algebra object in $\QCoh^{G}(\frkg)$.
Let $X=\Spec(R)$ be an affine Poisson scheme,
and assume the following conditions.
\begin{enumerate}[nosep,label=(\roman*)]
\item 
$R_V \simeq R$.
\item
The surjection $J_{\infty}(R_V) \surj \gr^F V$ of vertex Poisson algebras
is an isomorphism.
\end{enumerate}
If moreover $V$ is separated (Definition \ref{dfn:li:sep}),
then $V$ is called a \emph{strict chiral quantization} of $X$
\cite[Definition 2.1]{A18}.
We further assume the following conditions
considered in Remarks \ref{rmk:pr:glue} and \ref{rmk:co:glue}.
\begin{enumerate}[nosep,label=(\roman*)]
\setcounter{enumi}{2}
\item 
There is a closed subscheme $S \subset X$ 
such that the action map gives an isomorphism $G \times S \simto X$.
\item
The chiral momentum map 
$J_{\infty}(\mu_X)=(\gr^F \mu_V)^*: J_{\infty}(X) \to J_{\infty}(\frkg^*)$ is flat.
\end{enumerate}

Let $(V',\mu_{V'})$ be a vertex algebra object in $\KL_l$.
Then the tensor product $V^{\op} \otimes V'$ is a vertex algebra object 
in $\KL_{k+l}$ with chiral momentum map $\mu(a,b):=-\mu_V(a)+\mu_{V'}(b)$.

By the same argument in Remarks \ref{rmk:pr:glue} and \ref{rmk:co:glue},
we can then deduce an quasi-isomorphism 
\[
 \gr^F \BRST(\wh{\frkg}_{k+l},V^{\op} \otimes V',\mu) \simeq 
 \bigl((J_\infty(R))^{\op} \otimes (\gr^F V') \bigr) 
 \gq_{\tgr^F \mu} J_{\infty}(\frkg^*)
 \simeq \bbk[(\gr^F \mu)^{-1}(0) / \Delta(J_\infty(G))]
\] 
of cdgas, and as for the cohomology we have 
\[
 \gr^F H^{\sinf+\bl}(\wh{\frkg}_{k+l},V^{\op} \otimes V',\mu) \simeq  
 \bigl(\bbk[J_{\infty}(S)] \otimes_{J_{\infty}(\Sym(\frkg))} \gr^F V'\bigr)
 \otimes H^{\bl}(G,\bbC).
\]
Thus we have 
\[
 \gr^F H^{\sinf+0}(\wh{\frkg}_{k+l},V^{\op} \otimes V',\mu) \simeq 
 \bbk[J_{\infty}(S)] \otimes_{J_{\infty}(\Sym(\frkg))} \gr^F V',
\]
which recovers the formula in \cite[Theorem 3.1]{A18}.
\end{rmk}


\end{document}